\documentclass[11pt, reqno]{amsart}

\usepackage[margin=0.9in]{geometry}

\usepackage{amsmath,amsthm,soul,ulem,mathtools%,datetime
}
\usepackage{dsfont}

\usepackage{upgreek}

\usepackage{mathrsfs}
\usepackage{color}
\usepackage{xcolor}
\usepackage{graphicx}
\usepackage{amssymb}

\usepackage{url}
\usepackage{multicol}
\usepackage{tikz}
\usepackage{amsmath}
\usepackage{fancyhdr}
\usetikzlibrary{matrix}
\usetikzlibrary{arrows}
\usepackage{subfig}
\usepackage{hyperref}
\usepackage{listings}

\usepackage{enumitem}
%\usepackage{dsfont}
%Pseudocodigo
%\usepackage{algpseudocode}
%\usepackage{algorithm}
%\usepackage{pseudocode}
\usepackage{varwidth}

\allowdisplaybreaks

%ShowLabels
%Show Labels
%\usepackage[notref,color]{showkeys}% Shows labels in light dark color 

%%%%%%%%%%%%%%%%%%%%%%%%%%%%%%%%%%%%%%%%%%%%%%
%%%%%%%%%%%%%%%%%%%%%%%%%%%%%%%%%%%%%%%%%%%%%%
%%%%%%%%%%%%%%%%%%%%%%%%%%%%%%%%%%%%%%%%%%%%%%
%%%%%%%%%%%%%%%%%%%%%%%%%%%%%%%%%%%%%%%%%%%%%%
%%%%%%%%%%%%%%%%%%%%%%%%%%%%%%%%%%%%%%%%%%%%%%

\definecolor{bluegreen}{rgb}{0.0, 0.3, 0.9}

\def\tofill{\vskip20pt $\cdots$ To fill in $\cdots$ \vskip20pt}

\newcommand{\comment}[1]{\vskip.3cm\fbox{%
\parbox{0.93\linewidth}{\footnotesize #1}}
\vskip.3cm}

%%%%%%%%%%%%%%%%%%%%%%%%%%%%%%%%%%%%%%%%%%%
%%%%%%%%%%%%%%%%%%%%%%%%%%%%%%%%%%%%%%%%%%%
%%%%%%%%%%%%%%%%%%%%%%%%%%%%%%%%%%%%%%%%%%%
%%%%%%%%%%%%%%%%%%%%%%%%%%%%%%%%%%%%%%%%%%%
%%%%%%%%%%%%%%%%%%%%%%%%%%%%%%%%%%%%%%%%%%%

\title[Stability of vortices in GL evolution]{Linearized dynamic stability for vortices \\
of Ginzburg-Landau evolutions}
%corotational perturbations

\author[J.M. Palacios]{Jos\'e M. Palacios}
\address{University of Toronto}
\email{jose.palacios@utoronto.ca}

\author[F. Pusateri]{Fabio Pusateri}
\address{University of Toronto}
\email{fabiop@math.toronto.edu}

\def\eps{\epsilon}

\def\jxi{\langle \xi \rangle}
\def\jr{\langle r \rangle}
\def\jk{\langle k \rangle}

\def\wt{\widetilde}
\def\wtF{\wt{\mathcal{F}}}

\newcommand{\be}{\begin{equation}}
\newcommand{\ee}{\end{equation}}
\newcommand{\bp}{\begin{proof}}
\newcommand{\ep}{\end{proof}}
\newcommand{\bel}{\begin{equation}\label}
\newcommand{\eeq}{\end{equation}}
\newcommand{\bea}{\begin{eqnarray}}
\newcommand{\eea}{\end{eqnarray}}
\newcommand{\bee}{\begin{eqnarray*}}
\newcommand{\eee}{\end{eqnarray*}}
\newcommand{\ben}{\begin{enumerate}}
\newcommand{\een}{\end{enumerate}}

\newcommand{\R}{\mathbb{R}}

\newcommand{\N}{\mathbb{N}}
\newcommand{\Z}{\mathbb{Z}}

\newcommand{\C}{\mathbb{C}}

\newcommand{\supp}{\operatorname{supp}}

\newtheorem{thm}{Theorem}[section]

\newtheorem{lem}[thm]{Lemma}
\newtheorem{prop}[thm]{Proposition}

\newtheorem{rem}{Remark}[section]

\definecolor{codegreen}{rgb}{0,0.6,0}
\definecolor{codegray}{rgb}{0.5,0.5,0.5}
\definecolor{codepurple}{rgb}{0.58,0,0.82}
\definecolor{backcolour}{rgb}{0.95,0.95,0.92}

\lstdefinestyle{mystyle}{
	backgroundcolor=\color{backcolour},   
	commentstyle=\color{codegreen},
	keywordstyle=\color{magenta},
	numberstyle=\tiny\color{codegray},
	stringstyle=\color{codepurple},
	basicstyle=\footnotesize,
	breakatwhitespace=false,         
	breaklines=true,                 
	captionpos=b,                    
	keepspaces=true,                 
	numbers=left,                    
	numbersep=5pt,                  
	showspaces=false,                
	showstringspaces=false,
	showtabs=false,                  
	tabsize=2
}
\lstset{style=mystyle}

\numberwithin{equation}{section}

%Matlab2Tikz
%\usepackage{pgfplots}
%\pgfplotsset{compat=newest}

%Regla

%Paquete Matlab
%\usepackage{listings} %Codigo
%\usepackage[framed,numbered,autolinebreaks,useliterate]{mcode}
%\lstset{language=Matlab, tabsize=4,framexleftmargin=5mm,breaklines=true}
%=====================================

%=========PAQUETES AUXILIARES===========
%Paquetes con atajos personalizados
%\usepackage{parentizacion}
%\input{defs_aux.tex}
%\input{javaCode.tex}

%INDENTACION

%\setlength{\parindent}{0pt}

%\setlist[description]{leftmargin = 1.5cm,labelindent= 1cm}
%Nombre Tablas
%\addto\captionsspanish{%
%  \renewcommand{\tablename}{Tabla}%
%  \renewcommand{\listtablename}{\'Indice de tablas}%
%}

\makeatletter

\makeatother

%Definimos el comando ejercicio y numeramos dentro de las subsecciones
\theoremstyle{definition}

\numberwithin{ej}{section}

%Redefinición Demostración

%\renewenvironment{proof}{{\bfseries \noindent Proof.}}{ \qed \\}
%\newtheorem*{demo}{{\bfseries \noindent Proof.}}
%==========================================

%%%%%%%%%%%%%%%%%%%%%%%%%%%%%%%%%%%%%%%%%%%%%%%%%%%
%%%%%%%%%%%%%%%%%%%%%%%%%%%%%%%%%%%%%%%%%%%%%%%%%%%
%%%%%%%%%%%%%%%%%%%%%%%%%%%%%%%%%%%%%%%%%%%%%%%%%%%
%%%%%%%%%%%%%%%%%%%%%%%%%%%%%%%%%%%%%%%%%%%%%%%%%%%
%%%%%%%%%%%%%%%%%%%%%%%%%%%%%%%%%%%%%%%%%%%%%%%%%%%
%%%%%%%%%%%%%%%%%%%%%%%%%%%%%%%%%%%%%%%%%%%%%%%%%%%
%%%%%%%%%%%%%%%%%%%%%%%%%%%%%%%%%%%%%%%%%%%%%%%%%%%
%%%%%%%%%%%%%%%%%%%%%%%%%%%%%%%%%%%%%%%%%%%%%%%%%%%
%%%%%%%%%%%%%%%%%%%%%%%%%%%%%%%%%%%%%%%%%%%%%%%%%%%
%%%%%%%%%%%%%%%%%%%%%%%%%%%%%%%%%%%%%%%%%%%%%%%%%%%
%%%%%%%%%%%%%%%%%%%%%%%%%%%%%%%%%%%%%%%%%%%%%%%%%%%
%%%%%%%%%%%%%%%%%%%%%%%%%%%%%%%%%%%%%%%%%%%%%%%%%%%
%%%%%%%%%%%%%%%%%%%%%%%%%%%%%%%%%%%%%%%%%%%%%%%%%%%
%%%%%%%%%%%%%%%%%%%%%%%%%%%%%%%%%%%%%%%%%%%%%%%%%%%

\begin{document}
%\begin{sf}
%==============PORTADA=====================

%==============CAMBIO DE MARGENES=====================
%\newgeometry{top=2.5cm, bottom=2.5cm, right=2.0cm, left=1.8cm}

%==============ENCABEZA/PIE DE PAGINA=====================
%\newpage
%\pagestyle{fancy}
%\fancyhf{}

%Encabezado
%\fancyhead[L]{\rightmark}
%\fancyhead[L]{\small \rm \textit{\curso}} %Izquierda
%\fancyhead[R]{\small \rm \textit{}} %Derecha

%\fancyfoot[L]{\small \rm \textit{\titulo.}} %Izquierda
%\fancyfoot[R]{\small \rm \textbf{\thepage}} %Derecha
%\fancyfoot[C]{\thepage} %Centro

\renewcommand{\sectionmark}[1]{\markright{\thesection.\ #1}}
\renewcommand{\headrulewidth}{0.5pt}
\renewcommand{\footrulewidth}{0.5pt}

%\date{\today%; {\it Time} \currenttime
%}

\keywords{Ginzburg-Landau, Vortices, Distorted Fourier transform, Decay, Spectral Analysis.}
%Nonlinear Klein-Gordon Equation, Soliton, Zero-Energy Resonance, Local Energy.}

\begin{abstract}
%\normalsize
We consider the problem of dynamical stability for the $n$-vortex of the Ginzburg-Landau model.
Vortices are one of the main examples of topological solitons, and their dynamic stability is
the basic assumption of the asymptotic ``particle plus field'' description of interacting vortices 
\cite{WX,bookManSut,Neu}.
In this paper we focus on co-rotational perturbations of vortices
%for both the relativistic and Schr\"odinfger flows,
and establish %a variety of pointwise dispersive and 
decay estimates for their linearized evolution
in the relativistic case. %for this evolution.

One of the main ingredients is a construction of the distorted Fourier basis
associated to the linearized operator at the vortex.
The general approach follows that of Krieger-Schlag-Tataru 
\cite{KST} and Krieger-Miao-Schlag 
\cite{KMS} %in the context of stability of blow-up for wave maps,
and relies on the spectral analysis of %(time-independent) 
Schr\"odinger operators with strongly singular potentials \cite{GZ}.
%...
Since %the vortex is not given by an explicit formula, and 
one of the operators appearing in the linearization
has zero energy solutions that oscillate at infinity, %as $r \rightarrow \infty$,
%is not lower bounded by its lowest energy, 
additional work is needed for our construction %of the distorted Fourier basis,
and to control the spectral measure. 
%compared to the previous works.
The decay estimates that we obtain are of both wave and Klein-Gordon type,
and are consistent with the general theory for $2$d Schr\"odinger operators,
including those that have an $s$-wave resonance, as in the present case, but faster decaying potentials \cite{Ya,Toprak}.
%{%comm%{endpoint?}}
%These estimates are also consistent with the full dynamical asymptotic stability of the vortex;
%this will be the subject of forthcoming works.
%The application of our construction and estimates to the long-time nonlinear stability 
%of the vortex will be the subject of forthcoming work.

%We also give a new proof of the absence of unstable spectrum by using 
%a super-symmetric factorization identity, which relies on the presence of a zero-energy resonance %(an $s$-wave)
%arising from the gauge invariance of the equations.

Finally, we give a new proof of the absence of unstable spectrum 
%by using a super-symmetric factorization identity,
and provide an estimate on the location of embedded eigenvalues by using a suitable
Lieb-Thirring inequality due to Ekholm and Frank \cite{EkhFra}. In particular, we show that eigenvalues
must lie in the interval $(1.332,2)$, where $2$ represents the effective mass 
%(and bottom of the continuous spectrum) for 
of one of the two scalar operators appearing in the linearization. %and therefore cannot give rise to quadratic resonance.

\end{abstract}

\maketitle

\setcounter{tocdepth}{1}

\begin{quote}%\footnotesize

\tableofcontents

\end{quote}

%\comment{

%\begin{enumerate}

% \item Clean/Complete intro with spectral picture %comm%{Doing it}

 %\bigskip
 %\item The Schr\"odinger case: Spectrum? Simple orbital stability? %comm%{seems non-trivial\dots}

 %\noindent
 %%comm%{What about an argument like in Gustafson (he does relativistic with $A\neq 0$, with more assumptions,
 %less `rigor' than \cite{GPS} ?)}

 %\bigskip
 %\item Wave operators ?

 %\bigskip
 %\item %comm%{Orbital stability for the relativistic case is open (with $A=0$, or more rigorous $A\neq 0$) ?}
 
 %\medskip
 %\noindent
 %For Schr\"odinger the best/more rigorous result seems \cite{GPS}.
 
 %\medskip
 %\noindent
 %%comm%{Does argument in Gustafson \cite{GusVort} restrict to the case $A=0$ trivially?
 %(was done before?)}

%\bigskip
%\item Case of $n$-vortex, $|n|\geq 2$, co-rotational class 

%\bigskip
%\item Uniformize references format

%\bigskip
%\item Good reference for numerics of $c = U'(0)$ ?

%\bigskip
%\item %Lemma for expansion and derivatives of the vortex as claimed in \eqref{der1-U^2} for example ? (not really needed).
%Need the asymptotic
%\[
%U(r)=ar\left(1-\dfrac{r^2}{8}+\dfrac{(8a^2+1)r^4}{192}\right)+O(r^7) \quad \hbox{for} \quad r\approx0.
%\]
%and \eqref{asympt_U_0} is not enough?

\iffalse
\bigskip
\item Extend the result to the magnetic case
% \bigskip
% \item Question: role of coupling strength $\lambda$ in GL functional 
\begin{align}
\mathcal{E}(\phi,A) 
  = \int_{\R^2} | \nabla_A \phi |^2 + (\mathrm{curl} A)^2 + \frac{\lambda}{2} \big(1-|\phi|^2)^2 \, dxdy.
\end{align}
Vortices approach limit exponentially (see \cite{GusVort,GusSigVort}).

Will do FT

Asymptotic for Heat

(Orbital in Gus) 

Local asymptotic ?

\fi

%\end{enumerate}

\section{Introduction}

\subsection{The GL Energy functional and vortices}
The Ginzburg-Landau energy functional in two space dimensions has the form
\begin{align}\label{GLEnergy}
\mathcal{E}(\phi) 
  = \int_{\R^2} | \nabla \phi |^2 + \frac{1}{2} \big(1-|\phi|^2)^2 \, dxdy,
\end{align}
for a complex field $\phi : (x,y) \in \R^2 \rightarrow \C$. 
In this paper we are interested in the dynamic stability
of vortex solutions (mostly) for the relativistic flow associated to \eqref{GLEnergy}.
In general, one may look at three canonical types of flows associated to \eqref{GLEnergy} (see for example \cite{bookManSut});
given the variation
$-\tfrac{\delta}{\delta \overline{\phi}} \mathcal{E}(\phi) = \Delta \phi + (1-|\phi|^2)\phi$,
one may consider the associated gradient (or parabolic, or heat) flow 
\begin{align}
\label{Heat}
\partial_t\phi & %= %\frac{\delta \mathcal{E}(\phi)}{\delta \overaline{\phi}} 
  = \Delta\phi + (1-|\phi|^2)\phi,
\end{align}
the relativistic (or second order) flow
\begin{align}
\label{KG}
\partial_t^2\phi-\Delta\phi = (1-|\phi|^2)\phi,
\end{align}
and the Schr\"odinger (or Gross-Pitaevskii) flow
\begin{align}
\label{Schr}
i\partial_t\phi - \Delta \phi = (1-|\phi|^2)\phi.
\end{align}
%%commjose%{\begin{align}
%\label{Schr}
%-i\partial_t\phi - \Delta \phi = (1-|\phi|^2)\phi.
%\end{align}}
The relativistic case \eqref{KG} is the restriction to the case of a trivial connection
of the appropriate dynamics in the context of gauge field theory, %(in the presence of a magnetic field) 
the so-called Maxwell-Higgs (or Abelian-Higgs) equations \cite{GusVort,GusVort,bookManSut,bookJT}.

The equations \eqref{Heat}-\eqref{Schr} admit stationary solutions, called vortices
of degree (or winding number) $n$, of the form
%We are interested in the behaviour of vortex solutions of degree $n$ (or winding number $n$), 
\begin{align}\label{phin}
\phi_n(x,y) & = U_n(r)e^{in\theta}, \qquad  n  = \pm1,\pm2,...
\qquad U_n(0)=0, \quad U_n(+\infty)=1,
\end{align}
where $(r,\theta)$ are polar coordinates in the plane %$(x,y)=(r\cos\theta,r\sin\theta)$ 
and $U_n$ is a real-valued smooth increasing function  
satisfying the ODE
%$U_n$ Said function is the unique solution of the ODE 
\begin{align}\label{eq_vortex}
U_{rr} + \dfrac{1}{r} U_r - \dfrac{n^2}{r^2}U + (1-U^2)U = 0, \quad U(0)=0, \quad U(+\infty)=1, \quad U'(r)>0.
\end{align}
$U_n$ satisfies the following asymptotics at infinity 
\begin{align}\label{asympt_U_infty}
U_n \approx 1-\dfrac{n^2}{2r^2} +O\Big(\dfrac{1}{r^4}\Big)\quad \hbox{ as } \quad r \to \infty,
\end{align}
and around the origin
\begin{align}\label{asympt_U_0}
U_n(r)\approx a r^{|n|}\Big(1 - \dfrac{r^2}{4|n|+4} + O(r^4) \Big) \quad \hbox{ as } \quad   r \rightarrow 0.
\end{align}
With a slight abuse of notation we will sometimes refer to $U_n$ as a vortex as well.
It is important to note that the vortices %$U_n(r)e^{in\theta}$ 
convergence to $1$ rather slowly at infinity,
as well as the fact that they do not have finite energy;
even though $\partial_r U_n\in L^2(\R^2)$, %for the vortex, 
%due to the slow convergence to $1$ as $r\to+\infty$, 
the angular derivative $\tfrac{1}{r} U_n \notin L^2(\R^2)$.
This has long been a source of difficulty in the analysis of vortices
addressed by various `renormalization' strategies; %based on suitable forms of renormalization; 
we refer the reader to the discussions in \cite{GPS} and references therein.

\subsection{Dynamic stability, some literature and our results}
The nonlinear asymptotic stability of vortices for \eqref{Heat}-\eqref{Schr} 
and related models is, in general, a wide open problem.
Even in the case of a single vortex solution $\phi_n$ (as opposed to the 
case of multiple vortices configurations, say), 
there are many fundamental issues that need to be addressed; these include, for example,
understanding linearized stability, obtaining a precise enough description of the spectrum of the linearization
and of the distorted Fourier transform associated to it,
possible obstructions to local decay (e.g. zero-energy resonances), 
the slow convergence to $1$ of the vortices, 
and the `genuine' quadratic nonlinearity appearing in the equations for the perturbation of the vortex.
Because of all of these difficulties, very few rigorous stability results have been established for vortices.
This is in contrast to the relatively better understood case of one dimensional topological solitons (kinks);
see below for some literature on these latter.
In this respect, our paper also initiates the detailed analysis necessary for the study 
of the long-time stability of vortices in GL and related models.

In the case of the heat flow \eqref{Heat}, stability results 
were proven by Gustafson \cite{GusVort0} and Weinstein-Xin \cite{WX}.
These authors considered co-rotational perturbations of the $n$-vortex, that is, 
solutions of the form $\phi_n + \varepsilon(r) e^{in\theta}$ with suitably small $\varepsilon$.
Also building on the work of Ovchinnikov-Sigal \cite{OvSig1} on spectral stability (and instability) of the vortices,
\cite{GusVort0} proved nonlinear stability of co-rotational perturbations with small $H^1$ norm;
\cite{WX} also obtained a similar result and, in addition, 
proved nonlinear asymptotic stability with decay in time of certain $L^p$ norms of $\varepsilon$.
%of the perturbation %(for initial perturbations in $L^p$).
%Some weaker stability results for general perturbations in the case of \eqref{Heat} %for $|n|=1$
%are also proven in the cited works.
%\cite{BCPSHeatGL} is degree zero (close to constant of modulus $1$)

%
%G more involved statement, haven't read yet
%
%WX for $n=1$ get `orbital' (control of $L^2$ norm):
%These same authors also obtain linear dynamic stability for 
%general perturbations of the $1$-vortex in the parabolic case,
%
%and spectral stability (for the $1$-vortex) in the Schr\"odinger case. % follows from OvSig1 \dots
Only spectral and orbital stability results are known for the relativistic flow;
%we will discuss these and some of the other literature on the dynamical problem later on.
in particular, we mention the works of Gustafson-Sigal \cite{GusSigVort} 
on %the spectral stability and instability of 
magnetic vortices, 
and Gustafson \cite{GusVort} on %the nonlinear 
orbital stability. %for the relativistic evolution.
%%comm%{What about non-magnetic vortices? argument like in Gustafson (taking care of modulation as in there?}
We also mention the more recent work of Gravejat-Pacherie-Smets \cite{GPS} 
who obtained orbital stability for the 
$1$-vortex under the Schr\"odinger evolution introducing an elegant new functional-analytic framework
for this problem.

\smallskip
One of our main contributions is a %proof of 
quantitative analysis %(i.e. with pointwise decay in time) 
of the linearized dynamic stability of vortices %$U_{\pm 1}$ 
under co-rotational perturbations in the relativistic case \eqref{KG}
(as well as in the parabolic case),
with a proof of pointwise decay for two classes of localized data; see Theorems \ref{MT1} and \ref{MT2} 
and Proposition \ref{Heatprop}.
%Spectral stability is known since \cite{WX}, who also proved a nonlinear stability result for 
%corotational solutions (as in \eqref{introcorot} below) for the gradient flow \eqref{Heat}.
We also provide a new proof of spectral stability based on a supersymmetric factorization 
(see Proposition \ref{Propspec0})
%and a new proof of decay for Heat flow (thereby also recovering the results of \cite{WX} for the parabolic evolution).
%Finally, we also 
and show a gap property for embedded eigenvalues (see Theorem \ref{theoLT})
with potential implications for the nonlinear problem. %in the continuous spectrum;
%of the linearized operator; 

%more specifically, we prove that eigenvalues 
%may only occur in a certain region close enough to the bottom of the continuous spectrum
%of one of the operators appearing in the linearization.

%this has potential implications for the full nonlinear problem.

\smallskip
To describe briefly our first set of results let us fix $n=1$ for simplicity 
and consider solutions of \eqref{KG} in the form
\begin{align}\label{introcorot}
\phi(t,x,y) = e^{i\theta} (U_1(r) +  \varepsilon(t,r)) 
\end{align}
from which one obtains
\begin{align}\label{full_eq_pertrad}
\partial_t^2\varepsilon - \Delta\varepsilon %- \dfrac{2ni}{r^2}\partial_\theta\varepsilon
  + \dfrac{1}{r^2}\varepsilon - \varepsilon 
  + 2U_1^2\varepsilon + U_1^2\overline{\varepsilon} = - 2 U_1\vert\varepsilon\vert^2 
   - U_1\varepsilon^2 - \vert\varepsilon\vert^2\varepsilon.
\end{align}
%where $\Delta = \Delta_r$ is the Laplacian in polar coordinates.
Restricting the analysis to co-rotational (radial) perturbations, 
and letting 
$\varepsilon=\alpha+i\beta$, $(\alpha,\beta):=\big(\mathrm{Re}\,\varepsilon,\mathrm{Im}\,\varepsilon)$,
leads to
\begin{equation}
\begin{aligned}
\label{alpha_beta_system}
\partial_t^2\left( \begin{matrix}
\alpha \\ \beta 
\end{matrix}\right)&=\left(\begin{matrix}
\Delta+1-\tfrac{1}{r^2}-3U_1^2(r) & 0 \\ 0 & \Delta+1-\tfrac{1}{r^2}-U_1^2(r)
\end{matrix}\right)\left( \begin{matrix}
\alpha \\ \beta 
\end{matrix}\right)
\\ & =: \left( \begin{matrix}
-\mathcal{L}_{1} & 0 \\ 0 & - \mathcal{L}_{2}
\end{matrix}\right)\left( \begin{matrix}
\alpha \\ \beta 
\end{matrix}\right) =: \mathcal{L}(\alpha,\beta).
\end{aligned}
\end{equation}
Theorem \ref{MT1} shows that linear solutions of \eqref{alpha_beta_system} 
with data $(P^1_c \alpha(0), \beta(0))$, where $P^1_c$ 
denotes the projection onto the continuous spectrum of $\mathcal{L}_1$,
decay pointwise in time, provided that some initial $L^1$-based norm of the data is finite. 
We also show that similar decay estimates hold provided that 
certain natural norms of the data measured in (distorted) Fourier space are finite.
The decay rates that we obtain are sharp, consistently with %the type of flow that 
%the perturbation $\varepsilon$ satisfies; 
%in fact, it turns out 
the fact that $\beta = \mathrm{Im} \, \varepsilon$ satisfies a wave type equation, while $\alpha = \mathrm{Re} \, \varepsilon$
satisfies an equation of Klein-Gordon (massive wave) type;
indeed, we obtain a $t^{-1}$ rate of decay for $\alpha$ %the real part of the perturbation,
and a rate of $t^{-1/2}$, with additional decay away from the light-cone $r=t$, for $\beta$. %the imaginary part.
We also show that the same estimates hold for co-rotational perturbations of the vortex of degree $n$ under the same hypothesis.
%These estimates are also consistent with the general decay theory for $2$d perturbed operators
%that have a bounded but not $L^p$ for $p\in[1,\infty)$ zero energy resonance;
%see, for example, Toprak \cite{Toprak}, Erdo\u{g}an-Goldberg-Green \cite{ErGoGr}
%and Yajima \cite{Ya}.

%We initiate the ``analysis'' by studying the linearized dynamics for co-rotational perturbations
%for the one vortex
%
%and constructing the distorted Fourier transform associated the (matrix) operator linearized at the vortex

\smallskip
A key ingredient in the proofs of our decay estimates
is the construction of the distorted Fourier transform, with a robust description of the basis, 
associated to the linearized operator at the vortex.
This construction is of independent interest, and should be expected to be 
a key ingredient towards a better understanding of the nonlinear long-time/asymptotic stability
in this and related models.
%This is a first important step opening the possibility of robust nonlinear analysis and
%nonlinear stability results for the Hamiltonian flows \eqref{KG} and \eqref{Schr}.
In fact, in recent years, techniques based on the use of the distorted Fourier transform (dFT)
have proven to be very efficient in the analysis of stability problems for dispersive PDEs,
where the study of nonlinear oscillations plays a major role.
%In particular, the dFT gives access to normal form techniques and helps to establish weighted estimates
%bypassing the absence of invariant vectorfields.
For works in this direction we refer the reader to
\cite{LLSS,GP20,CP21,ChenNLSV,CP22,KaPu22,KGVSim,ColGer23,LiLu24},
around the stability of solitons and kinks in $1+1$ space dimensions;
%by Lindblad-L\"urmann-Soffer-Schlag \cite{LLSS},
%by the first author with Germain \cite{GP20}, Chen \cite{CP21,CP22}, and Kairzhan \cite{KaPu22},
%by Collot and Germain \cite{ColGer23}, %the more recent work by 
%and to references therein. 
see also \cite{DelortNLSV,DMKink} for an alternative approach using wave operators,
and \cite{LSch,LSch23} for the use of supersymmetric factorizations.
%We also mention important related works of Delort-Masmoudi \cite{DMKink} 
%who use an alternative approach based on wave operators, and of L\"urmann-Schlag \cite{LSch,LSch23}
%who use supersymmetric factorization techniques to conjugate linearized operators to flat ones.
For general references on the dFT for regular potentials see \cite{Y10,DT79};
for singular potentials such as those appearing in our problems, see Gezstesy-Zinchenko \cite{GZ} and \cite{KST,KMS}.
%and the classical work \cite{A75,AlS}

%\comment{

%The other natural notion of asymptotic stability in the energy space for $1$d solitons and kinks 
%is the subject of a series of works by KMM, Cuccagna-Maeda and co-authors, and Martel... 
%%comm%{put these or skip ? or mention later together with \cite{GPS} ? }

%Linear decay for the $3$d Schr\"odinger equation with a Coulomb potential in the radial setting 
%has been recently proved by Black, Toprak, Vergara and Zou \cite{BlToVeZo}.
%}

\smallskip
Prior to the works on $1$d problems cited above, %concerning $1$d fields theories,
the development and use of the dFT %distorted Fourier transform %associated to linearized operators
associated to Schr\"odinger operators %of the form $-\Delta + V(x)$,
played a crucial role in several important stability results;
without being exhaustive, we mention the works %of Schlag \cite{Sch09}, 
of Krieger-Schlag \cite{KriSchNLS},
Krieger-Nakanishi-Schlag \cite{KNS12}, Schlag-Soffer-Staubach \cite{ScSoStI,ScSoStII}
and the most relevant works for the present paper
by Krieger-Schlag-Tataru \cite{KST} and Krieger-Miao-Schlag \cite{KMS}
in the context of stable blow-up for wave maps in $2+1$ dimensions.
The literature on general time-decay estimates for the evolutions
associated with Schr\"odinger operators is vast,
so we will not attempt to give an exhaustive list of references
and %in the rest of this Introduction we 
will only mention works that are closer to the present one.
For more complete discussions and extensive lists of references, we refer the readers
to %Goldberg-Schlag 
\cite{GolSch,%BeGol,
ErdSch,JSS,%RodSch,
YaWO}, the recent work on the Coulomb potential \cite{BlToVeZo}, 
and the surveys \cite{SchlagSurvey,Schlagdecay} %,Soffer}. 

%where, to the best of our knowledge no result is available.

%focusing cubic Klein-Gordon equation 

The general ideas that we employ for the construction of the dFT follow
%that of Krieger-Schlag-Tataru \cite{KST} and Krieger-Miao-Schlag \cite{KMS}
%in the context of stability of blow-up for wave maps,
\cite{KST,KMS} and rely on the spectral analysis of Schr\"odinger operators with strongly singular potentials
by Gezstesy-Zinchenko \cite{GZ}.
%...
One of the difficulties in our case is that %the vortex is not given by an explicit formula. 
%More importantly, 
the operator $\mathcal{L}_1$ appearing in \eqref{alpha_beta_system} 
has zero energy solutions that oscillate at $r=\infty$.
%as $r \rightarrow \infty$,
%is not lower bounded by its lowest energy, 
This requires additional work in the construction of the distorted Fourier basis
and in the control of the associated spectral measure.

Once the distorted Fourier transform is constructed, and enough information is available 
on the generalized eigenfunctions and on the spectral measure, 
we are able to run stationary phase-type arguments
to prove $L^1-L^\infty$ decay estimates, as well as decay estimates where the 
controlling norms are $L^\infty$ and $H^k$ norms of the distorted transform of the data.
These latter sets of estimates are typically more useful in the context of nonlinear problems,
as shown in many of the works in $1$d cited above, 
as well as in the $3$d results \cite{PS,LP22}.

\subsection{Linearized relativistic evolution}
%For concreteness, let us restrict our attention to the relativistic case. 
%%comm%{Only this if don't do heat or Schr\"odinger; then we will modify the presentation}
%%comm%{However, note that variants of the formulas and some of the considerations below
%apply to \eqref{Heat} (and some to \eqref{Schr} as well).}
%%commjose%{In this subsection we review some of the spectral obstructions appearing in the linearized analysis near the vortex in the relativistic case.} 
The Hamiltonian associated with \eqref{KG} is 
\begin{align*}
\mathcal{E}(\phi,\phi_t) = \int_{\R^2} \Big( \frac{1}{2}|\partial_t\phi|^2 
  + \frac{1}{2}|\nabla \phi|^2 + \frac{1}{4}\big( 1 - |\phi|^2 )^2 \Big) \, dxdy.
\end{align*}
%which is conserved along the flow of the equation, 
%as a consequence of the invariance under time shifts. 
Notice that equation \eqref{KG} can be thought of as a $2$d version of the one-dimensional $\phi^4$ model
$\phi_{tt} - \phi_{xx} = (1-\phi^2)\phi$. %%commjose%{and, in some sense, the vortex as a two dimensional kink}.
Consider a perturbation $\varepsilon=\varepsilon(t,r,\theta)$ of the vortex $U_n$, 
so that a solution of \eqref{KG} has the form
%\begin{align}
$\phi(t,r,\theta)=\big(U_n(r)+\varepsilon(t,r,\theta)\big)e^{in\theta}$.
%\end{align}
%Recall that in polar coordinates \[
%\Delta=\partial_r^2+\dfrac{1}{r}\partial_r+\dfrac{1}{r^2}\partial_\theta^2.
%\]
%In polar coordinates $\varepsilon$ satisfies the equation 
Then
\begin{align}\label{full_eq_pert}
\partial_t^2\varepsilon-\Delta\varepsilon-\dfrac{2ni}{r^2}\partial_\theta\varepsilon+\dfrac{n^2}{r^2}\varepsilon - \varepsilon= -2U_n^2\varepsilon-U_n^2\overline{\varepsilon}- 2 U_n\vert\varepsilon\vert^2  - U_n\varepsilon^2 - \vert\varepsilon\vert^2\varepsilon.
\end{align}
%Thus, one could define the operator associated to the linear part of the above equation, 
%namely, 
%\begin{align}\label{full_linearop}
%\Delta\varepsilon+\dfrac{2ni}{r^2}\partial_\theta\varepsilon-\dfrac{n^2}{r^2}\varepsilon+\varepsilon
%-2U_n^2\varepsilon-U_n^2\overline{\varepsilon}.
%\end{align}
%However, 
%The resulting 
Since linear operator above is %real-linear but 
not complex-linear, %therefore, 
it is convenient to write 
\eqref{full_eq_pert} as a system for $(\alpha,\beta)$, $\varepsilon=\alpha+i\beta$. %as in \eqref{alpha_beta_system}.
%above $(\alpha,\beta):=\big(\mathrm{Re}\,\varepsilon,\mathrm{Im}\,\varepsilon)$. 
%Then, from \eqref{full_eq_pert}, 
Specifying to the case $n=1$ and letting $U:= U_1$, we obtain %the following system of equations:
\begin{equation}\label{system_pert_nonradial} 
\left\{
\begin{aligned}
\partial_t^2\alpha-\Delta\alpha + \dfrac{2}{r^2}\partial_\theta\beta +\dfrac{1}{r^2}\alpha-\alpha+3U^2\alpha 
  & =-U(\alpha^2+\beta^2)-2U\alpha^2-(\alpha^2+\beta^2)\alpha,
\\ 
\partial_t^2\beta-\Delta\beta - \dfrac{2}{r^2}\partial_\theta\alpha +\dfrac{1}{r^2}\beta-\beta+U^2\beta 
  & =-2 U \alpha\beta-(\alpha^2+\beta^2)\beta.
\end{aligned}
\right.
\end{equation}
At the linear level, $(\alpha,\beta)$ satisfy the system
\begin{align}\label{full_lin_Re_Im}
\begin{split}
& \partial_t^2 \left( \begin{matrix} \alpha \\ \beta \end{matrix}\right) 
  = \mathfrak{L}\left( \begin{matrix} \alpha \\ \beta \end{matrix}\right),
\quad \, \mathfrak{L} %(\alpha,\beta) 
:=\left(\begin{matrix}
\Delta+1-\tfrac{1}{r^2}-3U^2(r) & 0 \\ 0 & \Delta+1-\tfrac{1}{r^2}-U^2(r)
\end{matrix}\right)
%\left( \begin{matrix}\alpha \\ \beta \end{matrix}\right)
+ \dfrac{2}{r^2}
\left(\begin{matrix}
0 & -\partial_\theta \\ \partial_\theta  & 0 
\end{matrix}\right).
%\left( \begin{matrix} \alpha \\ \beta \end{matrix}\right)
\end{split}
\end{align}
Results from Weinstein-Xin \cite{WX} guarantee that the operator $-\mathfrak{L}$ has no negative eigenvalues,
while the absolutely continuous spectrum is $[0,\infty)$ by more standard perturbation theory.
We will give a new proof of the absence of negative eigenvalues in Subsection \ref{SecSpec}.
Moreover, one has that\footnote{%
%\comment{%comm%{put somewhere? (only if need to compare with \cite{WX}}
One can also write a system in terms of 
$(\varepsilon,\overline{\varepsilon})$ instead of $(\alpha,\beta)$ as above, 
in which case \eqref{full_eq_pert} %the linear equations around the $1$-vortex 
becomes 
\begin{align*}
\partial_t^2\left(\begin{matrix}
\varepsilon \\ \overline{\varepsilon}
\end{matrix}\right) = \mathcal{M}
%\left(\begin{matrix}
%\Delta +1-\tfrac{1}{r^2} -2U^2& -U^2
%\\ -U^2 & \Delta+1-\tfrac{1}{r^2}-2U^2 
%\end{matrix}\right)
\left(\begin{matrix}
\varepsilon \\ \overline{\varepsilon}
\end{matrix}\right)
+
\left(\begin{matrix}
\tfrac{2i}{r^2}\partial_\theta \!\! & \!\! 0
\\ 0 \!\! & \!\! -\tfrac{2i}{r^2}\partial_\theta 
\end{matrix}\right)
\left(\begin{matrix}
\varepsilon \\ \overline{\varepsilon}
\end{matrix}\right),
\qquad 
\mathcal{M} := \left(\begin{matrix}
\Delta +1-\tfrac{1}{r^2} -2U^2& -U^2
\\ -U^2 & \Delta+1-\tfrac{1}{r^2}-2U^2 
\end{matrix}\right).
\end{align*}
%and in the co-rotational class 
%\begin{align*}
%\partial_t^2\left(\begin{matrix}
%\varepsilon \\ \overline{\varepsilon}
%\end{matrix}\right)=
%\left(\begin{matrix}
%\Delta +1-\tfrac{1}{r^2} -2U^2& -U^2
%\\ -U^2 & \Delta+1-\tfrac{1}{r^2}-2U^2 
%\end{matrix}\right)\left(\begin{matrix}
%\varepsilon \\ \overline{\varepsilon}
%\end{matrix}\right)=:\mathcal{M}(\varepsilon,\overline{\varepsilon}).
%\end{align*}
The matrix $\mathcal{M}$ is the one appearing in \cite{WX}.
In this case the ($s$-wave) resonance writes as $(\varepsilon,\overline{\varepsilon})=i(U,-U)$
}
\begin{align}\label{introres}
%(\alpha_\star,\beta_\star)=
\mathfrak{L}(0,U)^T = 0,
\qquad \mathfrak{L} \, \mathrm{Re} \big( U_re^{i\theta}, \tfrac{i}{r}Ue^{i\theta}\big)^T = 
  \mathfrak{L} \, \mathrm{Re} (-iU_re^{i\theta},\tfrac{1}{r}Ue^{i\theta})^T = 0.
\end{align}
%satisfies $\vec{\mathcal{L}}_*(\alpha_\star,\beta_\star)=(0,0)$. 
%Of course we also have $\vec{\mathcal{L}}_*(-iU_re^{i\theta},\tfrac{1}{r}Ue^{i\theta})=(0,0)$. 
The zero-energy (or threshold) resonance $(0,U)$ arises from the gauge invariance of the equations,
%
%phase shifts $f\mapsto e^{i\lambda}f$. If one writes the system in terms of $(\varepsilon,\overline{\varepsilon})$ 
%this eigenfunction would write as $(\varepsilon,\overline{\varepsilon}):=(U,-U)$. 
%
while the other two resonances %$(\alpha_\star,\beta_\star)$ and $-i(\alpha_\star,\beta_\star)$ 
stem from the invariance under space-translations. %the translational mode. %and both of them correspond to resonances. 
$(0,U)$ is a so-called $s$-wave resonance since it belongs to $L^\infty \smallsetminus L^p$, for all $p\in[1,\infty)$,
while the other two modes are in $L^p$ for $p>2$, and are referred to as $p$-waves;
see \cite{Toprak,ErGoGr,Ya} and references therein on how these resonances can impact linear decay.
%Note that the situation 

\iffalse
\comment{Equation \eqref{KG} is invariant under space-time shifts, under constant phase shifts,
and under complex conjugation, so that, in particular \[
{\color{gray}U_n(r-r_0)e^{\pm in(\theta-\theta_0)}}
\]
is also a solution of \eqref{KG} for all $(r_0,\theta_0)\in\R^2$. 
Moreover, equation \eqref{KG} is invariant under Lorentz boost:
for each $\beta\in(-1,1)$, given a solution $\vec{\phi}(t,x)$, 
then \[
\vec{\phi}\big(\gamma(t-\beta x),\gamma (x-\beta t),y\big) \quad 
\hbox{ and } \quad \vec{\phi}\big(\gamma(t-\beta y ),x, \gamma(y-\beta t)\big) , \qquad \gamma:=(1-\vert \beta\vert^2)^{-1/2},
\]
are another two solutions of \eqref{KG}. 
%The parameter $\gamma$ is called Lorentz scaling factor.

\comment{Need? As a consequence of the space translation and phase shifts invariance 
we have the (formal) conservation of the momentum and mass, which are given by (respectively) 
\begin{align*}
...
\end{align*}
}
}
\fi

Restricting the analysis to co-rotational perturbations leads to \eqref{alpha_beta_system};
\iffalse
the following linear system:
\begin{equation}\begin{aligned}
\label{alpha_beta_system}
\partial_t^2\left( \begin{matrix}
\alpha \\ \beta 
\end{matrix}\right) & = \left(\begin{matrix}
\Delta+1-\tfrac{1}{r^2}-3U^2(r) & 0 \\ 0 & \Delta+1-\tfrac{1}{r^2}-U^2(r)
\end{matrix}\right)\left( \begin{matrix}
\alpha \\ \beta 
\end{matrix}\right)
\\ & =: \left( \begin{matrix}
-\mathcal{L}_{1} & 0 \\ 0 & - \mathcal{L}_{2}
\end{matrix}\right)\left( \begin{matrix}
\alpha \\ \beta 
\end{matrix}\right) =: \mathcal{L}(\alpha,\beta).
\end{aligned}
\end{equation}
\fi
the linear operator in \eqref{alpha_beta_system} still has the zero energy resonance $(0,U)^T$.
%that is, $U$ is a resonance for $\mathcal{L}_2$.
%It can be shown \cite{WX} that, 
Other than this resonance, $\mathcal{L}_2$
only has absolutely continuous spectrum $[0,\infty)$. 
The operator $\mathcal{L}_1$ has absolutely continuous spectrum $[2,\infty)$
and, to our knowledge, its whole spectrum has not been completely identified precisely yet.
Our analysis shows that the bottom of the spectrum is a resonance.
Moreover, due to the slow decay of the potential \eqref{introH1}, $\mathcal{L}_1$ has infinitely many 
eigenvalues in $(0,2)$; see for example \cite[Ch. 4]{FrankLTBook}.
%\iffalse
%\bibitem{FrankLTBook}
%Frank R.
%\newblock Schr\"odinger operators: eigenvalues and Lieb-Thirring inequalities. 
%\newblock {\it Cambridge Studies in Advanced Mathematics}, 200. 
%Cambridge University Press, Cambridge, 2023. xiii+507 pp.
%
%\bibitem{ReedSimon4}
%Reed, M and Simon, B.
%\newblock Methods of modern mathematical physics. IV. Analysis of operators. 
%\newblock {\it Academic Press}, New York-London, 1978. xv+396 pp. 
%\fi
%in particular, the location of its (countably many) eigenvalues is not known.
%result in embedded eigenvalues for $\vec{\mathfrak{L}}$.
%Note that internal modes in the gap $(0,2]$, would then result in embedded eigenvalues for $\vec{\mathfrak{L}}$.
Our Theorem \ref{theoLT} shows that there are no eigenvalues below $\lambda_0 \approx 1.332$. %\bottom
This has an important implication for possible applications to the nonlinear problem:
the discrete modes of the operator $\mathcal{L}_1$ 
cannot interact among themselves\footnote{How discrete modes interact with the continuous spectrum, and if a suitable 
nonlinear Fermi-Golden rule holds true, is an open question.}
 at the quadratic level
%\footnote{Cubic interactions are also expected to be negligible thanks to the 
%Hamiltonian structure of the equations, so that the first non-trivial interactions between discrete modes can only happen 
%at order four in the size of the perturbation.} 
since $\lambda_1\pm\lambda_2\pm\lambda_3\neq0$ whenever $\lambda_i \in (1,2]$.
%Subsection \ref{secLT}
For the proof of Theorem \ref{theoLT} we use a Lieb-Thirring inequality for the second moment 
of the discrete spectrum due to Ekholm and Frank \cite{EkhFra} with the optimal constant 
guaranteed by the work of Laptev-Weidl \cite{LapWei}, and some (numerical) computations.

\subsection{Main results and ideas of the proofs}
In this subsection we provide some ideas for the constructions of the
distorted Fourier transforms associated to the linearized operators $\mathcal{L}_1$ and $\mathcal{L}_2$,
%%(technically to their conjugated versions $\mathcal{H}_1$ and $\mathcal{H}_2$ see \eqref{notdefH_1}),
and the proofs of sharp pointwise decay estimates.
%% for both $L^1$ data (Theorem \ref{MT1} below) and 
%%data in a weighted $L^2$ space, with norms measured on the Fourier side and a leading order term
%%expressed as a Fourier-$L^\infty$ norm (Theorem \ref{MT2} below).
%
Our first %main 
theorem shows the pointwise decay of solutions for $L^1$ data.

\begin{thm}[$L^1 - L^\infty$ decay]\label{MT1} 
Let $(\alpha,\beta)$ be a solution of \eqref{alpha_beta_system},
assume that $(\alpha(0),\partial_t\alpha(0)) = (P_c^1 \alpha(0), P_c^1 \partial_t\alpha(0))$,
where $P_c^1$ is the projection onto the continuous
spectrum of $\mathcal{L}_1$, and that
\begin{align}\label{MT1as}
\begin{split}
& \sum_{\ell} (1+2^{2\ell}) {\big\| P_\ell^{\mathcal{L}_1} \alpha(0) \big\|}_{L^1} 
  + (1+2^{\ell}) {\big\| P_\ell^{\mathcal{L}_1} \alpha_t(0) \big\|}_{L^1} \leq 1,
\\
& \sum_{\ell} 2^\ell (1+2^{\ell/2}) {\big\| P_\ell^{\mathcal{L}_2} \beta(0) \big\|}_{L^1} 
  + (1+2^{\ell/2}) {\big\| P_\ell^{\mathcal{L}_2} \beta_t(0) \big\|}_{L^1} \leq 1.
\end{split}
\end{align}
Here $ P_\ell^{\mathcal{L}_j}$ denote the natural Littlewood-Paley projections associated
to the operators $\mathcal{L}_j$, $j=1,2$ (see \eqref{LPF1L} and \eqref{LPF1} for the definition).
Then
\begin{align}
\label{MT1conc1}
& |\alpha (t,r)| \lesssim \frac{1}{t}, 
\qquad  |\beta (t,r)| \lesssim \frac{1}{\sqrt{t}} \frac{1}{\sqrt{|t-r|+1}}.
\end{align}
\end{thm}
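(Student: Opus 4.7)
The strategy is to represent the solution $(\alpha,\beta)$ via the distorted Fourier transforms (dFT) constructed earlier in the paper for $\mathcal{L}_1$ and $\mathcal{L}_2$, and then to analyze the resulting oscillatory integrals through a Littlewood-Paley decomposition followed by stationary phase. Writing $\alpha(t) = P_c^1\bigl[\cos(t\sqrt{\mathcal{L}_1})\alpha(0) + \tfrac{\sin(t\sqrt{\mathcal{L}_1})}{\sqrt{\mathcal{L}_1}}\alpha_t(0)\bigr]$ (and analogously for $\beta$), I would change variables to $\lambda = 2 + \xi^2$ in the $\mathcal{L}_1$ spectral integral, producing the Klein-Gordon phase $t\sqrt{2+\xi^2}$, and to $\lambda = \xi^2$ in the $\mathcal{L}_2$ spectral integral, producing the wave phase $t\xi$. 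The weights in \eqref{MT1as} are calibrated so that dyadic summation closes against the final rates in \eqref{MT1conc1}: $(1+2^{2\ell})$ corresponds to two derivatives (the natural regularity for $L^1\to L^\infty$ Klein-Gordon decay in $2$d), while $2^\ell(1+2^{\ell/2})$ corresponds to 3/2 derivatives (the natural regularity for $L^1\to L^\infty$ wave decay in $2$d).

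For each dyadic piece $P_\ell$ the task reduces to bounding an oscillatory integral
\begin{align*}
I_\ell(t,r) = \int e^{\pm it\Phi_j(\xi)}\,a_\ell(\xi)\,\phi_j(r,\lambda(\xi))\,\widetilde{f}_j(\lambda(\xi))\,m_j(\xi)\,d\xi,
\end{align*}
where $m_j$ combines the spectral density with the Jacobian, and $\widetilde{f}_j$ is the dFT of the data. The required pointwise control of the generalized eigenfunctions, notably the asymptotic $\phi_j(r,\lambda)\sim c_j(\xi)\cos(\xi r + \delta_j(\xi))/\sqrt{r}$ as $r\to\infty$, comes directly from the dFT construction, and the $L^\infty$-to-$L^1$ duality converts the $L^1$ hypotheses on the data into $L^\infty$ control of $\widetilde{f}_j$. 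For $\mathcal{L}_1$, the combined phase $t\sqrt{2+\xi^2}\pm\xi r$ is non-degenerate at the stationary point $\xi/\sqrt{2+\xi^2}=r/t$, and two-dimensional stationary phase/Van der Corput yields a gain of $t^{-1}$ per dyadic piece. For $\mathcal{L}_2$, the combined phase $t\xi \pm \xi r$ has critical point only on the light cone $r = t$, producing a $t^{-1/2}$ factor on the cone; off the cone, integration by parts in $\xi$ against the non-stationary factor $e^{\pm i\xi(t\pm r)}$ produces the extra $(|t-r|+1)^{-1/2}$ gain claimed in \eqref{MT1conc1}.

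The principal obstacle is the low-frequency analysis for $\mathcal{L}_2$. The $s$-wave zero-energy resonance $(0,U)$ (cf.\ \eqref{introres}) means the spectral density $d\rho_2/d\xi$ does not vanish at $\xi=0$ with the order that a regular, faster-decaying potential would provide; this is consistent with the $2$d results of \cite{Ya,Toprak} and explains why only $t^{-1/2}$ decay, rather than $t^{-1}$, is available for $\beta$. Concretely, I would split $\phi_2(r,\xi)$ near $\xi=0$ into a component proportional to the resonance profile $U(r)$ (absorbed by the factor $\xi$ of the spectral measure after changing back to the Klein--Gordon variable) and a genuinely oscillatory remainder whose contribution admits the stationary-phase bound. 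A parallel but easier issue for $\mathcal{L}_1$ concerns the threshold $\lambda=2$, where the paper shows the bottom of the continuous spectrum is a resonance; this is handled in the $2^\ell\lesssim 1$ regime by the regularity of $\phi_1(\cdot,\lambda)$ at threshold and direct size estimates. The proof is then completed by summing the dyadic bounds against the weights in \eqref{MT1as}.
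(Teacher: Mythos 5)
Your overall skeleton matches the paper's: distorted Fourier representation, half-wave splitting, Littlewood--Paley localization, the eigenfunction asymptotics in the two regimes $rk\lesssim 1$ and $rk\gtrsim 1$, stationary phase for the $\mathcal{L}_1$ piece and integration by parts away from the cone for the $\mathcal{L}_2$ piece. But there is a genuine gap at the central step. You propose to control $\widetilde f_j$ only in $L^\infty$ (by duality from the $L^1$ hypothesis) and then run stationary phase/Van der Corput on $I_\ell(t,r)=\int e^{\pm it\Phi_j(\xi)}a_\ell(\xi)\,\phi_j(r,\lambda(\xi))\,\widetilde f_j(\lambda(\xi))\,m_j(\xi)\,d\xi$. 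Stationary phase, and the non-stationary integration by parts that is supposed to produce the $(|t-r|+1)^{-1/2}$ gain, require differentiating the amplitude in $\xi$, and no derivative control on $\widetilde f_j$ is available from an $L^1$ hypothesis on the data; with only $\|\widetilde f_j\|_{L^\infty}$ a dyadic piece yields no time decay at all. The way out --- and what the paper does in Propositions \ref{propKG1dec} and \ref{propKG2dec} --- is not to transform the data: insert the definition of $\widetilde f_j$, exchange the $s$- and $k$-integrations, and prove bounds, uniform in $(r,s)$, for the frequency-localized kernels $K_{\pm,\ell}(t,r,s)=\tfrac{1}{\sqrt{rs}}\int_0^\infty \Phi_j(r,k^2)\Phi_j(s,k^2)e^{\pm it\omega_j(k)}\varphi_\ell(k)\,k\,\rho_j'(k^2)\,dk$, whose amplitudes are explicit and satisfy symbol bounds in $k$; the hypothesis \eqref{MT1as} is then applied only at the end through $\|P_\ell^{\mathcal{L}_j}\,\cdot\,\|_{L^1(rdr)}$, together with the (routine but necessary) commutation of the projections with the flow via almost-orthogonality.

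Two further points that your sketch glosses over are where much of the actual work lies. First, working with the kernel forces you to track the second spatial variable $s$ in both regimes; the mixed regime $sk\lesssim 1\lesssim rk$ is genuinely present and is treated as a separate case in the paper. Second, the spectral density grows, $\rho_j'(k^2)\approx\langle k\rangle^2$, so the dyadic weights in \eqref{MT1as} cannot close unless this growth is cancelled: in the oscillatory regime this is exactly the identity $|a_j(k^2)|^2\rho_j'(k^2)=1/(4\pi)$ (one factor $a_j$ coming from each of the two eigenfunctions), while in the regime $rk\lesssim 1$ it is the bound $\tfrac{r}{\langle r\rangle}\tfrac{s}{\langle s\rangle}\langle k\rangle^2\lesssim 1$; your phrase ``$m_j$ combines the spectral density with the Jacobian'' hides this, and without it the weights $(1+2^{2\ell})$ and $2^\ell(1+2^{\ell/2})$ are insufficient. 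Finally, two smaller inaccuracies: the wave phase $k(t-r+s)$ has no critical point --- the $t^{-1/2}$ on the cone comes from the $1/\sqrt{rk}$ eigenfunction amplitude at $r\approx t$, and the off-cone gain comes from one integration by parts traded against $2^\ell|t-r|\gtrsim 1$ --- and for the Klein--Gordon part the $t^{-1}$ arises from a one-dimensional stationary-phase gain $t^{-1/2}$ combined with the $r^{-1/2}$ amplitude and $r\approx tk_0/\sqrt{k_0^2+2}$ at the stationary point, which is the precise version of your ``two-dimensional stationary phase'' heuristic.
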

We refer to Propositions \ref{propKG1dec}, respectively, Proposition \ref{propKG2dec},
for the proof of the first, respectively, the second, inequality in \eqref{MT1conc1}.
%and \eqref{MT1conc2} respectively.

\begin{rem}[The case of the heat flow: decay and asymptotic stability]
A statement analogous to that of Theorem \ref{MT1} can be obtained for the gradient flow
by a similar (in fact, simpler) proof;  
%A full range of $L^p-L^q$ estimates akin to those of the $2$d heat equation would then follow; 
see Proposition \ref{Heatprop}.
%%comm%{Will decide later if to incorporate the proof (maybe in an Appendix)}
This gives an alternative proof of decay to that of Weinstein-Xin \cite{WX}
who used different techniques, inspired by works of Nash, Aaronson and others,
to obtain Gaussian upper bounds.
Nonlinear asymptotic stability as in \cite[Theorem 1.1, Part 1]{WX} then follows from a bootstrap argument
as in the cited work, so we will not reproduce it.
\end{rem}

%%%%%%%%%%
%%%%%%%%%%
%%%%%%%%%%
%%%%%%%%%%

\smallskip
To state our main weighted-type estimates, we need to introduce the Fourier transforms associated to the
operators $\mathcal{L}_j$, $j=1,2$. These are constructed in Sections \ref{secL2} and \ref{secL1}.
The upshot of the construction is the following: define the operators 
$\mathcal{H}_j := r^{1/2} \mathcal{L}_j r^{-1/2}$ %unitarily conjugated 
and consider the two families of generalized eigenfunctions 
$\Phi_j = \Phi_j(r,\xi)$, $r\geq 0$, 
solving the two eigenvalue problems
\begin{align}\label{Ljevalue}
\mathcal{H}_1 \Phi_1 = (\xi + 2)\Phi_1 , \qquad \mathcal{H}_2 \Phi_2  = \xi \Phi_2, \quad  \xi \geq 0;
\end{align}
then, we have the Fourier representations\footnote{We use the unconventional $\sqrt{\xi}$ in the argument of $\wtF_j (g)$
here for consistency of notation, since we will mostly 
replace $\xi=k^2$ in the actual statements and proofs.}
\begin{align}\label{FTj}
\begin{split}
& P_c^1 f(r) = \frac{1}{\sqrt{r} }\int_{0}^\infty \Phi_1(r,\xi) \wtF_1 (\sqrt{r}f ) (\sqrt{\xi}) \, \rho_1(d\xi), \qquad  
\\
& g(r) = \frac{1}{\sqrt{r} }\int_{0}^\infty \Phi_2(r,\xi) \wtF_2 (\sqrt{r}g) (\sqrt{\xi}) \, \rho_2(d\xi), \qquad   
\\
& \wtF_j (h)(\sqrt{\xi}) := \int_{0}^\infty \Phi_j(r,\xi) h(r) \, dr, \qquad j=1,2.
\end{split}
\end{align}
The spectral measures $\rho_j(d\xi)$ %(according to the above notation where $\xi\geq 0$)
are absolutely continuous with respect to the Lebesgue measures on $[0,\infty)$,
$\rho_j(d\xi) = \rho_j'(\xi)d\xi$, and we will show that $\rho_j'(\xi) \approx \jxi$.
Moreover, we will obtain convergent expansions for the generalized eigenfunctions 
in the regions $r^2\xi \gtrsim 1$ and $r^2\xi \lesssim 1$.
%According to the general theory of \cite{GZ},
%$\wtF$ is an isometry on $L^2$ and diagonalizes the operator 
%We refer to \eqref{FT2} and Proposition \ref{propFT2}, and to \eqref{FT1} and Proposition \ref{propFT1} for the
%formal definitions and a convergence results for the dFT. 
%The general theory of \cite{GZ} for strongly singular potentials,
%that is applied to obtain the above Fourier representation,
%is briefly reviewed in Subsection \ref{SsecST}, and some of the ideas from \cite{KST,KMS} 
%%that are used to estimate the spectral measures 
%are also explained there.
We can now state our main weighted-type estimates:

%and give below the statements of the decay estimates that we obtain.

\begin{thm}[Decay estimates using Fourier norms]\label{MT2} 
Let $(\alpha,\beta)$ be a solution of \eqref{alpha_beta_system} 
with $(\alpha(0),\partial_t\alpha(0)) = (P_c^1 \alpha(0),P_c^1 \partial_t\alpha(0))$.
%where $P_c^1$ is the projections onto the continuous
%spectrum of the operator $\mathcal{L}_1$} 
With $ \wtF_j$ defined by \eqref{Ljevalue}-\eqref{FTj}, let
\begin{align*}
F_1(k) &:= \frac{1}{a_1(k)} \wtF_1 (\sqrt{r} \alpha(0)) (k),
\qquad 
G_1(k) := \frac{1}{a_1(k)} \wtF_1 (\sqrt{r} \alpha_t(0)) (k),
\end{align*}
and
\begin{align*}
F_2(k) &:= \frac{1}{a_2(k)} \wtF_2 (\sqrt{r} \beta(0)) (k),
\qquad
G_2(k) := \frac{1}{a_2(k)} \wtF_2 (\sqrt{r} \beta_t(0)) (k),
\end{align*}
where $a_j$, $j=1,2$, is such that $\rho_j'(k^2) = (1/4\pi) |a_j(k^2)|^{-2}$ (in particular $|a_j(k^2)| \approx \jk^{-1}$).
Then, we have %the following estimate holds for the Klein-Gordon type flow
\begin{align}\label{MT2conc1}
\begin{split}
|\alpha(t,r)| & \lesssim \frac{1}{t} {\| \jk^2 F_1 \|}_{L^\infty_k}
 + \frac{1}{t^{5/4}} {\| \jk^{11/4}\partial_k F_1 \|}_{L^2_k} + \frac{1}{t^{7/4}} {\|  \jk^{17/4}\partial_k^2 F_1 \|}_{L^2_k}
 \\ 
 & + \frac{1}{t} {\| \jk G_1 \|}_{L^\infty_k}
 + \frac{1}{t^{5/4}} {\| \jk^{7/4}\partial_k G_1 \|}_{L^2_k} + \frac{1}{t^{7/4}} {\|  \jk^{13/4}\partial_k^2 G_1 \|}_{L^2_k},
\end{split}
\end{align}
and
\begin{align}\label{MT2conc2}
\begin{split}
|\beta(t,r)| & \lesssim \dfrac{1}{\sqrt{t}} \Vert k \jk^{1/2+}F_2 \Vert_{L^\infty_k}
  + \dfrac{1}{t}\Vert k\jk^{1/2}\partial_kF_2 \Vert_{L^2_k}
 + \dfrac{1}{\sqrt{t}}\Vert \jk^{1/2+}G_2 \Vert_{L^\infty_k}
 + \dfrac{1}{t}\Vert \jk^{1/2}\partial_kG_2 \Vert_{L^2_k} .
\end{split}
\end{align}
\end{thm}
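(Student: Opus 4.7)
The plan is to derive both pointwise bounds in \eqref{MT2conc1}--\eqref{MT2conc2} from the distorted Fourier representations \eqref{FTj} constructed in Sections~\ref{secL2}--\ref{secL1}, combined with the asymptotic expansions of the generalized eigenfunctions $\Phi_1, \Phi_2$. Using the functional calculus for $\mathcal{L}_1$ and $\mathcal{L}_2$, the linear evolution gives
\begin{align*}
\alpha(t,r) &= \frac{1}{\sqrt{r}}\int_0^\infty \Phi_1(r,k^2) \Big[\cos\big(t\sqrt{k^2+2}\big)\,a_1(k^2)F_1(k) + \frac{\sin\big(t\sqrt{k^2+2}\big)}{\sqrt{k^2+2}}\,a_1(k^2)G_1(k)\Big]\rho_1'(k^2)\,2k\,dk, \\
\beta(t,r) &= \frac{1}{\sqrt{r}}\int_0^\infty \Phi_2(r,k^2) \Big[\cos(tk)\,a_2(k^2)F_2(k) + \frac{\sin(tk)}{k}\,a_2(k^2)G_2(k)\Big]\rho_2'(k^2)\,2k\,dk.
\end{align*}
Using $\rho_j'(k^2)|a_j(k^2)|^2 = (4\pi)^{-1}$ together with $|a_j(k^2)| \approx \jk^{-1}$, the combined measure $a_j(k^2)\rho_j'(k^2)\,2k\,dk$ reduces to a smooth weight bounded by $k\jk\,dk$, so that each of $\alpha, \beta$ is a sum of oscillatory integrals of the form
\[
\frac{1}{\sqrt{r}}\int_0^\infty \Phi_j(r,k^2)\,e^{\pm it\Omega_j(k)}\,m_j(k)\,dk,
\]
with Klein-Gordon phase $\Omega_1(k) = \sqrt{k^2+2}$, wave phase $\Omega_2(k) = k$, and smooth weight $|m_j(k)| \lesssim k\jk^s |F_j(k)|$ (or the analogous bound with $G_j$).

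Next I would split each integral with a smooth cutoff in $rk$ and use the two convergent expansions of $\Phi_j(r, k^2)$ developed in Sections~\ref{secL2}--\ref{secL1}: a Jost-type oscillatory expansion $\Phi_j(r,k^2) = k^{-1/2}\big(e^{ikr}\mathfrak{a}_j^+(r,k) + e^{-ikr}\mathfrak{a}_j^-(r,k)\big)$ with smooth bounded amplitudes in the regime $rk\gtrsim 1$, and a Bessel-type expansion in the regime $rk\lesssim 1$, where $\Phi_j$ is non-oscillatory in $r$ with leading behavior $\sim kr^{3/2}$ near $r=0$, reflecting the effective centrifugal term $3/(4r^2)$ produced by the conjugation $\mathcal{H}_j = r^{1/2}\mathcal{L}_j r^{-1/2}$. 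Each of the resulting one-dimensional oscillatory integrals is then estimated by phase-space methods.

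For the bound on $\alpha$, the Klein-Gordon phase $\pm t\sqrt{k^2+2} \pm kr$ has a unique non-degenerate stationary point $k_\ast = \sqrt{2}\,r/\sqrt{t^2-r^2}$ inside the light cone $r<t$, with $|\Omega_1''(k_\ast)| \asymp t/(k_\ast^2+2)^{3/2}$. In the oscillatory regime $rk\gtrsim 1$, standard one-dimensional stationary phase combined with the $r^{-1/2}$ prefactor and the $k^{-1/2}$ in the Jost asymptotic yields the $t^{-1}$ bound controlled by $\|\jk^2 F_1\|_{L^\infty}$ (respectively $\|\jk G_1\|_{L^\infty}$, the extra $\jk^{-1}$ on $G_1$ being saved by the $(k^2+2)^{-1/2}$ in the sine half). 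In the complementary regime $rk\lesssim 1$, where $\Phi_1$ is non-oscillatory in $r$, the decay comes entirely from the temporal phase: integrating by parts in $k$ via $\Omega_1'(k) = k/(k^2+2)^{1/2}$ produces a boundary contribution at $k=0$ of order $t^{-1}$ (as in the model $\int_0^\infty e^{it\sqrt{k^2+2}}k\,dk = O(t^{-1})$), and one or two further integrations by parts coupled with a Gagliardo-Nirenberg estimate on the half-line trade $k$-derivatives of $F_1$ for fractional $t^{-1/4}$ gains, producing the $t^{-5/4}$ and $t^{-7/4}$ remainder terms; the specific fractional weights $\jk^{11/4}$ and $\jk^{17/4}$ arise from balancing the $L^\infty$-versus-$L^2$ Sobolev loss against the algebraic growth of $m_1(k)$ at high frequency.

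For the bound on $\beta$, the wave phase $\pm tk \pm kr$ is linear in $k$ and has no interior stationary point, so the decay is governed by the distance $|t-r|$ to the light cone: away from $r=t$, non-stationary integration by parts in $k$ yields $|t-r|^{-1}$ gains, while the oscillatory factor $\Phi_2/\sqrt{r} \sim (kr)^{-1/2}$ in the regime $rk\gtrsim 1$ carries an intrinsic $t^{-1/2}$ size. Splitting $k$-space at the natural scale $|t-r|^{-1}$ and using $\|k\jk^{1/2+}F_2\|_{L^\infty}$ on the near-light-cone piece together with a single integration by parts plus Cauchy-Schwarz on the far piece produces respectively the $t^{-1/2}|t-r|^{-1/2}$ and $t^{-1}$ terms in \eqref{MT2conc2}; the $\jk^{1/2+}$ loss reflects a one-dimensional Sobolev embedding and the presence of the $s$-wave threshold resonance of $\mathcal{L}_2$. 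The main difficulty throughout is the low-frequency edge $k = 0$: for $\alpha$, the stationary point of the Klein-Gordon phase sits exactly at the edge when $r=0$, which requires careful matching between the Bessel expansion of $\Phi_1$ and the boundary integration-by-parts in order to extract the sharp $t^{-1}$ rate from only two $k$-derivatives of $F_1$; for $\beta$, the $s$-wave resonance of $\mathcal{L}_2$ forces the normalizing factor $1/a_2(k)$ built into the definition of $F_2, G_2$ to precisely cancel the resonance-induced singularity of the spectral measure $\rho_2'$.
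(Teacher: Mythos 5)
Your overall architecture matches the paper's (Propositions \ref{WDecKG1} and \ref{WDecWa1}): the distorted Fourier representation, the identity $|a_j(k^2)|^2\rho_j'(k^2)=(4\pi)^{-1}$, the split at $rk\sim 1$ using the two expansions of $\Phi_j$, stationary phase for the Klein--Gordon phase and non-stationary integration by parts for the wave phase. Your treatment of $\beta$ is essentially the paper's proof of \eqref{MT2conc2} (direct integration when $r\gtrsim t$, one integration by parts plus Cauchy--Schwarz when $r\ll t$); note only that \eqref{MT2conc2} contains no $|t-r|^{-1/2}$ factor, that $\Phi_j(r,k^2)\approx r^{3/2}$ (not $kr^{3/2}$) for $rk\lesssim 1$, $r\lesssim1$, and that $\rho_2'$ is not singular at $k=0$ (it is $\approx\jk^2$), so $1/a_2$ is a normalization, not a cancellation of a singularity.

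The genuine gap is in the Klein--Gordon estimate, in the region $rk\gtrsim 1$. You claim that ``standard stationary phase'' yields the entire contribution of this region as $t^{-1}\|\jk^2F_1\|_{L^\infty_k}$, and you relocate the $t^{-5/4}$ and $t^{-7/4}$ terms to the region $rk\lesssim 1$, generated by a boundary term at $k=0$ and a Gagliardo--Nirenberg interpolation. This cannot work as written: $F_1$ is only a function with the finite norms listed in the statement, so the part of the oscillatory integral away from the stationary point $k_0=\sqrt{2}r/\sqrt{t^2-r^2}$ cannot be controlled by $\|F_1\|_{L^\infty}$-type information alone; it must be handled by integration by parts in $k$, and the derivatives $\partial_kF_1,\partial_k^2F_1$ therefore appear precisely in the region $rk\gtrsim 1$. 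This is where the paper does its real work: it decomposes dyadically in $|k-k_0|\approx 2^q$ with $2^{q_0}\approx t^{-1/2}\langle k_0\rangle^{3/2}$, bounds the piece $q=q_0$ directly (producing $t^{-1}\|\jk^2F_1\|_{L^\infty_k}$), and for $q>q_0$ integrates by parts twice, applies Cauchy--Schwarz to the terms carrying $\partial_kF_1$ and $\partial_k^2F_1$, and sums over $q$; the exponents $5/4,\,7/4$ and the weights $\jk^{11/4},\jk^{17/4}$ arise exactly from this summation over $2^q\gtrsim t^{-1/2}\langle k_0\rangle^{3/2}$ combined with the prefactor $t^{-2}$ from the two integrations by parts. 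By contrast, in the region $rk\lesssim 1$ there is no spatial oscillation and no stationary point: the paper restricts to $k\gtrsim t^{-1/2}$ (the complementary piece is bounded by $t^{-1}\|F_1\|_{L^\infty_k}$ by direct integration), integrates by parts twice in the phase $S_*(k)=\sqrt{k^2+2}$, uses the identity $\partial_k\big[(\partial_kS_*)^{-1}\partial_k\big(k(\partial_kS_*)^{-1}\big)\big]=0$ to kill the worst term, and Cauchy--Schwarz then gives bounds with weights only $\jk^2$ --- no Gagliardo--Nirenberg argument and no fractional weights occur there, and the mechanism you describe would not produce them. Unless you supply a treatment of the non-stationary portion of the $rk\gtrsim1$ integral using only the norms appearing in the statement, your proof of \eqref{MT2conc1} does not close.
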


We refer to Propositions \ref{WDecKG1} and \ref{WDecWa1} for the proofs of \eqref{MT2conc1}
and \eqref{MT2conc2} respectively. Lemma \ref{WDecKG4} gives bounds for the types of Fourier
norms appearing on the right-hand sides of \eqref{MT2conc1} and \eqref{MT2conc2} in terms of
physical space norms. %(essentially at the same level of scaling).
We will discuss the analogous results for the $n$-vortex in Remark \ref{section1_nvortex}.

\smallskip
\begin{rem}[About the decay estimates]
The decay estimates in Theorems \ref{MT1} and \ref{MT2}
are consistent with the general theory for Schr\"odinger operators 
that have an $s$-wave resonance, as in the present case, but faster decaying potentials \cite{Ya,Toprak}.
%Note that they do not follow from general boundedness results for wave operators 
%such as those by Yajima \cite{Ya} or Beceanu-Schlag \cite{BeSch} %%comm%{check} 
%since 1) these are not available for singular potentials and 2) the endpoint spaces $L^p$ with $p=1$ and $\infty$
%are usually not included in such results.
Note also that one should not expect improved local decay because of the threshold resonance.
%this is also one of the main reasons for which the full nonlinear stability problem is %believed to be 
%particularly challenging.
We also mention that estimates like those in Theorem \eqref{MT1} have been obtained
by Black-Toprak-Vergara-Zou \cite{BlToVeZo} for the Coulomb potential in $3$d.

Finally, slightly more precise estimates than \eqref{MT2conc1}-\eqref{MT2conc2},
with an explicit leading order term (rather than an upperbound) 
can be deduced from our arguments, but we do not state these here. 
%%comm%{Could extract and state it later if needed}
Also, there is some flexibility in the weighted estimates in Theorem \ref{MT2} as one could potentially
play with the powers $\jk$ in \eqref{MT2conc1}, 
as well as sacrifice some powers of $k$ to gain factors of $|r-t|^{-1}$ in \eqref{MT2conc2}.
\end{rem}

%\smallskip
%\subsection{Some ideas}

%\subsubsection{Construction of the dFT}

%In the co-rotational case it is enough to study the half-line Schr\"odinger operator \begin{align*}
%\vec{\mathfrak{L}}=\left(\begin{matrix}
%-\partial_r^2-\tfrac{1}{r}\partial_r+\tfrac{1}{r^2}-(1-3U^2(r)) & 0 \\ 0 & -\partial_r^2-\tfrac{1}{r}\partial_r+\tfrac{1}{r^2}-(1-U^2(r))
%\end{matrix}\right).
%\end{align*}	  

%One of the main differences with respect to previous works on this type of
%Schr\"odinger operators with singular potentials is that, on the one hand, 
%we have a system of operators, not just a single scalar operator, 
%but what is most important is that, in addition, the linear matrix operator 
%has a ($s$-wave) resonance at the bottom of its continuous spectrum, property 
%that does not appear in the previous works \cite{,,}, 
%in particular due to the positivity of the potentials considered in said works.

%\medskip

%%commjose%{\subsection{Overview of the proof}
%In this subsection give a brief description of the arguments for the constructions 
%of the distorted Fourier transforms associated to the linearized operators 
%$\mathcal{L}_1$ and $\mathcal{L}_2$,
%%(technically to their conjugated versions $\mathcal{H}_1$ and $\mathcal{H}_2$ see \eqref{notdefH_1}),
%and the proofs of sharp pointwise decay estimates in Theorems \ref{MT1} and \ref{MT2}.}
We now give a brief description of the arguments for the construction of the distorted Fourier
transform and the proofs of Theorems \ref{MT1} and \ref{MT2}.
%One simple but useful simplification one can perform in this case is that 
%In order not to overextend this introductory section 
We focus on the Klein-Gordon type operator $\mathcal{L}_1$ since the wave-like operator $\mathcal{L}_2$ 
can be handled similarly, and it is in fact slightly simpler to deal with.
%First, $\vec{\mathcal{L}}$ is unitarily equivalent to $\vec{\mathcal{H}}$ where 
%\begin{align*}
%\vec{\mathcal{H}} = \left(\begin{matrix}
%-\partial_r^2+\tfrac{3}{4r^2}-(1-3U^2(r)) & 0 \\ 0 & -\partial_r^2+\tfrac{3}{4r^2}-(1-U^2(r))
%\end{matrix}\right)=:\left(\begin{matrix}
%\mathcal{H}_1 & 0 \\ 0 & \mathcal{H}_2
%\end{matrix}\right).
%\end{align*}
%$\vec{\mathcal{L}} = r^{-1/2} \vec{\mathcal{H}}\big(r^{1/2}\cdot\big)$
%
The first main goal is to solve the eigenvalue problem 
$\mathcal{H}_1 f = (\xi + 2)f$, for $\xi \geq 0$.
This is usually done by first finding a fundamental set of real-valued
zero-energy ($\xi=0$, here) solutions, %solving $\mathcal{H}_1 f = 2f$,
and then perturbing the $L^2((0,1))$ solution (also called Weyl-solution at zero) to obtain a series representation 
for a real-valued solution $\Phi_1=\Phi_1(r,\xi)$ %$\mathcal{H}_1 \Phi_1 = (\xi + 2)\Phi_1$ 
in the region $r^2\xi \lesssim 1$.
An asymptotic expansion for a (complex-valued) solution $\Psi_1=\Psi_1(r,\xi)$ 
%of the $\mathcal{H}_1 \Psi_1 = (\xi + 2)\Psi_1$ expansions 
in the complementary region $r^2\xi \gtrsim 1$ follows from a general argument,
essentially independent of the shape of the potential,
perturbing off of the approximate solution $\xi^{-1/4} e^{ir\sqrt{\xi}}$ 
(this is the so-called Weyl solution at infinity, with a conveniently chosen normalization).
%Note that for $\mathcal{H}_2$ one already has the zero-energy solution $U$ to start with,
%while solutions need to be constructed for $\mathcal{H}_1$. However, 

Using the theory of \cite{GZ}, the spectral density $\rho_1'(\xi)$ can be calculated from the formula 
%$\rho_1'(\xi)=(1/4\pi)|a_1(\xi)|^{-2}$, where $a_1$ connects $\Phi_1$ to $\Psi_1$ via
%$\Phi_1 = \mathrm{Re} (a \Psi_1)$.
%Then, up to constant and a multiplicative factor of $W(\overline{\Psi_1},\Psi_1)$, where $W$ is the Wronskian,
%one has that $a_1 \approx W(\Phi_1,\overline{\Psi_1})$; 
\begin{align}\label{introSM}
\rho_1'(\xi)=(1/4\pi)|a_1(\xi)|^{-2}, \qquad a_1 = (i/2) W(\Phi_1,\overline{\Psi_1}), %W(\Psi_1,\overline{\Psi_1})};
\end{align}
where $W(f,g) = fg' - f'g$ is the Wronskian; see \ref{SsecST} and \ref{sec_specm_2} for more details.
The elegant idea of Krieger-Schlag-Tataru \cite{KST} and Kriger-Miao-Schlag \cite{KMS} 
is to rely on the second identity in \eqref{introSM} to obtain upper and lower bounds for $a_1$, %and its derivatives,
hence for the spectral measure, by evaluating $W(\Phi_1,\overline{\Psi_1})$ in the region $r^2\xi \approx 1$
where asymptotics are available for both $\Phi_1$ and $\Psi_1$.

One of the main issues in our case %with $\mathcal{H}_1$ as opposed to $\mathcal{H}_2$ 
comes from the behavior
\begin{align}\label{introH1}
\mathcal{H}_1 - 2 = r^{1/2} \mathcal{L}_1 r^{-1/2} - 2 \approx -\partial_r ^2 -(9/4)r^{-2}, \qquad r\rightarrow \infty,
\end{align}
%
%\begin{align*}
%\dfrac{3}{4r^2}-(1-3U^2)  &\approx 2-\dfrac{9}{4r^2} & \hbox{as} \quad &r\to+\infty, &  &  & \dfrac{3}{4r^2}-(1-U^2)  & \approx -\dfrac{1}{4r^2}    
%& \hbox{ as} \quad &r\to+\infty,
%\\  
%\dfrac{3}{4r^2}-(1-3U^2) &\approx \dfrac{3}{4r^2}-1 & \hbox{as} \quad &r\to0,  &  &  &  \dfrac{3}{4r^2}-(1-U^2)  &\approx \dfrac{3}{4r^2}-1  & \hbox{ as} \quad & r\to0.
%\end{align*} 
%
having used \eqref{asympt_U_infty}, %that $U(r)\approx 1-\tfrac{1}{2r^2}-\tfrac{9}{8r^4}$ as $r \to \infty$. 
%
\iffalse
Around the origin $r=0$ the dominant term in both cases is the inverse square potential $r^{-2}$. 
Of course, the main contribution of these potentials (with respect to the case with no potential) is at zero, 
in which case both of them behave repulsively. 
It is worth pointing out that potentials of type $q/r^2$ 
have a special structure since the corresponding operator $-\Delta+q/r^2$ is scaling invariant. 
Also, potentials decaying as $1/r^2$ at infinity represents a borderline case 
for some important spectral inequalities resulting into dispersive or Strichartz estimates (see \cite{GoVeVi}). 
Moreover, observe that even though, at infinity, 
the potential $\tfrac{3}{3r^2}-(1-U^2)\approx -\tfrac{1}{4r^2}$, 
around zero is approximately $\tfrac{3}{4r^2}$, 
exactly the critical number that the operator requires to be limit point at zero, 
as we mentioned in the last paragraph.
\fi
which implies that a fundamental system of (real-valued) zero-energy solutions asymptotically behaves as 
%\begin{align*}
%-y''(r)-\dfrac{9}{4r^2}y(r)&=0, \qquad  \ \ -y''(r)-\dfrac{1}{4r^2}y(r)=0, \qquad  \ \ -y''(r)+\dfrac{3}{4r^2}y(r)=0,
%\end{align*}
%is, respectively, 
\begin{align}\label{introapps}
y_1(r) & =\sqrt{r}\cos(\sqrt{2}\log r), \quad y_2(r)=\sqrt{r}\sin(\sqrt{2}\log r).
%\\ y_1(r)&= \sqrt{r}, & & y_2(r)=\sqrt{r}\log r
%\\ y_1(r)&= r^{-1/2}, & & y_2(r)=r^{3/2}.
\end{align}
%Zero-energy solution are then linear combinations of the above (see Lemma \ref{lem_p1_t1})
%and this oscillatory behavior characterizes $\Phi_1$ as well.
We then need to pay particular attention to the expansion of $\Phi_1$ in the region $r^2\xi \lesssim 1$,
and track a subtle cancellation in the coefficients of this expansion;
%in order to obtain a precise description of its oscillatory behavior;
%for this we need to track a special cancellation in some of the coefficients of the expansion
%that are coming from particular trigonometric integrals;
see Lemma \ref{Phi1Op1_1} %and \ref{lem_p1_t1} 
for a detailed statement.
%It follows that $\Phi_1(r,\xi)$ is approximately a linear combination of $y_1,y_2$
%for $r\xi \lesssim 1$, and therefore has an oscillatory behavior;
Furthermore, we need an extension of the arguments in \cite{KST,KMS} to lower bound $a_1$
and circumvent the vanishing of $\Phi_1$.

%\medskip
%\subsubsection{Decay estimates}
%In Section \ref{secL2} we construct the Fourier basis in the simpler case of $\mathcal{L}_2$,
%while in Section \ref{secL1} we deal with $\mathcal{L}_1$.
Once the distorted Fourier transforms have been constructed, and enough information
is available on the generalized eigenfunctions and their derivatives (in both space and frequency), 
we can write a convenient representation for the solutions of \eqref{alpha_beta_system}.
If we denote by $\Phi_j(r,k^2)$ the generalized eigenfunctions solving $\mathcal{H}_1 \Phi_1 = (2+k^2) \Phi_1$
and $\mathcal{H}_2 \Phi_2 = k^2 \Phi_2$, for $k\geq 0$,
we have that $\alpha$ is a linear combination of expression of the form
\begin{align}\label{introalpha1}
\begin{split}
& %\alpha(t,r) = 
\frac{1}{\sqrt{r}} 
  \int_0^\infty \Phi_1(r,k^2) e^{\pm it\sqrt{k^2+2}} \wt{F_\pm}(k) \, 2k \rho_1^\prime(k^2) \, dk,
  %\\
  %& \qquad \qquad + \frac{1}{\sqrt{r}} 
  %\int_0^\infty \Phi_1(r,k^2) e^{-it\sqrt{k^2+2}} \wt{F_-}(k) \, 2k \rho_1^\prime(k^2) \, dk,
\\
& \mbox{with} \quad \wt{F_{\pm}}(k) := %\frac{1}{2}\Big( \wtF_1(\sqrt{r}f)(k) \pm \frac{1}{i\sqrt{k^2+2}}\wtF_1(\sqrt{r}g)(k) \Big) = 
  \frac{1}{2}\wt{\mathcal{F}_1}\Big( \sqrt{r}\alpha(0) \pm \frac{1}{i\sqrt{\mathcal{H}_1}} \sqrt{r}
  \alpha_t(0)\Big)(k),
\end{split}
\end{align}
or, equivalently, of the form
\begin{align}\label{introalpha2}
\begin{split}
& \int_0^\infty K_\pm(t,r,s) \alpha(0,s) s \, ds  + \cdots
\end{split}
\end{align}
where the kernel is
\begin{align}\label{introalpha2Ker}
\begin{split}
K_\pm(t,r,s) :=  \frac{1}{\sqrt{rs}} 
  \int_0^\infty \Phi_1(r,k^2) \Phi_1(s,k^2) e^{\pm it\sqrt{k^2+2}} \, k \rho_1^\prime(k^2) \, dk,
\end{split}
\end{align}
and ``$\cdots$'' are similar contributions from $\alpha_t(0)$.
Similar representations hold for $\beta$ as in \eqref{alpha_beta_system},
with the oscillatory factor $e^{\pm i t \sqrt{k^2+2}}$ replaced by $e^{\pm i t k}$.

Based on \eqref{introalpha2}-\eqref{introalpha2Ker}, and its analogue for $\beta$, in Section \ref{secdecay}
we prove Theorem \ref{MT1} by suitably bounding frequency localized versions of the kernel using 
stationary phase arguments.
%; for \eqref{MT1conc1} this reduces to
%\begin{align}\label{introalpha2Ker}
%\begin{split}
%\sup_{r,s\geq 0} | K_\pm(t,r,s) | \lesssim 1/t
%\end{split}
%\end{align}
Using instead \eqref{introalpha1}, and its analogue for $\beta$, in Section \ref{WDecKG_Sec}
we prove Theorem \ref{MT2}.

\smallskip
We conclude this introduction with a remark on the case of the general $n$-vortex.

\begin{rem}[The case $|n| \geq 2$]\label{section1_nvortex}
%Regarding the linearized dynamics for co-rotational perturbations around $U_n$, with $|n|\geq 2$, 
%the vortex of degree $n$, 
Our proof applies, up to simple modifications, to co-rotational perturbations of $U_n$,
the vortex of degree $n$ for any $|n|\geq 2$, yielding almost the same formulas
and identical decay estimates. 

Indeed, let us define the operators
%and \eqref{introH2}) 
\begin{align*}
\mathcal{H}_1^n &:=-\partial_r^2+\left(n^2-\tfrac{1}{4}\right)r^{-2}-(1-3U_n^2(r)),
  \qquad \mathcal{H}_2^n = -\partial_r^2+\left(n^2-\tfrac{1}{4}\right)r^{-2}-(1-U_n^2(r)),
\end{align*}
which are the analogues for $|n|\geq 2$ of the conjugated linearized operators $\mathcal{H}_1$
and $\mathcal{H}_2$ (see \eqref{notdefH_1}) for $n=1$.
%in \eqref{full_eq_pert} and \eqref{introH1}) 
Since $U_n(r)\approx 1-\tfrac{n^2}{2r^2}$ as $r\to\infty$, near infinity we can approximate %the above operators by 
\begin{align*}
\mathcal{H}_1^n & \approx -\partial_r^2+2 - (2n^2 + \tfrac{1}{4})r^{-2}, 
  \qquad \mathcal{H}_2^n \approx -\partial_r^2- \tfrac{1}{4}r^{-2}.
\end{align*}
Fundamental systems of solutions of 
\[
-y''(r)-(2n^2+ \tfrac{1}{4})r^{-2} y(r)=0 \qquad \hbox{ and } \qquad -y''(r)-\tfrac{1}{4}r^{-2} y(r)=0
\]
for $n\geq 2$ are given, respectively, by
$(\sqrt{r}\sin( n\sqrt{2} \ln(r)),\sqrt{r}\cos( n\sqrt{2} \ln(r)))$
and $(\sqrt{r},\sqrt{r}\ln(r))$.
%\begin{align*}
%y_1(r) =\sqrt{r}\sin( n\sqrt{2} \ln(r)), \quad y_2(r)= \sqrt{r}\cos( n\sqrt{2} \ln(r)),
%\\ 
%y_1(r) =\sqrt{r}, \quad y_2(r)= \sqrt{r}\ln(r).
%\end{align*}
Comparing with \eqref{introapps}, we see that the first set of fundamental solutions above is almost the same,
%as that in \eqref{introapps}, 
and notice that the difference between the factors $\sqrt{2}$ and $n\sqrt{2}$ does not play any role 
%affect (and plays no role in) 
in our arguments and estimates in Sections \ref{secL1}-\ref{WDecKG_Sec}; %\ref{secdecay} ;
the second set of fundamental solutions above, associated with the approximation of $\mathcal{H}_2^n$, 
is exactly the same as that of $\mathcal{H}_2$. 
%since the leading order terms in both operators $\mathcal{H}_2$ and $\mathcal{H}_2^n$ are exactly the same. 

Near $r=0$ %changing $U(r)$ for $U_n(r)$, $n\geq 2$, does not affect the approximate equations since $U_n(r)\approx cr^{\vert n\vert}$ for all $n\in \Z$ which is harmless since in this case 
the dominant term %(near zero) 
is $r^{-2}$. The approximate equation in this case is 
$-y''(r) + (n^2-\tfrac{1}{4}) r^{-2} y(r)=0,$
for both operators, and a fundamental system is given by %($r\approx 0$) 
%\[
$%y_1(r)= 
(r^{1/2-n}, %\qquad \hbox{ and } \qquad 
%y_2(r)= 
r^{1/2+n}).$
%\]
Thus, the Weyl solution near zero satisfies $\Phi_{1}^n(r)\approx r^{1/2+n}$. 
The Weyl solution at infinity is essentially the same as in the case of $U_1(r)$, 
and one can show that $a_{1}^n(k^2)\approx \langle k\rangle^{-n}$, 
so that the  density of the spectral measure associated to $\mathcal{H}_{1}^n$ 
behaves like %$\rho_{1}^n \approx 
$\langle k\rangle^{2n}$. Similar estimate hold for $\mathcal{H}_{2}^n$.

Finally, it is not difficult to verify that all our calculations in Section \ref{WDecKG_Sec}
carry over with minor modifications to the case $|n|\geq 2$. 
In particular, all additional powers of $\jk^n$ are automatically compensated (by the behavior of $\Phi_{1}^n$
near $r=0$, for example)
giving integrals and kernels of the same form as those appearing in that section. 
Thus, Theorems \ref{MT1} and \ref{MT2}, also hold true for the vortex of degree $n$. 
We will give some more details in Remarks \ref{section3_nvortex}, \ref{section4_nvortex} and \ref{section5_nvortex}.
\end{rem}

%%comm2%{ (for us) For a quick check of some of the calculations needed please 
%see \eqref{check_nvor_proof4}, for $r\ll 1$, \eqref{check_nvor_proof1}, 
%\eqref{check_nvor_proof2} and \eqref{check_nvor_proof3} for $r\gg1$; and \eqref{check_nvor_proof9}, 
%\eqref{check_nvor_proof5} and \eqref{check_nvor_proof6} for the spectral measure. 
%Maybe we could add them right after the proof of those lemmas in a quick remark (only those lines; just as an idea, we don't need to add them). Regarding the second order dynamics, the important calculations are \eqref{check_nvor_proof8}, \eqref{check_nvor_proof10}, \eqref{check_nvor_proof11}, the comment below \eqref{WDecKG8} and \eqref{check_nvor_proof12} 
%
%\medskip
%\textbf{Conclusion}: $\vert a_{1,n}(k^2)\vert \approx \langle k\rangle^{-n}$ 
%and all estimates in sections $5.1$ and $5.3$ follow \textbf{identical} 
%lines with almost no modifications (initially there are some powers of $n$ in $s,r$ and $k$, 
%but they all perfectly cancel/balance each other}

\subsection{Notation}\label{secnot}
We adopt the following notation, most of which are standard: First, we use 
$\langle x \rangle$ as a short-hand for $\sqrt{1+|x|^2}$. %\smallskip
%\noindent
%$a \vee b$ denotes the largest number between $a$ and $b$, $\max(a,b)$; 
%$a \wedge b$ denotes the smallest number between $a$ and $b$, $\min(a,b)$.
%
%\smallskip
%\noindent
%We let $a^+ = a \vee 0$ and $a^- = a \wedge 0$.
%
%\smallskip
%\noindent
%For $x \in \R$ we denote its integer part (the largest integer smaller than $x$) by $\lfloor x \rfloor$.
%
%\smallskip
%\noindent
%$\Re$ and $\Im$ denote the real and imaginary parts of a complex number.
%
%
%\smallskip
%\noindent
We use $a\lesssim b$ when $a \leq Cb$ for some absolute constant $C>0$ independent on $a$ and $b$;
%and similarly use $a \gtrsim b$ if $a \geq c b$.
%and similarly we define $\gtrsim$.
$a \approx b$ means that $a\lesssim b$ and $b\lesssim a$.
When $a$ and $b$ are expressions depending on our main variables or parameters, the inequalities
are assumed to hold uniformly over these.
%$a \sim b$ means that $a\lesssim b$ and $b\lesssim a$.
We denote by $a+$ (respectively $a-$) a number $b>a$ (respectively $b<a$) 
that can be chosen arbitrarily close to $a$. 
Also, we use standard notation for Lebesgue and Sobolev norms, such as $L^p$, $W^{s,p}$, with $H^s = W^{s,2}$.

\medskip
\noindent
{\it Cutoffs}.
We fix a smooth even cutoff function  $\varphi: \R \to [0,1]$ 
supported in $[-8/5,8/5]$ and equal to $1$ on $[-5/4,5/4]$.
For $k \in %\mathbb{D} := 2^
\Z$, we define $\varphi_k(x) := %\varphi(k^{-1}x) - \varphi(2k^{-1}x)$,
\varphi(2^{-k}x) - \varphi(2^{-k+1}x)$, 
%that equals $1$ for $|\xi|\leq 1$ and vanishes for $|\xi|\geq 2$, and define for $k\in\Z$
%\begin{equation}
%\varphi_k(\xi) = \varphi(2^{-k}\xi) - \varphi(2^{-k+1}\xi), 
%\quad \varphi_{\leq k}(\xi) = \sum_{j\leq k} \varphi_j(\xi), \quad \textrm{ etc.} 
%\end{equation}
so that the family $(\varphi_k)_{k \in\Z}$ forms a partition of unity,
\begin{equation*}
 \sum_{k\in\mathbb{Z}}\varphi_k(\xi)=1, \quad \xi \neq 0.
\end{equation*}
We let
\begin{align}\label{cut0}
\varphi_{I}(x) := \sum_{k \in I \cap \mathbb{Z}} \varphi_k, \quad \text{for any} \quad I \subset \R, \quad
\varphi_{\leq a}(x) := \varphi_{(-\infty,a]}(x), \quad \varphi_{> a}(x) = \varphi_{(a,\infty)}(x),
\end{align}
with similar definitions for $\varphi_{< a}$ and $\varphi_{\geq a}$.
%We will also denote $\varphi_{\sim k}$ a generic smooth cutoff function 
%that is supported around $|\xi| \approx k$, e.g. $\varphi_{[k-2,k+2]}$ or $\varphi'_k$.
We also define the cutoff functions
\begin{equation}\label{cut1}
\varphi_k^{(k_0)}(\xi) = 
\left\{
\begin{array}{ll}
\varphi_k(\xi) \quad & \mbox{if} \quad k> \lfloor k_0 \rfloor,
\\        
\varphi_{\leq \lfloor k_0 \rfloor }(\xi) \quad & \mbox{if} \quad k= \lfloor k_0 \rfloor,
\end{array}\right. 
\end{equation}
Here $\lfloor x \rfloor$ denotes the largest integer smaller than $x$.
%and define similarly $P_{I}g:=\mathcal{F}^{-1}\left(\varphi_{I}(\xi)\what{g}(\xi)\right)$,
%$P_{\leq k}g:=\mathcal{F}^{-1}\left(\varphi_{\leq k}(\xi) \what{g}(\xi)\right)$, $k\in\Z$ etc.
Note that the index $k_0$ in \eqref{cut1} does not need to be an integer.

\medskip
\noindent
{\it Some conventions}.
For ease of reference, we recall here our notation for the linearized operators,
and the conjugated operators $\mathcal{H}_j := r^{1/2} \mathcal{L}_j r^{-1/2}$, $j=1,2$:
%For simplicity we summarize the definition of all the previous operators here 
\begin{align}\label{notdefH_1}
\begin{split}
\mathcal{L}_1 & := -\partial_r^2-\dfrac{1}{r}\partial_r+\dfrac{1}{r^2}-(1-3U^2(r)),  
  \qquad \mathcal{H}_1 := -\partial_r^2+\dfrac{3}{4r^2}-(1-3U^2(r)),
\\ 
\mathcal{L}_2 & := -\partial_r^2-\dfrac{1}{r}\partial_r+\dfrac{1}{r^2}-(1-U^2(r)),  
  \qquad \mathcal{H}_2 := -\partial_r^2+\dfrac{3}{4r^2}-(1-U^2(r)).
\end{split}
\end{align}

\noindent
Since only the operator $\mathcal{L}_1$ needs to be projected on the continuous spectrum,
we may drop the apex in the notation $P_c^1$ and denote this projection just by $P_c$.
We are also using the notation $\wtF_1$ as in \eqref{FTj} to denote 
the Fourier transform restricted to the absolutely 
continuous spectral subspace.

\noindent
We will often use a dot symbol `$\cdot$' to distinguish the bounds on various quantities 
involved in product estimates.

\subsection*{Acknowledgements} J.M.P. and F. P. are supported in part by a start-up grant from the
University of Toronto and NSERC grants RGPIN-2018-06487. %and 261955 ??

%%%%%%%%%%%%%%%%%%%%%%%%%%%%%%%%%%%%%%%%%%%%%%%%%%%%%%%
%%%%%%%%%%%%%%%%%%%%%%%%%%%%%%%%%%%%%%%%%%%%%%%%%%%%%%%
%%%%%%%%%%%%%%%%%%%%%%%%%%%%%%%%%%%%%%%%%%%%%%%%%%%%%%%
%%%%%%%%%%%%%%%%%%%%%%%%%%%%%%%%%%%%%%%%%%%%%%%%%%%%%%%
%%%%%%%%%%%%%%%%%%%%%%%%%%%%%%%%%%%%%%%%%%%%%%%%%%%%%%%
%%%%%%%%%%%%%%%%%%%%%%%%%%%%%%%%%%%%%%%%%%%%%%%%%%%%%%%
%%%%%%%%%%%%%%%%%%%%%%%%%%%%%%%%%%%%%%%%%%%%%%%%%%%%%%%
%%%%%%%%%%%%%%%%%%%%%%%%%%%%%%%%%%%%%%%%%%%%%%%%%%%%%%%

%%%%%%%%%%%%%%%%%%%%%%%%%%%%%%%%%%%%%%%%%%%%%%%%%%%%%%%
%%%%%%%%%%%%%%%%%%%%%%%%%%%%%%%%%%%%%%%%%%%%%%%%%%%%%%%
%%%%%%%%%%%%%%%%%%%%%%%%%%%%%%%%%%%%%%%%%%%%%%%%%%%%%%%
%%%%%%%%%%%%%%%%%%%%%%%%%%%%%%%%%%%%%%%%%%%%%%%%%%%%%%%
%%%%%%%%%%%%%%%%%%%%%%%%%%%%%%%%%%%%%%%%%%%%%%%%%%%%%%%
%%%%%%%%%%%%%%%%%%%%%%%%%%%%%%%%%%%%%%%%%%%%%%%%%%%%%%%
%%%%%%%%%%%%%%%%%%%%%%%%%%%%%%%%%%%%%%%%%%%%%%%%%%%%%%%
%%%%%%%%%%%%%%%%%%%%%%%%%%%%%%%%%%%%%%%%%%%%%%%%%%%%%%%
%%%%%%%%%%%%%%%%%%%%%%%%%%%%%%%%%%%%%%%%%%%%%%%%%%%%%%%
%%%%%%%%%%%%%%%%%%%%%%%%%%%%%%%%%%%%%%%%%%%%%%%%%%%%%%%
%%%%%%%%%%%%%%%%%%%%%%%%%%%%%%%%%%%%%%%%%%%%%%%%%%%%%%%
%%%%%%%%%%%%%%%%%%%%%%%%%%%%%%%%%%%%%%%%%%%%%%%%%%%%%%%
%%%%%%%%%%%%%%%%%%%%%%%%%%%%%%%%%%%%%%%%%%%%%%%%%%%%%%%

\medskip
\section{Linear spectral analysis for $\mathcal{L}_2$}\label{secL2}
Recall the definition of $\mathcal{L}_2$
%\begin{align}\label{defL_2'}
%\mathcal{L}_2 := -\partial_r^2 - \dfrac{1}{r} \partial_r + \dfrac{1}{r^2}-(1-U^2(r)),
%\end{align}
and introduce the operator
\begin{align}\label{defH_2}
\mathcal{H}_2 := -\partial_r^2+\tfrac{3}{4r^2}-(1-U^2(r)), \qquad  \mathcal{H}_2 = r^{1/2} \mathcal{L}_2 r^{-1/2}.
\end{align}
We record the expansions
\begin{align}
\label{Uexp0}
U(r)& = a r\left(1-\dfrac{r^2}{8}+\dfrac{(8a^2+1)r^4}{192}\right)+O(r^7) \quad \hbox{for} \quad r\approx0, 
\\ 
\label{Uexpinfty}
U(r)&= 1-\dfrac{1}{2r^2}-\dfrac{9}{8r^4}+O(r^{-6}) \quad \qquad \quad \qquad  \hbox{as } \quad r\to+\infty,
\end{align}
which are slightly more precise than \eqref{asympt_U_infty} and \eqref{asympt_U_0};
in particular, $1-U^2 = r^{-2} + O(r^{-4})$ as $r \rightarrow \infty$.
Our main goal in this section is to construct the distorted Fourier Transform associated with $\mathcal{H}_2$. 
%As mentioned above, 
%The main difficulties comes from the strongly singular 
%character at zero of the potential, and its relatively slow decay at infinity.  

\subsection{General linear spectral theory}\label{SsecST}
We follow the general procedure in \cite{GZ,KST,KMS}.
Denote by 
\begin{align*}
\Phi_2(r,k^2) \quad \hbox{ and } \quad \Theta_2(r,k^2)
\end{align*}
a real-valued fundamental system of generalized eigenfunctions for the problem
%(of the type in \cite{KMS,GZ})
\begin{align*}
\mathcal{H}_2 f = k^2 f, 
\end{align*}
with $\Phi_2 \in L_2((0,1))$, and let the (unique, up to normalization) Weyl solution at infinity be $\Psi_2(r,k^2)$;
this is such that $\mathcal{H}_2\Psi_2 = z \Psi_2$ with $\Psi_2(r,z) \in L^2$ close to infinity for $\Im z >0$.
In the sequel we always assume that $\Phi_2$ and $\Theta_2$ are chosen so that 
$W(\Theta_2,\Phi_2) = 1$, where $W(f,g) = fg' - f'g$.

The general theory of \cite{GZ} allows one to write the Fourier representation as 
\begin{align}\label{linfou}
f(r) = \int_{0}^\infty \Phi_2(r,k^2) \widetilde{f}(k) \, \rho_2(dk^2) \qquad   \widetilde{f}(k) = \int_{0}^\infty \Phi_2(r,k^2) f(r) \, dr,
\end{align}
where the spectral measure is %absolutely continuous and, in general, 
given by 
\begin{align}\label{Fou01}
\rho_2\big((\lambda_1,\lambda_2])=\dfrac{1}{\pi}\lim_{\delta\to0^+}
  \lim_{\epsilon\to0^+}\int_{\lambda_1+\delta}^{\lambda_2+\delta}\mathrm{Im}\, m_2\big(\lambda+i\epsilon\big)d\lambda
\end{align}
where $m_2(\cdot)$, the so called Weyl-Titchmarsh $m$ function, is defined as 
\[
m_2(k^2)=\dfrac{W(\Theta_2(r,k^2),\Psi_2(r,k^2))}{W(\Psi_2(r,k^2),\Phi_2(r,k^2))}
  \qquad \hbox{so that} \qquad  C\Psi_2(r,k^2)=\Theta_2(r,k^2)+m_2(k^2)\Phi_2(r,k^2),
\]
for some $C=C(k^2)\neq0$. 
Note that the definition of $m_2$ does not depend on the normalization of the Weyl solution $\Psi_2$. 
Also note that the constant $C(k^2)$ is given by %can be found by direct calculations,  
\[
C(k^2)=\dfrac{W(\Theta_2,\Phi_2)}{W(\Psi_2,\Phi_2)}=\dfrac{1}{W(\Psi_2,\Phi_2)},
\] 
having used $W(\Theta_2,\Phi_2)=1$. 
%However, t
To calculate the spectral measure %it is more convenient to 
one can proceed as follows. 
First we connect the two Weyl solutions at zero and infinity respectively, 
namely, $\Phi_2$ and $\Psi_2$, by writing %as follows
\begin{align}
\Phi_2(r,k^2) = 2 \Re \big( a_2(k^2) \Psi_2(r,k^2) \big),
\end{align}
for a certain function $a_2(k^2)  = W(\Phi_2,\overline{\Psi_2})/W(\Psi_2,\overline{\Psi_2})$.
%(see Proposition \ref{propSM2} below). 
Then, using the definition of $m_2(k^2)$ above, we calculate the Wronskian 
\begin{align*}
\vert C(k^2)\vert^2W\big(\Psi_2(r,k^2),\overline{\Psi_2(r,k^2)}\big)
& = W\Big(\Theta_2+m_2\Phi_2,\Theta_2+\overline{m}_2\Phi_2\Big)
\\ & = \overline{m}_2-m_2=-2i\mathrm{Im}\,m(k^2),
\end{align*}
having used the fact that $\Phi_2$ and $\Theta_2$ are both real-valued. %and that $W(\Theta_2,\Phi_2)=1$. 
Therefore, going back to the definition of $\rho_2$ we see that the density of the spectral measure is given by
\begin{align*}%\label{Fou02}
\frac{d\rho_2}{dk}(k^2) = \dfrac{1}{\pi}\mathrm{Im}\,m(k^2)
  = \dfrac{W(\Psi_2,\overline{\Psi_2})}{-2i\pi \vert W(\Psi_2,\Phi_2)\vert^2}
  = \dfrac{1}{-2\pi i |a_2(k^2)|^2 W(\overline{\Psi_2},\Psi_2)}.
\end{align*}
By choosing a standard normalization so that $W(\overline{\Psi_2},\Psi_2) = 2i$ one gets
\begin{align}\label{Fou02}
\frac{d\rho_2}{dk}(k^2) = \dfrac{1}{4\pi |a_2(k^2)|^2}.
\end{align}
One of the advantages of this identity compared with the definition of $\rho_2$ in \eqref{Fou01}, 
is that calculating $a_2(k^2)$ is easier, at least at first order. 
In Proposition \ref{propSM2} we provide upper and lower bounds for $a_2(k^2)$ as well as for its derivatives.

\subsection{Region $rk \lesssim 1$}

\subsubsection{Fundamental system at zero}
We start by exhibiting a fundamental system for $\mathcal{H}_2 f = 0$. 
One advantage of this case compared with that of $\mathcal{H}_1$, which we will treat in the next section,
is that we already know an ``explicit'' solution to the equation $\mathcal{H}_2f=0$, 
which turns out to be the $L^2$ solution at zero. %(the vortex). 
In fact, by definition $r^{1/2} \mathcal{L}_2 r^{-1/2} =\mathcal{H}_2$, and since $\mathcal{L}_2 U=0$ we get that
\begin{align}\label{Phi_0}
\mathcal{H}_2\Phi_2^{(0)} = 0, \qquad \Phi_2^{(0)}(r) := r^{1/2} U(r).
\end{align}
Using this solution we find another one, $\Theta_2^{(0)}(r)$, by imposing
%that the Wronskian 
$W( \Theta_2^{(0)},\Phi_2^{(0)}) = 1$, %as stated at the beginning of this section, 
that is, by solving
\begin{align*}
(r^{1/2} U) \partial_r \Theta_2^{(0)} - \big(r^{1/2} U \big)' \Theta_2^{(0)} = -1.
\end{align*}
%By the integrating factor method 
Imposing %We find that this is solved, for 
$\Theta_2^{(0)} (1) = 0$ we obtain
\begin{align*}
\Theta_2^{(0)} (r) := -\Phi_2^{(0)}(r) \int_1^r \frac{1}{(\Phi_2^{(0)}(s))^2} \, ds =- r^{1/2} U(r) \int_1^r \frac{1}{s \, U^2(s)} \, ds.
\end{align*}
Note that we have 
\[
(\Phi_2^{(0)}(1),(\Phi_2^{(0)})'(1)) = (U(1), U'(1) + \frac{1}{2}U(1) ) 
\quad \hbox{ and } \quad (\Theta_2^{(0)}(1),(\Theta_2^{(0)})'(1)) = (0, -1/U(1)).
\] 
For later computations, it will be helpful to keep in mind that 
\begin{align}\label{Phi0exp}
\Phi_2^{(0)} (r) = \left\{ 
\begin{array}{ll}
a r^{3/2}+O(r^{7/2}), &  r \lesssim 1, 
 \\
r^{1/2}+O(r^{-3/2}), &  r \gtrsim 1,
\end{array} 
\right.
\end{align}
having used \eqref{Uexp0} and \eqref{Uexpinfty}, as well as
\begin{align}\label{Theta0exp}
\Theta_2^{(0)} (r) = \left\{ 
\begin{array}{ll}
\dfrac{1}{2a} r^{-1/2}+O\big(r^{3/2}\log r\big), &  r \lesssim 1, 
 \\
- \sqrt{r} \Big(\log r+\eta_0\Big)+O(r^{-3/2}\log r),  &  r \gtrsim 1,
\end{array} 
\right.
\end{align}
where the $\log r$ term in the first asymptotic in \eqref{Theta0exp} for $r\ll1$ can %easily 
be seen using \eqref{Uexp0} to expand
\[
\frac{1}{sU^2(s)} = 
  \dfrac{1}{s(as-\frac{a}{8}s^3 + O(s^5))^2} =\dfrac{1}{a^2s^3} + \dfrac{1}{4a^2s} + O(s), \qquad s \approx 0,
\]
whereas $\eta_0$ in the second asymptotic is 
\[
\eta_0:=\int_1^\infty\dfrac{(1-U^2(s))}{sU^2(s)}ds.
\]
\iffalse
Note how these asymptotics are consistent with the operator being of the form $-\partial_r^2 - \tfrac14r^{-2}$ 
for large $r$, for which a fundamental system is \[
y_1(r)=\sqrt{r} \qquad \hbox{ and }\qquad y_2(r)=\sqrt{r}\log r,
\]
and of the standard Bessel form $-\partial_r^2 + \tfrac34r^{-2}$  for small $r$, for which a fundamental system is \[
y_1(r)=r^{3/2} \qquad \hbox{ and } \qquad y_2(r)=r^{-1/2}.
\]
with multiplicative constants adjusted so that $W(\Theta_2^{(0)},\Phi_2^{(0)})=1$.
\fi

\smallskip
\subsubsection{Asymptotic expansion of $\Phi_2(r,k^2)$}
Using the fundamental system of solutions for $k^2=0$ we now construct a power series representation of $\Phi_2(r,k^2)$ 
and $\Theta_2(r,k^2)$.  %by exploiting $\Phi_2^{(0)}(r)$ and $\Theta_2^{(0)}(r)$. 
%Recalling that $\Phi_2(r,k^2)$ and $\Theta(r,k^2)$ are required to be real-valued and that
% $\Phi_2(r,k^2)$ is the unique (up to a multiplicative factor) $L^2$ solution around $r=0$ (limit point %at zero 
%case).

\begin{lem}\label{lemFou1}
For all $r>0$, $k\geq 0$ we have the expansion
\begin{align}\label{Fou11}
\Phi_2(r,k^2) = \Phi_2^{(0)}(r) + \sqrt{r} \sum_{j\geq 1} (kr)^{2j} \Phi_{2,j}(r), \qquad \Phi_2^{(0)}(r) = \sqrt{r}U(r),
\end{align}
which converges absolutely. The expansion converges uniformly if $kr \leq 1$, and
$\Phi_{2,j}$ are smooth and satisfy, for some absolute $C>0$, for all $j\geq 1$ %%comm%{re-check}
\begin{align}\label{Fou12}
\begin{split}
& | \Phi_{2,j}(u) | \leq \frac{C^j}{j!(j+1)!} u , \quad u \lesssim 1,
\\
& | \Phi_{2,j}(u) | \leq \frac{C^j}{((j-1)!)^2}  %\log (1+|u|)
	,  \hspace{0.05cm} \quad u \gtrsim 1.
\end{split}
\end{align}
Moreover %%comm%{needed for spectral measure calculation}
\begin{align}\label{Fou13}
\Phi_{2,1}(u) = \left\{ 
\begin{array}{ll}
\dfrac{a}{8} r + O(r^3), &  r \lesssim  1,
\\ 
\\
%\log \sqrt{r} - \frac{1}{4} + O(r^{-1}) &  r \gg 1.
\dfrac{1}{2} + O(r^{-2}\log^2r), &  r \gtrsim 1.
\end{array} 
\right.
\end{align}
In \eqref{Fou12} and \eqref{Fou13} we also have consistent symbol-type bounds for the derivatives, that is,
\begin{align}\label{Fou14}
\begin{split}
& | (u\partial_u)^m\Phi_{2,j}(u) | \leq \frac{C^j}{j!(j+1)!}u , \qquad u \lesssim 1,
\\
& | (u\partial_u)^m\Phi_{2,j}(u) | \leq \frac{C^j}{((j-1)!)^2}  %\log (1+|u|)
	,  \hspace{0.9cm}  u \gtrsim 1.
\end{split}
\end{align}
\end{lem}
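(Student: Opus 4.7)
The plan is to follow the perturbative strategy of \cite{KST,KMS}. Substitute the proposed series into the eigenvalue equation $(\mathcal{H}_2-k^2)\Phi_2 = 0$, derive a recursion for the coefficients, solve each step by variation of parameters against the zero-energy fundamental system $\{\Phi_2^{(0)},\Theta_2^{(0)}\}$, and then close the bounds on $\Phi_{2,j}$ by induction on $j$.

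Setting $h_j(r) := \sqrt{r}\,r^{2j}\Phi_{2,j}(r)$ and $h_0 := \Phi_2^{(0)}$, matching powers of $k^2$ and using $\mathcal{H}_2\Phi_2^{(0)}=0$ yields the recursion
\[
\mathcal{H}_2 h_j = h_{j-1}, \qquad j \ge 1.
\]
Since $W(\Theta_2^{(0)},\Phi_2^{(0)})=1$, variation of parameters based at the origin gives the particular solution
\[
h_j(r) = \Theta_2^{(0)}(r)\int_0^r \Phi_2^{(0)}(s)\,h_{j-1}(s)\,ds \;-\; \Phi_2^{(0)}(r)\int_0^r \Theta_2^{(0)}(s)\,h_{j-1}(s)\,ds.
\]
The base point $0$ is the unique choice that produces $h_j$ with leading behavior $r^{2j+3/2}$ at the origin, consistent with the ansatz and with $\Phi_{2,j}(r)\sim r$ there; one checks inductively using \eqref{Phi0exp}-\eqref{Theta0exp} that both integrands are integrable at $s=0$ and that $h_j$ is indeed of the claimed form with $\Phi_{2,j}$ smooth.

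With the integral formula in hand, the bounds \eqref{Fou12} are then established by induction on $j$, treating $r\lesssim 1$ and $r\gtrsim 1$ separately. In the inner region, the factors $s^{3/2}$ and $s^{-1/2}$ in the asymptotics of $\Phi_2^{(0)}$ and $\Theta_2^{(0)}$, combined with the inductive hypothesis $|\Phi_{2,j-1}(s)|\le C^{j-1}/((j-1)!\,j!)\cdot s$, produce explicit gains of $1/j$ and $1/(j+1)$ from the two integrals of the relevant powers of $s$, closing the double-factorial bound $C^j/(j!(j+1)!)\,u$. In the outer region, the fundamental solutions are only of size $\sqrt{s}$ and $\sqrt{s}\log s$, and the residual potential $1-U^2\sim s^{-2}$ provides no extra smallness, so the recursive gain at each step is only $1/(j-1)$, giving the coarser $C^j/((j-1)!)^2$ bound; the logarithmic losses are absorbed into a (slightly enlarged) $C$. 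Together these yield absolute convergence of the series on $r>0$ and uniform convergence on $\{kr\le 1\}$.

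The explicit leading-order formulas \eqref{Fou13} follow from direct evaluation of the $j=1$ integrals using \eqref{Phi0exp}-\eqref{Theta0exp}; for $r\gtrsim 1$ a delicate cancellation occurs between the two terms $\Theta_2^{(0)}(r)\int_0^r (\Phi_2^{(0)})^2$ and $\Phi_2^{(0)}(r)\int_0^r \Theta_2^{(0)}\Phi_2^{(0)}$, both of which carry leading $r^{5/2}\log r$ contributions that must cancel in order for $\Phi_{2,1}$ to possess a constant limit with only $O(r^{-2}\log^2 r)$ correction. The symbol-type derivative bounds \eqref{Fou14} come from differentiating the integral representation and rerunning the induction, observing that $u\partial_u$ preserves the pointwise orders of $\Phi_2^{(0)}$ and $\Theta_2^{(0)}$ in both regimes so that boundary terms generated by differentiation inside the integrals fit into the same scheme. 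The main obstacle is the sharp tracking of the two-factor factorial $1/(j!(j+1)!)$ at small $r$, which requires careful arithmetic on the integrals against the precise powers of $s$, together with absorbing the logarithmic growth at infinity into $C$ without destroying the geometric series structure responsible for uniform convergence on $\{kr\le 1\}$.
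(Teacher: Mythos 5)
Your proposal follows essentially the same route as the paper: the same power-series ansatz and recursion $\mathcal{H}_2 h_j = h_{j-1}$, the same variation-of-parameters solution based at $r=0$ built from the zero-energy pair $(\Phi_2^{(0)},\Theta_2^{(0)})$ with $W(\Theta_2^{(0)},\Phi_2^{(0)})=1$, the same two-region induction yielding the $1/(j!(j+1)!)$ gain for $r\lesssim 1$ and the $1/((j-1)!)^2$ gain for $r\gtrsim 1$, the same explicit $j=1$ evaluation with the cancellation of the leading $r^{5/2}\log r$ contributions, and the same differentiation of the recursion for the symbol-type bounds. (Your Green's kernel carries the opposite overall sign to the paper's $K_2(r,s)=U(r)\,sU(s)\int_s^r \frac{dt}{tU^2(t)}$, but this only flips the sign of each $\Phi_{2,j}$ and does not affect any of the stated estimates.)
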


\begin{proof}
In order to solve $\mathcal{H}_2\Phi_2(r,k^2) = k^2 \Phi_2(r,k^2)$ we begin by making the ansatz
\begin{align}
\Phi_2(r,k^2) = \sqrt{r} \sum_{j\geq 0} k^{2j} f_j(r), \qquad f_0(r) := U(r).
\end{align}
Once we show the convergence of such a power series, our ansatz shall hold and be proven. 
Substituting %said power series 
into the equation leads to the recurrence equation
\begin{align}\label{Fou1rec}
\mathcal{H}_2 \big(\sqrt{r} f_j(r)\big) = \sqrt{r} f_{j-1}(r), \quad j\geq 0, \quad f_{-1}(r) = 0.
\end{align}
Comparing with \eqref{Fou11} gives us $f_j(r) = r^{2j} \Phi_{2,j}(r)$. 
From \eqref{Fou1rec} and the fundamental solution (Green's function) for $\mathcal{H}_2$ given by 
\[
\Theta_2^{(0)}(s)\Phi_2^{(0)}(r)-\Theta_2^{(0)}(r)\Phi_2^{(0)}(s)  \quad \hbox{ for } \quad r>s,
\]
%Green's formula with the fundamental system found above, 
we get the recursion formula
\begin{align*}
\sqrt{r} f_j(r) = \int_0^r \Big( \Theta_2^{(0)}(s)\Phi_2^{(0)}(r)-\Theta_2^{(0)}(r)\Phi_2^{(0)}(s)\Big) \sqrt{s} f_{j-1}(s)\, ds.
\end{align*}
Note that this can be conveniently rewritten as 
\begin{align}\label{Fou1pr1}
\begin{split}
& f_j(r) 
%& = \frac{1}{\sqrt{r}} \int_0^r \Big( \frac{U(r)}{\sqrt{r}} \frac{U(s)}{\sqrt{s}} 
%  \int_1^r \frac{t}{U^2(t)} \, dt
%  - \frac{U(s)}{\sqrt{s}} \int_1^s \frac{t}{U^2(t)} \, dt  \frac{U(r)}{\sqrt{r}} 
% \Big) \sqrt{s} f_{j-1}(s)\, ds
% \\
= \int_0^r K_2(r,s) f_{j-1}(s)\, ds,
\qquad K_2(r,s) := U(r) \, s U(s) \Big( \int_s^r \frac{1}{t U^2(t)} \, dt \Big).
\end{split}
\end{align}
We immediately see from the last two identities that the kernel is non-negative and therefore $f_j(r) \geq 0$ for all $j$.
Moreover, using \eqref{Uexp0} and \eqref{Uexpinfty}, we see that %, for all $s<r$,
\begin{align}\label{Fou1Kasy}
K_2(r,s)
= \left\{ 
\begin{array}{ll}
\dfrac{1}{2} r s^2 \cdot \big( s^{-2} - r^{-2} \big)+\dfrac{1}{4}rs^2\log\tfrac{r}{s} +r^3O(1-\tfrac{s^4}{r^4}) & s < r \ll 1,
\\
s \log\big(\tfrac{r}{s}\big)-\dfrac{1}{2r^2s}(r^2+s^2)\log\big(\tfrac{r}{s}\big)+O\big(\tfrac{1}{r}+\tfrac{1}{s}\big)    &  r > s \gg 1.
\end{array} 
\right.
\end{align}
%\[
%s \log\big(\tfrac{r}{s}\big)+\dfrac{1}{2r^2s}\Big(r^2-s^2-(r^2+s^2-\tfrac12)\log\big(\tfrac{r}{s}\big)\Big)+O\big(\tfrac{1}{r^3}+\tfrac{1}{s^3}\big)
%\]
\medskip
\noindent
{\it Proof of \eqref{Fou13}}.
Let us analyze $f_1$. From \eqref{Fou1pr1}, when $r \lesssim 1$, we can use \eqref{Uexp0} to see that
\begin{align*}
f_1(r) & = U(r) \int_0^r s U^2(s) \Big( \int_s^r \frac{1}{t U^2(t)} \, dt \Big) \, ds
\\ & =\int_0^r\Big(\dfrac{1}{2} r s^2 \cdot \big( s^{-2} - r^{-2} \big)+\dfrac{1}{4}rs^2\log\tfrac{r}{s} +O(r^3+s^3)\Big) a(s+O(s^3))ds  
%\\
 % & = a(r + O(r^3)) \int_0^r a^2 (s^3 + O(s^5)) \Big( \int_s^r %\frac{1}{a^2 t^3} (1 + O(t^2)) \, dt \Big) \, ds
 % \\
  %& = a r \int_0^r \big(s^3 + O(s^5)\big) \frac{1}{2}(s^{-2} - r^{-2}) \, ds + O(r^4)
  = \frac{a}{8} r^3 + O(r^5)
\end{align*}
where we have used the fact that \[
\int rs^3\log\big(\tfrac rs\big)ds=r\left(\dfrac{s^4}{16}-\dfrac{s^4}{4}\log \left(\dfrac{r}{s}\right)\right)+C,
\]
and hence this term becomes $O(r^5)$ after evaluation at $s=r$. Since $f_1(r) = r^{2} \Phi_{2,1}(r)$ the asymptotic formula for $r\lesssim 1$ in \eqref{Fou13} follows.

%Next, we seek t
To prove the asymptotic formula for $r \gtrsim 1$ %. In fact, in this case 
we use the expansion \eqref{Uexpinfty} to see that, for $r_0 \gtrsim 1$,
\begin{align}\label{canc_log_f1}
\int_{r_0}^r s U^2(s) \Big( \int_s^r \frac{1}{t U^2(t)} \, dt \Big) \, ds\nonumber
 & = \int_{r_0}^r s (1 + O(s^{-2})) \Big( \int_s^r t ^{-1}+ O(t^{-3}) \, dt \Big) \, ds 
 \\
 & = \int_{r_0}^r s\Big(1+O(s^{-2})\Big) \Big(\log \big(\tfrac{r}{s}\big)+O(r^{-2}-s^{-2})\Big) \, ds 
% \\ & = \dfrac{r^2}{4}-\dfrac{r_0^2}{4}-2r_0^2\log\big(\tfrac{r}{r_0}\big)+O(1) 
\\ &= \frac{r^2}{4} + O(\log^2 r),  \nonumber
\end{align}
where we have used the trivial calculation \[
\int s\log\big(\tfrac{r}{s}\big)ds=\dfrac{s^2}{4}+\dfrac{s^2}{2}\log\big(\tfrac{r}{s}\big)+C.
\]
and the $O(\log^2 r)$ term comes from integrating $s\log(\tfrac{r}{s})O(s^{-2})$. Since the integration from $0$ to $r_0$ gives a lower order contribution, it follows that 
%\begin{align*}
$f_1(r)  = r^2/4 %\log r - \frac{r^2}{4} 
	+ O(\log^2 r),$ for $r \gtrsim 1.$
%\end{align*}
In fact, regarding the contribution for $s\in(0,r_0)$ we have 
\begin{align*}
\int_0^{r_0}s U^2(s) \Big( \int_s^r \frac{1}{t U^2(t)} \, dt \Big) \, ds &\lesssim  O(1)+\int_1^r\dfrac{1}{tU^2(t)}dt = O(\log r).
\end{align*}
Since $f_1(r) = r^{2} \Phi_{2,1}(r)$ it follows that %$\Phi_1(r) = \log \sqrt{r} - \frac{1}{4} + O(r^{-1})$
$\Phi_{2,1}(r) = 1/4 + O\big(r^{-2}\log^2r\big)$ which is the claimed %asymptotic formula in 
\eqref{Fou13}.

\medskip
\noindent
{\it Proof of \eqref{Fou12}}.
To obtain the upperbound \eqref{Fou12} on $\Phi_{2,j}(r) = r^{-2j} f_j(r)$, $j\geq 2$, we proceed by induction. We assume that $f_k(r) \leq \frac{C^k}{k!(k+1)!} r^{2k+1}$ for all $k \leq j-1$, with a constant $C>0$ depending only on implicit constant in $r\lesssim1$, say on $r_*$ with $r\leq r_*$.  In fact, from similar computations to those used above for $f_1$ we obtain that
\begin{align*}
f_{j}(r) & = U(r) \int_0^r s U(s) \Big( \int_s^r \frac{1}{t U^2(t)} \, dt \Big) f_{j-1}(s) \, ds
  \\
  & \leq a(r + O(r^3)) \int_0^r a (s^2 + O(s^4)) \Big( \int_s^r \frac{1}{a^2 t^3} (1 + O(t^2)) \, dt \Big) 
    \frac{C^{j-1}}{(j-1)!j!} s^{2j-1} \, ds
  \\ & \leq \frac{C^{j-1}}{(j-1)!j!} \Big(r+O(r^3)\Big) \int_0^r \Big(s^{2j+1}+O(s^{2j+3})\Big) \Big(\dfrac{1}{2s^2} - \dfrac{1}{2r^2}+O\big(\log(\tfrac{r}{s})\big)\Big) \, ds %+ O(r^{2j+3})
  \\ & \leq \dfrac{C^{j-1}}{(j-1)!j!}\Big(r+O(r^3)\Big)\left(\dfrac{r^{2j}}{4j(j+1)}+O\left(\dfrac{r^{2j+2}}{4(j+1)^2}\right)\right)
  \\ & \leq \frac{C^j}{j!(j+1)!} r^{2j+1}, %+ O(r^{2j+3})
\end{align*}
for $C>1$ sufficiently large and $r\lesssim 1$. Observe that the logarithmic term satisfies \[
\int_0^r s^{2j+1}\log\big(\tfrac{r}{s}\big)ds=\dfrac{r^{2j+2}}{4(j+1)^2}.
\]

When $r \gtrsim 1$, we make the inductive hypothesis $f_k(r) \leq \frac{C^k}{((k-1)!)^2} r^{2k}$, %\log r$, 
for $k\leq j-1$, $j\geq 2$, with some $C$ depending only on the expansion \eqref{Uexpinfty}. Then, estimating as before yields
\begin{align*}
& U(r) \int_{r_0}^r s U(s) \Big( \int_s^r \frac{1}{t U^2(t)} \, dt \Big)  f_{j-1}(s) \, ds
 \\
 & \qquad  \leq \frac{C^{j-1}}{((j-2)!)^2}\big(1+O(\tfrac{1}{r^2})\big) \int_{r_0}^r \big(s+O(\tfrac{1}{s})\big) \Big(\log \big(\tfrac{r}{s}\big) +O\big(\tfrac{1}{r^2}-\tfrac{1}{s^2}\big)\Big) s^{2j-2}  \, ds 
 \\ & \qquad \leq \dfrac{C^{j-1}}{((j-2)!)^2}\big(1+O(\tfrac{1}{r^2})\big)\left(\dfrac{r^{2j}}{4j^2}+O\left(\dfrac{r^{2j-2}}{(j-1)^2}\right)\right)\leq \frac{C^j}{((j-1)!)^2} r^{2j}.
\end{align*}
This proves the induction for $f_j$ and gives the upperbound for $\Phi_{2,j}(u)$ with $u\gtrsim 1$ in \eqref{Fou12}.

\medskip
\noindent
{\it Proof of \eqref{Fou14}}.
%Now we turn to the bounds in \eqref{Fou14} for the derivative. In fact, this case 
The symbol-type bounds follow from similar arguments to those above, using an inductive procedure,
 taking advantage of the identity 
 \[
\partial_r\big(\sqrt{r}f_j(r)\big)=\int_0^r \Big( \Theta_2^{(0)}(s)\partial_r\Phi_2^{(0)}(r)
	- \partial_r\Theta_2^{(0)}(r)\Phi_2^{(0)}(s)\Big) \sqrt{s} f_{j-1}(s)\, ds,
\]
and using the natural bounds that hold for the derivatives $\partial_r\Phi_2^{(0)}(r)$ and $\partial_r\Theta_2^{(0)}(r)$. 
Note that after solving for $\partial_rf_j(r)$ we obtain 
\begin{align*}
&  \partial_r f_j(r)=-\dfrac{1}{2r}f_j(r) + \int_0^r \widetilde{K}(r,s) f_{j-1}(s) \, ds,
\\
& \widetilde{K}(r,s) := \dfrac{s^{1/2}}{r^{1/2}}\left(\dfrac{\Phi_2^{(0)}(s)}{\Phi_2^{(0)}(r)}+\partial_r\Phi_2^{(0)}(r) \Phi_2^{(0)}(s)\int_s^r\dfrac{dt}{\Phi_2^{(0)}(t)^2}\right), \qquad r>s>0.
\end{align*}
Thus, in the case $j=1$, for instance, we have the formula
\[
r \partial_rf_1(r)= - \dfrac{1}{2}f_1(r) + \int_0^r\Big(\dfrac{sU(s)}{U(r)}+\big(\sqrt{r}U'(r)+\tfrac{1}{2\sqrt{r}}U(r)\big)\sqrt{s}U(s)\int_s^r\dfrac{dt}{tU^2(t)}\Big)U(s)ds.
\]
In the previous line the leading order term is $-(1/2)f_1(r)$ when $r\lesssim 1$. Similarly, when $r\gtrsim1$, due to $U(r)=1+O(r^{-2})$ we obtain the claimed bounds on $r\partial_rf_1$ proceeding in the same fashion as before. 
%In order to avoid repetitive calculations we skip it. 
Note that for the second derivative it is enough to use the equation \eqref{Fou1rec} satisfied by $f_j$ to obtain the desired bounds.
The same observation applies to all higher order derivatives.
%\medskip
%To see that $f_j$ is smooth on $[0,\infty)$ one can proceed by induction 
%\tofill
\end{proof}

\begin{rem}
It is worth pointing out that in the calculation \eqref{canc_log_f1}, 
the potentially leading order term of the form $r^2\log r$ cancels out; 
if such a term were present, it would have had non-trivial consequences %in the calculations in the following sections,
on the behavior of the spectral measure and impact all the decay estimates. 
%%comm2%{If we assume that $\Phi_0(r)\approx s^\alpha$ and $\Theta_0(r)\approx s^\alpha \log(r)$ at infinity, 
%then $\alpha=-1/2$ is the only value of $\alpha$ that can manage to
%pass the $\log(r)$ to $\Phi_{2,1}(r,k^2)$ in the leading order. Is it worth to comment on this here?}
\end{rem}

\subsection{Region $rk \gtrsim 1$}
In the next lemma we give asymptotics for the Weyl solution $\Psi_2(r,k^2)$ when $rk \gtrsim 1$. This result is very much analogous to that of Proposition 5.6 in  the work of Krieger-Schlag-Tataru \cite{KST},
so we will skip most of the details of the proof and refer the reader to that reference.
%Before going further let us recall that the Weyl solution at infinity is the (unique)
%solution that is in $L^2$ near infinity when $\mathrm{Im}\,k^2 > 0$. 

\begin{lem}\label{lemWeyl}
For all $r>0$, $k\geq 0$ with $k r \gtrsim 1$, a Weyl-Titchmarsh function of $\mathcal{H}_2$ at infinity is given by
\begin{align}\label{Fou21}
\Psi_{2} %,\infty}^+
  (r,k^2) = k^{-1/2}e^{ikr}\sigma_{2,\infty}\left(kr,r\right)\qquad kr\gtrsim 1,
\end{align}
where $\sigma_{2,\infty}(q,r)$ is smooth in $q\gtrsim1$, and admits the asymptotic power series approximation 
\begin{align*}
\sigma_{2,\infty}(q,r)\approx \sum_{j=0}^\infty q^{-j} \Psi_{2,j}(r), \qquad \Psi_{2,0}(r)=1, 
  \qquad \Psi_{2,1}(r) = -\dfrac{i}{8}+O(r^{-3}).
\end{align*}
as $r\to+\infty$, in the sense that \begin{align}\label{Fou22}
\sup_{r>0}\left\vert (r\partial_r)^\alpha (q \partial_q)^{\beta}
  \Big(\sigma_{2,\infty}(q,r)-\sum_{j=0}^{j_0}q^{-j}\Psi_{2,j}(r)\Big) \right\vert \lesssim_{j_0,\alpha,\beta} q^{-j_0-1}. 
\end{align}
Moreover, for all $j\geq1$, the coefficient functions $\Psi_{2,j}(r)$ are zero-order symbols, that is, 
for all $m\in\N$
\begin{align}\label{Fou23}
\sup_{r>0}\big\vert (r\partial_r)^m\Psi_{2,j}(r)\big\vert \lesssim_{j,m} 1. %< + \infty.
\end{align}
%, and they are analytic at infinity.
\end{lem}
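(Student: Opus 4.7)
The plan is to construct the Weyl solution through a WKB ansatz, following closely the scheme of \cite[Proposition 5.6]{KST}. I would write $\Psi_2(r,k^2) = k^{-1/2} e^{ikr} \sigma(kr,r)$ and derive the conjugated transport-type equation for $\sigma = \sigma(q,r)$, treating $q$ and $r$ as independent variables. Letting $V(r) := \tfrac{3}{4r^2} - (1-U^2(r))$, which by \eqref{Uexpinfty} satisfies $V(r) = -\tfrac{1}{4 r^2} + O(r^{-4})$ as $r \to \infty$, a direct computation using $\partial_r[\sigma(kr,r)] = k\sigma_q + \sigma_r$ and $\partial_r^2[\sigma(kr,r)] = k^2 \sigma_{qq} + 2k \sigma_{qr} + \sigma_{rr}$ converts $(\mathcal{H}_2 - k^2)\Psi_2 = 0$, after dividing by $-2ik^2 \cdot k^{-1/2} e^{ikr}$, into
\begin{equation*}
\sigma_q + \tfrac{1}{k}\sigma_r - \tfrac{i}{2}\sigma_{qq} - \tfrac{i}{k}\sigma_{qr} - \tfrac{i}{2k^2}\sigma_{rr} + \tfrac{i V(r)}{2k^2}\sigma = 0.
\end{equation*}
The key observation is that $r^2 V(r) = -\tfrac{1}{4} + O(r^{-2})$ is a zero-order symbol, so replacing $1/k = r/q$ and $1/k^2 = r^2/q^2$ exhibits every term beyond $\sigma_q$ as $O(q^{-1})$ times a zero-order symbol of $r$ acting on $\sigma$ or its derivatives.

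Next, I would insert the formal asymptotic series $\sigma = \sum_{j \geq 0} q^{-j} \Psi_{2,j}(r)$ with $\Psi_{2,0} \equiv 1$ and match coefficients of $q^{-n}$. This produces, for $n \geq 2$, a first-order linear ODE of the form
\begin{equation*}
r \Psi_{2,n-1}'(r) - (n-1) \Psi_{2,n-1}(r) = F_n(r),
\end{equation*}
with $F_n$ depending polynomially on $\Psi_{2,n-2}, \Psi_{2,n-2}', \Psi_{2,n-2}''$ with coefficients that are themselves zero-order symbols in $r$ (built from $r^2 V(r)$). The unique zero-order symbol solution is obtained by discarding the homogeneous solution $\propto r^{n-1}$, equivalently by integrating the inhomogeneity from $r=\infty$: writing the ODE as $(\Psi_{2,n-1}/r^{n-1})' = F_n(r)/r^n$ and integrating from infinity yields a well-defined $\Psi_{2,n-1}$ with the required bounded behavior. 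For $n=2$ one has $F_2(r) = -\tfrac{i}{2} r^2 V(r) = \tfrac{i}{8} + O(r^{-2})$, and the recipe above gives $\Psi_{2,1}(r) = -\tfrac{i}{8} + O(r^{-2})$, consistent with the stated asymptotics. An induction on $n$, combined with the symbol algebra preserved by this integration, then yields the symbol bounds \eqref{Fou23} for all $\Psi_{2,j}$.

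To upgrade the formal expansion to an actual solution, I would close the construction by a Volterra iteration in the variable $q$, as in \cite{KST,KMS}. Setting $\sigma_{2,\infty}(q,r) = \sum_{j=0}^{j_0} q^{-j} \Psi_{2,j}(r) + \rho_{j_0}(q,r)$ and substituting into the transport equation, $\rho_{j_0}$ satisfies an integral equation of the form
\begin{equation*}
\rho_{j_0}(q,r) = E_{j_0}(q,r) + \int_q^\infty K(q,q';r)\, \rho_{j_0}(q',r)\, dq',
\end{equation*}
with truncation error $E_{j_0}(q,r) = O(q^{-j_0-1})$ and a kernel $K$ of size $O(q'^{-2})$ uniformly in $r > 0$ when $q \gtrsim 1$. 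A standard contraction on $q \gtrsim 1$ produces $\rho_{j_0}$ with the claimed decay, and the symbol estimates \eqref{Fou22} follow by differentiating the integral equation and running the same contraction in weighted norms.

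The main obstacle is to maintain zero-order symbol bounds uniformly for $r \in (0, \infty)$: the slow inverse-square decay of $V$ means that any wrong choice of antiderivative at a given step of the recursion would reintroduce the homogeneous polynomial growth $r^{n-1}$, incompatible with \eqref{Fou23}. This is avoided by integrating every successive correction from infinity, which requires checking at each order that the source remains integrable in $r$ at infinity; the induction carries this through. Since the qualitative behavior of our potential at infinity is identical to the one treated in \cite[Proposition 5.6]{KST} (inverse-square decay, zero limit), the arguments there transfer with purely notational modifications, and we therefore omit the full details.
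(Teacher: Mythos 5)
Your proposal is correct and follows essentially the same route as the paper: the same WKB ansatz and conjugated equation, a power-series recursion for the coefficients solved by integrating from infinity (your ODE $r\Psi_{2,n-1}'-(n-1)\Psi_{2,n-1}=F_n$ is exactly the paper's recursion $2if_{j+1}'=-f_j''+\big(\tfrac{3}{4r^2}-(1-U^2)\big)f_j$ rewritten via $f_j=r^{-j}\Psi_{2,j}$), and the same deferral of the remainder and symbol estimates to the Volterra/contraction argument of \cite[Proposition 5.6]{KST}. Your value $\Psi_{2,1}=-\tfrac{i}{8}+O(r^{-2})$ agrees with the paper's own computation $f_1=-\tfrac{i}{8r}+O(r^{-3})$ (the $O(r^{-3})$ in the lemma statement pertains to $f_1$, i.e.\ before multiplying by $r$), so the leading constant and the error you obtain are the right ones.
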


%\begin{rem}%comm%{uncited remark and equation}
%In our applications we are going to use a simple byproduct of the above lemma, namely that,
%for $kr\gtrsim 1$, we have
%\begin{align}\label{lemWeylrem}
%\big| (k\partial_k)^2 \sigma_{2,\infty}\left(kr,r\right) \big| \lesssim 1.
%\end{align}
%\end{rem}

\begin{proof}%[Proof of Lemma \ref{lemWeyl}]
The proof is standard and follows a general procedure, see for example \cite{KST,KMS}. 
%In fact, we look for 
We claim that the Weyl solution of 
\[
-y''(r)+\dfrac{3}{4r^2}y(r)-(1-U^2(r))y(r)=k^2y(r)
\] 
for $kr\gtrsim1$, can be written as 
\[
\Psi_{2} %,\infty}
  (r,k^2)= k^{-1/2}e^{ikr}\sigma_{2,\infty}(kr,r).
\]
By inverting the above relation we see that for $\Psi_{2}%,\infty}^+
$ 
to satisfy the differential equation,  $\sigma_{2,\infty}(kr,r)$ must satisfy the conjugated equation 
\[
\Big(-\partial_r^2-2ik\partial_r+\dfrac{3}{4r^2}-(1-U^2(r))\Big)\sigma_{2,\infty}(kr,r)=0.
\]
To solve this equation we make the following power series ansatz \[
\sigma_{2,\infty}(kr,r)=\sum_{j=0}^\infty k^{-j}f_j(r), \qquad f_j(r) = r^{-j} \Psi_{2,j}(r).
\]
Once we show the convergence of such a power series, our ansatz shall hold and be proven. 
Substituting the ansatz into the conjugated equation we obtain that, at least formally, 
the family of $f_j(r)$ must satisfy the recursive equation 
\begin{align*}
2if_{j+1}'=-f_{j}''+\dfrac{3}{4r^2}f_j-(1-U^2)f_j, \qquad f_0(r)=1.
\end{align*}
Integrating the last last equation from $r$ to $\infty$, assuming $f_j(r)\to0$ as $r\to\infty$ for all $j\geq 1$, we get
\begin{align}\label{lemWeylprrec}
f_{j}(r)=\dfrac{i}{2}f'_{j-1}(r)+\dfrac{i}{2}\int_r^\infty \Big(\tfrac{3}{4s^2}f_{j-1}(s)-\big(1-U^2(s)\big)f_{j-1}(s)\Big)ds.
\end{align}
Observe that, in particular, 
\begin{align}\label{Fou24}
f_1(r) & = \dfrac{3 i}{8r}-\dfrac{i}{2}\int_r^\infty \big(1-U^2(s)\big)ds =-\dfrac{i}{8r}+O(r^{-3}),
\end{align}
where we have used the fact that $1-U^2(r)= \tfrac{1}{r^2}+O(r^{-4})$ for $r\gtrsim 1$;
this gives the formula for $\Psi_{2,1}$. 
Based on \eqref{lemWeylprrec} one can then run the exact same proof of the one of Proposition 5.6 of \cite{KST}
to obtain \eqref{Fou22}-\eqref{Fou23}; 
indeed, that proof does not depend on the specific form of the potential, and it suffices to rely on the fact that 
\begin{align}\label{der1-U^2}
\partial_r^m(1-U^2(r))=(-1)^m(m+1)!r^{-m-2}+O((m+3)!r^{-m-4}),  \quad r \gtrsim 1,
\end{align}
which can be shown directly from \eqref{eq_vortex}. %like \eqref{asympt_U_infty}.
\end{proof}

\subsection{The spectral measure}\label{sec_specm_2}
Following the general theory of \cite{GZ} and some of the arguments in \cite{KST,KMS} we  
now derive asymptotics for the spectral measure associated to $\mathcal{H}_2$. 
%First of all we relate $\Phi_2(r,k^2)$ and the Weyl solution at infinity. 
Recall that
\begin{align}\label{sec_specm_2W}
W\big(\Theta_2(r,k^2),\Phi_2(r,k^2)\big)=1 
	\qquad \hbox{ and } \qquad W\big(\Psi_2(r,k^2),\overline{\Psi_2(r,k^2)}\big)=-2i,
\end{align}
where the first identity is by our definition, while the second follows from Lemma \ref{lemWeyl} at $r=\infty$.
%and the fact that each pair defines a fundamental system of solutions, the first one being real-valued.
%Then, one can define a function $a_2(k^2)$ such that
%\[
%\Phi_2(r,k^2) = a_2(k^2)\Psi_2(r,k^2) + \overline{a_2(k^2)\Psi_2(r,k^2)}.
%\]
%The following proposition gives asymptotics for $a_2$ and hence for the spectral measure. %is the main result of this section.

\begin{prop}\label{propSM2}
%There exists a function $a_2(k^2)$ such that 
The Weyl-Titchmarsh solution of $\mathcal{H}_2$ at zero, $\Phi_2$,
and the Weyl solution %from Lemma \ref{lemWeyl} 
at infinity, $\Psi_2$, are related by %can be written as
\begin{align}\label{propSM0}
\Phi_2(r,k^2) = 2\Re (a_2(k^2) \Psi_2(r,k^2)),
\end{align}
%where $\Psi_2$ is the Weyl solution from Lemma \ref{lemWeyl}, and 
where $a_2(k^2)$ is smooth, always non-zero, and satisfies
\begin{align}\label{propSMa}
a_2(k^2) \approx \jk^{-1},
%\left\{ 
%\begin{array}{ll}
%1, %|\log \xi|, 
%	&  k \lesssim 1, 
%\\ 
%\\
%k^{-1},  & k \gtrsim 1.
%\end{array} 
%\right.
\end{align}
with consistent symbol-type bounds on the derivatives 
\begin{align}\label{propSMa_dk}
\vert (k\partial_k)^na_2(k^2) \vert \approx_n \jk^{-1}. %\vert a_2(k^2)\vert
%\left\{ 
%\begin{array}{ll}
%0, %|\log \xi|, 
%	&  k \lesssim 1, 
%\\ 
%\\
%k^{-2},  & k \gg 1.
%\end{array} 
%\right.
\end{align}
In particular, the spectral measure associated to $\mathcal{H}_2$ is given by
\begin{align}\label{propSMrho}
\frac{d\rho_2}{dk}(k^2) = \frac{1}{4\pi |a(k^2)|^2} \approx \langle k^2\rangle.
\end{align}
\end{prop}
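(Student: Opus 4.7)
\medskip
\noindent\textbf{Proof proposal.} The plan is to express $a_2(k^2)$ as an $r$-independent Wronskian and then evaluate it at a carefully chosen matching radius where the asymptotic expansions of both $\Phi_2$ and $\Psi_2$ established in Lemmas \ref{lemFou1} and \ref{lemWeyl} are simultaneously valid.

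First, I would show \eqref{propSM0}. Since $\Phi_2(\cdot,k^2)$ is real-valued and $\{\Psi_2,\overline{\Psi_2}\}$ is a fundamental system at infinity (they are linearly independent thanks to $W(\Psi_2,\overline{\Psi_2})=-2i$), there exists $a_2(k^2)\in\mathbb{C}$ such that $\Phi_2=a_2\Psi_2+\overline{a_2\Psi_2}=2\Re(a_2\Psi_2)$. Taking Wronskians with $\overline{\Psi_2}$ and using \eqref{sec_specm_2W} yields
\[
a_2(k^2)=\tfrac{i}{2}W(\Phi_2,\overline{\Psi_2}).
\]
Since both $\Phi_2$ and $\overline{\Psi_2}$ satisfy $\mathcal{H}_2 f=k^2 f$, the Wronskian $W(\Phi_2,\overline{\Psi_2})$ is independent of $r$, and so we are free to evaluate it at whichever $r$ is most convenient.

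The matching radius I would choose is $r_k:=1/k$ for $k\geq 1$ and $r_k:=1$ (or any fixed $r_\ast \gtrsim 1$) for $k\leq 1$. At $r=r_k$ both expansions in Lemmas \ref{lemFou1}-\ref{lemWeyl} converge (we have $kr_k\approx 1$), and their estimates produce sharp size bounds. Concretely, for $k\gtrsim 1$, evaluating at $r=1/k\lesssim 1$ and using \eqref{Phi0exp} together with \eqref{Fou12} gives $\Phi_2(1/k,k^2)=O(k^{-3/2})$, while $\overline{\Psi_2}(1/k,k^2)=O(k^{-1/2})$ by \eqref{Fou21}-\eqref{Fou22}; their $r$-derivatives evaluated at the same point give $O(k^{-1/2})$ and $O(k^{1/2})$, so $W(\Phi_2,\overline{\Psi_2})=O(k^{-1})$. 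For $k\lesssim 1$, evaluating at $r=r_\ast$ and using the $r\gtrsim 1$ parts of \eqref{Phi0exp}-\eqref{Theta0exp} and \eqref{Fou11}-\eqref{Fou13} gives $\Phi_2, \overline{\Psi_2}=O(1)$ together with $O(1)$ derivatives, hence $W=O(1)$. This yields the upper bound $|a_2(k^2)|\lesssim \jk^{-1}$.

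For the matching lower bound, the key point is that the leading order of the Wronskian at $r=r_k$ is a nonzero constant (in the appropriate scaling). For $k\gtrsim 1$, the principal term comes from $\Phi_2^{(0)}(1/k)\cdot\partial_r\overline{\Psi_2}$ whose leading part is $a(1/k)^{3/2}\cdot(-ik)k^{-1/2}e^{-i}=-ia\, e^{-i}k^{-1}$, which does not cancel against the subleading piece $\partial_r\Phi_2^{(0)}(1/k)\cdot\overline{\Psi_2}=\tfrac{3a}{2}k^{-1/2}\cdot k^{-1/2}e^{-i}=\tfrac{3a}{2}e^{-i}k^{-1}$, because the two contributions are linearly independent over $\mathbb{R}$ (the first is pure imaginary, the second real, up to the common phase $e^{-i}$). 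Similarly, for $k\lesssim 1$ the leading contribution is a fixed nonzero complex number computable purely from $\Phi_2^{(0)}$ and the $k\to 0$ behavior of $\Psi_2$ at $r=r_\ast$. A direct computation, together with absorbing the small remainders from \eqref{Fou12}, \eqref{Fou22} (with enough terms $j_0$), yields $|a_2(k^2)|\gtrsim \jk^{-1}$. Combined with the upper bound this proves \eqref{propSMa}.

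Finally, for the symbol-type bounds \eqref{propSMa_dk} I would differentiate in $k$: since $W(\Phi_2,\overline{\Psi_2})$ is $r$-independent, so are its $k$-derivatives, and
\[
(k\partial_k)^n a_2(k^2)=\tfrac{i}{2}\sum_{m\leq n}\binom{n}{m}W\bigl((k\partial_k)^m\Phi_2,(k\partial_k)^{n-m}\overline{\Psi_2}\bigr)\bigg|_{r=r_k}.
\]
Each factor obeys the same kind of size estimate as its undifferentiated counterpart thanks to the symbol-type bounds \eqref{Fou14} and \eqref{Fou23} (keeping in mind the chain rule $k\partial_k[(kr)^{2j}]=2j(kr)^{2j}$ and that $r_k\partial_k r_k=-r_k$); the leading terms are of the same form as in the base case and the same non-cancellation argument applies. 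This proves \eqref{propSMa_dk}, and then \eqref{propSMrho} follows from the general formula $\rho_2'(k^2)=1/(4\pi|a_2(k^2)|^2)$ derived in Subsection \ref{SsecST}. The main obstacle is the lower-bound argument: one must show the leading contributions to $W(\Phi_2,\overline{\Psi_2})$ at the matching radius do not conspire to cancel; this is ensured by the explicit asymptotics \eqref{Phi0exp}, \eqref{Fou13}, \eqref{Fou24} and the normalization of $\Psi_2$.
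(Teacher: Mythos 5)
Your setup (the representation \eqref{propSM0}, the $r$-independent Wronskian formula $a_2=\tfrac{i}{2}W(\Phi_2,\overline{\Psi_2})$, and the upper bound obtained by evaluating at a matching radius $rk\approx 1$) is the paper's approach, and the upper bound for $k\gtrsim 1$ goes through as you describe. But there are two genuine gaps. First, your choice of matching radius for small $k$ is not admissible: at a fixed $r_\ast\gtrsim 1$ and $k\to 0$ you have $kr_\ast\to 0$, and Lemma \ref{lemWeyl} gives no information whatsoever about $\Psi_2$ (or $\partial_r\Psi_2$) in the region $kr\lesssim 1$ — the representation $\Psi_2=k^{-1/2}e^{ikr}\sigma_{2,\infty}(kr,r)$ and its bounds are only available for $kr\gtrsim 1$. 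So the claim ``$\Phi_2,\overline{\Psi_2}=O(1)$ at $r=r_\ast$'' for $k\lesssim1$, and likewise the small-$k$ lower bound computed ``from the $k\to0$ behavior of $\Psi_2$ at $r=r_\ast$'', are unsupported. The matching must be done at $r\approx ck^{-1}\to\infty$ in the small-$k$ regime as well, which is what the paper does.

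Second, and more seriously, your lower-bound mechanism (non-cancellation of the two leading terms of the Wronskian, ``pure imaginary versus pure real up to the common phase $e^{-i}$'') does not survive the error estimates available at the matching point. At $q=kr\approx 1$ the asymptotic expansion \eqref{Fou22} only controls the remainder by $q^{-j_0-1}=O(1)$; taking more terms $j_0$ does not help, and already the first correction $-ik\,\overline{\Psi_{2,1}}(r)/q$ inside $\partial_r\overline{\Psi_2}$ produces a real contribution of size $\approx k$, i.e.\ comparable to the term you are trying to isolate, so the real/imaginary dichotomy is destroyed; the terms $k\partial_q\sigma_{2,\infty}+\partial_r\sigma_{2,\infty}$ and the $j\geq 1$ terms $\sqrt{r}\sum_{j\ge1}(kr)^{2j}\Phi_{2,j}$ in $\Phi_2$ and $\partial_r\Phi_2$ are likewise $O(1)$ relative corrections at $kr\approx1$. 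You also cannot escape by moving the matching point: making $kr$ large to render the Weyl corrections small destroys the domination of the $\Phi_2$-series by its zero-energy term, while making $kr$ small does the opposite. The paper sidesteps this tension entirely: from \eqref{propSM0} one has the pointwise inequality $|a_2(k^2)|\geq |\Phi_2(r,k^2)|/(2|\Psi_2(r,k^2)|)$ for every $r$, so the lower bound only requires an \emph{upper} bound on $|\Psi_2|$ (namely $\approx k^{-1/2}$ at $rk=c$) together with a \emph{lower} bound on $|\Phi_2(ck^{-1},k^2)|$, which follows for $c$ small because the zero-energy term $\Phi_2^{(0)}$ dominates the convergent series of Lemma \ref{lemFou1} (the tail is bounded by $(e^{c^2C}-1)$ times the leading term). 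No cancellation analysis of the Wronskian is needed, and the same device is what makes the two-sided derivative bounds \eqref{propSMa_dk} accessible. As written, your proposal does not establish \eqref{propSMa} (hence neither \eqref{propSMa_dk} nor the two-sided statement in \eqref{propSMrho}).
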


%\begin{rem}
%By `` $\approx$ '' in \eqref{propSMa} we mean that, for $k\lesssim1$,
%respectively $k\gtrsim1$, there exists two constants $C>c>0$ such that the following holds:
%\[
%c \leq \vert a(k^2)\vert \leq C \qquad \hbox{respectively} \qquad  \dfrac{c}{k}\leq \vert a(k^2)\vert\leq \dfrac{C}{k}.
%\]
%\end{rem}

\begin{proof}
%In fact, first of all, 
The existence of $a_2$ giving the relation \eqref{propSM0} 
follows from the fact that $(\Psi_2,\overline{\Psi_2})$ is a fundamental system for $\mathcal{H}_2$
and that $\Phi_2$ is real-valued, and we also have
%From \eqref{sec_specm_2W} we also have
\begin{align}\label{SMW_a2}
a_2(k^2) := \dfrac{W(\Phi_2(\cdot,k^2),\overline{\Psi_2(\cdot,k^2)})}{W(\Psi_2(\cdot,k^2),\overline{\Psi_2(\cdot,k^2)})}
=\dfrac{i}{2}W(\Phi_2(\cdot,k^2),\overline{\Psi_2(\cdot,k^2)}),
\end{align}
%where we have used the fact that \[ W(\Psi_2(\cdot,k^2),\overline{\Psi_2(\cdot,k^2)})=-2i, \]
%which follows directly from Lemma \ref{lemWeyl} at infinity.  
%With the previous definition of $a_2$ we get \begin{align*}
%a_2\Psi_2+\overline{a_2\Psi_2}= \dfrac{\Phi_2\overline{\partial_r\Psi_2}\Psi_2-\partial_r\Phi_2\overline{\Psi_2}\Psi_2-\Phi_2\partial_r\Psi_2\overline{\Psi_2}+\partial_r\Phi_2\Psi_2\overline{\Psi_2}}{W(\Psi_2,\overline{\Psi_2})}=\Phi_2,
%\end{align*}
%hence yielding \eqref{propSM0}. 

%It is worth emphasizing that $a(k^2)$ does not depend on $r$, and therefore, in what follows, 
%we can choose a constant $c=rk$ at our convenience to help us evaluate \eqref{SMW_a2}. 
%More concretely, 
In what follows, we will evaluate the Wronskian in \eqref{SMW_a2} 
in the region where we have asymptotic expansions for both $\Phi_2$ and $\Psi_2$, that is, 
when $rk=c \approx 1$ with $c$ small but fixed. 
In fact, by Lemma \ref{lemWeyl} we have that, for $rk = c \approx 1$,
\begin{align}\label{SMpr1}
|\Psi_2(ck^{-1},k^2)| \approx k^{-1/2},  %f(\xi^{-1/2}), 
\qquad |\partial_r \Psi_2(ck^{-1},k^2)| \approx k^{1/2}.  %f(\xi^{-1/2}) 
 %\qquad k^2 > 0.
\end{align}

%Having these bound at hand, all we have left is 
To estimate the function $\Phi_2$ %. In this case, 
we use the expansion \eqref{Fou11} and see that, for the same $c \approx 1$, %small,
\begin{align}\label{SMPhi}
\Phi_2(ck^{-1},k^2) & = \Phi_2^{(0)}(ck^{-1}) + \sqrt{c}k^{-1/2}  \sum_{j\geq 1} c^{2j} \Phi_{2,j}(ck^{-1}).
\end{align}
%In the following we divide the analysis of $\Phi_2(r,k^2)$ into two cases, namely, $k\gtrsim 1$ and $k\lesssim1$. 
When $k \gtrsim 1$, using the asymptotics for small $r$ in \eqref{Phi0exp} 
and \eqref{Fou12}-\eqref{Fou13}, we obtain that
\begin{align}\label{SMpr2a}
\Phi_2(ck^{-1},k^2) & \lesssim k^{-3/2}, \quad k \gtrsim 1.
\end{align}
Instead, when $k \lesssim 1$, we use the asymptotics in \eqref{Phi0exp} for large $r$, as well as  the asymptotics for $\Phi_{2,j}$ in \eqref{Fou12}-\eqref{Fou13}, from which it follows that \begin{align}\label{SMprb}
\Phi_2(ck^{-1},k^2) & \lesssim k^{-1/2} %|\log \xi|
	, \quad k \lesssim 1.
\end{align}
Again thanks to Lemma \ref{lemFou1} we have consistent bounds for the derivatives, namely,
\begin{align}\label{SMpr2der}
\partial_r \Phi_2(ck^{-1},k^2) \lesssim \left\{ 
\begin{array}{ll}
k^{1/2}, & k \lesssim 1, 
\\ k^{-1/2}, & k \gtrsim 1.
\end{array} 
\right.
\end{align}
Therefore, using \eqref{SMpr1}-\eqref{SMpr2der} we see that 
\begin{align}\label{SMpr5}
|a_2(k^2)| = \frac{1}{2}\big| \Phi_2(\cdot,k^2) \partial_r \overline{\Psi_2(\cdot,k^2)} 
  - \partial_r \Phi_2(\cdot,k^2) \overline{\Psi_2(\cdot,k^2)} \big| 
   \lesssim \left\{ 
\begin{array}{ll}
1, &  k \lesssim 1, 
\\ 
k^{-1},  & k \gtrsim 1.
\end{array} 
\right.
\end{align}
%This is consistent with \eqref{propSMa}. 

To obtain matching lower bounds we first observe, as in \cite{KMS}, that \eqref{propSM0} gives,
for all $r>0$,
\begin{align}\label{SMpr10}
|a_2(k^2)| \geq \frac{|\Phi_2(r,k^2)|}{2|\Psi_2(r,k^2)|} \gtrsim k^{1/2} |\Phi_2(r,k^2)|,
\end{align}
having used \eqref{SMpr1}, and hence it is enough to lower bound $\Phi_2$. To this end, we use again the expansion \eqref{SMPhi}.
%With $c<1$ to be chosen we have, using also \eqref{Phi0exp},
First, for $k \gtrsim 1$ and $r = ck^{-1}$ with a suitable $c<1$ fixed and small,
we use \eqref{SMPhi} followed by \eqref{Phi0exp} and the first bound in \eqref{Fou12}, to see that 
\begin{align*}
|\Phi_2(ck^{-1},k^2)| & = \Big| \Phi_{2}^{(0)}(ck^{-1}) 
  + \sqrt{c}k^{-1/2} \sum_{j\geq 1} c^{2j} \Phi_{2,j}(ck^{-1}) \Big|
\\ & \geq \frac{a}{10} c^{3/2} k^{-3/2} - \sqrt{c} k^{-1/2} \sum_{j\geq 1} c^{2j} \frac{C^j}{j!(j+1)!} c k^{-1}
  \\ & \geq \frac{a}{10} c^{3/2} k^{-3/2}
  - c^{3/2} k^{-3/2} \big( e^{c^2C} -1 \big) \gtrsim k^{-3/2},
\end{align*}
provided $c$ is small enough depending on $a$ and $C$.
This gives $|a_2(k^2)| \gtrsim k^{-1}$ when $k \gtrsim 1$ consistently with \eqref{SMpr5}.

%When $|\xi| \ll 1$, say $\xi \ll 1/(16C^2)$ where $C$ is the constant appearing in \eqref{Fou12},
%we pick $c = 1/(4C)$, so that $r = c\xi^{-1/2} \gg 1$ and, 
%using \eqref{SMPhi}, together with the asymptotics \eqref{Fou13} and \eqref{Fou12}, we can estimate

When $k \lesssim 1$ and $r = ck^{-1} \gtrsim 1$, we use the expansion \eqref{SMPhi} again, 
followed by the asymptotics \eqref{Phi0exp} for large $r$ and the second bound in \eqref{Fou12}, from where we get that
%\begin{align*}
%|\Phi(r,\xi)| & \geq \sqrt{r} c^2 \Phi_1(r) - \Phi_0(r)
%  - \sqrt{r} \sum_{j\geq 2} c^{2j} \Phi_j(r)
%  \\
%  & \geq  \sqrt{r} c^2 \cdot \frac{1}{4} r^{-1} \log (1+r) %- \frac{1}{4} + O(\xi^{1/2})
%  - \frac{1}{\sqrt{r}}
%  - \frac{1}{\sqrt{r}} \sum_{j\geq 2} c^{2j} \frac{C^j}{j!} \log(1+r)
%  \\
%  & \gtrsim r^{-1/2} \log (1+r)
%\end{align*}
\begin{align}\label{SMpr11}
\begin{split}
|\Phi_2(r,k^2)| \geq \Phi_2^{(0)}(r) - \sqrt{r} \sum_{j\geq 1} c^{2j} \Phi_{2,j}(r)
  &\geq  \frac{1}{2} \sqrt{r} - \sqrt{r} \sum_{j\geq 1} c^{2j} \frac{C^j}{j!(j+1)!}
  \\ 
  & \geq \dfrac{1}{2}\sqrt{r}-\dfrac{1}{2}\big(e^{c^2C}-1\big)\sqrt{r}\gtrsim \sqrt{r},
\end{split}
\end{align}
%where the last step follows from the fact that 
%$\sum_{j\geq 1} c^{2j} \frac{C^j}{j!} %= e^{c^2C} - 1 
%  < 2 c^2.$
provided $c<1$ is small enough. Expressed in terms of $k$, the lower bound above is $|\Phi_2(r,k^2)| \gtrsim k^{-1/2}$,
for $k \lesssim 1$. Inserting this in \eqref{SMpr10} 
we obtain $|a_2(k^2)| \gtrsim 1$ which matches the upperbound \eqref{SMpr5}.
This concludes the proof of the asymptotics \eqref{propSMa} for $a_2$.

To show that similar bounds hold for the derivatives we use Lemma \ref{lemWeyl} to see that both $\Psi_2(ck^{-1},k^2)$ and $(r\partial_r\Psi_2)(ck^{-1},k^2)$ can be express as $k^{-1/2}f(k^{-1})$ for some $f(\cdot)$ satisfying the symbol type bounds \[
\vert (r\partial_r)^nf(r)\vert \approx_n 1.
\]
On the other hand, from Lemma \ref{lemFou1} we see that both $\Phi_2(ck^{-1},k^2)$ and $(r\partial_r\Phi_2)(ck^{-1},k^2)$ can be written as $k^{-1/2}h(k^{-1})$, for some $h(\cdot)$ satisfying \[
\hbox{for all } \, r\gtrsim1, \quad \vert (r\partial_r)^nh(r)\vert \approx_n 1, \qquad \ \hbox{ and } \ \qquad \hbox{for } \, r\lesssim 1, \quad \vert (r\partial_r)^nh(r)\vert \approx_n r .
\]
%Consistent lower bounds also hold for $h(\cdot)$ using the same arguments as in \eqref{SMpr11}, for instance. 
Therefore, from \eqref{SMW_a2} and the above analysis we conclude 
that $a_2(k^2)$ can be written as a linear combination of functions of the form $f(k^{-1})h(k^{-1})$, 
from where we conclude \eqref{propSMa_dk}.

To conclude, we recall that the first identity in \eqref{propSMrho} 
follows from the general theory of \cite{GZ,KMS} (cf. \eqref{Fou01}-\eqref{Fou02}). 
The proof is complete.
\end{proof}

\iffalse
\begin{align}%\label{Fou11}
\Phi(r,\xi) = \Phi_0(r) + \sqrt{r} \sum_{j\geq 1} (r^2\xi)^j \Phi_j(r),
\end{align}
which converges absolutely; the expansion converges uniformly if $r^2\xi \leq 1$, and
$\Phi_j$ are smooth and satisfy, for some absolute $C>0$, for all $j\geq 1$ %%comm%{re-check}
\begin{align}%\label{Fou12}
\begin{split}
& | \Phi_j(u) | \leq \frac{C^j}{j!}, \quad u \lesssim 1,
\\
& | \Phi_j(u) | \leq \frac{C^j}{j!}  u^{-1} \log (1+|u|), \quad u \gg 1.
\end{split}
\end{align}
Moreover %%comm%{needed for spectral measure calculation}
\begin{align}%\label{Fou13}
\Phi_1(u) = \left\{ 
\begin{array}{ll}
\dfrac{a}{4} + O(r^2) &  r \ll 1,
\\ 
\\
r^{-1} \log \sqrt{r} - \frac{1}{4} + O(r^{-1}) &  r \gg 1.
\end{array} 
\right.
\end{align}
\fi

\subsection{Fourier transform associated to $\mathcal{L}_2$}\label{ssecFT2}
Let $\Phi_2(r,k^2)$ denote the eigenfunctions associated to $\mathcal{H}_2$ constructed above,
and recall that $\mathcal{H}_2 (\sqrt{r} f) = \sqrt{r} \mathcal{L}_2f$ for all $f \in \mathrm{Dom}(\mathcal{L}_2)$.
From the general theory of \cite{GZ,KST,KMS} we have, at least formally for nice enough $f$, the Fourier representation
%%comm%{Adjust this, consistently with \eqref{Sdec2G}}
%(recall that $\Phi$ is real-valued)
\begin{align}\label{FT2}
%\sqrt{r} f(r) = \int_{0}^\infty \Phi_2(r,\xi) \wtF_2 (\sqrt{r}f ) (\xi) \, \rho_2(d\xi), \qquad  
%  \wtF_2 (\sqrt{r}g) (\xi) := \int_{0}^\infty \Phi_2(r,\xi) \sqrt{r} g(r) \, dr.
\sqrt{r} f(r) = \int_{0}^\infty \Phi_2(r,\xi) \Big[ \int_{0}^\infty \Phi_2(s,\xi) (\sqrt{s}f )  \, ds
  \Big] \, \rho_2(d\xi),
\end{align}
where $\rho_2(d\xi) = \frac{d\rho_2}{d\xi}(\xi) \, d\xi$ with the formulas in Proposition \ref{propSMrho};
in particular we can re-write this as
\begin{align}\label{FT2'}
\begin{split}
& \sqrt{r} f(r) = \int_{0}^\infty \Phi_2(r,k^2) \wtF_2 (\sqrt{r}f ) (k) \, \frac{d\rho_2}{dk}(k^2) 2k \, dk, \qquad  
  \\
& \wtF_2 (\sqrt{r}g) (k) := \int_{0}^\infty \Phi_2(r,k^2) g(r) \, dr.
\end{split}
\end{align}
We also have the diagonalization property
\begin{align}\label{FT2diag0}
\wtF_2( n(\mathcal{H}_2) g )(k) = n(k^2) (\wtF_2 g)(k).
\end{align}
%%comm%{May want to use $k^2$ instead of $k$\dots}
Here is a rigorous convergence result for the integrals in \eqref{FT2}:

\begin{prop}\label{propFT2}
With the definitions in \eqref{FT2}-\eqref{FT2'}. let $f \in L^2 (rdr) \cap C^2((0,\infty))$ such that
\begin{align}\label{propFT2as}
\int_0^\infty \big( r^{3/2} |f''(r)| + r^{1/2} |f'(r)| + r^{-1/2}|f(r)| ) \, dr = M < \infty.
\end{align}
Then, the limit
\begin{align}\label{propFT2conc1}
\wtF_2 (\sqrt{r}f) (\sqrt{\xi}) := \lim_{R \rightarrow \infty} \int_0^R \Phi_2(r,\xi) \sqrt{r} f(r) \, dr
\end{align}
exists for all $\xi > 0$ and satisfies
\begin{align}\label{propFT2conc2}
\int_{0}^\infty |\Phi_2(r,\xi)| |\wtF_2 (\sqrt{r}f) (\sqrt{\xi})| \, \rho_2(d\xi) \lesssim M.
\end{align}
\end{prop}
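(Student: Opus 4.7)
The plan is to let $g(r) := \sqrt{r}f(r)$ be the primary object of study. Since $g' = \sqrt{r}f' + (2\sqrt{r})^{-1}f$ and there is an analogous expression for $g''$, the hypothesis \eqref{propFT2as} together with $f \in L^2(rdr)$ implies
\begin{align*}
\int_0^\infty \frac{|g(r)|}{r}\,dr + \int_0^\infty |g'(r)|\,dr + \int_0^\infty r|g''(r)|\,dr + \|g\|_{L^2(dr)} \lesssim M,
\end{align*}
so that in particular $g \in W^{1,1}(dr) \cap L^2(dr)$ and both $g(r)$ and $g'(r)$ tend to $0$ as $r \to \infty$. The argument then splits into two independent parts: the pointwise existence of $\wtF_2(\sqrt{r}f)(\sqrt{\xi})$ for each $\xi > 0$, and the uniform-in-$r$ bound \eqref{propFT2conc2}.

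For the existence of the limit, I would fix $k = \sqrt{\xi} > 0$ and use the representation $\Phi_2(r,k^2) = 2\,\mathrm{Re}\big(a_2(k^2)\Psi_2(r,k^2)\big)$ together with $\Psi_2(r,k^2) = k^{-1/2}e^{ikr}\sigma_{2,\infty}(kr,r)$ from Lemma \ref{lemWeyl}. A single integration by parts in the oscillatory phase gives, for $R_1, R_2 \gg 1/k$,
\begin{align*}
\int_{R_1}^{R_2} e^{ikr}\sigma_{2,\infty}(kr,r)\,g(r)\,dr = \left[\tfrac{e^{ikr}}{ik}\sigma_{2,\infty}(kr,r)\,g(r)\right]_{R_1}^{R_2} - \tfrac{1}{ik}\int_{R_1}^{R_2} e^{ikr}\,\partial_r\!\big(\sigma_{2,\infty}(kr,r)g(r)\big)\,dr,
\end{align*}
and the uniform bounds \eqref{Fou22}--\eqref{Fou23} on $\sigma_{2,\infty}$ and $r\partial_r\sigma_{2,\infty}$, combined with $g, g' \to 0$ at infinity and $\int|g'|\,dr, \int|g|/r\,dr < \infty$, make both the boundary contributions and the remainder integral vanish in the limit $R_1, R_2 \to \infty$. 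Together with absolute convergence on any bounded interval (using the expansion \eqref{Fou11}), this establishes the Cauchy property and hence the existence of $\wtF_2(\sqrt{r}f)(\sqrt{\xi})$.

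For the bound \eqref{propFT2conc2}, I would first combine Lemmas \ref{lemFou1}--\ref{lemWeyl} with Proposition \ref{propSM2} to get the pointwise estimates $|\Phi_2(r,k^2)| \lesssim \min\!\big(\sqrt{r},\,\jk^{-1}k^{-1/2}\big)$ and $\rho_2'(k^2)k \approx \jk^2\,k$, and then split the $\xi$-integral at $\xi = 1$. At low frequencies $k \leq 1$, the direct estimate $|\wtF_2 g(k)| \lesssim k^{-1/2}\|g\|_{L^1}$ combined with a split of the $k$-integrand according to whether $rk \lesssim 1$ or $rk \gtrsim 1$ yields a bound by $\|g\|_{L^1} \lesssim M$, uniform in $r$. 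At high frequencies $k \geq 1$, I would use the eigenfunction equation $\mathcal{H}_2\Phi_2 = k^2\Phi_2$ and integrate by parts twice to get
\begin{align*}
k^2\,\wtF_2 g(k) = \int_0^\infty \Phi_2(r,k^2)\,\mathcal{H}_2 g(r)\,dr,
\end{align*}
where the boundary terms vanish thanks to $\Phi_2(r,k^2) \sim a r^{3/2}$ near $r = 0$ and the decay of $g, g'$ at infinity. Splitting the resulting integral again in the regions $rk \lesssim 1$ and $rk \gtrsim 1$ and using the weighted hypotheses on $g''$ and $g$ produces a bound on $|\wtF_2 g(k)|$ which, integrated against $|\Phi_2(r,k^2)|\,\rho_2'(k^2)\,k$, contributes $\lesssim M$ uniformly in $r$.

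The main obstacle will be handling the intermediate regime $rk \sim 1$, where neither the near-zero series \eqref{Fou11} nor the asymptotic expansion \eqref{Fou21} is uniformly sharp and the two have to be glued using the symbol-type bounds \eqref{Fou14} and \eqref{Fou22}; obtaining a bound on $|\wtF_2 g(k)|$ that is strong enough in $k$ to compensate the growth $\rho_2'(k^2)k \approx k^3$ at high frequencies is the delicate point, and may require combining the $\mathcal{H}_2$-integration by parts with an extra oscillatory integration by parts exploiting the $e^{ikr}$ phase of $\Psi_2$. A secondary issue is the rigorous justification of the integration by parts at $r = 0$ in the presence of the singular potential $3/(4r^2)$, which is resolved by the $r^{3/2}$ vanishing of $\Phi_2$ at the origin.
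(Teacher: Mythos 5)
Your setup (the asymptotics of $\Phi_2$ from Lemmas \ref{lemFou1}--\ref{lemWeyl}, the representation $\Phi_2 = 2\Re(a_2\Psi_2)$, and a region splitting in $rk$) is the right toolkit, and your existence-of-limit argument via a single oscillatory integration by parts and the Cauchy criterion is correct; the paper instead deduces existence from summability of dyadically localized pieces, but your route works. The problem is with the proof of the uniform bound \eqref{propFT2conc2}, where both halves of your frequency split have genuine gaps. At low frequencies you invoke $|\wtF_2 g(k)| \lesssim k^{-1/2}\|g\|_{L^1}$ and then claim $\|g\|_{L^1} = \|\sqrt{r}f\|_{L^1(dr)} \lesssim M$; this last inequality is false under the hypotheses. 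Take $f(r) = r^{-1}(\log r)^{-1}$ for $r \geq 2$ (smoothly cut off near the origin): then $f \in L^2(rdr)$, all three integrals in \eqref{propFT2as} converge, but $\int \sqrt{r}\,|f|\,dr = \infty$. The paper avoids this by splitting the \emph{inner} integral according to $s^2\xi \lesssim 1$ versus $s^2\xi \approx 2^j$, so the low-frequency piece $B_0(\xi)$ carries the restriction $s \lesssim \xi^{-1/2}$, and the subsequent $\xi$-integration against $|\Phi_2(r,\xi)|\rho_2(d\xi)$ over $\xi \lesssim \min(s^{-2},r^{-2})$ supplies exactly the weight needed to land on $\int s^{-1/2}|f(s)|\,ds$; no $L^1$ bound on $\sqrt{r}f$ is ever used.

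At high frequencies your identity $k^2\,\wtF_2 g(k) = \wtF_2(\mathcal{H}_2 g)(k)$ gains only $k^{-2}$, and a direct estimate of $\wtF_2(\mathcal{H}_2 g)(k)$ (splitting at $r \approx 1/k$) gives at best $|\wtF_2 g(k)| \lesssim k^{-5/2}M$; against the outer factor $|\Phi_2(r,k^2)|\,\rho_2'(k^2)\,k\,dk \approx k^{1/2}\jk\,dk$ this produces a logarithmically divergent integral, so the crude ``sup over the inner integral, then integrate in $k$'' structure fails at the endpoint. You flag this yourself, but your proposed repair --- an extra oscillatory integration by parts on top of the $\mathcal{H}_2$ identity --- would place a derivative on $\mathcal{H}_2 g$ and hence require $f'''$, which \eqref{propFT2as} does not control. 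The paper's resolution uses only $f''$: it performs \emph{both} integrations by parts directly against the phase $e^{i\sqrt{\xi}r}$ of $\Psi_2$ (never using the eigenfunction equation), keeping the dyadic localization $s^2\xi \approx 2^j$ in the bound \eqref{propFT2prBj}, and then interchanges the $\xi$-integration with the $s$-integration before summing in $j$, so that for fixed $s$ the $\xi$-integral runs over $\xi \approx 2^j s^{-2}$ and the sum converges to a factor $\lesssim s$, yielding $\int C(s)\,s\,ds \lesssim M$. In short: keep the joint $(s,\xi)$ localization and exchange the order of integration rather than estimating $|\wtF_2 g(k)|$ uniformly in $k$; without that, both your low- and high-frequency steps lose exactly the endpoint the proposition requires.
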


Note that in some the formulas we are using the frequency variables $\xi$,
with the understanding that $\xi $ plays the role of $k^2$.

\begin{proof}
The proof is similar to \cite{KST,KMS} but requires a slightly more careful estimation
of the generalized eigenfunctions. 
First, we write
\begin{align*}
& B_0(\xi) = \int_0^\infty \varphi_{\leq 0}(s^2\xi) \Phi_2(s,\xi) \sqrt{s} f(s) \, ds,
\\
& B_j(\xi) = \int_0^\infty \varphi_j(s^2\xi) \Phi_2(s,\xi) \sqrt{s} f(s) \, ds, \qquad j \geq 1.
\end{align*}
so that, formally at least, we have
\begin{align}\label{propFT2prsum}
\wtF_2 (\sqrt{r}f) (\sqrt{\xi}) = B_0(\xi) + \sum_{j\geq 1} B_j(\xi).
\end{align}

To estimate the above terms, first recall that, from Lemmas \ref{lemFou1} and \ref{lemWeyl}, 
we have %, in the region $s^2\xi \lesssim 1$, 
($\xi = k^2$ here)
\begin{align}\label{propFT2pr0}
|\Phi_2(r,\xi)| \lesssim \left\{ 
\begin{array}{ll}
r^{3/2} \jr^{-1}, &  r^2\xi \lesssim 1,
\\
%\dfrac{1}{\sqrt{k}} \big[ a_1(k^2) e^{irk} \big( 1 + \phi_+(r,k)) 
%  + \overline{a_1}(k^2) e^{-irk} \big(1 + \phi_-(r,k)) \big], &  rk \gtrsim 1
\xi^{-1/4} \jxi^{-1/2}, &  r^2\xi \gtrsim 1.
\end{array} 
\right.
\end{align}
See also the more precise \eqref{prKG2Phi}-\eqref{prKG2phiest}.
Also, note the elementary estimate, which we will use a few times below, $y \langle y \rangle^{-1} \jxi^{1/2} \lesssim 1$ whenever $y^2\xi \lesssim 1$.
%\begin{align}\label{propFT2pr0}
%| \Phi(s,\xi)| \lesssim \frac{s^{3/2}}{\langle s \rangle} \lesssim \xi^{-3/4}  \jxi^{1/2}.
%\end{align}
%This bound holds as well in the region $s^2\xi \gtrsim 1$ in view of Lemma \ref{lemWeyl}.

Using \eqref{propFT2pr0}, it follows that 
\begin{align*}
|B_0(\xi)| \lesssim \int_0^\infty \varphi_{\leq0} (s^2\xi) s^2 \langle s \rangle^{-1} |f(s)| \, ds,
\end{align*}
and, therefore, using again  \eqref{propFT2pr0} and \eqref{propSMrho},
\begin{align*}
\int_0^\infty | \Phi(r,\xi) | |B_0(\xi)| \rho_2(d\xi) \,d\xi 
%\\
%& \lesssim \int_0^\infty \varphi_{\leq 0}(r^2\xi) \frac{r^{3/2}}{\jr} |B_0(\xi)| \,d\xi 
%	+ \lesssim \int_0^\infty \varphi_{\geq 0}(r^2\xi)  \xi^{-1/4}  \jxi^{3/2} |B_0(\xi)| \,d\xi
%\\
& \lesssim \int_0^\infty\int_0^\infty \varphi_{\leq 0}(r^2\xi)\varphi_{\leq 0}(s^2\xi) \jxi \frac{r^{3/2}}{\jr} \frac{s^2}{\langle s \rangle} |f(s)| \,d\xi ds
\\
& + \int_0^\infty\int_0^\infty \varphi_{\geq 0}(r^2\xi)\varphi_{\leq 0}(s^2\xi) \xi^{-1/4}  \jxi^{1/2} \frac{s^2}{\langle s \rangle} |f(s)| \,d\xi ds
:= A_1 + A_2.
\end{align*}
For the first term we have the estimate
\begin{align*}
A_1 & \lesssim \int_0^\infty \int_0^{C\min(s^{-2},r^{-2})} %\varphi_{\leq 0}(r^2\xi)\varphi_{\leq 0}(s^2\xi) 
	r^{1/2} s  \,d\xi  \, |f(s)| \, ds
	\lesssim \int_0^\infty s^{-1/2} |f(s)\vert \, ds.
\end{align*}
For the second term, we can similarly bound
\begin{align*}
A_2 & \lesssim \int_0^\infty \int_0^{Cs^{-2}} %\varphi_{\geq 0}(r^2\xi)\varphi_{\leq 0}(s^2\xi) 
	\xi^{-1/4} \, d\xi \, s |f(s)| \, ds	
\lesssim  \int_0^\infty s^{-1/2} |f(s)| \, ds.
\end{align*}
These last two estimates are consistent with the bound claimed in \eqref{propFT2conc2}. %quantity in \eqref{propFT2as}.
%consistently with the conclusion \eqref{propFT2conc2}.

In the region $r^2\xi \gtrsim 1$ we use the description \eqref{propSM0} 
through the Weyl function from Lemma \ref{lemWeyl} to write, for $j \geq 1$
%k^{-1/2}e^{ikr}\sigma_{2,\infty}\left(kr,r\right) \qquad kr\gtrsim 1.
\begin{align*}
B_j(\xi) & = \int_0^\infty \varphi_j(r^2\xi) \xi^{-1/4}e^{i\sqrt{\xi}r} \sigma_{2,\infty}(\xi^{1/2}r,r) a_2(\xi) 
  \, \sqrt{r} f(r) \, dr
\\ & + \int_0^\infty \varphi_j(r^2\xi) \xi^{-1/4}e^{-i\sqrt{\xi}r} \overline{\sigma_{2,\infty}}(\xi^{1/2}r,r) 
  \overline{a_2}(\xi) \, \sqrt{r} f(r) \, dr.
\end{align*}
It suffices to look at the first of the two terms since they are nearly identical.
We can integrate by parts twice, using $a_2(\xi) \approx \jxi^{-1/2}$,
and the symbol properties \eqref{Fou22}-\eqref{Fou23}, to estimate
\begin{align}\label{propFT2prBj}
\begin{split}
|B_j(\xi)| & \lesssim \frac{1}{\xi^{5/4}} \frac{1}{\jxi^{1/2}}
	\int_0^\infty \big| \partial_r^2 \big( \varphi_j(r^2\xi) \sigma_{2,\infty}(\xi^{1/2}r,r) \, \sqrt{r} f(r) \big) \big| \, dr
	\\
	& \lesssim \frac{1}{\xi^{5/4}} \frac{1}{\jxi^{1/2}} \int_{r^2\xi \approx 2^j} 
		\big| r^{1/2} f''(r) \big| + \big| r^{-1/2} f'(r) \big| + \big| r^{-3/2} f(r) \big|\, dr.
\end{split}
\end{align}
Using the above estimate with %\eqref{propFT2pr0} and 
\eqref{propSMrho}, we obtain
\begin{align}\label{propFT2pr5}
\begin{split}
& \sum_{j\geq 1} \int_0^\infty | \Phi(r,\xi) | |B_j(\xi)| \rho_2(d\xi) \,d\xi
\\
& \lesssim \sum_{j\geq 1} \int_0^\infty
	|\Phi(r,\xi)|  \frac{1}{\xi^{5/4}} \jxi^{1/2}  \int_{s^2\xi \approx 2^j} 
	\big| s^{1/2} f''(s) \big| + \big| s^{-1/2} f'(s) \big| + \big| s^{-3/2} f(s) \big|\, ds \,d\xi
\\
& \lesssim \int_0^\infty C(s) \cdot \sum_{j\geq 1} \Big[
	\int_{\xi \approx 2^j s^{-2}}  |\Phi(r,\xi)|  \frac{1}{\xi^{5/4}} \jxi^{1/2} \,d\xi \Big] \, ds,
\end{split}
\end{align}
where we have defined 
$$C(s) := \big| s^{1/2} f''(s) \big| + \big| s^{-1/2} f'(s) \big| + \big| s^{-3/2} f(s) \big|.$$
We can then estimate, using $|\Phi(r,\xi)| \lesssim \xi^{-1/4} \jxi^{-1/2}$
when $r^2\xi \gtrsim 1$ (see \eqref{lemWeyl} and \eqref{propSM2}),
\begin{align*}
& \sum_{j\geq 1}
  \int_{\xi \approx 2^j s^{-2}}  \varphi_{\geq 0}(r^2\xi) |\Phi(r,\xi)|  \frac{1}{\xi^{5/4}} \jxi^{1/2} \,d\xi
  \lesssim \sum_{j\geq 1}
  \int_{\xi \approx 2^j s^{-2}} \frac{1}{\xi^{3/2}} \,d\xi
  \lesssim s,
\end{align*}
and, when $r^2\xi \lesssim 1$ instead, using $|\Phi(r,\xi)| \lesssim r^{3/2} \jr^{-1}$,
\begin{align*}
& \sum_{j\geq 1}
  \int_{\xi \approx 2^j s^{-2}}  \varphi_{\leq 0}(r^2\xi) |\Phi(r,\xi)|  \frac{1}{\xi^{5/4}} \jxi^{1/2} \,d\xi
  \lesssim \sum_{j\geq 1}
  \int_{\xi \approx 2^j s^{-2}}  \varphi_{\leq 1}(r^2\xi) \frac{r^{3/2}}{\jr} %\sqrt{r} 
  \frac{1}{\xi^{5/4}} \jxi^{1/2} \,d\xi
  \\ & 
  \lesssim  \sum_{j\geq 1}
  \int_{\xi \approx 2^j s^{-2}} \frac{1}{\xi^{3/2}} \,d\xi \lesssim s,
\end{align*}
having used again $r\jr^{-1} \jxi^{1/2} \lesssim 1$ for $r^2\xi \lesssim 1$. 
%as one can see by looking separately at the cases $r \gtrsim 1$ and $r \lesssim 1$.
Plugging-in these last two bounds into \eqref{propFT2pr5} gives
\begin{align}\label{propFT2pr6}
\begin{split}
& \sum_{j\geq 1} \int_0^\infty | \Phi(r,\xi) | |B_j(\xi)| \rho_2(d\xi) \,d\xi 
  \lesssim \int_0^\infty C(s) s \, ds
\end{split}
\end{align}
which proves \eqref{propFT2conc2}.
The existence of the pointwise limit ($\xi>0$) in \eqref{propFT2conc1} is a consequence of
the convergence of the sum in \eqref{propFT2prsum}; we can see this 
%which in turn follows the same arguments above,
%as one can see 
by summing the right-hand side of \eqref{propFT2prBj} to obtain, using that $r^{-1} \lesssim \sqrt{\xi}$ 
on the support of the integral,
\begin{align*}
\begin{split}
\sum_{j\geq 1} |B_j(\xi)| & \lesssim \frac{1}{\xi^{5/4}} \frac{1}{\jxi^{1/2}} \int_{r^2\xi \gtrsim 1} 
  \big| r^{1/2} f''(r) \big| + \big| r^{-1/2} f'(r) \big| + \big| r^{-3/2} f(r) \big|\, dr
\\
  & \lesssim \frac{1}{\xi^{5/4}} 
  \int_0^\infty \big| r^{3/2} f''(r) \big| + \big| r^{1/2} f'(r) \big| + \big| r^{-1/2} f(r) \big|\, dr \lesssim_\xi M
\end{split}
\end{align*}
This concludes the proof.
\end{proof}

%%%%%%%%%%%%%%%%%%%%%%%%%%%%%%%%%%%%%%%%%%%%%%%%%%%%%%%%%%%%%
%%%%%%%%%%%%%%%%%%%%%%%%%%%%%%%%%%%%%%%%%%%%%%%%%%%%%%%%%%%%%
%%%%%%%%%%%%%%%%%%%%%%%%%%%%%%%%%%%%%%%%%%%%%%%%%%%%%%%%%%%%%
%%%%%%%%%%%%%%%%%%%%%%%%%%%%%%%%%%%%%%%%%%%%%%%%%%%%%%%%%%%%%
%%%%%%%%%%%%%%%%%%%%%%%%%%%%%%%%%%%%%%%%%%%%%%%%%%%%%%%%%%%%%
%%%%%%%%%%%%%%%%%%%%%%%%%%%%%%%%%%%%%%%%%%%%%%%%%%%%%%%%%%%%%
%%%%%%%%%%%%%%%%%%%%%%%%%%%%%%%%%%%%%%%%%%%%%%%%%%%%%%%%%%%%%
%%%%%%%%%%%%%%%%%%%%%%%%%%%%%%%%%%%%%%%%%%%%%%%%%%%%%%%%%%%%%
%%%%%%%%%%%%%%%%%%%%%%%%%%%%%%%%%%%%%%%%%%%%%%%%%%%%%%%%%%%%%
%%%%%%%%%%%%%%%%%%%%%%%%%%%%%%%%%%%%%%%%%%%%%%%%%%%%%%%%%%%%%
%%%%%%%%%%%%%%%%%%%%%%%%%%%%%%%%%%%%%%%%%%%%%%%%%%%%%%%%%%%%%
%%%%%%%%%%%%%%%%%%%%%%%%%%%%%%%%%%%%%%%%%%%%%%%%%%%%%%%%%%%%%
%%%%%%%%%%%%%%%%%%%%%%%%%%%%%%%%%%%%%%%%%%%%%%%%%%%%%%%%%%%%%

\medskip
\section{Linear spectral analysis for $\mathcal{L}_1$}\label{secL1}
Recall the definition of $\mathcal{L}_1$
%\begin{align}\label{defL_1'}
%\mathcal{L}_1 := -\partial_r^2 - \dfrac{1}{r} \partial_r + \dfrac{1}{r^2} - (1-3U^2(r)),
%\end{align}
and introduce the operator
\begin{align}\label{defH_1}
\mathcal{H}_1 := -\partial_r^2 + \tfrac{3}{4r^2} - (1-3U^2(r)), \qquad  \mathcal{H}_1 = r^{1/2} \mathcal{L}_1 r^{-1/2}.
\end{align}
%We define the operator $\mathcal{H}_1$ by $\mathcal{H}_1 (r^{1/2} f) = r^{1/2} \mathcal{L}_1f$,
We will try to follow the general arguments from the previous section. 
However, unlike the case of $\mathcal{L}_2$ treated before, %in the previous section, 
here we do not have an explicit zero energy solution to begin with.
Therefore, to construct the generalized eigenfunctions satisfying 
\begin{align}\label{L1evalue}
\mathcal{H}_1 f = (k^2 + 2)f, \quad k\geq 0,  
\end{align}
we will need some additional steps. 
Another main difference
in this case compared to the previous one comes from the oscillations 
of the generalized eigenfunctions, which can be seen from the fact that, 
$\mathcal{H}_1 - 2 \approx -\partial_r^2 - (9/4)r^{-2}$ as $r\rightarrow \infty$, for which 
a fundamental system of real-valued solutions is given by \eqref{fsys_h1_inf_app}.
%$y_1(r)=\sqrt{r}\cos\big(\sqrt{2}\log r\big)$ and $y_2(r)=\sqrt{r}\cos\big(\sqrt{2}\log r\big)$.
%$(\sqrt{r}\cos(\sqrt{2}\log r),\sqrt{r}\sin(\sqrt{2}\log r))$.
%of $-y''(r)-\tfrac{9}{4r^2}y(r)=0$.
%, whereas in the previous case, for $r\gg1$ we had \[
%-y''(r)-\dfrac{1}{4r^2}y(r)=0,
%\]
%whose fundamental system of real-valued solution is given by \[
%y_1(r)=\sqrt{r} \qquad \hbox{ and } \qquad y_2(r)=\sqrt{r}\log r.
%\]
The oscillations make some of the following calculations more involved; 
in particular, showing lower bounds for the spectral measures requires a little more work.

\subsection{Fundamental solutions at zero energy}
%To construct a (real-valued) fundamental system $(\Phi_1^{(0)}, \Theta_1^{(0)})$ at zero energy ($k=0$) for \eqref{L1evalue},
%we begin by noticing that since $\mathcal{L}_1 = \mathcal{L}_2 + 2U^2$ we have $\mathcal{H}_1 = \mathcal{H}_2 + 2U^2$.
%Therefore 
We look for $\Phi_1^{(0)}$ solving
%\begin{align}
$\mathcal{H}_2 \Phi_1^{(0)} = 2(1 - U^2) \Phi_1^{(0)}$,
%\end{align}
%Notice that for $r\approx 0$ 
and expect that %$\Phi_1^{(0)}(r) = \Phi_2(r,2)$,
%where $\Phi_2(r,k^2)$ is the generalized eigenfunction for $\mathcal{H}_2$ constructed in Lemma \ref{lemFou1};
%that means that we expect to have 
$\Phi_1^{(0)}(r) %\approx \Phi_2^{(0)}(r) = \sqrt{r}U(r) 
\approx r^{3/2}$ for small $r$.
For large $r$ we 
%have instead \[
%\dfrac{3}{4r^2}-(3-3U^2(r))=-\dfrac{9}{4r^2}+O(r^{-4}) \quad r\gg1,
%\]
%and therefore we 
expect $\Phi_1^{(0)}(r)$ to be approximated by solutions of 
%\begin{align*}
$-\partial_r^2 f - (9/4)r^{-2} f = 0$,
%\end{align*}
for which a fundamental system is %explicitly given by 
\begin{align}\label{fsys_h1_inf_app}
y_1(r)=\sqrt{r}\cos\big(\sqrt{2}\ln(r)\big), \qquad y_2(r)=-\sqrt{r/2} \sin\big(\sqrt{2}\ln(r)\big).
\end{align}
%This latter fact comes simply from noticing that the last equation is a Cauchy-Euler equation. 
Observe that the normalization was chosen so that $W[y_2,y_1](r)=1$. 
%Once again, keep in mind that $\Phi_1^{(0)}(r)$ and $\Theta_1^{(0)}(r)$ 
%form a fundamental system of real-valued solutions with $\Phi_1^{(0)}(r)$ belonging to $L^2\big((0,1)\big)$.
%Now, 

\begin{lem}\label{lem_p1_t1}
There exists a fundamental system of real-valued solutions of $\mathcal{H}_1f=2f$ with the following asymptotic behavior
\begin{align}\label{Phi1_0}
\Phi_{1}^{(0)} (r) & = \begin{cases}
r^{3/2}+O(r^{7/2}), & r\lesssim 1,
\\ c_1\sqrt{r}\cos\big(\sqrt{2}\ln(r)\big)+c_2\sqrt{r}\sin\big(\sqrt{2}\ln(r)\big)+O(r^{-3/2}),  &  r \gtrsim 1,
\end{cases}
\\ \Theta_{1}^{(0)} (r) & = \begin{cases}
 \tfrac{1}{2}r^{-1/2} + O(r^{3/2}), & r\lesssim 1,
\\ c_3\sqrt{r}\cos\big(\sqrt{2}\ln(r)\big)+c_4\sqrt{r}\sin\big(\sqrt{2}\ln(r)\big)+O(r^{-3/2}), &   r \gtrsim 1, 
\end{cases} \label{Theta1_0}
\end{align}
for some $c_1,...,c_4$ with $c_2c_3-c_1c_4=\tfrac{1}{\sqrt{2}}$.
Also, they are normalized so that $W[\Theta_{1}^{(0)},\Phi_{1}^{(0)}]=1$.
\end{lem}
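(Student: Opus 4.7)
The plan is to regard $(\mathcal{H}_1 - 2)f = 0$ as a perturbation of an exactly solvable free problem at each endpoint $r=0$ and $r=+\infty$, and then to glue the local asymptotic information using the fact that any solution of this second-order ODE extends globally. Rewriting the equation as $\bigl(-\partial_r^2 + \tfrac{3}{4r^2}\bigr) f = 3(1-U^2(r)) f$, the right-hand side is bounded on $(0,\infty)$, converges to $3$ as $r \to 0$ by \eqref{Uexp0}, and decays like $3/r^2$ at infinity by \eqref{Uexpinfty}; hence it is a regular perturbation of the Bessel-type operator $L_0 := -\partial_r^2 + \tfrac{3}{4r^2}$ near zero, and of the oscillatory operator $-\partial_r^2 - \tfrac{9}{4r^2}$ near infinity.

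Near $r=0$, I would construct $\Phi_{1}^{(0)}$ via the Volterra equation associated to the Green's function of $L_0$ built from the fundamental pair $\{r^{3/2}, r^{-1/2}\}$ (Wronskian $2$),
$\Phi_{1}^{(0)}(r) = r^{3/2} + \tfrac{3}{2}\int_0^r \bigl(r^{-1/2} s^{3/2} - r^{3/2} s^{-1/2}\bigr)(1-U^2(s))\,\Phi_{1}^{(0)}(s)\,ds,$
and show by standard Picard iteration that this defines a smooth solution with expansion $r^{3/2} + O(r^{7/2})$. A naive Volterra iteration for $\Theta_{1}^{(0)}$ starting from $r^{-1/2}$ fails (the source-free term produces a logarithmic divergence), so instead I would use reduction of order on a small interval $(0,R_0]$ on which $\Phi_{1}^{(0)}$ does not vanish: set
$\Theta_{1}^{(0)}(r) := \Phi_{1}^{(0)}(r) \int_r^{R_0} \frac{ds}{(\Phi_{1}^{(0)}(s))^2} + \gamma\, \Phi_{1}^{(0)}(r),$
and choose $\gamma$ so that $\Theta_{1}^{(0)}(r) = \tfrac{1}{2} r^{-1/2} + O(r^{3/2})$ as $r\to 0^+$. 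By construction $W[\Theta_{1}^{(0)}, \Phi_{1}^{(0)}] = 1$, and standard ODE theory extends both solutions uniquely to real-valued solutions on $(0,\infty)$, with the Wronskian preserved globally since the equation has no first-order term.

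For the asymptotics at $r=+\infty$, I would rewrite the equation as $\bigl(-\partial_r^2 - \tfrac{9}{4r^2}\bigr) f = W(r) f$ with $W(r) := 3\bigl[(1-U^2(r)) - r^{-2}\bigr] = O(r^{-4})$ by \eqref{Uexpinfty}. Perturbing off of the fundamental pair $y_1, y_2$ introduced in the statement (with $W[y_2, y_1] = 1$), any solution of the full equation can be represented by the Volterra equation
$f(r) = A\, y_1(r) + B\, y_2(r) - \int_r^\infty \bigl[y_1(s)y_2(r) - y_1(r) y_2(s)\bigr]\, W(s) f(s)\,ds.$
Since $|y_j(s)| \lesssim \sqrt{s}$ and $|W(s)| \lesssim s^{-4}$, Picard iteration converges on $[R_\infty, \infty)$ for $R_\infty$ large enough, and the integral correction is $O(r^{-3/2})$. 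Applied separately to $\Phi_{1}^{(0)}$ and $\Theta_{1}^{(0)}$, this produces the expansions \eqref{Phi1_0}--\eqref{Theta1_0} with some constants $c_1,\dots,c_4$ (the factor $-\sqrt{1/2}$ present in $y_2$ is absorbed into $c_2$ and $c_4$, giving the basis $\{\sqrt{r}\cos(\sqrt{2}\ln r), \sqrt{r}\sin(\sqrt{2}\ln r)\}$ in the statement).

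To finish, evaluating $W[\Theta_{1}^{(0)}, \Phi_{1}^{(0)}]$ at $r\to\infty$ using the expansions and the identity $W[\sqrt{r}\sin(\sqrt{2}\ln r),\sqrt{r}\cos(\sqrt{2}\ln r)]=\sqrt{2}$ yields $W = \sqrt{2}(c_2 c_3 - c_1 c_4)$; equating this with the value $1$ enforced at zero gives the claimed identity $c_2 c_3 - c_1 c_4 = 1/\sqrt{2}$. The step I expect to require the most care is the Volterra iteration at infinity: one must exploit the oscillating character of $y_1, y_2$ together with the $s^{-4}$ decay of $W$ to obtain the sharp $O(r^{-3/2})$ remainder rather than a logarithmic loss, and to verify that no resonant logarithmic corrections appear in the leading asymptotics — this is the analogue at $r=\infty$ of the subtle cancellation flagged in \eqref{canc_log_f1} for $\mathcal{L}_2$.
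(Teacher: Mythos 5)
Your proposal is correct and follows essentially the same route as the paper's proof: a Volterra/fixed-point perturbation off the explicit fundamental systems at $r=0$ (error $O(r^2)$ from $3-3U^2$) and at $r=\infty$ (error $O(r^{-4})$), reduction of order to produce $\Theta_1^{(0)}$ with $W[\Theta_1^{(0)},\Phi_1^{(0)}]=1$, global extension, and the Wronskian evaluation at infinity yielding $c_2c_3-c_1c_4=\tfrac{1}{\sqrt{2}}$. The differences are only organizational (you construct the solution at $r=0$ first and then match at infinity, while the paper builds the basis at infinity first; your claim that every solution satisfies the $\int_r^\infty$ Volterra representation is justified, as in the paper, by first constructing via contraction the two solutions asymptotic to $y_1,y_2$ and then expanding in that basis), so the argument is essentially identical.
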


\begin{proof}
%In order to show the existence of a fundamental system %($\Phi_1^{(0)}(r),\Theta_1^{(0)}(r))$ 
%satisfying the claimed asymptotics, 
We begin by constructing solutions at infinity using a perturbative argument.
%starting from the fundamental system just found and, therefore, having the same asymptotic behavior.
%In fact, as we already mention, we start by constructing solutions at infinity with said asymptotics.
%In order to solve %the exact equation 
We write \eqref{L1evalue} with $k=0$ %at infinity we rewrite it %said equation 
as 
\begin{align}\label{L1_eqrewri}
-y''(r)-\dfrac{9}{4r^2}y(r)=-\Big(\dfrac{3}{4r^2}-(3-3U^2(r))+\dfrac{9}{4r^2}\Big)y(r)=:\widetilde{V}(r)y(r).
\end{align}
%Before going any further it is worth pointing out that, 
From the asymptotics of $U(r)$ it follows that 
\[
\widetilde{V}(r)=O(r^{-4}) \qquad r\gtrsim1.
\]
%Then, recall that we already know a 
Using the fundamental system of solutions \eqref{fsys_h1_inf_app}, %for (cf. \eqref{fsys_h1_inf_app}) 
%\[ -y''(r)-\dfrac{9}{4r^2}y(r)=0,\]
%and hence, in view of the fundamental solution, 
recalling that $W[y_2,y_1](r)=1$, 
we have that
\begin{align*}
\Phi_{1,\infty}^{(0)}(r) &=\sqrt{r}\cos\big(\sqrt{2}\ln(r)\big)+ \dfrac{1}{\sqrt{2}}\int_r^\infty \Big(\sqrt{rs}\cos\big(\sqrt{2}\ln(s)\big)\sin\big(\sqrt{2}\ln(r)\big)
\\ & \quad -\sqrt{rs}\cos\big(\sqrt{2}\ln(r)\big)\sin\big(\sqrt{2}\ln(s)\big)\Big)\widetilde{V}(s)\Phi_{1,\infty}^{(0)}(s)ds,
\\ \Theta_{1,\infty}^{(0)}(r)& = -\dfrac{\sqrt{r}}{\sqrt{2}} \sin\big(\sqrt{2}\ln(r)\big)+\dfrac{1}{\sqrt{2}}\int_r^\infty\Big(\sqrt{rs}\cos\big(\sqrt{2}\ln(s)\big)\sin\big(\sqrt{2}\ln(r)\big)
\\ & \quad -\sqrt{rs}\cos\big(\sqrt{2}\ln(r)\big)\sin\big(\sqrt{2}\ln(s)\big)\Big)\widetilde{V}(s)\Theta_{1,\infty}^{(0)}(s),
\end{align*}
%%comm2%{the integral terms shouldn't change due to the double minus (wronskian and def of $\Theta$)} 
is a fundamental system of solutions to equation \eqref{L1_eqrewri};
we claim that this can be written as 
\begin{align}\label{FS0infty}
\begin{split}
\Phi_{1,\infty}^{(0)}(r) & = \sqrt{r}\cos\big(\sqrt{2}\ln(r)\big)+\varepsilon_{1,\infty}(r),
\\ 
\Theta_{1,\infty}^{(0)}(r) & = -\dfrac{\sqrt{r}}{\sqrt{2}} \sin\big(\sqrt{2}\ln(r)\big)+\varepsilon_{2,\infty}(r),
\end{split}
\end{align}
with $\varepsilon_{1,\infty}(r), \varepsilon_{2,\infty}(r) = O(r^{-3/2})$ lower order terms.
Since the argument %follows very similar lines in both cases,
is similar in the two cases,
we only perform it in the case of $\Phi_{1,\infty}^{(0)}(r)$. 
%In fact, substituting said ansatz 
Substituting the ansatz into the integral equation above we see that 
$\varepsilon_{1,\infty}(r)$ satisfies the Volterra equation 
\begin{align*}
\varepsilon_{1,\infty}(r) & = \dfrac{1}{\sqrt{2}}\int_r^\infty \Big(\sqrt{rs}\cos\big(\sqrt{2}\ln(s)\big)\sin\big(\sqrt{2}\ln(r)\big)
\\ & \quad -\sqrt{rs}\cos\big(\sqrt{2}\ln(r)\big)\sin\big(\sqrt{2}\ln(s)\big)\Big)\widetilde{V}(s)\left(\sqrt{s}\cos\big(\sqrt{2}\ln(s)\big)+\varepsilon_{1,\infty}(s)\right)ds.
\end{align*}
%Next, observe that, by direct computations, 
Thanks to the fast decay of $\widetilde{V}(s)=O(s^{-4})$, the leading order term in the last expression satisfies 
\begin{align*}
& \bigg\vert \int_r^\infty \Big(\sqrt{rs}\cos\big(\sqrt{2}\ln(s)\big)\sin\big(\sqrt{2}\ln(r)\big)
\\ & \quad -\sqrt{rs}\cos\big(\sqrt{2}\ln(r)\big)\sin\big(\sqrt{2}\ln(s)\big)\Big)\widetilde{V}(s)\sqrt{s}\cos\big(\sqrt{2}\ln(s)\big)ds\bigg\vert  \lesssim  \sqrt{r}\int_r^\infty s^{-3}ds \lesssim r^{-3/2}. 
\end{align*}
Therefore, %in order to conclude it is enough to 
we define $T(f):X\to X$ the map given by 
\begin{align*}
T(f)&:=\dfrac{1}{\sqrt{2}}\int_r^\infty \Big(\sqrt{rs}\cos\big(\sqrt{2}\ln(s)\big)\sin\big(\sqrt{2}\ln(r)\big)
\\ & \,\quad -\sqrt{rs}\cos\big(\sqrt{2}\ln(r)\big)\sin\big(\sqrt{2}\ln(s)\big)\Big)\widetilde{V}(s)\left(\sqrt{s}\cos\big(\sqrt{2}\ln(s)\big)+f(s)\right)ds,
\end{align*}
with $X := \{f\in C([r_0,\infty)): \, r^{3/2}f(r)\in L^\infty((r_0,\infty)) \}$ for $r_0\gg1$ large enough but fixed. 
%Thus, 
By similar computations as before one sees that %it follows that 
\[
\big\vert T[f](r)-T[g](r)\big\vert \lesssim r^{-2}\Vert f-g\Vert_{L^\infty(r,\infty)} .
\]
This implies that $\Vert Tf-Tg\Vert_X=\Vert r^{3/2}(Tf-Tg)\Vert_{L^\infty_r((r_0,\infty))}\lesssim r_0^{-1/2}\Vert f-g\Vert_X$,
hence $T$ is a contraction in $X$ for $r_0$ large enough. 
Then, a direct application of Banach's Fixed Point Theorem %, taking $r_0$ larger if necessary, 
leads to the existence, uniqueness and decay of $\varepsilon_{1,\infty}(r)$.

Next, using the fundamental system of solutions at infinity \eqref{FS0infty}
we can construct the (real-valued) Weyl solution at zero in the whole half-line $(0,\infty)$, 
namely $\Phi_1^{(0)}(r)$, which satisfies
\begin{align}\label{asymp_phi_1_0}
\Phi_1^{(0)}(r)=\begin{cases}
r^{3/2}+O(r^{7/2}) & r\ll1 
\\ c_1\sqrt{r}\cos\big(\sqrt{2}\ln(r)\big)+c_2\sqrt{r}\sin\big(\sqrt{2}\ln(r)\big)+\varepsilon(r) & r\gg 1,
\end{cases}
\end{align}
for some $c_1,c_2\in\R$, with $\varepsilon(r)$ satisfying \[
\vert \varepsilon(r)\vert \lesssim r^{-3/2}, \qquad r\gg1.
\]
In fact, it is enough to see that around zero the equation \eqref{L1evalue} with $k=0$ is approximately 
%\[
$-y''+(3/4)r^{-2}y=0,$ 
%\]
whose fundamental system of solution is 
%\[
$y_1(r)=r^{3/2}$, %\qquad \hbox{and} \qquad 
$y_2(r)=(1/2)r^{-1/2}.$
%\]
where only $y_1$ is %Observe that only one of these solutions is 
in $L^2\big((0,1)\big)$. 
Then, reasoning along similar lines as before, 
taking advantage of the fact that $1-U^2(r)=1-a^2r^2+O(r^4)$ for $r \ll 1$, 
it is not difficult to see that the Weyl real-valued solution of 
%\[
$\mathcal{H}_1f=2f$
%\]
can be written (up to a multiplicative constant), for  $r \ll 1$, as \[
\Phi_{1,0}^{(0)}(r)=r^{3/2}+\eta(r), \qquad \vert \eta(r)\vert=O(r^{7/2}),
\]
by a fixed point argument close to $r=0$. More specifically, writing the equation as \[
-y''(r)+\dfrac{3}{4r^2}y(r)=(3-3U^2(r))y(r)=:\widetilde{V}_0(r)y(r),
\]
then the map $T_0$ defined on $X_0=\{f\in C([0,r_0)): \, r^{-7/2}f(r)\in L^\infty((0,r_0)) \}$ with $r_0\ll1$, 
as 
\begin{align*}
T_0(f)&:=\dfrac{1}{2}\int_0^r \Big(r^{3/2}s^{-1/2}-s^{3/2}r^{-1/2}\Big)\widetilde{V}_0(s)\left(s^{3/2}+f(s)\right)ds
\end{align*} 
is a contraction on $X_0$. Then, by extending this solution up to $r=+\infty$, 
using the fact that $\Phi_1^{(0)}(r)$ solves the equation for all $r>0$, we conclude 
that there exist constants $c_1,c_2$ such that
\begin{align}\label{Phi1_0pr}
\Phi_1^{(0)}(r) = c_1 \Phi_{1,\infty}^{(0)}(r) - \sqrt{2}c_2 \Theta_{1,\infty}^{(0)}(r), \qquad r> r_0;
\end{align}
in view of \eqref{FS0infty} this gives \eqref{asymp_phi_1_0} 
with $\varepsilon = c_1\varepsilon_{1,\infty} - \sqrt{2}c_2\varepsilon_{2,\infty}$,
and \eqref{Phi1_0} follows.

Once $\Phi_1^{(0)}(r)$ is constructed, we can find $\Theta_1^{(0)}(r)$ %, for instance, 
by solving the equation 
%\[
$W[\Theta_1^{(0)},\Phi_1^{(0)}]=1,$ %\qquad 
with $\Theta_1^{(0)}(1)=0,$
%\]
for $r\lesssim1$, namely, 
\begin{align}\label{Theta10}
\Theta_1^{(0)}(r)=-\Phi_1^{(0)}(r)\int_1^r (\Phi_1^{(0)}(s))^{-2} \, ds.
\end{align}
Extending this solution to $r=+\infty$, 
leads to the asymptotics stated in \eqref{Theta1_0},
\iffalse
using the elementary identity
\begin{align*}
%& \big[ c_1\sqrt{r}\cos\big(\sqrt{2}\ln(r)\big)+c_2\sqrt{r}\sin\big(\sqrt{2}\ln(r)\big) \big]
%\\
% & \times 
\int \big( c_1\sqrt{s}\cos\big(\sqrt{2}\ln(s)\big)+c_2\sqrt{s}\sin\big(\sqrt{2}\ln(s)\big) \big)^{-2} \, ds 
 \\
 = -\sqrt{2} \frac{1}{c_1+c_2\tan (\sqrt{2} \ln s)} + C.
\end{align*}
\fi
where the condition on the constants, 
%\[
$c_2c_3-c_1c_4 = 1/\sqrt{2},$
%\]
comes from $W[\Theta_1^{(0)},\Phi_1^{(0)}]=1$ at infinity. 
From \eqref{Theta10} for $r$ close to zero we obtain the factor $\tfrac12$ 
in front of $\Theta_1^{(0)}$ appearing in \eqref{Theta1_0} for $r\ll1$.
This concludes the proof of \eqref{Theta1_0} and the lemma.
\end{proof}

\subsection{Region $rk \lesssim 1$}
In this subsection we seek to construct a power series expansion for $\Phi_1(r,k^2)$
%, the real-valued solution of $\mathcal{H}_1f=(k^2+2)f$ belonging to $L^2\big((0,1)\big)$, 
for all $r,k>0$. 
As before, we proceed perturbatively using the fundamental system for $k^2=0$ we found above.
%as our starting point. 

\begin{lem}\label{Phi1Op1_1}
For all $r>0$, $k> 0$ we have the expansion
\begin{align}\label{Phi1Op1_2}
\Phi_1(r,k^2) = \Phi_{1}^{(0)}(r) + \sqrt{r} \sum_{j\geq 1} (rk)^{2j} \Phi_{1,j}(r),
\end{align}
which converges absolutely; the expansion converges uniformly if $rk \leq 1$, and
$\Phi_{1,j}$ are smooth. In the case of $j=1$ we have
\begin{align}\label{Phi1Op1_10}
\Phi_{1,1}(u) = \begin{cases} 
\tfrac18r + O(r^3), &  r \lesssim 1,
\\ \tfrac{1}{12}\Big((c_1-\sqrt{2}c_2)\cos(\sqrt{2}\ln r)+(\sqrt{2}c_1+c_2)\sin(\sqrt{2}\ln r)\Big) + O(\tfrac{1}{r^2}), &  r \gtrsim 1.
\end{cases}
\end{align}
Moreover, for some absolute $C>0$, for all $j\geq 1$
\begin{align}\label{Phi1Op1_3}
| \Phi_{1,j}(u) | \leq\frac{C^j}{j!(j+1)!} u , \quad u \lesssim 1,
\end{align}
while for $r\gtrsim1$ the following holds
\begin{align}\label{Phi1Op1_8}
\Phi_{1,j}(r)=c_{1,j}\cos(\sqrt{2}\ln(r))+c_{2,j}\sin(\sqrt{2}\ln(r))+O\big(\tfrac{1}{(j-2)!}r^{-2}\big), \qquad r\gtrsim1,
\end{align}
with constants satisfying 
\begin{align}\label{Phi1Op1_9}
\vert c_{1,j}\vert+\vert c_{2,j}\vert \lesssim  \dfrac{\vert c_1\vert+\vert c_2\vert}{2^{2j}(j!)^2}.
\end{align}
In \eqref{Phi1Op1_3} we also have consistent symbol-type bounds for the derivatives, that is,
\begin{equation}\label{Phi1Op1_4}
\begin{aligned}
\qquad \vert (r\partial_r)^m\Phi_{1,j}(r)\vert&\lesssim r,  & \qquad r\lesssim 1,
\end{aligned}
\end{equation}
whereas, for \eqref{Phi1Op1_8}, we have %%comm2%{this is not what I want to write/how the statement should look like; it is not $\partial_r\Phi_{1,j}$ who should be on this line. I'll fix it}
\begin{align}\label{Phi1Op1_12}
(r\partial_r)^m\Phi_{1,j}(r) 
  = \widetilde{c}_{1,j,m}\cos(\sqrt{2}\ln(r))+\widetilde{c}_{2,j,m}\sin(\sqrt{2}\ln(r))+O\big(\tfrac{1}{(j-2)!}r^{-2}\big), \qquad r\gtrsim1,
\end{align}
with constants satisfying
\begin{align}\label{Phi1Op1_12'}
\vert \widetilde{c}_{1,j,m}\vert+\vert \widetilde{c}_{2,j,m}\vert \lesssim  \dfrac{C^m}{((j-1)!)^2}.
\end{align}
\end{lem}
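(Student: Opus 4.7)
The plan is to adapt the strategy of Lemma \ref{lemFou1} to the operator $\mathcal{H}_1 - 2$, using the fundamental system $(\Phi_1^{(0)}, \Theta_1^{(0)})$ from Lemma \ref{lem_p1_t1}. I would make the ansatz
\begin{align*}
\Phi_1(r,k^2) = \sqrt{r}\sum_{j\geq 0} k^{2j} f_j(r), \qquad \sqrt{r}\,f_0(r) := \Phi_1^{(0)}(r),
\end{align*}
and substitute into $\mathcal{H}_1 f = (k^2+2)f$ to obtain the recursion $(\mathcal{H}_1 - 2)(\sqrt{r} f_j) = \sqrt{r} f_{j-1}$. Since $W[\Theta_1^{(0)},\Phi_1^{(0)}]=1$, the variation of parameters formula gives
\begin{align*}
\sqrt{r}\,f_j(r) = \int_0^r \bigl(\Theta_1^{(0)}(s)\Phi_1^{(0)}(r) - \Theta_1^{(0)}(r)\Phi_1^{(0)}(s)\bigr)\sqrt{s}\,f_{j-1}(s)\,ds,
\end{align*}
and identifying $f_j(r) = r^{2j}\Phi_{1,j}(r)$ yields the Volterra recursion for $\Phi_{1,j}$.

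Then I would compute $\Phi_{1,1}$ in the two regimes. For $r \lesssim 1$, one plugs in $\Phi_1^{(0)}(r) = r^{3/2}+O(r^{7/2})$ and $\Theta_1^{(0)}(r) = \tfrac12 r^{-1/2}+O(r^{3/2})$ and integrates term by term, producing $\Phi_{1,1}(r) = r/8 + O(r^3)$ by essentially the same arithmetic as in Lemma \ref{lemFou1}. For $r \gtrsim 1$, the asymptotics of $\Phi_1^{(0)}$ and $\Theta_1^{(0)}$ give products that are linear combinations of $s$, $s\cos(2\sqrt{2}\ln s)$, and $s\sin(2\sqrt{2}\ln s)$; elementary primitives such as $\int s\cos(2\sqrt{2}\ln s)\,ds = \tfrac{s^2}{9}\cos(2\sqrt{2}\ln s) + \tfrac{2\sqrt{2}\,s^2}{9}\sin(2\sqrt{2}\ln s) + C$ (and its sine analogue) let one collect the secular $r^2$ contributions and reassemble them, after dividing by $r$, into the claimed oscillatory expression with the precise coefficient $1/12$ in \eqref{Phi1Op1_10}; the $O(r^{-2})$ remainder comes both from the $O(r^{-3/2})$ tails in Lemma \ref{lem_p1_t1} and from the boundary contribution at $r_0$ of the inner integral.

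The general bounds \eqref{Phi1Op1_3} and \eqref{Phi1Op1_8}–\eqref{Phi1Op1_9} proceed by induction on $j$. For $r\lesssim 1$, the Green's kernel behaves as $\tfrac12(rs^{-1}-sr^{-1})\cdot s^{3/2}\sqrt{r}$ modulo higher-order corrections, identical in shape to the one in Lemma \ref{lemFou1}, so the same factorial recursion $\tfrac{C^{j-1}}{(j-1)!j!} \mapsto \tfrac{C^j}{j!(j+1)!}$ closes. For $r\gtrsim 1$, I would insert the inductive form $\Phi_{1,j-1}(s) = c_{1,j-1}\cos(\sqrt{2}\ln s) + c_{2,j-1}\sin(\sqrt{2}\ln s) + O(s^{-2}/(j-3)!)$ into the recursion and evaluate using the trig-product identities $\cos A\cos B = \tfrac12(\cos(A-B)+\cos(A+B))$ (etc.). Each iteration produces a new pair of bounded oscillatory coefficients multiplied by a factor of roughly $1/(4j(j+1))$ arising from integrals $\int_0^r s^{2j+1}(\cos\text{ or }\sin)(2\sqrt{2}\ln s)\,ds$, and after dividing by $r^{2j}$ this propagates the bound \eqref{Phi1Op1_9}. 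The symbol-type bounds \eqref{Phi1Op1_4}, \eqref{Phi1Op1_12}, \eqref{Phi1Op1_12'} follow by differentiating the Volterra identity for $\partial_r(\sqrt{r}f_j)$ and iterating as in Lemma \ref{lemFou1}; higher derivatives are obtained directly from the equation $(\mathcal{H}_1-2)(\sqrt{r}f_j) = \sqrt{r}f_{j-1}$.

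The main obstacle is the region $r\gtrsim 1$. Unlike the monotone, decaying Green's kernel in the $\mathcal{L}_2$ case, here $\Phi_1^{(0)}$ and $\Theta_1^{(0)}$ oscillate and do not decay, so naïve bounds on the Volterra iteration would allow the oscillatory coefficients to grow rather than decay in $j$. The delicate point is to exploit the trigonometric cancellation between the $\Phi_1^{(0)}(r)\int\Theta_1^{(0)}\Phi_1^{(0)}(\cdot)f_{j-1}$ and $\Theta_1^{(0)}(r)\int(\Phi_1^{(0)})^2(\cdot) f_{j-1}$ terms so that the secular $r^{2j+2}$ growth in the primitives is absorbed cleanly, leaving only the small denominators $4j(j+1)$ that produce the factorial decay \eqref{Phi1Op1_9}; a parallel care is needed to check that the error tails of size $O(r^{-2})$ propagate with a mere $1/(j-2)!$ weight rather than accumulating.
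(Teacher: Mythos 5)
Your proposal follows essentially the same route as the paper's proof: the ansatz $\Phi_1=\sqrt{r}\sum_j k^{2j}f_j$ with $f_0=r^{-1/2}\Phi_1^{(0)}$, the Volterra recursion through the Green's function built from $(\Phi_1^{(0)},\Theta_1^{(0)})$, the two-regime computation of $f_1$, the induction with factorial gain coming from the small denominators in the explicit logarithmic-trigonometric primitives, and the differentiated Volterra identity (plus the ODE) for the symbol bounds, with the oscillatory non-decaying kernel for $r\gtrsim1$ correctly identified as the delicate point. Only a couple of incidental arithmetic slips appear (e.g. $\int s\cos(2\sqrt{2}\ln s)\,ds=\tfrac{s^2}{12}\bigl(2\cos(2\sqrt{2}\ln s)+2\sqrt{2}\sin(2\sqrt{2}\ln s)\bigr)+C$, not the $1/9$ coefficients you wrote, and the relevant integrand power is $s^{2j-1}$ rather than $s^{2j+1}$), which do not affect the structure of the argument.
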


\begin{proof}
In order to solve $\mathcal{H}_1\Phi_1(r,k^2) = (k^2+2) \Phi_1(r,k^2)$ we begin by making the ansatz
\begin{align*}
\Phi_1(r,k^2) = \sqrt{r} \sum_{j\geq 0} k^{2j} f_j(r), \qquad f_0(r) := r^{-1/2}\Phi_1^{(0)}(r).
\end{align*}
Substituting this %said ansatz 
into the equation we are led to the relation
\begin{align}\label{Phi1Op1_5}
\mathcal{H}_1 (\sqrt{r} f_j(r))-2\sqrt{r}f_j = \sqrt{r} f_{j-1}(r), \quad j\geq 0, \quad f_{-1}(r) = 0.
\end{align}
Comparing with \eqref{Phi1Op1_2} 
we have $f_j(r) = r^{2j} \Phi_j(r)$.
From \eqref{Phi1Op1_5} and the fundamental solution (Green's function) 
for $\mathcal{H}_1f=2f$ given by
\begin{align}\label{Phi1Op1_1W}
\Theta_1^{(0)}(r)\Phi _1^{(0)}(s) - \Theta_1^{(0)}(s)\Phi_1^{(0)}(r)  \quad  \hbox{ for }  \quad r>s,
\end{align}
%Green's formula with the fundamental system found above, 
we get the recursion formula
\begin{align}\label{Phi1Op1_13}
\sqrt{r} f_j(r) = \int_0^r 
\Big( \Theta_1^{(0)}(s)\Phi_1^{(0)}(r)-\Theta_1^{(0)}(r)\Phi _1^{(0)}(s)\Big) \sqrt{s} f_{j-1}(s)\, ds.
\end{align}
Recalling \eqref{Theta10}, %that %, from Lemma \ref{lem_p1_t1} and $\color{red}W[\Phi_1^{(0)},\Theta_1^{(0)}]=1$,
%for $r\lesssim1$ we can write $\Theta_1^{(0)}(r)$ as 
%\begin{align}\label{Phi1Op1_11} 
%\Theta_1^{(0)}(r)=-\Phi_1^{(0)}(r)\int_1^r\dfrac{1}{\Phi_1^{(0)}(s)^2}ds,
%\end{align}
it follows that the recursion for $f_j(r)$ can be conveniently rewritten as 
\begin{align}\label{Phi1Op1_6}
\begin{split}
& f_j(r) 
%& = \frac{1}{\sqrt{r}} \int_0^r \Big( \frac{U(r)}{\sqrt{r}} \frac{U(s)}{\sqrt{s}} 
%  \int_1^r \frac{t}{U^2(t)} \, dt
%  - \frac{U(s)}{\sqrt{s}} \int_1^s \frac{t}{U^2(t)} \, dt  \frac{U(r)}{\sqrt{r}} 
% \Big) \sqrt{s} f_{j-1}(s)\, ds
% \\
= \int_0^r K(r,s) f_{j-1}(s)\, ds,
\\
& K(r,s) := \big(\tfrac{s}{r}\big)^{1/2}\Phi_1^{(0)}(r)\Phi_1^{(0)}(s) \int_s^r \dfrac{1}{\Phi_1^{(0)}(t)^2} \, dt, 
  \quad r\lesssim1.
\end{split}
\end{align}
Observe that $K(r,s)$ is strictly positive for $r\ll 1$. 
Using the asymptotics in \eqref{Phi1_0}-\eqref{Theta1_0} for small and large $r$, we obtain %that (respectively)
\begin{align}\label{Phi1Op1_7}
K(r,s)
= \left\{ 
\begin{array}{ll}
\dfrac{r}{2} \big( 1 - \tfrac{s^2}{r^2} \big)+\widetilde{c}  rs^2\log\tfrac{r}{s} +r^3O(1-\tfrac{s^4}{r^4}) & s < r \lesssim 1,
\\ 
\\
\dfrac{s}{\sqrt{2}}\sin\big(\sqrt{2}\ln(\tfrac{r}{s})\big) +O(s^{-1})&  r > s \gtrsim 1,
\end{array} 
\right.
\end{align}
for some constant $\widetilde{c}\in\R$;
to calculate the asymptotic for $r>s\gtrsim 1$ we have used the explicit asymptotics 
for $\Theta_1^{(0)}(r)$, given by \eqref{Theta1_0}, in the formula \eqref{Phi1Op1_1W}
(cfr. \eqref{Phi1Op1_13}),
\iffalse
the elementary identity
\begin{align*}
%& \big[ c_1\sqrt{r}\cos\big(\sqrt{2}\ln(r)\big)+c_2\sqrt{r}\sin\big(\sqrt{2}\ln(r)\big) \big]
%\\
% & \times 
\int \big( c_1\sqrt{s}\cos\big(\sqrt{2}\ln(s)\big)+c_2\sqrt{s}\sin\big(\sqrt{2}\ln(s)\big) \big)^{-2} \, ds 
 \\
 = -\frac{1}{\sqrt{2}} \frac{1}{c_1+c_2\tan (\sqrt{2} \ln s)} + C.
\end{align*}
\fi
%(instead of those in \eqref{Phi1Op1_11}),
elementary trigonometric identities, %as well as the  fact that \[
and
$c_2c_3-c_1c_4=1/\sqrt{2}.$

%{\color{magenta}\eqref{Fou1Kasy}}
%
%\begin{align*}
%\Phi_{1}^{(0)} (r) &= \begin{cases}
%r^{3/2}+O(r^{7/2}) & r\ll 1,
%\\ c_1\sqrt{r}\cos\big(\sqrt{2}\ln(r)\big)+c_2\sqrt{r}\sin\big(\sqrt{2}\ln(r)\big)+O(r^{-3/2}),  &  r \gg 1,
%\end{cases}
%\\ \Theta_{1}^{(0)} (r) &= \begin{cases}
%-\tfrac{1}{2}r^{-1/2} + O(r^{3/2}) & r\ll 1,
%\\ c_3\sqrt{r}\cos\big(\sqrt{2}\ln(r)\big)+c_4\sqrt{r}\sin\big(\sqrt{2}\ln(r)\big)+O(r^{-3/2}), &   r \gg 1, 
%\end{cases} 
%\end{align*}

\smallskip
\noindent
{\it Proof of \eqref{Phi1Op1_10}}.
Let us analyze $f_1$. From \eqref{Phi1Op1_6}, when $r \lesssim 1$, we can use \eqref{Phi1_0} again to see that
\begin{align}\label{Phi1Op1_f1_rsmall}
f_1(r) & = r^{-1/2} \Phi_{1}^{(0)}(r) \int_0^r \Phi_1^{(0)}(s)^2 \Big( \int_s^r \frac{1}{\Phi_1^{(0)}(t)^2} \, dt \Big) \, ds \nonumber
  \\
  & = (r + O(r^3)) \int_0^r (s^3 + O(s^5)) \Big( \int_s^r \frac{1}{t^3} (1 -2ct^2+O(t^4)) \, dt \Big) \, ds \nonumber
  \\ & = \big(r+O(r^3)\big) \int_0^r \big(s^3 + O(s^5)\big) \frac{1}{2}(s^{-2} - r^{-2}-4c\log\tfrac{r}{s}+O\big(r^2+s^2\big)) \, ds \nonumber
  \\ & = \frac{1}{8} r^3 + O(r^5),
\end{align}
where the constant $c$ in $-2ct^2$ above is some constant $c\in\R$ coming from the $O(r^{7/2})$ term in \eqref{Phi1_0}. 
Since $f_1(r) = r^{2} \Phi_{1,1}(r)$ the asymptotic formula for $r\lesssim 1$ in \eqref{Phi1Op1_10} follows. 

When $r \gg 1$, we use the \eqref{Phi1_0} %, \eqref{Phi1Op1_10} as well as 
and \eqref{Phi1Op1_7} 
for $r_0 \gtrsim 1$ large enough and fixed, to infer that
\begin{align}\label{Phi1Op1_f1_rlarge}
\int_{r_0}^r K(r,s)f_0(s)\, ds
 %\\ 
 & = \dfrac{1}{\sqrt{2}}\int_{r_0}^r  \Big(s^{1/2} \sin\big(\sqrt{2}\ln(\tfrac{r}{s}) \big) +s^{-1/2}O(s^{-1}+sr^{-2}) \Big)\Phi_1^{(0)}(s)ds 
 \\ 
 & = \dfrac{1}{12\sqrt{2}}\Big((\sqrt{2}c_1-2c_2)\cos(\sqrt{2}\ln r)+(2c_1+\sqrt{2}c_2)\sin(\sqrt{2}\ln r)\Big)r^2 +O(1). \nonumber
\end{align}
where $c_1$ and $c_2$ are the constants in $\Phi_1^{(0)}(r)$. 
%It is worth to mention that above the $O(1)$ depends on $r_0^2$. 
Since the integration from $0$ to $r_0$ gives a bounded contribution, 
it follows that 
\begin{align*}
f_1(r) & = \dfrac{1}{12}\Big((c_1-\sqrt{2}c_2)\cos(\sqrt{2}\ln r)+(\sqrt{2}c_1+c_2)\sin(\sqrt{2}\ln r)\Big)r^2 
	+ O(1), \qquad r \gtrsim 1,
\end{align*}
which gives the second asymptotic formula for $\Phi_{1,1}(r,k^2)$.
 
\medskip

\noindent
{\it Proof of \eqref{Phi1Op1_8}-\eqref{Phi1Op1_9}}.
We proceed by induction having already proved the case $n=1$.
%Since the case $n=1$ was already proven, it only remains to prove the general case $n$. 
Assuming there exist some constants such that 
\[
f_{n-1}(r)=\Big(c_{1,n-1}\cos(\sqrt{2}\ln r)+c_{2,n-1}\sin(\sqrt{2}\ln r)\Big)r^{2(n-1)}
  +O\left(\tfrac{1}{(n-3)!}r^{2(n-2)}\right),
\]
substituting into the identity for $f_n(r)$ in terms of $f_{n-1}(r)$, for $r>r_0\gg1$, we find that \begin{align*}
f_n(r) & =\int_{r_0}^r K(r,s)f_{n-1}(s)ds
\\ & = \dfrac{1}{\sqrt{2}}\int_{r_0}^r \Big(s\sin\big(\sqrt{2}\ln(\tfrac rs)\big)+O(s^{-1})\Big) \times 
\\ & \quad \times \Big(c_{1,n-1}s^{2(n-1)}\cos(\sqrt{2}\ln s)+c_{2,n-1}s^{2(n-1)}\sin(\sqrt{2}\ln s)+O\big(\tfrac{s^{2(n-2)}}{(n-3)!}\big)\Big)ds
\\ & = \dfrac{r^{2n}}{4n(2+n^2)}\Big((nc_{1,n-1}-\sqrt{2}c_{2,n-1})\cos(\sqrt{2}\ln r)
\\ & \quad +(\sqrt{2}c_{1,n-1}+nc_{2,n-1})\sin(\sqrt{2}\ln r)\Big)+O\left(\dfrac{r^{2(n-1)}}{(n-2)!}\right),
\end{align*} 
where we have used the following explicit antiderivatives \begin{align*}
&\int s^{2n-1}\sin\big(\sqrt{2}\ln(\tfrac rs)\big)\cos(\sqrt{2}\ln s)ds
\\ & \quad \ = \dfrac{s^{2n}}{4n(2+n^2)}\bigg(\sqrt{2}n\cos\big(\sqrt{2}\log(\tfrac{r}{s^2})\big)+(2+n^2)\sin(\sqrt{2}\log r)+n^2\sin\big(\sqrt{2}\log(\tfrac{r}{s^2})\big)\bigg)+C,
\end{align*}
and
\begin{align*}
\\ &\int s^{2n-1}\sin\big(\sqrt{2}\ln(\tfrac rs)\big)\sin(\sqrt{2}\ln s)ds
\\ & \quad = -\dfrac{s^{2n}}{4n(2+n^2)}\bigg(\sqrt{2}n\sin\big(\sqrt{2}\ln(\tfrac{r}{s^2}\big)+(2+n^2)\cos(\sqrt{2}\log r)-n^2\cos\big(\sqrt{2}\log(\tfrac{r}{s^2})\big)\bigg)+C,
\end{align*}
which give
\begin{align*}
& \int s\sin\big(\sqrt{2}\ln(\tfrac rs)\big)\Big(c_{1}s^{2(n-1)}\cos(\sqrt{2}\ln s)+c_{2}s^{2(n-1)}\sin(\sqrt{2}\ln s)\Big)ds
\\ 
& \qquad = \dfrac{s^{2n}}{4n(2+n^2)}\bigg(n(\sqrt{2}c_1+c_2n)\cos\big(\sqrt{2}\ln(\tfrac{r}{s^2})\big)
\\ 
& \qquad +(2+n^2)\big(c_1\sin(\sqrt{2}\ln r)-c_2\cos(\sqrt{2}\ln r)\big)
  +n(c_1n-c_2\sqrt{2})\sin\big(\sqrt{2}\ln(\tfrac{r}{s^2})\big)\bigg)+C.
\end{align*}
Observe that, in particular, the terms containing an additional multiplier factor of $n^2$ 
cancel each other when $s=r$ since 
\[
c_2n^2\cos\big(\sqrt{2}\ln(\tfrac{r}{r^2})\big)+c_1n^2\sin(\sqrt{2}\ln r)
  -c_2n^2\cos(\sqrt{2}\ln r)+c_1n^2\sin\big(\sqrt{2}\ln (\tfrac{r}{r^2})\big)=0.
\]
In the previous calculation of $f_n(r)$, the error term is $O\big(\tfrac{r^{2(n-1)}}{(n-2)!}\big)$. 
Regarding the constants $c_{1,n}$ and $c_{2,n}$, from our previous computations it 
follows that they satisfy the recursion 
\begin{align*}
c_{1,n} & = \dfrac{1}{4n(2+n^2)}\big(nc_{1,n-1}-\sqrt{2}c_{2,n-1}\big),
\qquad c_{2,n} = \dfrac{1}{4n(2+n^2)}\big(\sqrt{2}c_{1,n-1}+nc_{2,n-1}\big),
\end{align*}
which in turn implies \eqref{Phi1Op1_9}.
%that 
%\[
%\vert c_{1,n}\vert+\vert c_{2,n}\vert \lesssim  \dfrac{\vert c_1\vert+\vert c_2\vert}{2^{2n}(n!)^2}.
%\]
%{\color{gray}In the case $n=2$ one can explicitly obtain $c_{1,2}=-\tfrac{c_2}{96\sqrt{2}}$ and $c_{2,2}=\tfrac{c_1}{96\sqrt{2}}$.}
%%comm2%{Calculation (for us) to convince ourselves that everything works for the vortex of degree $n\geq2$. 
%\begin{align}\label{check_nvor_proof2}
%& \int s\sin\big(n\sqrt{2}\ln(\tfrac rs)\big)\Big(c_{1}s^{2(m-1)}\cos(n\sqrt{2}\ln s)+c_{2}s^{2(m-1)}\sin(n\sqrt{2}\ln s)\Big)ds \nonumber
%\\ 
%& \  = \dfrac{s^{2m}}{4m(2n^2+m^2)}\bigg(m(\sqrt{2}c_1n+c_2m)\cos\big(n\sqrt{2}\ln(\tfrac{r}{s^2})\big)
%\\ 
%& \ +(2n^2+m^2)\big(c_1\sin(n\sqrt{2}\ln r)-c_2\cos(n\sqrt{2}\ln r)\big)
%  +m(c_1m-c_2n\sqrt{2})\sin\big(n\sqrt{2}\ln(\tfrac{r}{s^2})\big)\bigg)+C. \nonumber
%\end{align}
%Regarding the coefficients $c_{1,m,n}$ and $c_{2,m,n}$ (now $n$ denotes the degree of the vortex and $m$ is the former $n$ in the case of the vortex of degree $1$ above) \begin{align}\label{check_nvor_proof3}
%c_{1,m,n}&=\dfrac{n}{4m(2n^2+m^2)}\Big(mc_{1,m-1,n}-\sqrt{2}nc_{2,m-1,n}\Big), 
%\\ c_{2,m,n}&=\dfrac{n}{4m(2n^2+m^2)}\Big(\sqrt{2}nc_{1,m-1,n}+mc_{2,m-1,n}\Big), \nonumber
%\end{align}
%}

\smallskip
\noindent
{\it Proof of \eqref{Phi1Op1_3}}.
%Similarly as in last section, 
To obtain the upperbound on $\Phi_j(r) = r^{-2j} f_j(r)$, $j\geq 2$, for $r \lesssim  1$, we proceed by induction. 
Assuming that \[
f_k(r) \leq \frac{C^k}{k!(k+1)!} r^{2k+1} \qquad \hbox{ and } \qquad k \leq j-1,
\]
we see that computations similar to those used above for $f_1$ give
\begin{align*}
f_{j}(r) & = \int_0^r K(r,s) f_{j-1}(s) \, ds  
\\ & \leq \int_0^r  \left(\dfrac{r}{2}(1-\tfrac{s^2}{r^2})+\widetilde{c}  rs^2\log\tfrac{r}{s} +r^3O(1-\tfrac{s^4}{r^4})\right)\dfrac{C^{j-1}}{(j-1)!j!}s^{2j-1} \, ds
  \\ & =\dfrac{C^{j-1}}{2(j-1)!j!}\left(\dfrac{1}{2j}-\dfrac{1}{2j+2}\right)r^{2j+1}+O\left(\dfrac{r^{2j+3}}{j!(j+1)!}\right) \leq \dfrac{C^j}{j!(j+1)!}r^{2j+1},
\end{align*}
where we have used that the logarithmic term satisfies \[
\int_0^r r  s^{2j+1} \log\big(\tfrac{r}{s}\big)ds = \dfrac{  r^{2j+3}  }{4(j+2)^2}.
\]
which concludes the proof of \eqref{Phi1Op1_3}. 
To see that $f_j$ is smooth on $(0,\infty)$ one can proceed by induction.
In fact, the smoothness of $f_0(r)$ is direct and follows from that of $\Phi_1^{(0)}(r)$ 
along with the identity $f_0(r)=r^{-1/2}\Phi_1^{(0)}(r)$. 
Then, the general inductive case follows from the recursive representation \eqref{Phi1Op1_7}
%\[
%f_j(r)=\int_0^rK(r,s)f_{j-1}(r),
%\]
using the smoothness of $K(r,s)$ on $\{(r,s)\in\R^2: \, 0<s<r\}$. 
%Then, we conclude by the Fundamental Theorem of Calculus.

\medskip
\noindent
{\it Proof of \eqref{Phi1Op1_4}-\eqref{Phi1Op1_12}}. 
%In this case 
To prove the symbol-type bounds for $r \gtrsim 1$,
we begin by differentiating the recursive formula \eqref{Phi1Op1_13}, to get that
\begin{align}\label{Phi1Op1_14}
\begin{split}
\partial_r f_j(r) & =
-\dfrac{1}{2r}f_j(r)+\dfrac{1}{\sqrt{r}}\int_0^r\Big(\Theta_1^{(0)}(s)\partial_r\Phi_1^{(0)}(r)
  -\partial_r\Theta_1^{(0)}(r)\Phi_1^{(0)}(s)\Big)\sqrt{s}f_{j-1}(s)ds
\\ & = -\dfrac{1}{2r}f_j(r)+\int_0^r \widetilde{K}(r,s)f_{j-1}(s)ds.
\end{split}
\end{align}
By direct computations, using \eqref{Phi1_0}, \eqref{Theta1_0} 
as well as the fact that $c_1c_4-c_2c_3=1/\sqrt{2}$, we find that the kernel in this case is
\begin{align*}
\widetilde{K}(r,s)=\dfrac{s}{4r}\Big(4\cos\big(\sqrt{2}\ln(\tfrac{r}{s})\big)+\sqrt{2}\sin\big(\sqrt{2}\ln(\tfrac{r}{s})\big)\Big)+O\big(\tfrac{1}{sr}+\tfrac{s}{r^3}\big), \qquad r>s\gtrsim 1.
\end{align*}
Thus, recalling that we just proved that \begin{align*}
f_{j-1}(r)&=\Big(c_{1,j-1}\cos(\sqrt{2}\ln r)+c_{2,j-1}\sin(\sqrt{2}\ln r)\Big)r^{2(j-1)}+O\left(\tfrac{1}{(j-3)!}r^{2(j-2)}\right)
\\ & =:\widetilde{f}_{j-1}(r)+O\left(\dfrac{r^{2(j-2)}}{(j-3)!}\right),
\end{align*} 
using the %following 
antiderivatives
\begin{align*}
& \int\dfrac{s}{r}\cos\big(\sqrt{2}\ln(\tfrac{r}{s})\big)\widetilde{f}_{j-1}(s)ds 
\\ & \qquad =\dfrac{s^{2j}}{4j(2+j^2)r}\Big(j(jc_{1,j-1}-\sqrt{2}c_{2,j-1})\cos\big(\sqrt{2}\ln(\tfrac{r}{s^2})\big)+c_{1,j-1}(2+j^2)\cos\big(\sqrt{2}\ln r\big)
\\ & \qquad \quad -j(\sqrt{2}c_{1,j-1}+jc_{2,j-1})\sin\big(\sqrt{2}\ln(\tfrac{r}{s^2})\big)+c_{2,j-1}(2+j^2)\sin\big(\sqrt{2}\ln r\big)\Big)+C,
\end{align*}
and
\begin{align*}
& \int\dfrac{s}{r}\sin\big(\sqrt{2}\ln(\tfrac{r}{s})\big)\widetilde{f}_{j-1}(s)ds 
\\ & \qquad =\dfrac{s^{2j}}{4j(2+j^2)r}\Big(j(\sqrt{2}c_{1,j-1}+jc_{2,j-1})\cos\big(\sqrt{2}\ln(\tfrac{r}{s^2})\big)-c_{2,j-1}(2+j^2)\cos\big(\sqrt{2}\ln r\big)
\\ & \qquad \quad +j(jc_{1,j-1}-\sqrt{2}c_{2,j-1})\sin\big(\sqrt{2}\ln(\tfrac{r}{s^2})\big)+c_{1,j-1}(2+j^2)\sin\big(\sqrt{2}\ln r\big)\Big)+C,
\end{align*}
we conclude that, for $r_0\gtrsim1$ large enough,
the leading order contribution coming from the integral term in \eqref{Phi1Op1_14} is given by 
\begin{align*}
& \int_{r_0}^r\widetilde{K}(r,s)f_{j-1}(s)ds
\\ &\qquad =\dfrac{r^{2j-1}}{4j(2+j^2)}\bigg(\left(c_{1,j-1}(2+\sqrt{2}j+2j^2)-c_{2,j-1}(2+\sqrt{2}j)\right)\cos\big(\sqrt{2}\ln r\big)
\\ & \qquad  \quad +\left(c_{1,j-1}(2+\sqrt{2}j)+c_{2,j-1}(2+\sqrt{2}j+2j^2)\right)\sin\big(\sqrt{2}\ln r\big)\bigg)+O(r^{2j-3})
\\ & \qquad = r^{2j-1}\Big(\widetilde{c}_{1,j}\cos\big(\sqrt{2}\ln r\big)+\widetilde{c}_{2,j}\sin\big(\sqrt{2}\ln r\big)\Big)+O\left(\dfrac{r^{2j-3}}{(j-2)!}\right),
\end{align*}
where the coefficients satisfy \begin{align*}
\vert \widetilde{c}_{1,j}\vert+\vert \widetilde{c}_{2,j}\vert \lesssim \dfrac{\vert c_{1,j-1}\vert+\vert c_{2,j-1}\vert}{j}.
\end{align*}
Therefore, using that 
$\partial_r\Phi_{1,j}(r)=r^{-2j}\partial_rf_j-2jr^{-1}\Phi_{1,j}$ 
we conclude \eqref{Phi1Op1_12} for $m=1$.

The general case follows from noticing that 
\begin{align*}
& \Theta_1^{(0)}(s)\partial_r^m\Phi_1^{(0)}(r) - \partial_r^m \Theta_1^{(0)}(r)\Phi_1^{(0)}(s)
\\ & 
\qquad =\dfrac{\textbf{c}_{1,m}}{r^{(2m-1)/2}}\cos\big(\sqrt{2}\ln(\tfrac{r}{s})\big)
  + \dfrac{\textbf{c}_{2,m}}{r^{(2m-1)/2}}\sin\big(\sqrt{2}\ln(\tfrac{r}{s})\big)
  + O\big(r^{-(2m+3)/2}\big),
\end{align*}
where the coefficients grow as $\vert \textbf{c}_{1,m}\vert+\vert \textbf{c}_{2,m}\vert\lesssim C^m$,
along similar lines to those above, and hence we can omit the details. 

Finally, \eqref{Phi1Op1_4} follows from a similar (in fact simpler) argument. 
%The proof is complete.
\end{proof}

\subsection{Region $rk \gtrsim 1$}
The next lemma is the analogue of Lemma \ref{lemWeyl} from the previous section.
The proof is almost exactly the same, so we will skip most of the details,
and once again refer the reader to Proposition 5.6 in \cite{KST}.

\begin{lem}\label{lemWeyl1}
For any $k^2>2$, the Weyl-Titchmarsh solution $\Psi_1(r,k^2)$ associated to $\mathcal{H}_1$
%of \[
%\mathcal{H}_1f=-y''(r)+\dfrac{3}{4r^2}y(r)-(1-3U^2(r))y(r)=(k^2+2)y(r)
%\]
%at infinity 
is of the form \begin{align*}
\Psi_1(r,k^2) = k^{-1/2}e^{ikr}\sigma_{1,\infty}\left(kr,r\right), \qquad kr\gtrsim 1.
\end{align*}
where $\sigma_{1,\infty}(q,r)$ is smooth in $q\gtrsim1$, 
and admits the asymptotic power series approximation 
\begin{align}\label{lemWeyl1exp}
\sigma_{1,\infty}(q,r)\approx \sum_{j=0}^\infty q^{-j} \Psi_{1,j}(r), 
  \qquad \Psi_{1,0}(r)=1, \qquad \Psi_{1,1}(r)=-\dfrac{9i}{8}+O(r^{-3}).
\end{align}
as $r\to\infty$, in the sense that 
\[
\sup_{r>0}\left\vert (r\partial_r)^\alpha (q \partial_q)^{\beta}
\Big(\sigma_{1,\infty}(q,r)-\sum_{j=0}^{j_0}q^{-j} \Psi_{1,j}(r)\Big) \right\vert \lesssim_{j_0,\alpha,\beta} q^{-j_0-1}. 
\]
Moreover, for all $j\geq1$, the coefficient functions $\Psi_{1,j}(r)$ are zero-order symbols, that is, 
\begin{align}\label{Psi1Op1_1}
\sup_{r>0}\big\vert (r\partial_r)^k \Psi_{1,j}(r) \big\vert < + \infty,
\end{align}
for all $k\in\N$, and they are analytic at infinity.
\end{lem}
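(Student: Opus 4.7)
The plan is to mimic the argument used in Lemma \ref{lemWeyl} above, which in turn is modelled on Proposition 5.6 of \cite{KST}; the only structural change concerns the different coefficient of the $r^{-2}$ piece of the potential at infinity. First I would rewrite the eigenvalue equation $\mathcal{H}_1 \Psi_1 = (k^2 + 2)\Psi_1$ in the form $-\Psi_1'' + V(r)\Psi_1 = k^2 \Psi_1$ with
\begin{equation*}
V(r) := \tfrac{3}{4r^2} - 3(1 - U^2(r)) = -\tfrac{9}{4r^2} + O(r^{-4}), \qquad r \to \infty,
\end{equation*}
where the asymptotic comes from \eqref{Uexpinfty}, and more generally $\partial_r^m V(r) = O(r^{-m-2})$ as $r \to \infty$, which can be read off from \eqref{der1-U^2}. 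I would then substitute the ansatz $\Psi_1(r,k^2) = k^{-1/2} e^{ikr} \sigma_{1,\infty}(kr, r)$ and observe, by a direct computation, that $\sigma_{1,\infty}(kr,r)$ must solve the conjugated equation
\begin{equation*}
\bigl(-\partial_r^2 - 2ik\partial_r + V(r)\bigr) \sigma_{1,\infty}(kr, r) = 0.
\end{equation*}

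Next, inserting the formal asymptotic series $\sigma_{1,\infty}(kr, r) = \sum_{j \geq 0} k^{-j} f_j(r)$ with $f_j(r) := r^{-j}\Psi_{1,j}(r)$, equating equal powers of $k$, and integrating from $r$ to $\infty$ under the decay assumption $f_j(r)\to 0$ for $j \geq 1$, would produce the recursion
\begin{equation*}
f_0 \equiv 1, \qquad f_j(r) = \tfrac{i}{2} f_{j-1}'(r) + \tfrac{i}{2} \int_r^\infty V(s) f_{j-1}(s)\, ds, \qquad j \geq 1.
\end{equation*}
For $j = 1$ one then reads off
\begin{equation*}
f_1(r) = \tfrac{i}{2}\int_r^\infty V(s)\, ds = -\tfrac{9i}{8r} + O(r^{-3}),
\end{equation*}
so that $\Psi_{1,1}(r) = r f_1(r) = -9i/8 + O(r^{-2})$, recovering the explicit coefficient claimed in \eqref{lemWeyl1exp}.

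The symbol-type bounds \eqref{Psi1Op1_1} on each $\Psi_{1,j}$ would then be obtained by a double induction on $j$ and on the order of the derivative, proceeding exactly as in the argument outlined in the proof of Lemma \ref{lemWeyl} and using the sharp decay of $V$ and its derivatives recalled above; the proof is actually independent of the specific form of $V$ as long as these symbol-type bounds at infinity hold. Finally, to upgrade the formal series to the full pointwise estimate on all $(r\partial_r)^\alpha(q\partial_q)^\beta$ derivatives, I would truncate $\sigma_{1,\infty}(q, r) = \sum_{j=0}^{j_0} q^{-j} \Psi_{1,j}(r) + R_{j_0}(q, r)$ and solve a Volterra integral equation for the remainder $R_{j_0}$ via a contraction in a weighted $L^\infty$ space; this is precisely the content of Proposition 5.6 in \cite{KST} and transfers without modification to the present setting. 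The main obstacle to highlight is the bookkeeping in this last step: because the leading term of $V$ is $-9/(4r^2)$ rather than $-1/(4r^2)$ as in Lemma \ref{lemWeyl}, the numerical constants produced by the iteration are slightly larger, but the structural $r^{-2}$ decay at infinity, and the gained power of $r^{-1}$ at each integration $\int_r^\infty s^{-j-2}\, ds = r^{-j-1}/(j+1)$, are unchanged, so no logarithmic loss appears and the inductive bounds close. Analyticity of each $\Psi_{1,j}$ at infinity follows from the explicit integral recursion together with the real-analyticity of $V(r)$ for $r$ in a complex neighborhood of $[r_0, \infty)$.
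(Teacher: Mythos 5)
Your proposal is correct and follows essentially the same route as the paper's proof: the same ansatz $\Psi_1 = k^{-1/2}e^{ikr}\sigma_{1,\infty}(kr,r)$, the same conjugated equation and power-series recursion $f_j = \tfrac{i}{2}f_{j-1}' + \tfrac{i}{2}\int_r^\infty V(s) f_{j-1}(s)\,ds$ (the paper writes $V = \tfrac{3}{4s^2}-(3-3U^2)$ explicitly), the same computation yielding $f_1 = -\tfrac{9i}{8r}+O(r^{-3})$, and the same deferral to Proposition 5.6 of \cite{KST} (via the derivative bounds \eqref{der1-U^2} on the potential) for the symbol-type bounds and the control of the truncated remainder. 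The only tiny discrepancy is your remainder $O(r^{-2})$ for $\Psi_{1,1}=rf_1$ versus the $O(r^{-3})$ in the statement, which in fact corresponds to the expansion of $f_1$ itself; this is bookkeeping, not a gap.
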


\begin{proof}
As before, we make the ansatz  
%\[
$\Psi_1(r,k^2)= k^{-1/2}e^{ikr}\sigma_{1,\infty}\big(kr,r\big),$
%\]
and look for $\sigma_{1,\infty}(kr,r)$ satisfying %the above properties. 
%In fact, by inverting the above relation, namely, writing 
%\[
%\sigma_{1,\infty}(kr,r)=k^{1/2}e^{-ikr}\Psi_{1,\infty}^+(r,k^2),
%\]
%we see that for $\Psi_{1,\infty}^+$ to satisfy 
%the differential equation, $\sigma_{1,\infty}(kr,r)$ must satisfy 
the conjugated equation 
\[
\Big(-\partial_r^2-2ik\partial_r+\dfrac{3}{4r^2}-(3-3U^2(r))\Big)\sigma_{1,\infty}(kr,r)=0.
\]
%This last equation follows simply from substituting $\Psi^+_{1,\infty}$ into the ODE. 
To solve the last equation we make the power series ansatz 
\[
\sigma_{1,\infty}(kr,r)=\sum_{j=0}^\infty k^{-j}f_j(r).
\]
and substituting it into the conjugated equation we obtain 
%that, at least formally, the family of $f_j(r)$ must satisfy 
\begin{align}\label{lemWeyl1conj}
-2if_0'(r)+\sum_{j=0}^\infty \Big(-2if_{j+1}'+\left(-f_j''+\tfrac{3}{4r^2}f_j-(3-3U^2)f_j\right)\Big)k^{-j}=0,
\end{align}
hence,  
\begin{align*}
f_0(r)=1 \qquad \hbox{and}\qquad 2if_{j+1}'=-f_{j}''+\dfrac{3}{4r^2}f_j-(3-3U^2)f_j.
\end{align*}
Integrating the last last equation from $r$ to $\infty$, 
assuming $f_j(r)\to0$ as $r\to+\infty$ for all $j\geq 1$, we get 
\begin{align}\label{lemWeyl1prrec}
f_{j}(r)=\dfrac{i}{2}f'_{j-1}(r)+\dfrac{i}{2}\int_r^\infty \Big(\tfrac{3}{4s^2}f_{j-1}(s)-\big(3-3U^2(s)\big)f_{j-1}(s)\Big)ds.
\end{align}
Observe that, in particular, 
\begin{align}\label{lemWeyl1_f1}
f_1(r)&=\dfrac{3 i}{8r}-\dfrac{i}{2}\int_r^\infty \big(3-3U^2(s)\big)ds =-\dfrac{9i}{8r}+O(r^{-3}),
\end{align}
where we have used the fact that $3-3U^2(r)= \tfrac{3}{r^2}+O(r^{-4})$ for $r\gg1$.
%It is not difficult to see from the the last identity that, for all $k\in\N$, 
%\[
%\sup_{r>0}\big\vert(r\partial_r)^kf_1(r)\big\vert =\sup_{r\gtrsim1}\left\vert(r\partial_r)^k\left(\dfrac{3i}{8r}
%  -\dfrac{i}{2}\int_r^\infty \big(3-3U^2(s)\big)ds\right)\right\vert \lesssim_k 1.
%\]
Based on \eqref{lemWeyl1prrec} one can see that the proofs of \eqref{lemWeyl1exp} and \eqref{Psi1Op1_1} 
can proceed identically to the proofs of \eqref{Fou22} and \eqref{Fou23} (see the analogous recursion
\eqref{lemWeylprrec})
and the proof of Proposition 5.6 of \cite{KST}, using again the estimates \eqref{der1-U^2} for the potential.
%can be done by a simple double induction in $j$ and $k$, 
%based on the expression for $f_j(r)$ in terms of $f_{j-1}(r)$, and hence, we omit it.

%%comm2%{ \begin{align}\label{check_nvor_proof9}
%-2if_0'(r)+\sum_{j=0}^\infty \Big(-2if_{j+1}'+\left(-f_j''+\dfrac{1}{r^2}\big(n^2-\tfrac{1}{4}\big)f_j-(3-3U_n^2)f_j\right)\Big)k^{-j}=0 \nonumber
%\\ f_0(r)=1 \qquad \hbox{and}\qquad 2if_{j+1}'=-f_{j}''+\dfrac{1}{r^2}\big(n^2-\tfrac{1}{4}\big)f_j-(3-3U_n^2)f_j.  \nonumber
%%
%%
%\\ f_{j,n}(r)=\dfrac{i}{2}f'_{j-1,n}(r)+\dfrac{i}{2}\int_r^\infty \Big(\dfrac{1}{s^2}\big(n^2-\tfrac{1}{4}\big)f_{j-1,n}(s)-\big(3-3U_n^2(s)\big)f_{j-1,n}(s)\Big)ds
%%
%\\ f_{1,n}(r)=\dfrac{(n^2-\tfrac{1}{4}) i}{2r}-\dfrac{i}{2}\int_r^\infty \big(3-3U_n^2(s)\big)ds =-\dfrac{(1+8n^2)i}{8r}+O(r^{-3}) \nonumber
%\end{align}
%having used that $3-3U_n^2(r)=\tfrac{3n^2}{r^2}+O(r^{-4})$ for $r\gg1$. }
\end{proof}

\subsection{The spectral measure}
We now derive asymptotics for the spectral measure associated to $\mathcal{L}_1$.
The main difficulty is the oscillatory nature as $r \rightarrow \infty$
of the $L^2((0,1))$ Weyl solution constructed in Lemma \ref{Phi1Op1_1}.
%following again the general theory of \cite{GZ} and some of the arguments in \cite{KST,KMS}.
%Similarly as in the previous case (cf. Lemma \ref{propSM2}), we begin by defining a function $a_1(k^2)$ 
%that connects the Weyl solution at zero, $\Phi_1(r,k^2)$, and the Weyl solution at infinity $\Psi_1(r,k^2)$, via
%\[
%\Phi_1(r,k^2)=a_1(k^2)\Psi_1(r,k^2)+\overline{a_1(k^2)\Psi_1(r,k^2)}.
%\]
The following proposition is the main result of this section.

\begin{prop}\label{SMOp1_1}
The generalized eigenfunctions associated to $\mathcal{H}_1$ are given by
\begin{align}\label{SMOp1_2}
\Phi_1(r,k^2) = 2\Re (a_1(k^2) \Psi_1(r,k^2))
\end{align}
where $\Psi_1$ is the Weyl solution at infinity from Lemma \ref{lemWeyl1}, and $a_1(k^2)$ is smooth,
always non-zero, and satisfies
\begin{align}\label{SMOp1_3}
a_1(k^2) \approx \jk^{-1}.
%\left\{ 
%\begin{array}{ll}
%1, %|\log \xi|, 
%	&  k \lesssim 1, 
%\\ 
%\\
%k^{-1},  & k \gg 1.
%\end{array} 
%\right.
\end{align}
Moreover, $a_1(k^2)$ satisfies the symbol type bounds
\begin{align}\label{SMOp1_13}
\vert (k\partial_k)^na_1(k^2)\vert \lesssim %\approx  
  \jk^{-1}. %\vert a_1(k^2)\vert.
%\left\{ 
%\begin{array}{ll}
%k^{-1},  &  k \lesssim 1, 
%\\ 
%\\
%k^{-2},  & k \gtrsim 1.
%\end{array} 
%\right.
\end{align}
In particular, the spectral measure associated to $\mathcal{H}_1$ is given by
\begin{align}\label{SMOp1_4}
\frac{d\rho_1}{dk}(k^2) = \frac{1}{4\pi |a_1(k^2)|^2} \approx \langle k^2\rangle,
\end{align}
with consistent symbol-type upperbounds on the derivatives.
\end{prop}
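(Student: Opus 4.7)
The plan is to mirror the proof of Proposition \ref{propSM2}, with the main additional difficulty coming from the oscillatory behavior of $\Phi_1$ at infinity. Since $(\Psi_1,\overline{\Psi_1})$ forms a fundamental system for $\mathcal{H}_1 - (k^2+2)$ with $W(\Psi_1,\overline{\Psi_1}) = -2i$ by Lemma \ref{lemWeyl1}, and $\Phi_1$ is real-valued, expanding $\Phi_1$ in this basis immediately gives \eqref{SMOp1_2} with
\begin{align*}
a_1(k^2) = \tfrac{i}{2}W(\Phi_1(\cdot,k^2),\overline{\Psi_1(\cdot,k^2)}),
\end{align*}
independent of $r$. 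I will then evaluate this Wronskian at $r = c/k$, where $c$ is a small but fixed constant, so that the asymptotics of Lemmas \ref{Phi1Op1_1} and \ref{lemWeyl1} are simultaneously available. The upper bound \eqref{SMOp1_3} follows exactly as for $a_2$: Lemma \ref{lemWeyl1} gives $|\Psi_1(ck^{-1},k^2)|\approx k^{-1/2}$, $|\partial_r\Psi_1(ck^{-1},k^2)|\approx k^{1/2}$; for $k\gtrsim 1$ the small-$r$ expansion of Lemma \ref{Phi1Op1_1} yields $|\Phi_1(ck^{-1},k^2)|\lesssim k^{-3/2}$ and $|\partial_r \Phi_1(ck^{-1},k^2)| \lesssim k^{-1/2}$; for $k\lesssim 1$ the large-$r$ asymptotics in \eqref{Phi1_0} and \eqref{Phi1Op1_8}--\eqref{Phi1Op1_9} give $|\Phi_1(ck^{-1},k^2)|\lesssim k^{-1/2}$ and $|\partial_r \Phi_1(ck^{-1},k^2)| \lesssim k^{1/2}$. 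Substituting gives $|a_1(k^2)| \lesssim \langle k\rangle^{-1}$.

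The hard part is the matching lower bound, since—unlike in the proof of Proposition \ref{propSM2}—one cannot simply bound $|a_1|$ from below by $|\Phi_1|/|\Psi_1|$ at a single point: $\Phi_1$ oscillates at infinity and has zeros. The remedy is to exploit both $\Phi_1$ and $\partial_r \Phi_1$ simultaneously. Writing \eqref{SMOp1_2} and differentiating, one obtains
\begin{align*}
\begin{pmatrix} \Phi_1 \\ \partial_r \Phi_1 \end{pmatrix}
= 2\begin{pmatrix} \Re \Psi_1 & -\Im \Psi_1 \\ \Re \partial_r \Psi_1 & -\Im \partial_r \Psi_1 \end{pmatrix}\begin{pmatrix} \Re a_1 \\ \Im a_1 \end{pmatrix},
\end{align*}
and the determinant of the $2\times 2$ matrix equals $-\tfrac{1}{2}\Im\, W(\Psi_1,\overline{\Psi_1}) = -1$ identically in $r$. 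Inverting and estimating the operator norm by Lemma \ref{lemWeyl1} yields the pointwise bound
\begin{align*}
|a_1(k^2)|^2 \gtrsim \frac{|\Phi_1(r,k^2)|^2 + k^{-2}|\partial_r\Phi_1(r,k^2)|^2}{\langle k\rangle^{-1}\max(k,k^{-1})^2},
\end{align*}
valid at any $r$ with $rk \approx 1$. It then suffices to produce some $r = r(k)$ at which the numerator is comparable to its natural size.

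For $k\gtrsim 1$, I will choose $r = c/k$ with $c$ small; the small-$r$ expansion \eqref{Phi1Op1_2}--\eqref{Phi1Op1_3}, combined with $\Phi_1^{(0)}(r) = r^{3/2}+O(r^{7/2})$, shows $|\Phi_1(r,k^2)|\gtrsim c^{3/2}k^{-3/2}$ provided $c$ is chosen small enough relative to the constant in \eqref{Phi1Op1_3}, and this suffices. For $k\lesssim 1$ I will exploit the conserved ``energy'' at infinity: writing $\Phi_1^{(0)}(r)\approx \sqrt{r}(c_1\cos(\sqrt{2}\ln r)+c_2\sin(\sqrt{2}\ln r))$ and directly differentiating, a short computation shows
\begin{align*}
|\Phi_1^{(0)}(r)|^2 + r^2|\partial_r \Phi_1^{(0)}(r)|^2 \gtrsim (c_1^2+c_2^2)\, r, \qquad r\gg 1,
\end{align*}
because the relevant quadratic form in $(\cos,\sin)$ is positive definite (its discriminant is negative). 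The contributions of the series $\sqrt{r}\sum_{j\ge 1}(rk)^{2j}\Phi_{1,j}(r)$ to $\Phi_1$ and $\partial_r \Phi_1$ at $r=c/k$ are controlled by \eqref{Phi1Op1_8}--\eqref{Phi1Op1_12'} and yield at most a factor $(e^{Cc^2}-1)$ of the leading energy, so for $c$ sufficiently small the full $\Phi_1$ inherits the same lower bound, giving $|\Phi_1|^2 + k^{-2}|\partial_r \Phi_1|^2\gtrsim k^{-1}$ and hence $|a_1(k^2)|\gtrsim 1$.

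Finally, the symbol-type estimates \eqref{SMOp1_13} follow by the same strategy: Lemma \ref{lemWeyl1} and Lemma \ref{Phi1Op1_1} provide symbol bounds $|(k\partial_k)^n \Phi_1(ck^{-1},k^2)|$ and analogous ones for $\Psi_1$ and derivatives in $r$, so the Wronskian expression for $a_1$ inherits the required $\langle k\rangle^{-1}$ behavior under $(k\partial_k)^n$. The spectral measure identity \eqref{SMOp1_4} is then a consequence of the general formula $\rho_1'(k^2) = 1/(4\pi|a_1(k^2)|^2)$ derived in Subsection \ref{SsecST}, which applies verbatim in the present setting. The only genuinely new ingredient compared to the $\mathcal{L}_2$ analysis is the two-component lower bound via the $|\Phi_1|^2 + r^2|\partial_r \Phi_1|^2$ energy, which is the step I expect to require the most care.
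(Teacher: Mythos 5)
Your overall route is the same as the paper's: the Wronskian formula $a_1=\tfrac i2 W(\Phi_1,\overline{\Psi_1})$ evaluated at $rk=c$, the same upper bounds, a lower bound that uses $\Phi_1$ \emph{and} $\partial_r\Phi_1$ to beat the oscillation/zeros of $\Phi_1$, a positive-definite ``energy'' computation at small $k$ (your discriminant argument is equivalent to the paper's non-vanishing condition $C_2A-C_1B=\sqrt2(c_1^2+c_2^2)+O(c^2)\neq0$), and the general formula $\rho_1'=1/(4\pi|a_1|^2)$. However, the central displayed lower bound in your proposal is too lossy and, as written, does not prove \eqref{SMOp1_3}. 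Inverting the $2\times2$ matrix by its operator norm throws away the fact that its two rows have very different sizes ($|\Psi_1|\approx k^{-1/2}$ versus $|\partial_r\Psi_1|\approx k^{1/2}$ at $rk\approx1$); moreover your stated denominator $\langle k\rangle^{-1}\max(k,k^{-1})^2$ is not the square of any relevant matrix norm. Concretely, plugging your own estimates into your inequality gives, for $k\lesssim1$, the numerator $\gtrsim k^{-1}$ and denominator $k^{-2}$, hence only $|a_1|\gtrsim k^{1/2}$, and for $k\gtrsim1$ only $|a_1|\gtrsim k^{-2}$; both fall short of $|a_1|\gtrsim\jk^{-1}$. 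The lossless version is to use the two scalar relations separately, $\Phi_1=2\Re(a_1\Psi_1)$ and $\partial_r\Phi_1=2\Re(a_1\partial_r\Psi_1)$, which give
\begin{align*}
|a_1(k^2)| \;\geq\; \frac{|\Phi_1(r,k^2)|}{2|\Psi_1(r,k^2)|}\,\vee\,\frac{|\partial_r\Phi_1(r,k^2)|}{2|\partial_r\Psi_1(r,k^2)|}\;\gtrsim\; k^{1/2}|\Phi_1(r,k^2)|\,\vee\,k^{-1/2}|\partial_r\Phi_1(r,k^2)|;
\end{align*}
equivalently, rescale the second row by $k^{-1}$ \emph{before} taking norms, so that the rescaled matrix has norm $\approx k^{-1/2}$ and the correct denominator is $\approx k^{-1}$ uniformly. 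With this (which is exactly the paper's \eqref{SMOp1_11}--\eqref{SMOp1_12}), your large-$k$ bound $|\Phi_1|\gtrsim k^{-3/2}$ and your small-$k$ energy bound $|\Phi_1|^2+k^{-2}|\partial_r\Phi_1|^2\gtrsim k^{-1}$ do close the argument.

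A second, smaller gap: to run the small-$k$ step you need the \emph{differentiated} asymptotics of $\Phi_1^{(0)}$ at infinity, i.e.\ that the $O(r^{-3/2})$ error in \eqref{Phi1_0} has derivative $O(r^{-5/2})$. This is not contained in Lemma \ref{lem_p1_t1} as stated and cannot be obtained by formally differentiating the leading term; the paper proves it by a separate fixed-point argument for $\partial_r\varepsilon_{i,\infty}$ (see \eqref{drPhi10}). Your plan should include this step (the corresponding symbol bounds \eqref{Phi1Op1_12}--\eqref{Phi1Op1_12'} for the series part, which you do cite, are fine).
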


\begin{proof}
As in Section \ref{sec_specm_2}, from the definition of $a_1(k^2)$, and Lemma \ref{lemWeyl1} at infinity, 
one has that $a_1(k^2)$ is given by the following Wronskian:
\begin{align}\label{SMOp1_15}
a_1(k^2) =\dfrac{W(\Phi_1(\cdot,k^2),\overline{\Psi_1(\cdot,k^2)})}{W(\Psi_1(\cdot,k^2),\overline{\Psi_1(\cdot,k^2)})}= \frac{i}{2} W(\Phi_1(\cdot,k^2),\overline{\Psi_1(\cdot,k^2)}).
\end{align}

\medskip
{\it Upper bounds}. 
As in Lemma \ref{propSM2}, to upper bound $a_1$ 
we evaluate this Wronskian in the region where we have asymptotic expansions for both $\Phi_1$ and $\Psi_1$,
that is, when $kr \approx 1$. 
From Lemma \ref{lemWeyl1} we obtain that, for $kr = c \approx 1$,
\begin{align}\label{SMOp1_5}
|\Psi_1(ck^{-1},k^2)| \approx k^{-1/2},   
\qquad |\partial_r \Psi_1(ck^{-1},k^2)| \approx k^{1/2},   
\qquad k > 0.
\end{align}
%for some $f$ satisfying symbol-type bounds and fixed $\delta<1$ small.
%
%The behavior of $\Phi$ is a bit more complex due to the logarithmic singularities. 
%Regarding $\Phi_1$, we see 
From the expansion \eqref{Phi1Op1_2}, for $c \approx 1$, we see that
\begin{align}\label{SMOp1_6}
\Phi_1(ck^{-1},k^2) & = \Phi_1^{(0)}(ck^{-1}) + \sqrt{c}k^{-1/2} 
  \sum_{j\geq 1} c^{2j} \Phi_j(ck^{-1}).
\end{align}
Therefore, when $k \gtrsim 1$, using the asymptotics for small $r$ in \eqref{Phi1_0} 
and \eqref{Phi1Op1_2}-\eqref{Phi1Op1_3}, we have
\begin{align}\label{SMOp1_7}
\Phi_1(ck^{-1},k^2) & \lesssim k^{-3/2}, \quad k \gtrsim 1.
\end{align}
When instead $k \lesssim 1$, we use the asymptotics for large $r$ in \eqref{Phi1_0} 
and \eqref{Phi1Op1_2}-\eqref{Phi1Op1_8}, and see that
\begin{align}\label{SMOp1_8}
\Phi_1(ck^{-1},k^2) & \lesssim k^{-1/2} %|\log \xi|
	, \quad k \lesssim 1.
\end{align}
Using \eqref{Phi1Op1_4} we infer that consistent bounds hold for the derivatives:
\begin{align}\label{SMOp1_9}
\partial_r \Phi_1(ck^{-1},k^2) \lesssim \left\{ 
\begin{array}{ll}
k^{1/2}, &  k \lesssim 1, 
\\  k^{-1/2},  & k \gtrsim 1.
\end{array} 
\right.
\end{align}
Plugging the above estimates into the right-hand side of \eqref{SMOp1_15} we can upper bound 
%$a_1(k^2)$ as follows. Using \eqref{SMOp1_5}-\eqref{SMOp1_9} we see that 
\begin{align}\label{SMOp1_10}
|a_1(k^2)| = \frac{1}{2}\big| \Phi_1(\cdot,k^2) \partial_r \overline{\Psi_1(\cdot,k^2)} 
  - \partial_r \Phi_1(\cdot,k^2) \overline{\Psi_1(\cdot,k^2)} \big| 
   \lesssim \left\{ 
\begin{array}{ll}
1, &  k \lesssim 1, 
\\
k^{-1},  & k \gtrsim 1.
\end{array} 
\right.
\end{align}
This is consistent with \eqref{SMOp1_3}. 
%Similar upperbounds hold for $|k\partial_k a(k^2)|$ 
%in view of the bounds in Lemmas \ref{Phi1Op1_1} and \ref{lemWeyl1}.
%
%%comm2%{
%\begin{align}\
%\Phi_{1,n}(ck^{-1},k^2)\lesssim \begin{cases}
%k^{-1/2}, & k\lesssim 1,
%\\ k^{-1/2-n}, & k\gtrsim 1,
%\end{cases}  \quad \hbox{ and } \quad \partial_r\Phi_{1,n}(ck^{-1},k^2)\lesssim \begin{cases}
%k^{1/2}, & k\lesssim 1,
%\\ k^{1/2-n} & k\gtrsim 1.
%\end{cases}\label{check_nvor_proof5}
%\end{align}
%and hence, \begin{align}\label{check_nvor_proof6}
%\vert a_{1,n}(k^2)\vert \lesssim \begin{cases}
%1, & k\lesssim 1,
%\\ k^{-n}, & k\gtrsim 1.
%\end{cases}
%\end{align}
%}

\medskip
{\it Lower bounds}. 
Now we seek to lower bound $a_1$. 
This case is more subtle %than the last one and 
than that of $a_2$ due to the fact the the leading order terms in $\Phi_1^{(0)}(r)$ and $\Phi_1(r,k^2)$ 
may vanish. First of all %notice that, just as before, 
from \eqref{SMOp1_2} we infer that, for all $r>0$,
\begin{align}\label{SMOp1_11}
|a_1(k^2)| \geq \frac{|\Phi_1(r,k^2)|}{2|\Psi_1(r,k^2)|} \gtrsim k^{1/2} |\Phi_1(r,k^2)|,
\end{align}
having used \eqref{SMOp1_5}. 
%However, as just mentioned, to obtain matching lower bounds for $a_1$, due to the vanishing 
%of the leading order terms in the expansion of $\Phi_1$, first we need to obtain an alternative expression for the lower bound in \eqref{SMOp1_11}. 
Then, recall that from Lemma \ref{lemWeyl1}
calculating at $r=\infty$, we have $W(\Psi_1,\overline{\Psi_1})=-2i$,
which in turn implies $\mathrm{Im}(\Psi_1(r,k^2)\partial_r\overline{\Psi_1(r,k^2)} )=-1$;
combining this last identity with the fact that $\Phi_1(r,k^2)$ is real-valued, we obtain that 
\begin{align} \label{SMOp1_16}
&\mathrm{Im}\big[\partial_r\Psi_1(r,k^2)W\big(\Phi_1(r,k^2),\overline{\Psi_1(r,k^2)}\big)\big]
%\\ 
%& \qquad =\mathrm{Im}\left(\Phi_1(r,k^2)\big\vert\partial_r\Psi_1(r,k^2)\vert^2 
%- \partial_r\Psi_1(r,k^2)\partial_r\Phi_1(r,k^2)\overline{\Psi_1(r,k^2)}\right)
= -\partial_r\Phi_1(r,k^2).
\end{align}
Going back to the definition of $a_1(k^2)$ 
%namely, \begin{align}\label{SMOp1_14}
%\Phi_1(r,k^2)=a_1(k^2)\Psi_1(r,k^2)+\overline{a_1(k^2)\Psi_1(r,k^2)},
%\end{align}
taking the Wrosnkian of  \eqref{SMOp1_2} against $\overline{\Psi_1(r,k^2)}$ 
and then multiplying the resulting equation by $\partial_r\Psi_1(r,k^2)$, we obtain that 
\[
\vert a_1(k^2)\vert \geq \dfrac{\vert \partial_r\Phi_1(r,k^2)\vert}{2\vert \partial_r\Psi_1(r,k^2)\vert},
\]
which in turn implies that 
\begin{align}\label{SMOp1_12}
\vert a_1(k^2)\vert \geq \frac{|\Phi_1(r,k^2)|}{2|\Psi_1(r,k^2)|} 
  \vee \dfrac{\vert \partial_r\Phi_1(r,k^2)\vert}{2\vert \partial_r\Psi_1(r,k^2)\vert} \gtrsim k^{1/2} |\Phi_1(r,k^2)| \vee    k^{-1/2}\vert \partial_r\Phi_1(r,k^2)\vert
\end{align}
having used Lemma \ref{lemWeyl1} again.  

\medskip
\noindent
{\it Case 1: Large frequencies}.
%To obtain a lower bound for $\Phi_1$ we use again the expansion \eqref{SMOp1_6}. 
For $k \gtrsim 1$ and $r = ck^{-1}$, with a suitable small but fixed constant $c<1$ to be chosen, 
we can proceed as in Lemma \ref{propSM2}:
we use \eqref{SMOp1_6} followed by \eqref{asymp_phi_1_0} and \eqref{Phi1Op1_3}, to see that 
\begin{align*}
|\Phi_1(ck^{-1},k^2)| & = \Big| \Phi_1^{(0)}(ck^{-1}) 
  + \sqrt{c}k^{-1/2} \sum_{j\geq 1} c^{2j} \Phi_{1,j}(ck^{-1}) \Big|
  \\
  & \geq \frac{1}{10} c^{3/2} k^{-3/2}
  - \sqrt{c} k^{-1/2} \sum_{j\geq 1} c^{2j} \frac{C^j}{2^{2j}j!(j+1)!} c k^{-1}
  \\
  & \geq \frac{1}{10} c^{3/2} k^{-3/2}
  - \dfrac{1}{8}c^{3/2} k^{-3/2} \big( e^{c^2C} -1 \big) \gtrsim k^{-3/2},
\end{align*}
provided $c$ is small enough depending on $C$.
Inserting this bound into \eqref{SMOp1_11} gives $|a_1(k^2)| \gtrsim k^{-1}$ when $k \gtrsim 1$,
which is consistent with \eqref{SMOp1_3}.

%%comm2%{This calculation follows trivially in the case of $\Phi_{1,n}$: \begin{align}\label{check_nvor_proof7}
%\vert \Phi_{1,n}(ck^{-1},k^2)\vert \gtrsim k^{-1/2-n}
%\end{align} }
\medskip
\noindent
{\it Case 2: Small frequencies}.
In the case $k \ll c<1$ with $c$ fixed, and $r = ck^{-1} \gg 1$, 
we need to be careful with the oscillations of $\Phi_1(r,k^2)$ 
and exploit \eqref{SMOp1_12}, which we use in the form
\begin{align}\label{SMOp1_12'}
\vert a_1(k^2)\vert 
  \gtrsim (rk)^{1/2} \big| \frac{1}{\sqrt{r}} \Phi_1(r,k^2) \big| 
     + (rk)^{-1/2} \big| \sqrt{r} \partial_r\Phi_1(r,k^2) \big|;
\end{align}
the idea is to use that, if the leading order term in $\Phi_1(r,k^2)$ is equal to zero, 
then, the leading order term in $\partial_r\Phi_1(r,k^2)$ is not.

First, observe that, %To this end we begin by pointing out that, 
from \eqref{Phi1Op1_2}, \eqref{Phi1Op1_8} as well as the fast decay \eqref{Phi1Op1_9} for the coefficients, 
when $r=ck^{-1}\gg1$, we can write %$\Phi_1(r,k^2)$ can be written as 
\begin{align}\label{c1c2til}
\begin{split}
& \frac{1}{\sqrt{r}} \Phi_1(r,k^2) = \frac{1}{\sqrt{r}} \Phi_1^{(0)}(r) + %\sqrt{r}
  \sum_{j\geq1}c^{2j}\Phi_{1,j}(r)
\\ & \qquad \qquad \qquad = %\widetilde{\textbf{c}}_1 %\sqrt{r}
  C_1 \cos\big(\sqrt{2}\ln(r)\big) + %\widetilde{\textbf{c}}_2 %\sqrt{r}
  C_2 \sin\big(\sqrt{2}\ln(r)\big) + O(r^{-2}),
\\
& %\widetilde{\textbf{c}}_1 
  C_1 := c_1 + \sum_{j=1}^\infty c^{2j}c_{1,j},
  \qquad 
  %\widetilde{\textbf{c}}_2 
  C_2 := c_2 + \sum_{j=1}^\infty c^{2j}c_{2,j}.
\end{split}
\end{align}
Note that $C_i = c_i + O(c^2)$, $i=1,2$, and 
%$\widetilde{\textbf{c}}_1\approx c_1$ and $\widetilde{\textbf{c}}_2\approx c_2$ 
so these constants can be made arbitrarily close to $c_1$ and $c_2$, respectively, 
by making $c<1$ small enough. %er if necessary. 

Second, we claim that the following differentiated version of \eqref{Phi1_0} (see also \eqref{asymp_phi_1_0}) holds:
\begin{align}\label{drPhi10}
\begin{split}
\sqrt{r} \partial_r\Phi_1^{(0)}(r) = \dfrac{1}{2%\sqrt{r}
  }\big(c_1+2\sqrt{2}c_2\big)\cos\big(\sqrt{2}\ln(r)\big)
  + \dfrac{1}{2%\sqrt{r}
  }\big(c_2-2\sqrt{2}c_1\big)\sin\big(\sqrt{2}\ln(r)\big) 
	+ \varepsilon'(r), \\ \varepsilon'(r) = O(r^{-2}), \qquad r \gtrsim 1.
\end{split}
\end{align}
To see this, it suffices to show that $\sqrt{r} \partial_r\varepsilon(r) = O(r^{-2})$
where $\varepsilon$ is the remainder in the asymptotic \eqref{Phi1_0},
that is 
\begin{align*}%\label{drPhi10eps}
\varepsilon := 
%\Phi_1^{(0)}(r) - c_1\sqrt{r}\cos\big(\sqrt{2}\ln(r)\big)+c_2\sqrt{r}\sin\big(\sqrt{2}\ln(r)\big)
c_1\varepsilon_{1,\infty} - \sqrt{2}c_2\varepsilon_{2,\infty};
\end{align*}
cfr. \eqref{Phi1_0pr} and \eqref{FS0infty}
in the proof of Lemma \ref{lem_p1_t1}.
%The desired bound on $\partial_r\varepsilon$ can then be obtained 
%via arguments similar to those provided after \eqref{FS0infty} in the proof of Lemma \ref{lem_p1_t1}.
%More precisely, 
Let us use the same definitions for $\wt{V}$ in \eqref{L1_eqrewri},
so that $\mathcal{H}_1 - 2 = -\partial_r^2 - 9/(4r^2) - \wt{V}$,
and the notation for $(\Phi_{1,\infty}^{(0)},\Theta_{1,\infty}^{(0)})$ and
$(\varepsilon_{1,\infty},\varepsilon_{2,\infty})$ in \eqref{FS0infty};
in particular, recall that $\varepsilon_{i,\infty} = O(r^{-3/2})$, $i=1,2$, and
that $\partial_r^\alpha \wt{V} = O(r^{-4-\alpha})$, $\alpha=0,1$, for $r \gtrsim 1$.
Then, since we have
\begin{align*}
\big( -\partial_r^2 - \frac{9}{4r^2} \big) \varepsilon_{1,\infty} & = 
 \wt{V}(r) \big(\sqrt{r}\cos\big(\sqrt{2}\ln(r) +  \varepsilon_{1,\infty}\big),
\end{align*}
it follows that
\begin{align*}
\big( -\partial_r^2 - \frac{9}{4r^2} \big) \partial_r \varepsilon_{1,\infty} & = 
 \wt{V}(r) \partial_r \varepsilon_{1,\infty} + O(r^{-9/2}).
\end{align*}
From this last equation we can then use a fixed point argument almost identical 
to the one performed after \eqref{FS0infty}
to see that $\partial_r \varepsilon_{1,\infty} = O(r^{-5/2})$. 
The same argument applies to $\partial_r \varepsilon_{2,\infty}$,
and we conclude the validity of \eqref{drPhi10}.

Next, differentiating \eqref{Phi1Op1_2} we can write
\begin{align}\label{xyz}
\begin{split}
\sqrt{r} \partial_r\Phi_1(r,k^2) & = \sqrt{r} \partial_r \Phi_1^{(0)}(r) %\Phi_1^{(0)}(r,k^2) 
	+ \sum_{j\geq1}(rk)^{2j} r\partial_r\Phi_{1,j}(r) + R(r,k),
\\
R(r,k) & := \sum_{j\geq1}(2j+1/2)(rk)^{2j}\Phi_{1,j}(r).
\end{split}
\end{align}
Upon evaluating at $rk = c \ll 1$, using \eqref{Phi1Op1_8} we see that $R$ is a lower order term
satisfying
\begin{align}\label{xyzR}
\begin{split}
& R(r,cr^{-1}) %& = \sum_{j\geq1}(2j+1/2) c^{2j} \big[ c_{1,j}\cos(\sqrt{2}\ln(r))+c_{2,j}\sin(\sqrt{2}\ln(r))
  %+ O \big(\tfrac{1}{(j-2)!}r^{-2}\big) \big]
  %\\
  = c_{1,R} \cos(\sqrt{2}\ln(r)) + c_{2,R} \sin(\sqrt{2}\ln(r)) + O(r^{-2}),
  \qquad c_{1,R}, \, c_{2,R} = O(c^2).
\end{split}
\end{align}
Then, using \eqref{Phi1Op1_12} and \eqref{drPhi10} to evaluate the leading order term in \eqref{xyz}, 
  we see that,
for $rk = c$,
\begin{align}\label{xyz'}
\begin{split}
\sqrt{r} \partial_r\Phi_1(r,k^2) & = A \cos\big(\sqrt{2}\ln(r)\big) + B \sin\big(\sqrt{2}\ln(r)\big) 
%\\ & + \sum_{j\geq1}(rk)^{2j} r\partial_r\Phi_{1,j}(r) 
  + O(r^{-2}),
\end{split}
\end{align}
where we have defined
\begin{align}\label{xyzconstants}
\begin{split}
A & := \dfrac{1}{2}(c_1+2\sqrt{2}c_2) + \sum_{j\geq 1} c^2 \widetilde{c}_{1,j,1}
  + c_{1,R} = \dfrac{1}{2}(c_1+2\sqrt{2}c_2) + O(c^2),
\\
B & := \dfrac{1}{2}(c_2-2\sqrt{2}c_1) + \sum_{j\geq 1} c^2 \widetilde{c}_{2,j,1} 
  + c_{2,R} = \dfrac{1}{2}(c_2-2\sqrt{2}c_1) + O(c^2),
\end{split}
\end{align}
also owing to \eqref{Phi1Op1_12'}.

Plugging \eqref{c1c2til} and \eqref{xyz'} into \eqref{SMOp1_12'}, and using the shorthand $\sqrt{2}\ln r = x$,
we have 
\begin{align}\label{SMOp1_12''}
\begin{split}
| a_1(k^2) |
  %\gtrsim \big| \frac{1}{\sqrt{r}} \Phi_1(r,k^2) \big| + \big| \sqrt{r} \partial_r\Phi_1(r,k^2) \big|
  & \gtrsim \big| C_1 %\widetilde{\textbf{c}}_1
  \cos x + C_2 %\widetilde{\textbf{c}}_2 
  \sin x \big|
  + \big| A \cos x + B \sin x \big| + O(r^{-2})
\end{split}
\end{align}
at $rk = c < 1$;
we then see that, for $r$ large enough, $|a_1(k^2)| \gtrsim 1$,
since the function of $x$ that appears in \eqref{SMOp1_12''} is periodic and never vanishes
in view of the fact that
\begin{align}
C_2 A - C_1 B 
 %= \dfrac{1}{2}(c_1+2\sqrt{2}c_2) c_1 + \dfrac{1}{2}(c_2-2\sqrt{2}c_1) c_2
 = \sqrt{2} (c_1^2 + c_2^2) + O(c^2) \neq 0,
\end{align}
provided $c$ is small enough. This concludes the proof of the lower bounds,
and gives \eqref{SMOp1_3}.

\medskip
{\it Proof of \eqref{SMOp1_13}}. 
Finally, to show that similar bounds hold for the derivatives, 
%that is, that \eqref{SMOp1_13} hold, 
we proceed as in Lemma \ref{propSM2}. 
First, we use Lemma \ref{lemWeyl1} to see that both $\Psi_1(ck^{-1},k^2)$ and $(r\partial_r\Psi_1)(ck^{-1},k^2)$ 
can be expressed as $k^{-1/2}f(k^{-1})$ for some $f(\cdot)$ satisfying the symbol type bounds \[
\vert (r\partial_r)^nf(r)\vert   \approx_n 1.
\]
On the other hand, using Lemma \ref{Phi1Op1_1} we infer that a similar property holds for $\Phi_1(r,k^2)$. 
More specifically, from Lemma \ref{Phi1Op1_1} we see that
both $\Phi_1(ck^{-1},k^2)$ and $(r\partial_r\Phi_1)(ck^{-1},k^2)$ can be written 
as $k^{-1/2}h(k^{-1})$  and $k^{-1/2}g(k^{-1})$, respectively, 
for some $h(\cdot)$ and $g(\cdot)$ satisfying 
\begin{align*}
\hbox{for all } \, r>0, \quad \vert (r\partial_r)^nh(r)\vert \leq \widetilde{c}_n, 
  \qquad \ \hbox{ and } \ \qquad \hbox{for } \, r\lesssim 1, \quad \vert (r\partial_r)^nh(r)\vert \approx_n r ,
\\ 
\hbox{for all } \, r>0, \quad \vert (r\partial_r)^ng(r)\vert \leq \widetilde{c}_n, 
  \qquad \ \hbox{ and } \ \qquad \hbox{for } \, r\lesssim 1, \quad \vert (r\partial_r)^ng(r)\vert \approx_n r .
\end{align*}
Therefore, from \eqref{SMOp1_15} and the above analysis we conclude that $a_1(k^2)$ 
can be written as a linear combination of functions of the form $ f(k^{-1})h(k^{-1})$ and $f(k^{-1})g(k^{-1})$, 
from where we conclude the claimed upper bounds on the derivatives of $a_1(k^2)$.
%, namely, 
%\[
%\vert (k\partial_k)^na_1(k^2)\vert \lesssim \jk^{-1}. % \vert a_1(k^2)\vert.
%\]
\iffalse
However, when $k\lesssim 1$ the analysis is more is involved. 
Similarly as in the proof of the lower bound of $a_1(k^2)$, 
due to the oscillations of $\Phi_1(r,k^2)$ for $r$ large, we use  \eqref{SMOp1_15} 
and \eqref{SMOp1_16}-\eqref{SMOp1_14} instead of only \eqref{SMOp1_15} to conclude that 
\begin{align}\label{SMOp1_17}
\vert \partial_ka_1(k^2)\vert&\geq \left\vert 
  \partial_k \bigg(\dfrac{\Phi_1(ck^{-1},k^2)}{\Psi_1(ck^{-1},k^2)}\bigg)
  \right\vert \vee \left\vert \partial_k\bigg(\dfrac{\partial_r\Phi_1(ck^{-1},k^2)}{\partial_r\Psi_1(ck^{-1},k^2)}\bigg)\right\vert ,
\end{align}
and similarly for higher derivatives of $a_1(k^2)$. Then, the same proof as for the lower bound of $a_1(k^2)$ shows that \[
\vert (r\partial_r)^nh(r)\vert+\vert (r\partial_r)^ng(r)\vert \geq C_n, \qquad r\gtrsim1,
\]
from where we conclude the desired lower bound.

\tofill

To conclude, we recall that the first identity in \eqref{propSMrho} follows from the general theory of \cite{GZ,KMS}.
\fi
\end{proof}

\begin{rem}[The case $\vert n\vert\geq 2$]\label{section3_nvortex}
Now we briefly show some of the calculations needed for the vortex of degree $n$. 
Let us discuss Lemma \ref{Phi1Op1_1} first. In the proof of \eqref{Phi1Op1_10},  
for $r\lesssim1$, the kernel in \eqref{Phi1Op1_7} now writes \[
K_n(r,s)=\tfrac{s}{2n}(rs)^{-n}(r^{2n}-s^{2n})\Big(1+O(r^2+s^2)\Big),
\]
and $f_{0,n}(r)=r^{-1/2}\Phi_{1,n}^{(0)}(r)$ obeys the bound $f_{0,n} = r^n+O(r^{n+2})$ for $r\lesssim1$,
consistently with the fundamental set of solutions already shown in Remark \ref{section1_nvortex}. 
Hence, the calculation of $f_1$ in \eqref{Phi1Op1_f1_rsmall} is now replaced by
\begin{align}\label{check_nvor_proof4}
f_{1,n}(r) & = \int_0^r K_n(r,s)f_{0,n}(s)ds = \frac{1}{4+4n} r^{2+n} + O(r^{4+n}).
\end{align}
On the other hand, for $r>s\gtrsim1$, the kernel is now replaced by  $K_n=\tfrac{s}{\sqrt{2}}\sin\big(n\sqrt{2}\ln(\tfrac{r}{s})\big) +O(s^{-1})$, 
and hence the calculation in \eqref{Phi1Op1_f1_rlarge} needs to be replaced by
\begin{align}\label{check_nvor_proof1}
& \int_{r_0}^r K_n(r,s)f_0(s)\, ds%\nonumber
 %\\ &  \qquad  = \dfrac{1}{\sqrt{2}}\int_{r_0}^r  \Big(s^{1/2} \sin\big(n\sqrt{2}\ln(\tfrac{r}{s}) \big) +s^{-1/2}O(s^{-1}+sr^{-2}) \Big)\Phi_{1,n}^{(0)}(s)ds 
 \\ &  \qquad  = \dfrac{n}{4\sqrt{2}(1+2n^2)}\Big((\sqrt{2}c_1-2c_2n)\cos(n\sqrt{2}\ln r)+(2c_1n+\sqrt{2}c_2)\sin(n\sqrt{2}\ln r)\Big)r^2 +O(1). \nonumber
\end{align}
The remaining estimates are adapted similarly so, for the sake of brevity, we omit them.

Regarding  the region $rk\gtrsim 1$, the proof of Lemma \ref{lemWeyl1} needs some trivial modifications;
in particular, \eqref{lemWeyl1conj} needs to be replaced by \[
-2if_0'(r)+\sum_{j=0}^\infty \Big(-2if_{j+1}'+\left(-f_j''+\dfrac{1}{r^2}\big(n^2-\tfrac{1}{4}\big)f_j-(3-3U_n^2)f_j\right)\Big)k^{-j}=0.
\]
yielding (compare with \eqref{lemWeyl1_f1}), \[
f_{1,n}(r)=\dfrac{(n^2-\tfrac{1}{4}) i}{2r}-\dfrac{i}{2}\int_r^\infty \big(3-3U_n^2(s)\big)ds =-\dfrac{(1+8n^2)i}{8r}+O(r^{-3}).
\]
Finally, regarding the spectral measure (Lemma \ref{SMOp1_1}), first of all note that in the present case we have (compare with \eqref{SMOp1_7}-\eqref{SMOp1_9}) \begin{align}\
\Phi_{1,n}(ck^{-1},k^2)\lesssim \begin{cases}
k^{-1/2}, & k\lesssim 1,
\\ k^{-1/2-n}, & k\gtrsim 1,
\end{cases}  \quad \hbox{ and } \quad \partial_r\Phi_{1,n}(ck^{-1},k^2)\lesssim \begin{cases}
k^{1/2}, & k\lesssim 1,
\\ k^{1/2-n} & k\gtrsim 1,
\end{cases}\label{check_nvor_proof5}
\end{align}
and hence \eqref{SMOp1_10} for $\vert n\vert\geq2$ becomes \begin{align}\label{check_nvor_proof6}
\vert a_{1,n}(k^2)\vert \lesssim \begin{cases}
1, & k\lesssim 1,
\\ k^{-n}, & k\gtrsim 1,
\end{cases}
\end{align}
yielding the upper bound for $\vert a_{1,n}\vert$. 
The lower bound is adapted similarly and yields the same result. 
Therefore, the spectral measure satisfies $\rho_{1,n}'(k^2)\approx \langle k^2\rangle^n$ (compare with \eqref{SMOp1_4}).
\end{rem}

\subsection{Fourier transform associated to $\mathcal{L}_1$}\label{ssecFT1}
Let $\Phi_1(r,k^2)$ denote the eigenfunctions associated to $\mathcal{H}_1$ constructed above,
and recall that $\mathcal{H}_1(\sqrt{r}f) = \sqrt{r}\mathcal{L}_1f$, for all $f \in \mathrm{Dom}(\mathcal{L}_1)$.
From the general theory of \cite{GZ}, at least formally, for nice enough $f = P_c^1f$ we have the Fourier representation
\begin{align}\label{FT1}
\sqrt{r} f(r) = \int_{0}^\infty \Phi_1(r,\xi) \wtF_1 (\sqrt{r}f ) (\sqrt{\xi}) \, \rho_1(d\xi), \qquad  
  \wtF_1 (\sqrt{r}g)(\sqrt{\xi}) := \int_{0}^\infty \Phi_1(r,\xi) \sqrt{r} g(r) \, dr,
\end{align}
(here $\xi$ plays the role of our $k^2$ generalized eigenvalue)
and the diagonalization property
\begin{align}\label{FT1diag0}
\wtF_1( n(\mathcal{H}_1) g )(k) = n(k^2) (\wtF_1 g)(k)
\end{align}

A rigorous convergence result for the integrals above is the following:

\begin{prop}\label{propFT1}
With the definitions in \eqref{FT1}, let $f \in L^2 (rdr) \cap C^2((0,\infty))$ such that
\begin{align*}%\label{propFT1as}
\int_0^\infty \big( r^{3/2} |f''(r)| + r^{1/2} |f'(r)| + r^{-1/2}|f(r)| ) \, dr = M < \infty.
\end{align*}
Then, the limit
\begin{align*}%\label{propFT1conc1}
\wtF_2 (\sqrt{r}f) (\sqrt{\xi}) := \lim_{R \rightarrow \infty} \int_0^R \Phi_2(r,\xi) \sqrt{r} f(r) \, dr
\end{align*}
exists for all $\xi > 0$ and satisfies
\begin{align*}%\label{propFT1conc2}
\int_{0}^\infty |\Phi_2(r,\xi)| |\wtF_2 (\sqrt{r}f) (\sqrt{\xi})| \, \rho_2(d\xi) \lesssim M.
\end{align*}
\end{prop}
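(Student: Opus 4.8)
The statement of Proposition \ref{propFT1} is the exact analogue for $\mathcal{L}_1$ (equivalently $\mathcal{H}_1$) of Proposition \ref{propFT2}, so the plan is to repeat the argument given there, tracking the two structural differences: first, the generalized eigenvalue is now $\xi + 2$ rather than $\xi$, so oscillatory factors are $e^{\pm i\sqrt{\xi}\,r}$ where $\sqrt{\xi} = k$ and $k^2 = \xi$ is the shifted frequency (the Weyl solution at infinity from Lemma \ref{lemWeyl1} has precisely this $k^{-1/2}e^{ikr}\sigma_{1,\infty}(kr,r)$ form); second, the $L^2((0,1))$ eigenfunction $\Phi_1(r,k^2)$ oscillates as $r\to\infty$ on the region $rk\gtrsim 1$ (it is $2\Re(a_1(k^2)\Psi_1(r,k^2))$ with $a_1(k^2)\approx\jk^{-1}$ by Proposition \ref{SMOp1_1}). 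Neither of these affects the size bounds that the proof of Proposition \ref{propFT2} actually used.

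First I would record the pointwise bounds on $\Phi_1$ analogous to \eqref{propFT2pr0}: from Lemma \ref{Phi1Op1_1} (specifically \eqref{Phi1_0} and \eqref{Phi1Op1_3}) one has $|\Phi_1(r,\xi)|\lesssim r^{3/2}\jr^{-1}$ in the region $r^2\xi\lesssim 1$, while from Lemma \ref{lemWeyl1} together with \eqref{SMOp1_2}-\eqref{SMOp1_3} one has $|\Phi_1(r,\xi)|\lesssim \xi^{-1/4}\jxi^{-1/2}$ in the region $r^2\xi\gtrsim 1$ (here the oscillation of $\Phi_1$ is harmless because we only need the modulus bound). I would also note $\rho_1'(\xi)\approx\jxi$ from \eqref{SMOp1_4}. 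Then I would split $\wtF_1(\sqrt{r}f)(\sqrt{\xi}) = B_0(\xi) + \sum_{j\geq 1} B_j(\xi)$ with dyadic cutoffs $\varphi_{\leq 0}(s^2\xi)$ and $\varphi_j(s^2\xi)$ exactly as in the proof of Proposition \ref{propFT2}, bound $B_0$ directly using the two size estimates for $\Phi_1$ (yielding the terms $A_1, A_2 \lesssim \int s^{-1/2}|f(s)|\,ds$), and for $j\geq 1$ write $\Phi_1$ via the Weyl solution $\Psi_1$ and integrate by parts twice in $r$, using $a_1(\xi)\approx\jxi^{-1/2}$ (noting that here $\jxi^{1/2}$ means $\langle k\rangle$ with $\xi=k^2$ the shifted variable, but this is the same quantity appearing in $\rho_1'$, so the bookkeeping is identical) and the symbol estimates \eqref{lemWeyl1exp}-\eqref{Psi1Op1_1} for $\sigma_{1,\infty}$. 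This produces the analogue of \eqref{propFT2prBj}, and then the same double-integral manipulation \eqref{propFT2pr5}-\eqref{propFT2pr6} with $C(s) := |s^{1/2}f''(s)| + |s^{-1/2}f'(s)| + |s^{-3/2}f(s)|$ gives $\sum_{j\geq 1}\int |\Phi_1||B_j|\rho_1(d\xi)\lesssim \int C(s)\,s\,ds \lesssim M$, proving the analogue of \eqref{propFT2conc2}; existence of the pointwise limit \eqref{propFT2conc1} follows from the convergence of the series $\sum_j |B_j(\xi)|\lesssim_\xi M$ exactly as at the end of that proof.

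I expect the argument to be essentially mechanical once the two eigenfunction size bounds are in place; the only point requiring genuine care — and hence the main (minor) obstacle — is making sure that when integrating by parts in the region $r^2\xi\gtrsim 1$, the $r^{-1}\lesssim\sqrt{\xi}$ estimate on the support is used with the correct power of the frequency, given that $\mathcal{H}_1$ has spectrum starting at $2$ rather than $0$ and the oscillation frequency in $\Psi_1$ is $k = \sqrt{\xi}$ (the shifted variable) rather than the physical frequency; since all the Weyl-solution asymptotics in Lemma \ref{lemWeyl1} are already stated in terms of this $k$, the bookkeeping goes through verbatim. In the interest of brevity, since the proof is word-for-word the proof of Proposition \ref{propFT2} with $\Phi_2,\Psi_2,\rho_2$ replaced by $\Phi_1,\Psi_1,\rho_1$ and the size bounds above, I would simply state that the proof is identical to that of Proposition \ref{propFT2}, using Lemmas \ref{Phi1Op1_1} and \ref{lemWeyl1} and Proposition \ref{SMOp1_1} in place of Lemmas \ref{lemFou1} and \ref{lemWeyl} and Proposition \ref{propSM2}, and omit the repetition.
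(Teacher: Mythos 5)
Your proposal matches the paper exactly: the paper omits the proof of Proposition \ref{propFT1} with the remark that it is completely analogous to that of Proposition \ref{propFT2}, citing Lemma \ref{lemWeyl1}, Lemma \ref{Phi1Op1_1}, and Proposition \ref{SMOp1_1} in place of their $\mathcal{H}_2$ counterparts, and noting that the logarithmic oscillations of $\Phi_1$ for $r\gtrsim 1$, $rk\lesssim 1$ play no role since only size bounds are needed. Your identification of the two structural differences (shifted eigenvalue, oscillatory $\Phi_1$) and the observation that neither affects the modulus estimates used in the proof is precisely the paper's reasoning; you also correctly interpreted the statement's $\Phi_2,\wtF_2,\rho_2$ as typos for the index-$1$ objects.
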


We omit the proof since it is completely analogous to the proof of Proposition \ref{propFT2}, 
given that the generalized eigenfunctions 
$\Phi_1$ satisfy properties identical  to those used there for the generalized eigenfunctions $\Phi_2$;
see Lemma \ref{lemWeyl1} and Lemma \ref{Phi1Op1_1}, 
noticing that the additional logarithmic oscillations for $r\gtrsim 1$ (with $rk \lesssim 1$)
play no role for the relevant estimates in this argument.

%%%%%%%%%%%%%%%%%%%%%%%%%%%%%%%%%%%%%%%%%%%%%%%%%%%%%%%%%%%%%
%%%%%%%%%%%%%%%%%%%%%%%%%%%%%%%%%%%%%%%%%%%%%%%%%%%%%%%%%%%%%
%%%%%%%%%%%%%%%%%%%%%%%%%%%%%%%%%%%%%%%%%%%%%%%%%%%%%%%%%%%%%
%%%%%%%%%%%%%%%%%%%%%%%%%%%%%%%%%%%%%%%%%%%%%%%%%%%%%%%%%%%%%
%%%%%%%%%%%%%%%%%%%%%%%%%%%%%%%%%%%%%%%%%%%%%%%%%%%%%%%%%%%%%
%%%%%%%%%%%%%%%%%%%%%%%%%%%%%%%%%%%%%%%%%%%%%%%%%%%%%%%%%%%%%
%%%%%%%%%%%%%%%%%%%%%%%%%%%%%%%%%%%%%%%%%%%%%%%%%%%%%%%%%%%%%
%%%%%%%%%%%%%%%%%%%%%%%%%%%%%%%%%%%%%%%%%%%%%%%%%%%%%%%%%%%%%
%%%%%%%%%%%%%%%%%%%%%%%%%%%%%%%%%%%%%%%%%%%%%%%%%%%%%%%%%%%%%
%%%%%%%%%%%%%%%%%%%%%%%%%%%%%%%%%%%%%%%%%%%%%%%%%%%%%%%%%%%%%
%%%%%%%%%%%%%%%%%%%%%%%%%%%%%%%%%%%%%%%%%%%%%%%%%%%%%%%%%%%%%
%%%%%%%%%%%%%%%%%%%%%%%%%%%%%%%%%%%%%%%%%%%%%%%%%%%%%%%%%%%%%
%%%%%%%%%%%%%%%%%%%%%%%%%%%%%%%%%%%%%%%%%%%%%%%%%%%%%%%%%%%%%

%%%%%%%%%%%%%%%%%%%%%%%%%%%%%%%%%%%%%%%%%%%%%%%%%%%%%%%%%%%%%
%%%%%%%%%%%%%%%%%%%%%%%%%%%%%%%%%%%%%%%%%%%%%%%%%%%%%%%%%%%%%
%%%%%%%%%%%%%%%%%%%%%%%%%%%%%%%%%%%%%%%%%%%%%%%%%%%%%%%%%%%%%
%%%%%%%%%%%%%%%%%%%%%%%%%%%%%%%%%%%%%%%%%%%%%%%%%%%%%%%%%%%%%
%%%%%%%%%%%%%%%%%%%%%%%%%%%%%%%%%%%%%%%%%%%%%%%%%%%%%%%%%%%%%
%%%%%%%%%%%%%%%%%%%%%%%%%%%%%%%%%%%%%%%%%%%%%%%%%%%%%%%%%%%%%
%%%%%%%%%%%%%%%%%%%%%%%%%%%%%%%%%%%%%%%%%%%%%%%%%%%%%%%%%%%%%

\medskip
\section{Decay estimates}\label{secdecay}
In this section we establish %several 
dispersive decay estimates for linear solutions of 
the second-order in time evolution problems associated with the operators $\mathcal{L}_1$ and $\mathcal{L}_2$.
%In Appendix \ref{ssecS1} we consider the first-order Schr\"odinger dynamics as model problems.
Consider the initial value problems, for $j=1,2$,
\begin{align}\label{KGj}
\partial_t^2 v_j + \mathcal{L}_j v_j = 0, \qquad v_j(0) = f, \quad \partial_t v_j(0) = g,
\end{align}
with $(v_1(0),\partial_tv_1(0)) = (P_c^1 v_1(0), P_c^1 \partial_tv_1(0))$; 
we will often omit the indexes $j=1,2$ for convenience and just denote $v=v_j$. Let $\wtF_j$, $j=1,2$, denote the Fourier transform associated with the operator $\mathcal{H}_j$, 
constructed in Sections \ref{secL2} and \ref{secL1}, and recall that
%\begin{align}
$\mathcal{H}_j \sqrt{r} = \sqrt{r} \mathcal{L}_j$. 
%\end{align}
After applying the Fourier transform, in terms of the variable 
\begin{align*}
\wt{z_j} := \wtF_j (\sqrt{r} v_j), \qquad z_j := \sqrt{r}v_j, 
\end{align*}
we write \eqref{KGj} as
\begin{align}\label{FTKG1}
\partial_t^2 \wt{z_1} + (k^2+2) \wt{z_1} = 0,  \qquad \wt{z_1}(0) = \wtF_1(\sqrt{r}f), 
  \quad \partial_t\wt{z_1}(0) = \wtF_1(\sqrt{r}g),
\end{align}
when $j=1$, and as 
\begin{align}\label{FTKG2}
\partial_t^2 \wt{z_2} + k^2 \wt{z_2} = 0,  \qquad \wt{z_2}(0) = \wtF_2(\sqrt{r}f), 
  \quad \partial_t\wt{z_2}(0) = \wtF_2(\sqrt{r}g),
\end{align}
when $j=2$.
Note that \eqref{FTKG1} is a Klein-Gordon type evolution, while \eqref{FTKG2} is wave-like.

\subsection{Littlewood-Paley projections}
We define the Littlewood-Paley `projection' to frequencies $k \approx 2^\ell$, $\ell \in \Z$
relative to the operator $\mathcal{H}_j$, $j=1,2$, as follows:
let $f = f(r) \in L^2$, define the projection $P_\ell^{\mathcal{H}_j} f$ by 
\begin{align}\label{LPF1}
\begin{split}
\wtF_j (P_\ell^{\mathcal{H}_j} f) (k) & = \varphi_\ell(k) (\wtF_j f)(k)
 = \varphi_\ell(k) \int_0^\infty \Phi_j(r,k^2) f(r) \, dr 
%  \\
%\Longleftrightarrow \quad P_\ell^{\mathcal{H}_j} f (r) & = \int_0^\infty \Phi_j(r,k^2) \varphi_\ell(k) 
%  \int_0^\infty \Phi_j(s,k^2) f(s) \, ds \, 2k \rho_j'(k^2) dk;
\end{split}
\end{align}
%recall the notation from Subsection \ref{secnot}.
%We can then write
that is,
\begin{align}\label{LPF1'}
\begin{split}
& P_\ell^{\mathcal{H}_j} f (r) = \int_0^\infty K_{j,\ell}(r,s) f(s) \, ds,
\\
& K_{j,\ell}(r,s) := \int_0^\infty \Phi_j(r,k^2) \Phi_j(s,k^2)  \, \varphi_\ell(k) \, 2k \rho_j'(k^2) \, dk,
\end{split}
\end{align}
recall the notation from Subsection \ref{secnot}.
We have the usual partition of unity property
\begin{align*}
\sum_{\ell \in \Z} P_\ell^{\mathcal{H}_j} f = f \quad \mbox{a.e.},
\end{align*}
as well as orthogonality: $P_\ell^{\mathcal{H}_j} P_{\ell'}^{\mathcal{H}_j} = 0$ if $|\ell - \ell' | > 1$.
Recalling the definition of the cutoff $\varphi_I$, $I\subset[0,\infty]$, from Subsection \ref{secnot},
we similarly define
\begin{align}\label{LPF1'I}
\begin{split}
& P_I^{\mathcal{H}_j} f (r) = \int_0^\infty K_{j,I}(r,s) f(s) \, ds,
\\
& K_{j,I}(r,s) := \int_0^\infty \Phi_j(r,k^2) \Phi_j(s,k^2)  \, \varphi_I(k) \, 2k \rho_j'(k^2) \, dk.
\end{split}
\end{align}

We also define a `projection' relative to the operators $\mathcal{L}_j$, $j=1,2$, %to frequencies $k \approx 2^\ell$, $\ell \in \Z$
as follows:
let $h = h(r) \in L^2$, define %the projection $P_\ell^{\mathcal{L}_j} f$, by 
\begin{align}\label{LPF1L}
\begin{split}
P_\ell^{\mathcal{L}_j} f (r) = r^{-1/2} P_\ell^{\mathcal{H}_j} (r^{1/2} f ),
\end{split}
\end{align}
that is,
\begin{align}\label{LPF1'L}
\begin{split}
& P_\ell^{\mathcal{L}_j} f (r) := \int_0^\infty \frac{1}{\sqrt{rs}} K_{j,\ell}(r,s) f(s) \, s  ds,
\end{split}
\end{align}
where the kernel $K_{j,\ell}$ is the same as in \eqref{LPF1'}.
As before we have 
\begin{align}
\sum_{\ell \in \Z} P_\ell^{\mathcal{L}_j} f = f \quad \mbox{a.e.}
\end{align}
and the orthogonality property $P_\ell^{\mathcal{L}_j} P_{\ell'}^{\mathcal{L}_j} = 0$ if $|\ell - \ell' | > 1$.
Similarly to the above, we also define, for $I\subset[0,\infty]$ 
\begin{align*}P_I^{\mathcal{L}_j} f (r) := r^{-1/2} P_I^{\mathcal{H}_j} (r^{1/2} f )
  = \int_0^\infty \frac{1}{\sqrt{rs}} K_{j,I}(r,s) f(s) \, s  ds.
\end{align*}
In Subsection \ref{ssecLP} we prove $L^p$ bounds for these projections.
%%comm%{Which projections are more needed ?}

%%comm%{Equivalence $2^\ell P_\ell^{\mathcal{L}_j} \approx P_\ell^{\mathcal{L}_j} \partial_r$ to figure out nicely}

%%comm2%{Is this Lemma \ref{WDecKG4}?}

\subsection{Second order dynamics for $\mathcal{L}_1$}\label{ssecKG}
%\subsubsection{The Klein-Gordon case}
Writing out the solution to \eqref{FTKG1}, and using the Fourier representation 
from Proposition \ref{propFT1}, %\eqref{FT2} 
we have 
\begin{align*}%\label{ssecKG1}
\begin{split}
\wt{z_1}(t,k) & = e^{it\sqrt{k^2+2}} \frac{1}{2}\Big( \wtF_1(\sqrt{r}f)(k) + \frac{1}{i\sqrt{k^2+2}}\wtF_1(\sqrt{r}g)(k) \Big)
\\
& + e^{-it\sqrt{k^2+2}} \frac{1}{2}\Big( \wtF_1(\sqrt{r}f)(k) - \frac{1}{i\sqrt{k^2+2}}\wtF_1(\sqrt{r}g)(k) \Big)
\\
& = e^{it\sqrt{k^2+2}} \frac{1}{2} \int_{0}^\infty \Phi_1(r,k^2) \Big( \sqrt{r}f + \frac{1}{i\sqrt{k^2+2}} \sqrt{r}g \Big) \, dr
\\
& + e^{-it\sqrt{k^2+2}} \frac{1}{2} \int_{0}^\infty \Phi_1(r,k^2) \Big( \sqrt{r}f - \frac{1}{i\sqrt{k^2+2}} \sqrt{r}g \Big) \, dr
%\sqrt{r} f(r) = \int_{0}^\infty \Phi(r,\xi) \wtF_2 (\sqrt{r}f ) (\xi) \, \rho(d\xi), \qquad  
%  \wtF_2 g(\xi) := \int_{0}^\infty \Phi(r,\xi) g(r) \, dr.
\end{split}
\end{align*}
so that we can represent the solution of \eqref{KGj} as
%(recall $\wt{z_1} := \wtF_1 (\sqrt{r} v)$) as 
\begin{align}\label{ssecKG1}
\begin{split}
& v(t,r) = \frac{1}{\sqrt{r}} 
  \int_0^\infty \Phi_1(r,k^2) e^{it\sqrt{k^2+2}} \wt{F_+}(k) \, 2k \rho_1^\prime(k^2) \, dk
  \\
  & \qquad \qquad + \frac{1}{\sqrt{r}} 
  \int_0^\infty \Phi_1(r,k^2) e^{-it\sqrt{k^2+2}} \wt{F_-}(k) \, 2k \rho_1^\prime(k^2) \, dk,
  \\
& \wt{F_{\pm}}(k) := \frac{1}{2}\Big( \wtF_1(\sqrt{r}f)(k) \pm \frac{1}{i\sqrt{k^2+2}}\wtF_1(\sqrt{r}g)(k) \Big)
  = \frac{1}{2}\wt{\mathcal{F}_1}\Big( \sqrt{r}f \pm \frac{1}{i\sqrt{\mathcal{H}_1}} \sqrt{r}g \Big)(k) ,
%\sqrt{r} f(r) = \int_{0}^\infty \Phi(r,\xi) \wtF_2 (\sqrt{r}f ) (\xi) \, \rho(d\xi), \qquad  
%  \wtF_2 g(\xi) := \int_{0}^\infty \Phi(r,\xi) g(r) \, dr.
\end{split}
\end{align}
and as
\begin{align}\label{ssecKG1'}
\begin{split}
v(t,r) & = \int_0^\infty %\Phi_1(r,k) e^{it\sqrt{k^2+2}} \frac{1}{2} \int_{0}^\infty \Phi_1(s,k) 
  \Big(K_+(t,r,s) f(s) + K_+'(t,r,s) g(s)\Big) \, s \, ds 
  \\
  & + \int_0^\infty %\Phi_1(r,k) e^{it\sqrt{k^2+2}} \frac{1}{2} \int_{0}^\infty \Phi_1(s,k) 
  \Big(K_-(t,r,s) f(s) - K_-'(t,r,s) g(s)\Big) \, s \, ds 
\end{split}
\end{align}
where the kernels are defined by
\begin{align}\label{Ker}
K_\pm(t,r,s) & :=  \frac{1}{\sqrt{rs}} 
  \int_0^\infty \Phi_1(r,k^2) \Phi_1(s,k^2) e^{\pm it\sqrt{k^2+2}} \, k \rho_1^\prime(k^2) \, dk,
\\
\label{Ker'}
K_\pm'(t,r,s) & := \frac{1}{\sqrt{rs}} \int_0^\infty
  \frac{1}{i\sqrt{k^2+2}} \Phi_1(r,k^2) \Phi_1(s,k^2) e^{\pm it\sqrt{k^2+2}} \, k \rho_1^\prime(k^2) \, dk.
\end{align}
Recall that $\rho_1'$ is given in Proposition \ref{SMOp1_1},
and that we can effectively think of $\rho_1'(k^2) \approx %\langle k^2 \rangle \approx 
\langle k \rangle^2$.

We are going to use both types of representations, namely \eqref{ssecKG1} and \eqref{ssecKG1'},
to establish different types of dispersive estimates. The former representation 
will be used to obtain estimates in terms of the transform of the data in Section \ref{WDecKG_Sec}; 
this type of estimate is often more suitable to tackle nonlinear problems where oscillations 
- which are more naturally seen in Fourier space - play an important role.
The second representation is more convenient to obtain physical space type estimates, which we do in this section.

According to the notation above we have
\begin{align}\label{ssecKG1'l}
\begin{split}
P_\ell^{\mathcal{L}_1} v(t,r) & = \int_0^\infty %\Phi_1(r,k) e^{it\sqrt{k^2+2}} \frac{1}{2} \int_{0}^\infty \Phi_1(s,k) 
  \Big(K_{+,\ell}(t,r,s) P_{[\ell-2,\ell+2]}^{\mathcal{L}_1} f(s) + K_{+,\ell}'(t,r,s) P_{[\ell-2,\ell+2]}^{\mathcal{L}_1} g(s)\Big) \, s \, ds 
  \\
  & + \int_0^\infty %\Phi_1(r,k) e^{it\sqrt{k^2+2}} \frac{1}{2} \int_{0}^\infty \Phi_1(s,k) 
  \Big(K_{-,\ell}(t,r,s) P_{[\ell-2,\ell+2]}^{\mathcal{L}_1} f(s) - K_{-,\ell}'(t,r,s) P_{[\ell-2,\ell+2]}^{\mathcal{L}_1} g(s)\Big) \, s \, ds 
\end{split}
\end{align}
where the kernels with the index $\ell$, that is, $K_{\pm,\ell}$ and $K_{+,\ell}'$ 
are defined as the kernel $K_{\pm}$ and $K_{\pm}'$ in \eqref{ssecKG1'}
with an additional $\varphi_\ell(k)$ cutoff in the integrals.
%recall also the definition for $\varphi_{[a,b]}$ from \ref{secnot}.
Let us verify the identity \eqref{ssecKG1'l}; it suffices to do this for the kernel $K_+$
since the others obey the same formulas.
We have, according to our Fourier representation, that
\begin{align*}
\int_0^\infty K_+(t,r,s) f(s) \, s \, ds  = \frac{1}{\sqrt{r}} e^{it \sqrt{\mathcal{H}_1+2}} (\sqrt{r} f).
\end{align*}
Applying the projection with respect to $\mathcal{L}_1$, according to the definition \eqref{LPF1L}, we get
\begin{align}\label{ssecKG1'lpr}
\begin{split}
& P_\ell^{\mathcal{L}_1} \big[ \frac{1}{\sqrt{r}} e^{it \sqrt{\mathcal{H}_1+2}} (\sqrt{r} f) \big]
  =  \frac{1}{\sqrt{r}} P_\ell^{\mathcal{H}_1} \big[e^{it \sqrt{\mathcal{H}_1+2}} (\sqrt{r} f) \big]
  \\
  & =  \frac{1}{\sqrt{r}} e^{it \sqrt{\mathcal{H}_1+2}} P_\ell^{\mathcal{H}_1} 
    \big[ P_{[\ell-2,\ell+2]}^{\mathcal{H}_1} (\sqrt{r} f) \big]
  =  \frac{1}{\sqrt{r}} e^{it \sqrt{\mathcal{H}_1+2}} P_\ell^{\mathcal{H}_1} 
  \big[ \sqrt{r} \, P_{[\ell-2,\ell+2]}^{\mathcal{L}_1} f \big]
\end{split}
\end{align}
having used the orthogonality property. 
Finally we notice that
\begin{align*}
\frac{1}{\sqrt{r}} e^{it \sqrt{\mathcal{H}_1+2}}  P_\ell^{\mathcal{H}_1} (\sqrt{r} h ) = 
 \int_0^\infty K_{+,\ell}(r,s) h(s) \, s ds;
\end{align*}
%corresponds to the kernel $K_{+,\ell}$,
%last expression in \eqref{ssecKG1'lpr}
this proves \eqref{ssecKG1'l} from \eqref{ssecKG1'}.

\smallskip
\begin{prop}\label{propKG1dec}
Let $v$ be a solution to $v_{tt} + \mathcal{L}_1 v = 0$, with $(v(0),\partial_t v(0)) = (P_c^1f,P_c^1g)$ such that 
%%comm%{Can avoid sum?}
\begin{align}\label{propKG1decas}
\sum_{\ell} 
%{\color{red} 
%%%%(1+2^{3\ell}) {\big\| P_\ell^{\mathcal{L}_1} f \big\|}_{L^1} + (1+2^{2\ell}) {\big\| P_\ell^{\mathcal{L}_1} g \big\|}_{L^1}%%%%%%%
(1+2^{2\ell}) {\big\| P_\ell^{\mathcal{L}_1} f \big\|}_{L^1(rdr)} 
  + (1+2^{\ell}) {\big\| P_\ell^{\mathcal{L}_1} g \big\|}_{L^1(rdr)}
\leq 1.
%} 
\end{align}
%%comm2%{These $L^1$ norm are the $rdr$ norm, right? \eqref{ssecKG1'}}
Then
\begin{align*}
|v(t,r)| \lesssim \frac{1}{t}.
\end{align*}
%%comm%{Could add decay outside light cone?}
\end{prop}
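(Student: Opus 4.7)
The plan is to use the Littlewood--Paley decomposition together with the kernel representation \eqref{ssecKG1'l} to reduce the proof to the pointwise dyadic kernel estimates
\[
\sup_{s>0}|K_{\pm,\ell}(t,r,s)| \lesssim \frac{1+2^{2\ell}}{t}, \qquad \sup_{s>0}|K'_{\pm,\ell}(t,r,s)| \lesssim \frac{1+2^\ell}{t},
\]
uniformly in $r>0$; pairing these against the weighted hypothesis \eqref{propKG1decas} and summing in $\ell$ then yields $|v(t,r)|\lesssim 1/t$.

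To obtain these kernel bounds I will decompose the $k$-integration based on whether both $rk,sk\gtrsim 1$ on the support $k\approx 2^\ell$ (the \emph{Weyl region}) or not. In the Weyl region I substitute the representation $\Phi_1(r,k^2)=2\Re(a_1(k^2)\Psi_1(r,k^2))$ of Proposition \ref{SMOp1_1} together with the asymptotic $\Psi_1(r,k^2)=k^{-1/2}e^{ikr}\sigma_{1,\infty}(kr,r)$ of Lemma \ref{lemWeyl1}. The product $\Phi_1(r,k^2)\Phi_1(s,k^2)$ then splits into four oscillatory contributions with phases of the form $\phi_{\vec\epsilon}(k):=\epsilon_1 kr+\epsilon_2 ks+t\sqrt{k^2+2}$, $\vec\epsilon\in\{\pm\}^2$. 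A crucial algebraic point is the identity
\[
|a_1(k^2)|^2\rho_1'(k^2)=\tfrac{1}{4\pi},
\]
which follows directly from \eqref{SMOp1_4} and cancels the growing $\jk^2$ factor in the spectral density. Consequently, the effective amplitude reduces to $(rs)^{-1/2}$ times a symbol in $k$ supported on $k\approx 2^\ell$, with derivatives controlled by \eqref{SMOp1_13} and the symbol bounds in Lemma \ref{lemWeyl1}. In the complementary region $rk\lesssim 1$ or $sk\lesssim 1$ I will instead use the power-series expansion from Lemma \ref{Phi1Op1_1}, where the bound $|\Phi_1(r,k^2)|\lesssim r^{3/2}$ (respectively $s^{3/2}$) yields a favorable amplitude.

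The phase $\phi_{++}$, whose derivative $r+s+tk/\sqrt{k^2+2}$ is bounded below by $r+s$, is never stationary for $r,s,t>0$; iterated integration by parts using the symbol bounds on the amplitude then yields a contribution of size $O(t^{-N})$, negligible for our purposes. The principal contribution comes from the phase $\phi(k)=k(r-s)-t\sqrt{k^2+2}$ and its sign variants, whose stationary point $k_*$ is determined by $tk_*/\sqrt{k_*^2+2}=|r-s|$, existing only when $|r-s|<t$, with
\[
|\phi''(k_*)|=\frac{2t}{(k_*^2+2)^{3/2}}=\frac{(t^2-(r-s)^2)^{3/2}}{\sqrt{2}\,t^2}.
\]
Standard stationary phase then yields, in the Weyl region, the pointwise bound
\[
|K_{+,\ell}(t,r,s)|\lesssim \frac{1}{\sqrt{rs}}\cdot\frac{t}{(t^2-(r-s)^2)^{3/4}}.
\]
Frequency localization $k_*\approx 2^\ell$ forces $t^2-(r-s)^2\approx t^2/(1+2^{2\ell})$, giving $t/(t^2-(r-s)^2)^{3/4}\approx (1+2^{2\ell})^{3/4}/\sqrt{t}$; combining with the Weyl constraint $r,s\gtrsim 2^{-\ell}$ and the stationary relation $\max(r,s)\gtrsim t/\jk$ produces the target bound $(1+2^{2\ell})/t$. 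The estimate for $K'_{+,\ell}$ follows verbatim, with the extra factor $(k^2+2)^{-1/2}\approx\jk^{-1}$ in the integrand lowering the weight to $1+2^\ell$.

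The hardest part will be organizing the various region decompositions (Weyl versus power-series asymptotics for $\Phi_1$, stationary versus non-stationary $(r,s)$ geometry) so that the various oscillatory-integral estimates assemble \emph{uniformly} into the frequency-weighted bound $(1+2^{2\ell})/t$. In essence, this encodes the two-dimensional radial Klein-Gordon dispersion within the distorted spectral framework developed in Sections \ref{secL2}--\ref{secL1}, and the algebraic cancellation $|a_1|^2\rho_1'=(4\pi)^{-1}$, reflecting the correct normalization of the distorted Fourier basis, is precisely what makes the frequency weights line up with the underlying $2$d Klein-Gordon dispersive decay on dyadic frequency blocks.
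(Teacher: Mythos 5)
Your overall strategy coincides with the paper's: reduce via \eqref{ssecKG1'l} to the dyadic kernel bounds $\sup_{r,s}|K_{\pm,\ell}|\lesssim(1+2^{2\ell})t^{-1}$ and $|K'_{\pm,\ell}|\lesssim(1+2^{\ell})t^{-1}$, split according to whether $rk,sk\gtrsim1$, use the Weyl asymptotics of Lemma \ref{lemWeyl1} together with the cancellation $|a_1(k^2)|^2\rho_1'(k^2)=(4\pi)^{-1}$ from \eqref{SMOp1_4} in the oscillatory region, and the series expansion of Lemma \ref{Phi1Op1_1} otherwise. In the oscillatory region your execution (van der Corput at the stationary point $k_*$, with $|\phi''(k_*)|=(t^2-(r-s)^2)^{3/2}/(\sqrt{2}\,t^2)$) is a legitimate, somewhat more compact variant of the paper's decomposition in $|k-k_0|$ plus a single integration by parts, and the numerology closes \emph{provided} you use the correct stationary relation. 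You state ``$\max(r,s)\gtrsim t/\jk$''; what actually follows from $|r-s|=tk_*/\sqrt{k_*^2+2}$ is $r\geq|r-s|\approx t\,2^{\ell}\jk^{-1}$ (so $\approx t$ for $2^{\ell}\gtrsim1$ and $\approx t2^{\ell}$ for $2^{\ell}\lesssim1$). With your stated relation the high-frequency case only gives $rs\gtrsim t\,2^{-2\ell}$ and hence $|K_{+,\ell}|\lesssim 2^{5\ell/2}/t$, missing the target; with the correct relation $rs\gtrsim (t\,2^{\ell}/\jk)\cdot 2^{-\ell}=t/\jk$ and the bound $(1+2^{2\ell})/t$ follows as you intend. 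This is a slip rather than a gap, since your own stationary-point equation contains the right inequality.

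The more substantive issue is the non-Weyl region. The bound $|\Phi_1(r,k^2)|\lesssim r^{3/2}$ is the $r\lesssim1$ asymptotic only; for $1\lesssim r\lesssim 1/k$ one has $\Phi_1\approx r^{1/2}$, i.e. $r^{3/2}\jr^{-1}$ (cf. \eqref{prKG1Phi}), and this improvement is not cosmetic. If you run your plan with the amplitude $(rs)^{-1/2}\,r^{3/2}s^{3/2}=rs\lesssim k^{-2}$ against the spectral density $\rho_1'\approx\jk^{2}$, then in the range $t^{-1/2}\lesssim 2^{\ell}\lesssim 1$ (where $r,s$ may be as large as $2^{-\ell}$) one integration by parts only yields $|K_{2,\ell}|\lesssim 2^{-2\ell}/t$, while direct integration yields $O(1)$; neither is $\lesssim(1+2^{2\ell})/t$, and the sum over $\ell$ against the weights in \eqref{propKG1decas} then diverges. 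What saves the estimate --- and is the point of the paper's computation around \eqref{prKG1Ker2'} --- is the uniform bound $\tfrac{r}{\jr}\tfrac{s}{\langle s\rangle}\jk^{2}\lesssim1$ on the support $rk,sk\lesssim1$, which requires the sharp asymptotics including the $\jr^{-1}$, $\langle s\rangle^{-1}$ factors. The same point recurs in the mixed region $sk\lesssim1\lesssim rk$, which your sketch does not treat separately: there only one factor of $a_1$ is available, so the cancellation with $\rho_1'$ is partial and one needs $|a_1|\rho_1'\approx\jk$ together with $\tfrac{s}{\langle s\rangle}\jk\lesssim1$. Since Lemma \ref{Phi1Op1_1} provides exactly these asymptotics, the fix is immediate, but as written your low-frequency amplitude claim is not sufficient to close the argument.
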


\begin{proof}
In view of the representation \eqref{ssecKG1'} and the assumption \eqref{propKG1decas},
it suffices to show the following estimates for the kernels in %\eqref{Ker}-\eqref{Ker'} 
the expressions \eqref{ssecKG1'l}:
\begin{align}\label{prKG1Ker0}
\sup_{r,s\geq 0}\big| K_{\pm,\ell}(t,r,s) \big| \lesssim (1+2^{2\ell}) t^{-1}, 
  \qquad \big| K_{\pm,\ell}'(t,r,s) \big| \lesssim (1+2^{\ell}) t^{-1}.
\end{align}
Since these two estimate are almost identical, we will only give the details for the proof 
of the first one. Also, we only consider the case with index $+$, since the one with index $-$
can be dealt with in an identical fashion.

By symmetry we may assume, without loss of generality, that $r\geq s$.
We let $\chi = \chi(x)$, $x\geq 0$, be a smooth cutoff function equal to $1$ in $[0,1]$, %a neighbourhood of the origin, 
decreasing, and vanishing for $x \geq 2$, and denote $\chi^c := 1-\chi$. We split the kernel into three 
main pieces:
\begin{align}
\nonumber
K_{+,\ell}(t,r,s) & = K_{1,\ell}(t,r,s) + K_{2,\ell}(t,r,s) + K_{3,\ell}(t,r,s),
\\
\label{prKG1Ker1}
K_{1,\ell}(t,r,s) & :=  \frac{1}{\sqrt{rs}} 
  \int_0^\infty \Phi_1(r,k^2) \Phi_1(s,k^2) e^{%\pm 
  it\sqrt{k^2+2}} \, \varphi_\ell(k) k \rho_1^\prime(k^2) \, 
  \chi^c(sk) \, dk,
\\
\label{prKG1Ker2}
K_{2,\ell}(t,r,s) & :=  \frac{1}{\sqrt{rs}} 
  \int_0^\infty \Phi_1(r,k^2) \Phi_1(s,k^2) e^{it\sqrt{k^2+2}} \, \varphi_\ell(k) k \rho_1^\prime(k^2) \, 
  \chi(rk)\chi(sk) \, dk,
\\
\label{prKG1Ker3}
K_{3,\ell}(t,r,s) & :=  \frac{1}{\sqrt{rs}} 
  \int_0^\infty \Phi_1(r,k^2) \Phi_1(s,k^2) e^{it\sqrt{k^2+2}} \, \varphi_\ell(k) k \rho_1^\prime(k^2) \, 
  \chi^c(rk)\chi(sk) \, dk.
\end{align}
Our main aim is to prove 
\begin{align}\label{prKG1main}
\sup_{r,s\geq 0} \big| K_{j,\ell}(r,s) \big| \lesssim (1+2^{2\ell}) t^{-1}, \qquad j=1,2,3.    %%%(1+2^{3\ell/2}) t^{-1}, \qquad j=1,2,3.
\end{align}

Before proceeding to estimate each piece we observe that, from Lemmas \ref{Phi1Op1_1} and \ref{lemWeyl1},
we can write 
%%comm%{These are the estimates for $\Phi_2$, the ones for $\Phi_1$ are slightly more complicated to get, 
%could maybe just comment on the log factors \dots
%
\begin{align}\label{prKG1Phi}
\Phi_1(r,k^2) = \left\{ 
\begin{array}{ll}
r^{3/2} \big( 1 + \phi_{<}(r,k)), &  rk \lesssim 1, \quad r \lesssim 1,
\\
\\
r^{1/2} \big( 1 + \phi_{>}(r,k)), &  rk \lesssim 1, \quad r \gtrsim 1,
\\
\\
\dfrac{1}{\sqrt{k}} \big[ a_1(k^2) e^{irk} \big( 1 + \phi_+(r,k)) 
  + \overline{a_1}(k^2) e^{-irk} \big(1 + \phi_-(r,k)) \big], &  rk \gtrsim 1
\end{array} 
\right.
\end{align}
where $\phi \in \{ \phi_{<}, \phi_{>}\phi_-,\phi_+ \}$ 
is a generic function satisfying the symbol-type estimates
\begin{align}\label{prKG1phi}
%|\partial_y^\alpha \partial_k^\beta \phi(r,k)| \lesssim y^{-\alpha} k^{-\beta}, \quad \alpha,\beta = 0,1,2.
|\partial_k^\alpha \phi(r,k)| \lesssim k^{-\alpha}, \quad \alpha = 0,1.
\end{align}
Also, recall that $a_1(k^2) \approx \langle k \rangle^{-1}$ with compatible symbol-type estimates
for the derivatives. 
%Since $a$ and $\bar{a}$ satisfy identical estimates, we will not distinguish them in what follows.
To see the validity of \eqref{prKG1Phi} it suffices to inspect the statement 
of Lemma \ref{Phi1Op1_1}, and use Lemma \ref{lemWeyl1}. 
%%comm%{Check if okay and put some details here, plus add the bounds on derivatives in the respective lemmas}

%use the representation $\Phi(r,k^2) = 2\Re (a(k^2) \Psi(r,k^2))$, see \eqref{propSM0} and \ref{}

%
%%comm2%{
%
%}

\medskip
\noindent
{\it Estimate of $K_{1,\ell}$.}
In view of our assumptions, on the support of $K_{1,\ell}$ we have $rk \geq sk \gtrsim 1$.
Using \eqref{prKG1Phi} we see that the desired estimate \eqref{prKG1main} reduces to the same estimate for
the kernels 
\begin{align}
\begin{split}
L_{\eps_1\eps_2}(t,r,s) := 
  \int_0^\infty \frac{1}{\sqrt{rk}} \frac{1}{\sqrt{sk}} a_1^{\eps_1}(k^2) e^{\eps_1 irk} \big( 1 + \phi_{\eps_1}(r,k))
  a_1^{\eps_2}(k^2) e^{\eps_2 isk} \big( 1 + \phi_{\eps_2}(s,k))  
  \\
  \times e^{it\sqrt{k^2+2}} \, \varphi_\ell(k) k \rho_1^\prime(k^2) \, \chi^c(sk) \, dk, \qquad \eps_1,\eps_2 \in \{+,-\},
\end{split}
\end{align}
where we use the notation $f^+=f$ and $f^-=\bar{f}$.
It suffices to look at the case $(\eps_1,\eps_2) = (-,+)$ since the other cases are similar or easier.
Using $|a_1(k^2)|^2 \rho_1^\prime(k^2) = 1/(4\pi)$, see \eqref{SMOp1_4}, we can write $4\pi L_{-+}=L$ with
\begin{align}\label{prKG1L}
\begin{split}
L(t,r,s) & := \int_0^\infty \frac{1}{\sqrt{rk}} \frac{1}{\sqrt{sk}} e^{i t S(r,s,k,t)} %m(k) 
  A(r,s,k) \,  \varphi_\ell(k) k \, dk,
\\
S(r,s,k,t) & := \frac{k}{t}(s-r) + \sqrt{k^2+2},
%\\
%m(k) & := |a_1(k^2)|^2 \rho_1^\prime(k^2),
\\
A(r,s,k) & := \big( 1 + \phi_-(r,k)) \big( 1 + \phi_{+}(s,k)) \chi^c(sk).
\end{split}
\end{align}
We then aim to prove 
\begin{align}\label{prKG1main1}
\sup_{r,s\geq 0} \big| L(t,r,s) \big| \lesssim (1+ 2^{2\ell}) t^{-1}. 
\end{align}
From Proposition \ref{SMOp1_1} and \eqref{prKG1phi}, we have the symbol-type estimates
\begin{align}\label{prKG15}
%|\partial^\alpha_k m(k)| \lesssim k^{-\alpha}, \qquad 
|\partial_k^\alpha A(r,s,k)| \lesssim k^{-\alpha}, \qquad \alpha = 0,1,2.
\end{align}

%%comm2%{This case does not change at all for the vortex of degree $n$.}

\smallskip
\noindent
{\it Proof of \eqref{prKG1main1}}.
First, observe that we may restrict ourselves to the case $k \geq Ct^{-1/2}$,
that is $2^\ell \gtrsim t^{-1/2}$, otherwise the bound follows easily from \eqref{prKG15} and $rk,sk \geq 1$. 

%We may then insert a cutoff $\chi^c (k\sqrt{t})$ in the integral defining $L$ above.
According to \eqref{prKG1L} we have
\begin{align*}
& \partial_k S(r,s,k,t) = \frac{1}{t}(s-r) + \frac{k}{\sqrt{k^2+2}},
\qquad \partial_k^2 S(r,s,k,t) = (k^2+2)^{-3/2} \approx (1+2^{3\ell})^{-1}.
\end{align*}
In what follows we will often omit some the arguments $(r,s,k,t)$ for ease of notation,
and when this causes no confusion.
The phase $S$ has a unique stationary point, which we denote by $k_0$, whenever $\frac{1}{t}(s-r) \in (-1,0)$. 
In what follows we may assume that the stationary point is in the support 
of the integral for otherwise %the phase is non-stationary and 
the desired bound can be obtained more easily. %%comm%{Put some details?}
In particular, we have $k_0 \approx 2^\ell$ and $r \gtrsim t k/\sqrt{k^2+2}$.

%Let $k_0$ be the the stationary point of the phase defined as the unique solution to $\partial_k S = 0$, i.e.
%\begin{align}
%\frac{1}{t}(s-r) + \frac{k_0}{\sqrt{k_0^2+2}} = 0.
%\end{align}
Let $q_0$ be the smallest integer such that $2^{q_0} \geq t^{-1/2}(1+2^{3\ell/2})$. 
We decompose the support of the integral according to the distance of $k$ to $k_0$ as follows:
\begin{align}\label{prKGmain1q}
\begin{split}
& L(t,r,s) = \sum_{q \geq q_0}^{\ell+10} L_{q}(t,r,s), %\qquad K_0 := \min(\log k, 0) -10,
\\
& L_{q}(t,r,s) := \int_0^\infty \frac{1}{\sqrt{rk}} \frac{1}{\sqrt{sk}} e^{i t S} A(r,s,k)
  \varphi_q^{(q_0)}(k-k_0) \,  \varphi_\ell(k) k \, dk.
\end{split}
\end{align}
Note that the estimate for the term with $q=q_0$ follows by direct integration, 
using also that $\sqrt{k/r} \lesssim t^{-1/2} (k^2+2)^{1/4}$:
\begin{align*}
\vert L_{q_0}\vert \lesssim t^{-1/2} %2^{-\ell/2}
  (1+2^{\ell/2}) %2^{-\ell/2} 2^{\ell}
  2^{q_0} \lesssim t^{-1}(1+2^{2\ell}),
\end{align*}
having used that $2^{q_0}\approx t^{-1/2}(1+2^{3\ell/2})$. 

Next, observe that on the support of $L_q$ we have
\begin{align}\label{prKG1main1S}
| \partial_k S| \approx |k-k_0| \jk^{-3} \approx 2^q (1+2^{3\ell})^{-1} \gtrsim t^{-1/2}(1+2^{3\ell})^{-1/2}. 
\end{align}
%We proceed to estimate $L_q$ consistently with the desired \eqref{prKG1main1}.
Integrating by parts in $k$ gives
\begin{align*}
L_{q}(t,r,s) = \frac{i}{t} \int_0^\infty e^{i t S} \partial_k \Big[ \frac{1}{\partial_k S}
	\frac{1}{\sqrt{r}} \frac{1}{\sqrt{s}} \,  A \, \varphi_q(k-k_0) \,  \varphi_\ell(k) \Big] \, dk,
\end{align*}
so that (in what follows we will sometimes  omit the variable $t$ for lighter notation)
\begin{align}\label{prKG1main1q}
\begin{split}
| L_{q} | \lesssim t^{-1} (I + II + III), \qquad
I(r,s) & = \int_0^\infty %\frac{1}{\jk^3} 
  \frac{|\partial_k^2 S|}{(\partial_k S)^2}
	\frac{1}{\sqrt{r}} \frac{1}{\sqrt{s}} \, | A | \, \varphi_q(k-k_0) \, \varphi_\ell(k) \, dk,	
\\
II(r,s) & = \int_0^\infty \frac{1}{|\partial_k S|}
	\frac{1}{\sqrt{r}} \frac{1}{\sqrt{s}} \, | A |
	\, \varphi_\ell(k) \, \partial_k \varphi_q(k-k_0) \, dk,
\\
III(r,s) & = \int_0^\infty \frac{1}{|\partial_k S|}
	\frac{1}{\sqrt{r}} \frac{1}{\sqrt{s}} \, \big| \partial_k \big[ A
	\, \varphi_\ell(k) \, \big] \big| \varphi_q(k-k_0) \, dk.
\end{split}
\end{align}

Since $r \gtrsim t k/\sqrt{k^2+2}$ and $sk \gtrsim 1$, using also \eqref{prKG1main1S}
and \eqref{prKG15} we obtain
\begin{align*}
| I(r,s) | \lesssim \int_0^\infty \jk^3 2^{-2q}
	\frac{1}{\sqrt{t k}} \sqrt{1+k} \frac{1}{\sqrt{s}} \chi^c(sk) \varphi_q(k-k_0) \, \varphi_\ell(k) \, dk 
	\\
	\lesssim (1+2^{3\ell}) 2^{-q} \cdot t^{-1/2}(1+2^{\ell/2}).
\end{align*}
Then, summing over $q\geq q_0$ gives a bound by $1+2^{2\ell}$.

Similarly, we can estimate the second term in \eqref{prKG1main1q}, 
using $r \gtrsim t k/\sqrt{k^2+2}$ and $sk \gtrsim 1$, by
\begin{align*}
| II(r,s) | \lesssim \int_0^\infty \jk^3 2^{-q}
	\frac{1}{\sqrt{t k}} \sqrt{1+k} \frac{1}{\sqrt{s}}  2^{-q} \varphi_{[q-2,q+2]}(k-k_0) \, \varphi_\ell(k) \, dk 
	\\
	\lesssim (1+2^{3\ell}) 2^{-q} \cdot t^{-1/2}(1+2^{\ell/2}),
\end{align*}
which suffices.
For the third term in \eqref{prKG1main1q} we use \eqref{prKG15} to bound it by 
\begin{align*}
| III(r,s) | \lesssim \int_0^\infty \jk^3 2^{-q}
	\frac{1}{\sqrt{t k}} \sqrt{1+k} \frac{1}{\sqrt{s}} \varphi_q(k-k_0) \, 2^{-\ell} \varphi_{[\ell-2,\ell+2]}(k) \, dk 
	\\
	\lesssim (1+2^{3\ell}) 2^{-\ell} \cdot t^{-1/2}(1+2^{\ell/2}),
\end{align*}
which is more than sufficient since $\ell \geq q-10$.
This concludes the proof of \eqref{prKG1main1} and therefore 
the proof of \eqref{prKG1main} for $j=1$. %%comm%{Seem to get only $1+2^{2\ell}$\dots}

\medskip
\noindent
{\it Estimate of $K_{2,\ell}$.}
On the support of $K_{2,\ell}$ we have $sk\leq rk \lesssim 1$ so we only use the first two 
expansions in \eqref{prKG1Phi}. In particular, we can write
\begin{align}\label{prKG1Ker2'}
K_{2,\ell}(t,r,s) & :=
  \int_0^\infty \frac{r}{\jr} \frac{s}{\langle s \rangle} (1+\phi(r,k)) (1+\phi(s,k)) \,
  e^{it\sqrt{k^2+2}} \, \varphi_\ell(k) k \rho_1^\prime(k^2) \, \chi(rk)\chi(sk) \, dk,
\end{align}
%%comm2%{
%
%}
where we are using $\phi(y,k)$ to indicate a generic function satisfying \eqref{prKG1phi}.
%
%\medskip
%\noindent
%{\it Subcase $\ell \geq 0$}.
%We consider the case when $\ell \geq 0$, that is $k \gtrsim 1$.
%In particular we must also have $r,s \lesssim 1$.
Integrating by parts using $\partial_k e^{it\sqrt{k^2+2}} = e^{it\sqrt{k^2+2}} k/\sqrt{k^2+2}$ gives
\begin{align*}
\big| K_{2,\ell}(t,r,s) \big| \lesssim \frac{1}{t}  \int_0^\infty \Big|\frac{r}{\jr} \frac{s}{\langle s \rangle}
  \partial_k \Big[ \sqrt{k^2+2} \, (1+\phi(r,k)) (1+\phi(s,k))
  \, \varphi_\ell(k) \rho_1^\prime(k^2) \, \chi(rk)\chi(sk) \Big]\Big| \, dk ,
\end{align*}
so that
\begin{align}\label{prKG120}
| K_{2,\ell}(t,r,s) | & \lesssim t^{-1} (I + II),
\\
I(r,s) & = \int_0^\infty \frac{r}{\jr} \frac{s}{\langle s \rangle}\big| \partial_k \big[ (1+\phi(r,k)) (1+\phi(s,k)) \, \chi(rk)\chi(sk) \big] \big| \, \sqrt{k^2+2} \varphi_\ell(k)\rho_1^\prime(k^2) dk, \nonumber
\\
II(r,s) & = \int_0^\infty \frac{r}{\jr} \frac{s}{\langle s \rangle}\big| \partial_k \big[ \sqrt{k^2+2} \,
  \, \varphi_\ell(k)\rho_1^\prime(k^2) \big] \, (1+\phi(r,k)) (1+\phi(s,k)) \big| \chi(rk)\chi(sk) dk. \nonumber
\end{align}
We estimate $I(r,s)$ using %$|\partial_k \phi(y,k)| \lesssim k^{-1}$ 
$|\partial_k \phi(y,k)| \lesssim y$,
which can be seen using Proposition \ref{Phi1Op1_1},
the estimate from Proposition \ref{SMOp1_1} for $\rho_1'(k^2)$, 
and the fact that $s \langle s \rangle^{-1} \jk, r \jr^{-1} \jk \lesssim 1$,
\begin{align*}
I(r,s) & \lesssim %%\int_0^\infty k^{-1}  \, \sqrt{k^2+2} \, \varphi_\ell(k)\rho_1^\prime(k^2) \, \chi(rk)\chi(sk) \, dk%%
  \dfrac{r}{\langle r\rangle }\dfrac{s}{\langle s\rangle} %\max\{r,\,s\}
  r \int_0^\infty \sqrt{k^2+2} \, \varphi_\ell(k) \rho_1^\prime(k^2) \, \chi(rk)\chi(sk) \, dk \lesssim 1+2^{\ell},
\end{align*}
and, similarly,
\begin{align*}
II(r,s) & \lesssim \int_0^\infty\dfrac{r}{\langle r\rangle }\dfrac{s}{\langle s\rangle} \Big| \partial_k \big[ \sqrt{k^2+2} \,
  \, \varphi_\ell(k)\rho_1^\prime(k^2) \big] \Big|  \, dk \lesssim 1+2^{\ell}.
\end{align*}

\medskip
\noindent
{\it Estimate of $K_{3,\ell}$.}
The estimate \eqref{prKG1main} for $j=3$ can be obtained by a combination of the 
arguments used in the previous two cases. On the support of the integral
we have $sk \lesssim 1 \lesssim rk$, so that, using the expansions in \eqref{prKG1Phi}
we can reduce to estimating an expression of the form
\begin{align}\label{prKG1Ker3'}
\begin{split}
M(t,r,s) =  
  \int_0^\infty \frac{1}{\sqrt{rk}} \overline{a_1(k^2)} e^{-irk} \big( 1 + \phi(r,k)) 
  \, \frac{s}{\langle s \rangle}  (1+\phi(s,k)) 
  \, e^{it\sqrt{k^2+2}} \, 
  \\ \times \varphi_\ell(k) k \rho_1^\prime(k^2) \,  \chi^c(rk)\chi(sk) \, dk
\end{split}
\end{align}
where, as before, $\phi$ denotes a generic function obeying the bounds \eqref{prKG1phi} 
and we have only considered the main term with the exponential $e^{-irk}$, disregarding the other with the opposite sign
which gives lower order contributions.
%To estimate \eqref{prKG1Ker3'} we want to resort again to integration by parts.
Similarly to \eqref{prKG1L} and \eqref{prKGmain1q} we let 
\begin{align}\label{prKG1main3q}
\begin{split}
& M(t,r,s) = \sum_{q \geq q_0}^{\ell+10} M_{q}(t,r,s), %\qquad K_0 := \min(\log k, 0) -10,
\\
& M_{q}(t,r,s) := \int_0^\infty \frac{\sqrt{k}}{\sqrt{r}} e^{i t T(r,s,k,t)}  n(k) B(r,s,k) \, 
 \varphi_q^{(q_0)}(k-k_0) \varphi_\ell(k) \, dk,
\end{split}
\end{align}
where $q_0$ is the smallest integer such that $2^{q_0} \geq t^{-1/2}(1+2^{3\ell/2})$
and we denote
\begin{align*}%\label{prKG3L'} %not used ?
\begin{split}
T(r,s,k,t) & := -\frac{k}{t}r + \sqrt{k^2+2},
\\
n(k) & := \overline{a(k^2)} \rho_1^\prime(k^2) \cdot \jk^{-1},
\\
B(r,s,k) & := \big( 1 + \phi(r,k)) \big( 1 + \phi(s,k)) \chi^c(rk)\chi(sk) \cdot \frac{s}{\langle s \rangle} \jk.
\end{split}
\end{align*}
As before, we work under the assumption that the phase is stationary,
and let $k_0$ denote the unique solution to $\partial_k T = 0$. 
In particular, we also have $r \gtrsim tk/(k+1)$. 
It is not hard to see, by direct integration as in the case of $K_{1,\ell}$,
that $|M_{q_0}| \lesssim t^{-1}(1+2^{2\ell})$.

To deal with the cases $q>q_0$, we first observe that, in view of \eqref{SMOp1_4}
and the fact that on the support of the integral $sk\lesssim 1$,
we have estimates similar to those in \eqref{prKG15} and \eqref{prKG1main1S}:
\begin{align}\label{prKG15'}
\begin{split}
& |\partial^\alpha_k n(k)| \lesssim k^{-\alpha},
\qquad |\partial_k^\alpha B(r,s,k)| \lesssim k^{-\alpha}, \qquad \alpha = 0,1,2;
\\
& | \partial_k T(r,s,k,t)| \approx 2^q (1+2^{3\ell})^{-1} \gtrsim t^{-1/2}(1+2^{3\ell})^{-1/2}.
\end{split}
\end{align}
%an estimate like \eqref{prKG1main1S} for $T$ holds as well.
We can then proceed as we did after \eqref{prKG1main1S} up to some small adjustments.
An integration by parts leads us to estimate
\begin{align}\label{prKG1main3q'}
\begin{split}
| M_{q} | \lesssim t^{-1} (I + II + III),
\qquad
I(r,s) & = \int_0^\infty %\frac{1}{\jk^3} 
  \frac{|\partial_k^2 T|}{(\partial_k T)^2}
	\frac{\sqrt{k}}{\sqrt{r}} \, | n \,  B | \, \varphi_q(k-k_0) \, \varphi_\ell(k) \, dk,	
\\
II(r,s) & = \int_0^\infty \frac{1}{|\partial_k T|}
	\frac{\sqrt{k}}{\sqrt{r}} \, | n \,  B |
	\, \varphi_\ell(k) \, \partial_k \varphi_q(k-k_0) \, dk,
\\
III(r,s) & = \int_0^\infty \frac{1}{|\partial_k T|}
	\frac{\sqrt{k}}{\sqrt{r}} \, \big| \partial_k \big[ n \,  B
	\, \varphi_\ell(k) \, \big] \big| \varphi_q(k-k_0) \, dk.
\end{split}
\end{align}
Since $r \gtrsim t k/\sqrt{k^2+2}$, using also \eqref{prKG15'} we can bound
\begin{align*}
| I(r,s) | \lesssim \int_0^\infty \jk^3 2^{-2q}
	\frac{\sqrt{1+k}}{\sqrt{t}}  \varphi_q(k-k_0) \, \varphi_\ell(k) \, dk 
	\lesssim (1+2^{3\ell}) 2^{-q} \cdot t^{-1/2}(1+2^{\ell/2});
\end{align*}
summing over $q\geq q_0$ gives a bound by $1+2^{2\ell}$. 
The second term in \eqref{prKG1main3q} is estimated similarly
%using again that $r \gtrsim t k/(1+k)$:
\begin{align*}
| II(r,s) | \lesssim \int_0^\infty \jk^3 2^{-q}
	\frac{\sqrt{1+k}}{\sqrt{t}}  2^{-q} \varphi_{[q-2,q+2]}(k-k_0) \, \varphi_\ell(k) \, dk 
	\\
	\lesssim (1+2^{3\ell}) 2^{-q} \cdot t^{-1/2}(1+2^{\ell/2}),
\end{align*}
which suffices.
For the third term in \eqref{prKG1main3q} we use \eqref{prKG15'} to bound it by 
\begin{align*}
| III(r,s) | \lesssim \int_0^\infty \jk^3 2^{-q}
	\frac{\sqrt{1+k}}{\sqrt{t}} \varphi_q(k-k_0) \, 2^{-\ell} \varphi_{[\ell-2\ell+2]}(k) \, dk 
	\\
	\lesssim (1+2^{3\ell}) 2^{-\ell} \cdot t^{-1/2}(1+2^{\ell/2}),
\end{align*}
which is more than sufficient. %since $\ell \geq q-10$.
The last three bounds, \eqref{prKG1main3q}, and \eqref{prKG1main3q'} give $|M(r,s)| \lesssim t^{-1}
(1+2^{2\ell})$ as desired.
This concludes the proof of \eqref{prKG1main} for $j=3$, and of the proposition.
\end{proof}

\begin{rem}[The case $\vert n\vert\geq 2$]\label{section4_nvortex}
Again, we briefly show some of the calculations needed for the vortex of degree $n$. In Proposition \ref{propKG1dec}, the Weyl solution in \eqref{prKG1Phi} now writes \begin{align}\label{check_nvor_proof8}
\Phi_{1,n}(r,k^2) = \left\{ 
\begin{array}{ll}
r^{1/2+n} \big( 1 + \phi_{<}(r,k)), &  rk \lesssim 1, \quad r \lesssim 1,
\\
r^{1/2} \big( 1 + \phi_{>}(r,k)), &  rk \lesssim 1, \quad r \gtrsim 1,
\\
\dfrac{1}{\sqrt{k}} \big[ a_{1,n}(k^2) e^{irk} \big( 1 + \phi_+(r,k)) 
  + \overline{a_{1,n}}(k^2) e^{-irk} \big(1 + \phi_-(r,k)) \big], &  rk \gtrsim 1
\end{array} 
\right.
\end{align}
where $a_{1,n}\approx \langle k\rangle^{-n}$, and we use $\phi(r,k)$ to indicate a generic function satisfying \eqref{prKG1phi}. Then, in the case $sk\leq rk\lesssim1$, for example, the integral $K_{2,\ell}$ in \eqref{prKG1Ker2'} needs to be replaced by \begin{align*}%\label{check_nvor_proof10}
K_{2,\ell,n}(t,r,s) & :=
  \int_0^\infty \frac{r^n}{\jr^n} \frac{s^n}{\langle s \rangle^n} (1+\phi(r,k)) (1+\phi(s,k)) \,
  e^{it\sqrt{k^2+2}} \, \varphi_\ell(k) k \rho_1^\prime(k^2) \, \chi(rk)\chi(sk) \, dk.
\end{align*}
However, noticing that $r^ns^n\langle r\rangle^{-n}\langle s\rangle^{-n}\langle k\rangle^{2n}\lesssim 1$,
on the support of the integral,
we deduce that all calculations for $K_{2,\ell,n}$ follow identically those for $K_{2,\ell}$ in\eqref{prKG1Ker2'}.

If $sk\lesssim 1\lesssim rk$, the integral $M(t,r,s)$ in \eqref{prKG1Ker3'} when $\vert n\vert\geq 2$ is replaced by \begin{align*}
M_n(t,r,s) =  
  \int_0^\infty \frac{1}{\sqrt{rk}} \overline{a_{1,n}(k^2)} e^{-irk} \big( 1 + \phi(r,k)) 
  \, \dfrac{s^n}{\langle s\rangle^n}  (1+\phi(s,k)) 
  \, e^{it\sqrt{k^2+2}} \, 
  \\ \times \varphi_\ell(k) k \rho_{1,n}'(k^2) \,  \chi^c(rk)\chi(sk) \, dk.
\end{align*}
Then, it is enough to notice that 
%\begin{align*}%\label{check_nvor_proof11}
$| \overline{a_{1,n}}(k^2) s^n \langle s\rangle^{-n} \rho_{1,n}'(k^2)| 
\lesssim s^n \langle s\rangle^{-n}\langle k\rangle^n\lesssim 1,$
%\end{align*}
and hence the same bounds deduced for $M(t,r,s)$ in \eqref{prKG1Ker3'} apply in the case $\vert n\vert\geq2$.
In all other cases, once all modifications have been made, all the resulting integrals
in Proposition \ref{propKG1dec} when  $\vert n\vert\geq2$ ultimately reduce to the same estimates as those for $n=1$, 
just as in the two cases shown above.
\end{rem}

%%%%%%%%%%%%%%%%%%%%%%%%%%%%%%%%%%%%%%%%
%%%%%%%%%%%%%%%%%%%%%%%%%%%%%%%%%%%%%%%%
%%%%%%%%%%%%%%%%%%%%%%%%%%%%%%%%%%%%%%%%
%%%%%%%%%%%%%%%%%%%%%%%%%%%%%%%%%%%%%%%%

%%%%%%%%%%%%%%%%%%%%%%%%%%%%%%%%%%%%%%%%
%%%%%%%%%%%%%%%%%%%%%%%%%%%%%%%%%%%%%%%%
%%%%%%%%%%%%%%%%%%%%%%%%%%%%%%%%%%%%%%%%
%%%%%%%%%%%%%%%%%%%%%%%%%%%%%%%%%%%%%%%%

%\newpage
\subsection{Second order dynamics for $\mathcal{L}_2$}\label{ssecKG2}
%This is the wave-like case.
Writing out the solution to \eqref{FTKG2}, and using the Fourier representation 
from Proposition \ref{propFT2}, %\eqref{FT2}
we have 
\begin{align}%\label{}
\begin{split}
\wt{z_2}(t,k) & = e^{itk} \frac{1}{2}\Big( \wtF_2(\sqrt{r}f)(k) + \frac{1}{ik}\wtF_2(\sqrt{r}g)(k) \Big)
+ e^{-itk} \frac{1}{2}\Big( \wtF_2(\sqrt{r}f)(k) - \frac{1}{ik}\wtF_2(\sqrt{r}g)(k) \Big)
\\
& = e^{itk} \frac{1}{2} \int_{0}^\infty \Phi_2(r,k^2) \Big( \sqrt{r}f + \frac{1}{ik} \sqrt{r}g \Big) \, dr
+ e^{-itk} \frac{1}{2} \int_{0}^\infty \Phi_2(r,k^2) \Big( \sqrt{r}f - \frac{1}{ik} \sqrt{r}g \Big) \, dr,
%\sqrt{r} f(r) = \int_{0}^\infty \Phi(r,\xi) \wtF_2 (\sqrt{r}f ) (\xi) \, \rho(d\xi), \qquad  
%  \wtF_2 g(\xi) := \int_{0}^\infty \Phi(r,\xi) g(r) \, dr.
\end{split}
\end{align}
so that we can represent the linear solution of \eqref{KGj} when $j=2$ as
%(recall $\wt{z_1} := \wtF_1 (\sqrt{r} v)$) as 
\begin{align}\label{ssecKG20}
\begin{split}
& v(t,r) = \frac{1}{\sqrt{r}} 
  \int_0^\infty \Phi_1(r,k^2) e^{itk} \wt{F_+}(k) \, 2k \rho_2^\prime(k^2) \, dk
  \\
  & \qquad \qquad + \frac{1}{\sqrt{r}} 
  \int_0^\infty \Phi_2(r,k^2) e^{-itk} \wt{F_-}(k) \, 2k \rho_2^\prime(k^2) \, dk,
  \\
& \wt{F_{\pm}}(k) := \frac{1}{2}\Big( \wtF_2(\sqrt{r}f)(k) \pm \frac{1}{ik}\wtF_2(\sqrt{r}g)(k) \Big)
  = \frac{1}{2}\wt{\mathcal{F}_2}\Big( \sqrt{r}f \pm \frac{1}{i\sqrt{\mathcal{H}_2}} \sqrt{r}g \Big)(k) ,
%\sqrt{r} f(r) = \int_{0}^\infty \Phi(r,\xi) \wtF_2 (\sqrt{r}f ) (\xi) \, \rho(d\xi), \qquad  
%  \wtF_2 g(\xi) := \int_{0}^\infty \Phi(r,\xi) g(r) \, dr.
\end{split}
\end{align}
and as
\begin{align}\label{ssecKG2'}
\begin{split}
v(t,r) & = \int_0^\infty %\Phi_1(r,k) e^{itk} \frac{1}{2} \int_{0}^\infty \Phi_1(s,k) 
  \Big(K_+(t,r,s) f(s) + K_+'(t,r,s) g(s)\Big) \, s \, ds 
  \\
  & + \int_0^\infty %\Phi_1(r,k) e^{itk} \frac{1}{2} \int_{0}^\infty \Phi_1(s,k) 
  \Big(K_-(t,r,s) f(s) - K_-'(t,r,s) g(s)\Big) \, s \, ds 
\end{split}
\end{align}
where the kernels are defined by
\begin{align}\label{Ker2}
K_\pm(t,r,s) & :=  \frac{1}{\sqrt{rs}} 
  \int_0^\infty \Phi_2(r,k^2) \Phi_2(s,k^2) e^{\pm itk} \, k \rho_2^\prime(k^2) \, dk,
\\
\label{Ker2'}
K_\pm'(t,r,s) & := \frac{1}{\sqrt{rs}} \int_0^\infty
  \frac{1}{ik} \Phi_2(r,k^2) \Phi_2(s,k^2) e^{\pm itk} \, k \rho_2^\prime(k^2) \, dk.
\end{align}
As before, for the frequency projections we have
\begin{align}\label{ssecKG2'l}
\begin{split}
P_\ell^{\mathcal{L}_2} v(t,r) & = \int_0^\infty %\Phi_1(r,k) e^{it\sqrt{k^2+2}} \frac{1}{2} \int_{0}^\infty \Phi_1(s,k) 
  \Big(K_{+,\ell}(t,r,s) P_{[\ell-2,\ell+2]}^{\mathcal{L}_2} f(s) + K_{+,\ell}'(t,r,s) 
    P_{[\ell-2,\ell+2]}^{\mathcal{L}_2} g(s)\Big) \, s \, ds 
  \\
  & + \int_0^\infty %\Phi_1(r,k) e^{it\sqrt{k^2+2}} \frac{1}{2} \int_{0}^\infty \Phi_1(s,k) 
  \Big(K_{-,\ell}(t,r,s) P_{[\ell-2,\ell+2]}^{\mathcal{L}_2} f(s) 
  - K_{-,\ell}'(t,r,s) P_{[\ell-2,\ell+2]}^{\mathcal{L}_2} g(s)\Big) \, s \, ds 
\end{split}
\end{align}
where the kernels with the index $\ell$ are defined as in \eqref{Ker2} and \eqref{Ker2'}
with an additional $\varphi_\ell(k)$ cutoff in the integrals.
%recall also the definition for $\varphi_{[a,b]}$ from \ref{secnot}.
Note that we are using the same letter for the kernels as in the previous subsection,
but this should cause no confusion here. %%comm%{could put indexes 1,2}%comm2%{you mean indexes on $K_\pm$ and $K_\pm'$?}

\smallskip
\begin{prop}\label{propKG2dec}
Let $v$ be a solution to $v_{tt} + \mathcal{L}_2 v = 0$, with $(v(0),\partial_t v(0)) = (f,g)$
such that
\begin{align}
\sum_{\ell} 2^\ell (1+2^{\ell/2}) {\big\| P_\ell^{\mathcal{L}_2} f \big\|}_{L^1(rdr)} 
  + (1+2^{\ell/2}) {\big\| P_\ell^{\mathcal{L}_2} g \big\|}_{L^1(rdr)} \leq 1.
\end{align}
Then, for $t \geq 1$,
\begin{align}\label{KG2decconc}
|v(t,r)| \lesssim \frac{1}{\sqrt{t}} \frac{1}{\sqrt{|t-r|+1}}.
\end{align}
\end{prop}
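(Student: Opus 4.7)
The plan is to follow the same architecture as Proposition \ref{propKG1dec}, substituting the Klein--Gordon phase $t\sqrt{k^2+2}$ by the wave phase $\pm tk$, and replacing the target $1/t$ decay by the anisotropic wave-decay profile $1/\bigl(\sqrt{t}\sqrt{|t-r|+1}\bigr)$. By the representation \eqref{ssecKG2'l} and the hypothesis on the data, it suffices to prove the kernel bounds
\begin{align*}
\sup_{r,s\geq 0}|K_{\pm,\ell}(t,r,s)|\lesssim \frac{2^\ell(1+2^{\ell/2})}{\sqrt{t}\sqrt{|t-r|+1}},
\qquad
\sup_{r,s\geq 0}|K_{\pm,\ell}'(t,r,s)|\lesssim \frac{1+2^{\ell/2}}{\sqrt{t}\sqrt{|t-r|+1}}.
\end{align*}
These two bounds are essentially the same; the kernel $K'_{\pm,\ell}$ carries an extra $1/(ik)$, which costs one power of $2^\ell$ on the support of $\varphi_\ell$, exactly matching the gap between the two weights in the assumption. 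By symmetry we may assume $r\geq s$, and treat only the $+$ sign.

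Next, I split $K_{\pm,\ell}=K_{1,\ell}+K_{2,\ell}+K_{3,\ell}$ using the cutoffs $\chi(rk),\chi(sk)$ and their complements $\chi^c$, exactly as in \eqref{prKG1Ker1}--\eqref{prKG1Ker3}. On $K_{1,\ell}$ both $rk,sk\gtrsim 1$; on $K_{2,\ell}$ both are $\lesssim 1$; on $K_{3,\ell}$ we have $sk\lesssim 1\lesssim rk$. In each region I insert the corresponding description of $\Phi_2$: Lemma \ref{lemFou1} provides the power-series expansion for $rk\lesssim 1$, while Lemma \ref{lemWeyl} together with \eqref{propSM0} gives the oscillatory form $\Phi_2=k^{-1/2}\bigl[a_2(k^2)e^{ikr}(1+\phi_+(r,k))+\overline{a_2}(k^2)e^{-ikr}(1+\phi_-(r,k))\bigr]$ when $rk\gtrsim 1$, with symbol bounds $|\partial_k^\alpha\phi_\pm|\lesssim k^{-\alpha}$ and $a_2(k^2)\approx\langle k\rangle^{-1}$ from Proposition \ref{propSM2}.

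The heart of the proof is $K_{1,\ell}$. Using the above expansion and the normalization $|a_2(k^2)|^2\rho_2'(k^2)=1/(4\pi)$, the kernel reduces to four integrals of the form
\begin{align*}
\frac{1}{\sqrt{rs}}\int_0^\infty e^{ik\Psi_{\epsilon_1,\epsilon_2}}\, c_{\epsilon_1\epsilon_2}(r,s,k)\,\varphi_\ell(k)\,dk,
\qquad
\Psi_{\epsilon_1,\epsilon_2}:=t+\epsilon_1 r+\epsilon_2 s,\quad \epsilon_i\in\{\pm\},
\end{align*}
with $c_{\epsilon_1\epsilon_2}$ a zero-order symbol in $k$. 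Since the phase is \emph{linear} in $k$ there is no interior stationary point; instead, repeated integration by parts in $k$ yields, for any $N$,
\begin{align*}
|K_{1,\ell}|\lesssim \frac{1}{\sqrt{rs}}\cdot\frac{2^\ell}{(1+2^\ell|\Psi_{\epsilon_1,\epsilon_2}|)^N}.
\end{align*}
Among the four phases, $t+r+s\gtrsim t$, $|t+r-s|\gtrsim t$, and $|t-r+s|\geq |t-r|$, so only $|t-r-s|$ may be small, and this measures proximity to the light cone $r+s=t$. On the support of $K_{1,\ell}$ we have $s\gtrsim 2^{-\ell}$ and $r\geq s$, so $\sqrt{rs}\gtrsim 2^{-\ell/2}\sqrt{r}$. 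A short case analysis according to whether $r\gtrsim t$ or $r\lesssim t$, and whether $|t-r|\gtrsim s$ or not, converts the IBP gain into the target profile $2^\ell(1+2^{\ell/2})/(\sqrt{t}\sqrt{|t-r|+1})$, the $(1+2^{\ell/2})$ factor arising precisely from $1/\sqrt{s}\lesssim 2^{\ell/2}$ when the light-cone phase $|t-r-s|$ is the smallest of the four.

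For $K_{2,\ell}$, the amplitudes are $\Phi_2(r,k^2)\Phi_2(s,k^2)/\sqrt{rs}\lesssim (r/\langle r\rangle)(s/\langle s\rangle)$ with $r,s\lesssim 2^{-\ell}$, so the remaining phase is just $\pm tk$; integration by parts in $k$ (as in \eqref{prKG120}) yields $1/t$ decay and a prefactor of order $2^\ell$, and since $r,s\lesssim 2^{-\ell}$ we have $|t-r|+1\approx t$ in the relevant regime $t\gg 2^{-\ell}$, so the bound is consistent. The mixed region $K_{3,\ell}$ is handled by the same ideas: the spatial oscillation comes only from $\Phi_2(r,k^2)$, producing phases $t\pm r$, while $\Phi_2(s,k^2)\lesssim s/\langle s\rangle$ contributes as a symbol; the IBP analysis proceeds as in Paragraph 3 with $s$ effectively removed from the phase.

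The main obstacle is the delicate balance in $K_{1,\ell}$ near the light cone $t\approx r+s$, where iterated integration by parts no longer produces gain in the phase $|t-r-s|$ and one must extract the correct $1/\sqrt{|t-r|+1}$ factor from the $1/\sqrt{rs}$ prefactor by case analysis. A secondary technical point is that the symbol $\sigma_{2,\infty}(kr,r)$ from Lemma \ref{lemWeyl} is jointly symbolic in $(kr,r)$ and its $r$-derivatives must be kept under control during the $k$-integrations by parts; this is guaranteed by \eqref{Fou22}--\eqref{Fou23}, but needs to be invoked explicitly to close the estimates uniformly in $r,s$.
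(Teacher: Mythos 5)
Your overall strategy is the paper's: the same reduction to the kernel bounds for $K_{\pm,\ell}$ and $K'_{\pm,\ell}$, the same three-region splitting via $\chi(rk),\chi(sk)$, the same input from Lemmas \ref{lemFou1} and \ref{lemWeyl} with the normalization $|a_2(k^2)|^2\rho_2'(k^2)=1/(4\pi)$, and the same dichotomy (trivial bound plus $1/\sqrt{s}\lesssim 2^{\ell/2}$ near the resonant set, one integration by parts away from it). However, two steps are not correct as written. First, in $K_{1,\ell}$ you claim $|t-r+s|\geq |t-r|$ and conclude that only the phase $t-r-s$ can be small. That inequality fails when $r>t$: the phase $t-r+s$ vanishes on the cone $r=t+s$, and this is precisely the case the paper treats in detail (the choice $(\eps_1,\eps_2)=(-,+)$ with phase $k(t-r+s)$). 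So this phase cannot be discarded by integration by parts alone; near $r=t+s$ one has $|t-r|\approx s$ and must again use the trivial bound together with $1/\sqrt{s}\lesssim 2^{\ell/2}$ (your computation for $t-r-s$ then applies verbatim), or the paper's variant of splitting on $s\gtrless |t-r|/2$. The conclusion survives, but the stated justification does not. A minor related point: your ``for any $N$'' integration by parts needs all-order symbol bounds on $\phi_\pm$; these do follow from \eqref{Fou22}--\eqref{Fou23}, though \eqref{prKG2phiest} records only $\alpha\le 2$, and in fact a single integration by parts suffices throughout.

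Second, your treatment of $K_{2,\ell}$ relies only on integration by parts, yielding $2^\ell/t$, plus the remark that $|t-r|+1\approx t$ ``in the relevant regime $t\gg 2^{-\ell}$''. The complementary regime is not vacuous: when $2^{-\ell}\gtrsim t$ the support $r\lesssim 2^{-\ell}$ allows $r\geq t/2$, where $|t-r|+1$ need not be comparable to $t$, and there $2^\ell/t$ is \emph{not} dominated by $2^\ell(1+2^{\ell/2})\,t^{-1/2}(|t-r|+1)^{-1/2}$ (take $r\approx 2^{-\ell}\gg t\geq 1$). In that regime one must instead use the direct bound $|K_{2,\ell}|\lesssim 2^\ell\min(r^{-1},2^{\ell})$ together with $|t-r|\lesssim r$, which is exactly how the paper handles the case $r\geq t/2$ before integrating by parts only when $r\leq t/2$. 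With these two corrections (and the analogous near-cone case $r\in(t/2,2t)$ made explicit for $K_{3,\ell}$, which your ``same ideas'' remark implicitly covers), your argument coincides with the paper's proof.
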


%\begin{rem}
%Note how \eqref{KG2decconc} matches the sharp estimate for the wave equation in $2$d 
%both in terms of decay close and away from the light cone, 
%as well as in terms of regularity requirements on the data. %%comm%{check}
%\end{rem}

\begin{proof}
In view of the representation \eqref{ssecKG2'} it suffices to show the following estimates for 
the kernels in %\eqref{Ker}-\eqref{Ker'} 
the expressions \eqref{Ker2}-\eqref{Ker2'}: 
\begin{align}\label{prKG2Ker0}
\begin{split}
& \sup_{r,s\geq 0} \big| K_{\pm,\ell}(t,r,s) \big| \lesssim 2^\ell (1+2^{\ell/2}) \frac{1}{\sqrt{t}} \frac{1}{\sqrt{|t-r|+1}},
\\
& \sup_{r,s\geq 0} \big| K_{\pm,\ell}'(t,r,s) \big| \lesssim (1+2^{\ell/2}) 
  \frac{1}{\sqrt{t}} \frac{1}{\sqrt{|t-r|+1}}.
\end{split}
\end{align}
Since the two kernels are the same up to a factor of $1/k$, it is enough to prove the estimate for $K_{\pm,\ell}$.
We adopt similar notation as in the proof of Proposition \ref{propKG1dec}.
By symmetry we may assume, without loss of generality, that $r\geq s$. 
%We may also assume that $t\geq 1$.
We let $\chi = \chi(x)$, $x\geq 0$ be a smooth function equal to $1$ in $[0,1]$, %a neighbourhood of the origin, 
decreasing, and vanishing for $x \geq 2$, let $\chi^c := 1-\chi$ and decompose the kernel as follows:
\begin{align}
\nonumber
K_{+,\ell}(t,r,s) & = K_{1,\ell}(t,r,s) + K_{2,\ell}(t,r,s) + K_{3,\ell}(t,r,s),
\\
\label{prKG2Ker1}
K_{1,\ell}(t,r,s) & :=  \frac{1}{\sqrt{rs}} 
  \int_0^\infty \Phi_2(r,k^2) \Phi_2(s,k^2) e^{itk} \, \varphi_\ell(k) k \rho_2^\prime(k^2) \, 
  \chi^c(sk) \, dk,
\\
\label{prKG2Ker2}
K_{2,\ell}(t,r,s) & :=  \frac{1}{\sqrt{rs}} 
  \int_0^\infty \Phi_2(r,k^2) \Phi_2(s,k^2) e^{itk} \, \varphi_\ell(k) k \rho_2^\prime(k^2) \, 
  \chi(rk)\chi(sk) \, dk,
\\
\label{prKG2Ker3}
K_{3,\ell}(t,r,s) & :=  \frac{1}{\sqrt{rs}} 
  \int_0^\infty \Phi_2(r,k^2) \Phi_2(s,k^2) e^{itk} \, \varphi_\ell(k) k \rho_2^\prime(k^2) \, 
  \chi^c(rk)\chi(sk) \, dk.
\end{align}
It then suffices to show that
\begin{align}\label{prKG2main}
\sup_{r,s\geq 0} \big| K_{j,\ell}(t,r,s) \big| \lesssim 2^\ell (1+2^{\ell/2}) \, t^{-1/2} (1+|r-t|)^{-1/2}, \qquad j=1,2,3.
\end{align}

Similarly to \eqref{prKG1Phi}, we can use Lemmas \ref{lemFou1} and \ref{lemWeyl} to write 
\begin{align}\label{prKG2Phi}
\Phi_2(r,k^2) = \left\{ 
\begin{array}{ll}
r^{3/2} \big( 1 + \phi_{<}(r,k)), &  rk \lesssim 1, \quad r \lesssim 1,
\\
\\
r^{1/2} \big( 1 + \phi_{>}(r,k)), &  rk \lesssim 1, \quad r \gtrsim 1,
\\
\\
\dfrac{1}{\sqrt{k}} \big[ a_2(k^2) e^{irk} \big( 1 + \phi_+(r,k)) 
  + \overline{a_2}(k^2) e^{-irk} \big(1 + \phi_-(r,k)) \big], &  rk \gtrsim 1
\end{array} 
\right.
\end{align}
where $\phi \in \{ \phi_{<}, \phi_{>}\phi_-,\phi_+ \}$ is a generic function satisfying %the symbol-type estimates
\begin{align}\label{prKG2phiest}
|\partial_k^\alpha \phi(y,k)| \lesssim k^{-\alpha}, \quad \alpha = 0,1,2,
\end{align}
which follows from an inspection of the statement of Lemma \ref{lemFou1}.
Also, recall that $a_2(k^2) \approx \langle k \rangle^{-1}$ with compatible symbol-type estimates
for the derivatives, see Proposition \ref{propSM2}.
%Since $a$ and $\bar{a}$ satisfy identical estimates, we will not distinguish them in what follows.
%To see the validity of \eqref{prKG1Ph2}

%use the representation $\Phi(r,k^2) = 2\Re (a(k^2) \Psi(r,k^2))$, see \eqref{propSM0} and \ref{}

\medskip
\noindent
{\it Estimate of $K_{1,\ell}$.}
On the support of $K_{1,\ell}$ we have $rk \geq sk \gtrsim 1$, and
using \eqref{prKG2Phi} we see that the desired estimate \eqref{prKG1main} reduces to the same estimate for
the kernels 
\begin{align*}
\begin{split}
L_{\eps_1\eps_2}(t,r,s) := 
  \int_0^\infty \frac{1}{\sqrt{rk}} \frac{1}{\sqrt{sk}} a_2^{\eps_1}(k^2) e^{\eps_1 irk} \big( 1 + \phi_{\eps_1}(r,k))
  a_2^{\eps_2}(k^2) e^{\eps_2 isk} \big( 1 + \phi_{\eps_2}(s,k))  
  \\
  \times e^{itk} \, \varphi_\ell(k) k \rho_2^\prime(k^2) \, \chi^c(sk) \, dk, \qquad \eps_1,\eps_2 \in \{+,-\},
\end{split}
\end{align*}
where, as before, we are denoting $f^+=f$ and $f^-=\bar{f}$.
It suffices to look at the case $(\eps_1,\eps_2) = (-,+)$ since the other cases are similar or easier.
Using that $|a_2(k^2)|^2 \rho_2^\prime(k^2) = 1/(4\pi)$, see \eqref{propSMrho}, we write $4\pi L_{-+} = L$ where
\begin{align}\label{prKG2L}
\begin{split}
L(t,r,s) & := \int_0^\infty \frac{1}{\sqrt{rk}} \frac{1}{\sqrt{sk}} e^{i S(r,s,k,t)} %m(k) 
  A(r,s,k) \,  \varphi_\ell(k) k \, dk,
\\
S(r,s,k,t) & := k( t-r + s),
%\\
%m(k) & := |a_2(k^2)|^2 \rho_2^\prime(k^2),
\\
A(r,s,k) & := \big( 1 + \phi_-(r,k)) \big( 1 + \phi_{+}(s,k)) \chi^c(sk),
\end{split}
\end{align}
and aim to show
\begin{align}\label{prKG2main1}
\sup_{r,s\geq 0} \big| L(t,r,s) \big| \lesssim 2^\ell (1+2^{\ell/2}) \, t^{-1/2} (1+|r-t|)^{-1/2}.
\end{align}
Recall that, in view of \eqref{propSMrho} and \eqref{prKG2phiest}, we have the symbol type estimates
\begin{align}\label{prKG15''} %not used
%|\partial^\alpha_k m(k)| \lesssim \jk^{-\alpha}, \qquad 
|\partial_k^\alpha A(r,s,k)| \lesssim k^{-\alpha}, \qquad \alpha = 0,1,2.
\end{align}

%The proof of \eqref{prKG2main1} relies again on integration by parts.
%but the argument is simpler that the one in the proof of Proposition \ref{propKG1dec}.

\smallskip
\noindent
{\it Case $r\geq t/3$.}
First, we trivially have
\begin{align*}
|L(t,r,s)|\lesssim  \frac{1}{\sqrt{r}} \int_0^\infty \frac{1}{\sqrt{sk}} \varphi_\ell(k) \sqrt{k} \, \chi^c(sk)\, dk
\lesssim \frac{1}{\sqrt{r}} 2^{3\ell/2},
\end{align*}
having used that $sk\gtrsim1$. Hence, if $|t-r| \leq 1$ this already suffices.
If $2^\ell \leq (1+|t-r|)^{-1}$, the above is also bounded by the right-hand side of \eqref{prKG2main} as desired.

When $2^\ell \geq (1+|t-r|)^{-1}$, we do not bound as above, instead, we first look at the case
when $s \geq |t-r|/2$ and use the factor $1/\sqrt{s}$ to bound
\begin{align*}
|L(t,r,s)|\lesssim  \frac{1}{\sqrt{r}} \frac{1}{\sqrt{s}} \int_0^\infty 
  \varphi_\ell(k) \, \chi^c(sk)\, dk
  \lesssim \frac{1}{\sqrt{t}}  \frac{1}{\sqrt{|t-r|}} 2^{\ell},
\end{align*}
which is sufficient.
Whereas if $s \leq |t-r|/2$, we integrate by parts using
%\begin{align*}
$|\partial_k S| = |t-r+s| \geq |t-r|/2,$ to obtain
%\end{align*}
%In what follows we will often omit some the arguments $(r,s,k,t)$ for ease of notation, and when this causes no confusion.
\begin{align*}
| L(t,r,s) | = \frac{1}{|t-r+s|} \frac{1}{\sqrt{r}}
  \Big| \int_0^\infty e^{i S} 
  \partial_k \Big[\frac{1}{\sqrt{sk}} \, A(r,s,k) \, \sqrt{k} \varphi_\ell(k) \Big]\, dk \Big| 
  \lesssim \frac{1}{|t-r|} \frac{1}{\sqrt{t}} \, 2^{\ell/2},
\end{align*}
which is enough since $|t-r| \gtrsim 2^{-\ell}$.

\smallskip
\noindent
{\it Case $r \leq t/3$.}
In this case, since $s\leq r \leq t/3$, we have 
%\begin{align*}
$|\partial_k S| = |t-r+s| \geq t/3,$
%\end{align*}
and integration by parts gives
\begin{align*}
| L(t,r,s) | = \frac{1}{|t-r+s|}
  \Big| \int_0^\infty e^{i S} 
  \partial_k \Big[  \frac{1}{\sqrt{rk}}\frac{1}{\sqrt{sk}} \,  A(r,s,k) \, k \varphi_\ell(k) \Big]\, dk \Big| 
  \lesssim \frac{1}{t} \, 2^{\ell}.
\end{align*}
This bound is consistent with \eqref{prKG2main1} since $t \geq t-r \gtrsim 1$, 
and concludes the proof of \eqref{prKG2main} when $j=1$.

\medskip
\noindent
{\it Estimate of $K_{2,\ell}$.}
On the support of $K_{2,\ell}$ we have $sk\leq rk \lesssim 1$ so we only use the first two 
expansions in \eqref{prKG2Phi}. In particular, we can write
\begin{align}\label{prKG2Ker2'}
K_{2,\ell}(t,r,s) & :=
  \int_0^\infty \frac{r}{\jr} \frac{s}{\langle s \rangle} (1+\phi(r,k)) (1+\phi(s,k)) \,
  e^{itk} \, \varphi_\ell(k) k \rho_2^\prime(k^2) \, \chi(rk)\chi(sk) \, dk,
\end{align}
where $\phi$ is a generic function satisfying 
$|\partial_k^\alpha \phi| \lesssim k^{-\alpha},$ for $\alpha = 0,1,2$, consistently with \eqref{prKG2phiest}.

Note first that, using \eqref{propSMrho}, we can bound
\begin{align*}
\big| K_{2,\ell}(t,r,s) \big| & \lesssim
  \int_0^\infty \frac{s}{\langle s \rangle} \frac{r}{\langle r \rangle} \, \jk^2 
  \big| k  \, \varphi_\ell(k) %\rho_2^\prime(k^2) 
  \, \chi(rk)\chi(sk) \, dk |
  \lesssim 2^\ell \min\big(r^{-1}, 2^{\ell}\big).
\end{align*}
Then, if $r \geq t/2$, so that we also have $|r-t| \lesssim r$,
%we use that $sk, rk \lesssim 1$ on the support of the integral, and bound
the bound above is stronger than the right-hand side of \eqref{prKG2main1}.
In the case $r \leq t/2$, we use integration by parts instead, to obtain
\begin{align*}
\big| K_{2,\ell}(t,r,s) \big| \lesssim \frac{1}{t} \Big| \int_0^\infty e^{i t k} 
  \frac{r}{\jr} \frac{s}{\langle s \rangle} 
  \partial_k \Big[ (1+\phi(r,k)) (1+\phi(s,k))
  \\ 
  \times \, k \varphi_\ell(k) \rho_2^\prime(k^2) \, \chi(rk)\chi(sk) \Big] \, dk \Big|
  \lesssim \frac{1}{t} 2^\ell %(1+2^{2\ell})
\end{align*}
which suffices %to obtain a bound as in \eqref{prKG2main1} 
since $t \gtrsim |t-r| \gtrsim 1$. This proves \eqref{prKG2main1} for $j=2$.

\medskip
\noindent
{\it Estimate of $K_{3,\ell}$.}
On the support of the integral we have $sk \lesssim 1 \lesssim rk$, so that, 
using the expansions in \eqref{prKG2Phi} we can reduce to estimating an expression of the form
\begin{align}\label{prKG2Ker3'}
\begin{split}
M(r,s) =  
  \int_0^\infty \frac{1}{\sqrt{rk}} a_2^{-}(k^2) e^{-irk} \big( 1 + \phi(r,k))
  \, \frac{s}{\langle s \rangle} (1+\phi(s,k)) 
  \, e^{itk} \\ \varphi_\ell(k) k \rho_2^\prime(k^2) \, 
  \chi^c(rk)\chi(sk) \, dk
\end{split}
\end{align}
where, as before, $\phi$ denotes a generic function obeying the bounds \eqref{prKG1phi}.
Note that in \eqref{prKG2Ker3'} we have chosen the $-$ sign for the exponential $e^{-irk}$
since the case of $e^{irk}$ would correspond to a non-stationary phase, and hence it is easier to handle.
We can then rewrite \eqref{prKG2Ker3'} as 
\begin{align}\label{prKG2Ker3''}
M(r,s) = \int_0^\infty \frac{\sqrt{k}}{\sqrt{r}} e^{i(t -r)k}  n(k) B(r,s,k) \, \varphi_\ell(k) \, dk,
\end{align}
where
\begin{align*}%\label{prKG3L''} % not used
\begin{split}
%T(r,s,k,t) & := \\
n(k) & := a_2^-(k^2) \rho_2^\prime(k^2) \cdot \jk^{-1},
\\
B(r,s,k) & := \big( 1 + \phi(r,k)) \big( 1 + \phi(s,k)) \chi^c(rk)\chi(sk) \cdot \frac{s}{\langle s \rangle} \jk,
\end{split}
\end{align*}
and the following estimates hold
\begin{align}\label{prKG25'}
& |\partial^\alpha n(k)| \lesssim \jk^{-\alpha},
\qquad |\partial_k^\alpha B(r,s,k)| \lesssim k^{-\alpha}, \qquad \alpha = 0,1,2.
\end{align}

Let us first look at the case $r \in (t/2,2t)$.
Observe that we have the trivial bound $|M(r,s)| \lesssim r^{-1/2} 2^{3\ell/2}$.
This suffices if $|r-t|\lesssim 1$, or if $2^\ell \leq (1+|r-t|)^{-1}$. 
When instead $|r-t|\gtrsim 1$ and $2^\ell \gtrsim |r-t|^{-1}$,
we integrate by parts in \eqref{prKG2Ker3''} and obtain
\begin{align*}
|M(r,s)| \lesssim \frac{1}{\sqrt{r}} \frac{1}{|t-r|}
  \int_0^\infty \big| \partial_k \big[ \sqrt{k} n(k) B(r,s,k) \, \varphi_\ell(k) \big] \big| \, dk
  \\
  \lesssim \frac{1}{\sqrt{r}} \frac{1}{|t-r|} 2^{\ell/2}
  \lesssim \frac{1}{\sqrt{r}} \frac{1}{\sqrt{|t-r|+1}} 2^{\ell}.
\end{align*}

When $r \not\in (t/2,2t)$, integration by parts gives
\begin{align*}
|M(r,s)| \lesssim \frac{1}{|t-r|}
  \int_0^\infty \big| \partial_k \big[ \frac{1}{\sqrt{rk}} \, k  n(k) B(r,s,k) \, \varphi_\ell(k) \big] \big| \, dk
  \lesssim  \frac{1}{|t-r|} 2^\ell,
\end{align*}
having used that $rk \gtrsim 1$ on the support of the integral;
since $|r-t| \gtrsim t$ the above bound is consistent with the inequality \eqref{prKG2main1}.
This concludes the proof of the proposition.
\end{proof}

\medskip
\section{Weighted-type estimates}\label{WDecKG_Sec}
In this section we establish weighted-type decay estimates 
where the norms on the right-hand side are measured in the distorted Fourier space. 
%these types of estimates are often more useful for nonlinear applications. 
%, since they are based on $L^\infty$ and $L^2$ spaces 
%often more easy to propagate

\subsection{Klein-Gordon case}
We first estimate solutions associated with $\mathcal{L}_1$. 

%The following proposition is the main result of this subsection.
\begin{prop}\label{WDecKG1}
Let $v$ be a solution to $ \partial_t^2 v + \mathcal{L}_1 v = 0$, 
with initial conditions $(v(0),v_t(0)) = (P_c^1f,P_c^1g)$.
Let %$F(k)$ and $G(k)$ be  (respectively)
\begin{align}\label{WDecKG2}
F(k) &:= \frac{1}{a_1(k)} \wtF_1 (\sqrt{r} f)(k), 
  \qquad \hbox{and} \qquad G(k) := \frac{1}{a_1(k)} \wtF_1 (\sqrt{r} g)(k),
\end{align}
where $ \wtF_1$ is the Fourier transform associated to the operator $\mathcal{H}_1 = r^{1/2} \mathcal{L}_1 r^{-1/2}$. 
%such that ${\| f \big\|}_{L^1} \leq 1$.
Then, the following inequality holds
\begin{align}\label{WDecKG3}
\begin{split}
|v(t,r)| &\lesssim \frac{1}{t} {\| \jk^2 F \|}_{L^\infty_k}
 + \frac{1}{t^{5/4}} {\| \jk^{11/4}\partial_k F \|}_{L^2_k} 
 + \frac{1}{t^{7/4}} {\|  \jk^{17/4}\partial_k^2 F \|}_{L^2_k}
 \\ & \qquad +\frac{1}{t} {\| \jk  G \|}_{L^\infty_k}
 + \frac{1}{t^{5/4}} {\| \jk^{7/4}\partial_k G  \|}_{L^2_k} 
 + \frac{1}{t^{7/4}} {\|  \jk^{13/4}\partial_k^2 G  \|}_{L^2_k}.
\end{split}
\end{align}
\end{prop}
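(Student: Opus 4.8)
The starting point is the Fourier representation \eqref{ssecKG1}, which writes $v(t,r)$ as a sum of two oscillatory integrals of the form
\begin{align*}
\frac{1}{\sqrt{r}} \int_0^\infty \Phi_1(r,k^2) e^{\pm it\sqrt{k^2+2}} \wt{F_\pm}(k) \, 2k \rho_1'(k^2)\, dk,
\end{align*}
where $\wt{F_\pm} = \tfrac12(\wtF_1(\sqrt{r}f) \pm (i\sqrt{k^2+2})^{-1} \wtF_1(\sqrt{r}g))$. Using $a_1(k^2)^2 \rho_1'(k^2) = 1/(4\pi)$ (see \eqref{SMOp1_4}) and the definition \eqref{WDecKG2} of $F,G$, the $\rho_1'$ weight and one power of $a_1$ combine with $\wt{F_\pm}$ to leave behind the normalized profiles $F$ and $G/\sqrt{k^2+2}$ against the single factor $a_1(k^2)\Phi_1(r,k^2)$. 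I would then insert the expansions of $\Phi_1$ from Lemma \ref{Phi1Op1_1} and Lemma \ref{lemWeyl1} into $a_1\Phi_1$: in the region $rk\gtrsim1$ one gets $a_1(k^2)\Phi_1(r,k^2) = \tfrac{1}{\sqrt k}\mathrm{Re}\big(a_1(k^2)^2 e^{irk}(1+\phi_+(r,k))\big)$-type terms with $|a_1(k^2)^2|\approx\jk^{-2}$ and $\phi_\pm$ symbols of order $k^{-\alpha}$, and in the region $rk\lesssim1$ one gets a non-oscillatory (in $r$) amplitude of size $\lesssim r^{3/2}\jr^{-1}\jk^{-2}$ times $r^{-1/2}$, i.e.\ $\lesssim r\jr^{-1}\jk^{-2} \lesssim \jk^{-2}$ on the support $rk\lesssim1$. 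After this reduction $v$ is a finite linear combination of integrals $\int_0^\infty e^{\pm it\sqrt{k^2+2}} e^{\pm irk} b(r,k) H(k)\,dk$ with $H\in\{\jk^{-2}F, \jk^{-2}G/\jk\}$ roughly, and $b$ an amplitude satisfying $|\partial_k^\alpha b(r,k)|\lesssim \jk^{-?}$ uniformly in $r$ — the key point being that the bad powers of $\jk$ coming from $\rho_1'$ are already absorbed, so everything is controlled by the $\jk$-weights displayed on $F,G$ in \eqref{WDecKG3}.

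Next I would run a standard stationary-phase/Littlewood-Paley analysis on these reduced integrals. Decompose in frequency $k\approx 2^\ell$. The phase is $S = \pm t\sqrt{k^2+2} \pm rk$; its derivative $\partial_k S = \pm tk/\sqrt{k^2+2} \pm r$ vanishes at most at one point, and $\partial_k^2 S = (k^2+2)^{-3/2}\approx \jk^{-3}$. For the bound with the $L^\infty_k$ norm of the profile, I would use the classical one-dimensional oscillatory integral estimate: on the dyadic piece, a $TT^*$ / van der Corput argument with the second-derivative lower bound $|\partial_k^2 S|\gtrsim \jk^{-3}$ gives a gain of $(t\jk^{-3})^{-1/2}$ per unit frequency interval; combined with the $2^\ell$ length of the dyadic band and the amplitude size this produces, after summing over $\ell$, the factor $t^{-1}$ against $\|\jk^2 F\|_{L^\infty}$ and $t^{-1}\|\jk G\|_{L^\infty}$ (the extra $\jk$ for $F$ versus $G$ comes from the $(i\sqrt{k^2+2})^{-1}$ that accompanies $G$ in $\wt{F_\pm}$ and the different $\jk$-weight one can afford). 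For the remainder terms, integrating by parts once or twice in $k$ moves derivatives onto $F$ (resp.\ $G$), producing $\partial_k F$, $\partial_k^2 F$; here I would use Cauchy--Schwarz in $k$ on each dyadic block rather than $L^\infty$, which is why the norms are $L^2_k$. Each integration by parts against the stationary phase costs a factor $|\partial_k S|^{-1}$, but localizing $|k-k_0|\approx 2^q$ away from the stationary point $k_0$ and summing the geometric series in $q$ (exactly as in the proof of Proposition \ref{propKG1dec}) yields the improved powers $t^{-5/4}$ and $t^{-7/4}$, at the cost of the higher $\jk$-weights $\jk^{11/4}, \jk^{17/4}$ (resp.\ $\jk^{7/4},\jk^{13/4}$) that appear in \eqref{WDecKG3} — these weights are precisely what is needed to compensate the $\jk$-powers lost when trading $L^\infty$ for $L^2$ on a band of length $2^\ell$ and when counting the $\jk^{-3}$ from $\partial_k^2 S$.

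The contribution from the region $rk\lesssim1$ is handled more directly: there the amplitude is non-oscillatory in $r$ and bounded by $\jk^{-2}$ on $k\lesssim r^{-1}$, so the integral is $\lesssim \int_{k\lesssim r^{-1}} |H(k)|\,dk$; splitting into $r\lesssim t$ and $r\gtrsim t$ and using that on the relevant set $k$ is small, one again recovers (with room to spare) the claimed decay via the phase $e^{\pm it\sqrt{k^2+2}}$ and integration by parts in $k$ — for $k\ll1$ one has $\sqrt{k^2+2}\approx\sqrt2 + k^2/(2\sqrt2)$, so $|\partial_k S|\approx k t$ away from $k=0$ and the low-frequency part contributes an even faster-decaying term, absorbable into the stated bound (this is the analogue of the $K_{2,\ell}$ estimate in Proposition \ref{propKG1dec}). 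Assembling the pieces and summing the (geometric in $\ell$, because of the $\jk$ weights) dyadic series gives \eqref{WDecKG3}.

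\textbf{Main obstacle.} The routine part is the dyadic integration-by-parts bookkeeping; the genuinely delicate point is tracking the $\jk$-weights through the normalization. One must verify that the combination $a_1(k^2)\rho_1'(k^2)\Phi_1(r,k^2)$, after using $|a_1(k^2)|\approx\jk^{-1}$ and $\rho_1'\approx\jk^2$, really does produce an $r$-uniform symbol whose $k$-derivatives satisfy $|\partial_k^\alpha(\cdot)|\lesssim \jk^{-\alpha}$ with \emph{no} leftover positive power of $\jk$ — this relies on the symbol bounds \eqref{Phi1Op1_4}, \eqref{Phi1Op1_12} for $\Phi_1$ in $rk\lesssim1$, on Lemma \ref{lemWeyl1} and \eqref{SMOp1_13} for $rk\gtrsim1$, and on the subtle cancellation in Lemma \ref{Phi1Op1_1}. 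Getting the exact exponents $11/4, 17/4, 5/4, 7/4$ to come out requires carefully balancing: (i) the $2^\ell$ length of each dyadic band, (ii) the $(t\,2^{-3\ell})^{-1/2}$ gain from $|\partial_k^2 S|\approx 2^{-3\ell}$ on the band near the stationary point, (iii) the number $q-q_0$ of integrations by parts and the associated $|\partial_k S|\approx 2^q 2^{-3\ell}$, with $2^{q_0}\approx t^{-1/2}2^{3\ell/2}$, and (iv) the $\sqrt{k/r}\lesssim t^{-1/2}\jk^{1/2}$ bound valid near the stationary point since $r\gtrsim tk/\jk$. Checking that all these conspire to give a convergent geometric series in $\ell$ with the advertised time-decay rate is where the real care is needed; the structure, however, is identical to the kernel estimates already carried out in Section \ref{secdecay}, so I would model the argument closely on the proof of Proposition \ref{propKG1dec}.
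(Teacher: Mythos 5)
Your proposal follows essentially the same route as the paper's proof: split into $rk\lesssim1$ and $rk\gtrsim1$ using the expansions of $\Phi_1$, normalize by $|a_1|^2\rho_1'=1/(4\pi)$, and in the oscillatory region perform a dyadic decomposition $|k-k_0|\approx2^q$ around the stationary point $k_0=\sqrt{2}r/\sqrt{t^2-r^2}$ with $2^{q_0}\approx t^{-1/2}\langle k_0\rangle^{3/2}$, treating the core directly and the tails by integration by parts together with Cauchy--Schwarz, exactly as in Proposition \ref{propKG1dec}. A few inaccuracies are worth flagging, though none is fatal. First, the $TT^*$/van der Corput framing for the $L^\infty_k$ term is not quite right: van der Corput needs $L^1$ control of the $k$-derivative of the amplitude, which an $L^\infty$ bound on $F$ does not supply. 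In the paper the $t^{-1}\|\langle k\rangle^2 F\|_{L^\infty}$ bound instead comes from two sources, the undifferentiated core $I_{q_0}$ and the term $A_q$ of the two-fold integration by parts in which both derivatives land on $(\partial_kS)^{-2}$ rather than on $F$; your ``one IBP for $\partial_kF$, two for $\partial_k^2F$'' bookkeeping reproduces the same exponents once the ``derivative hits $(\partial_kS)^{-1}$'' term is also counted. Second, the paper does not do a Littlewood--Paley decomposition in $k\approx 2^\ell$: the cutoff $\varphi_{[-2,2]}(k/k_0)$ already localizes $k$ to one dyadic shell, so there is no $\ell$-series to sum. Third, for $v^{(2)}$ the paper exploits the identity $\partial_k\big[\tfrac{1}{\partial_kS_*}\partial_k\big(\tfrac{k}{\partial_kS_*}\big)\big]=0$ (since $\tfrac{1}{\partial_kS_*}\partial_k\big(\tfrac{k}{\partial_kS_*}\big)\equiv1$ for $S_*=\sqrt{k^2+2}$), which kills the $h_1$ term outright; you don't mention this cancellation, but the corresponding term would in any case only lose a power of $\log t$, which is absorbable given the extra $t^{-2}$ prefactor.
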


%\smallskip
%\begin{rem}
%%comm%{to make precise}
%Note how the estimates respect the heuristic that $\partial_k \wtF f$ is equivalent to 
%$r f$ in $L^2$, and the Schr\"odinger scaling (uncertainty principle) $r \approx \sqrt{t}$.
%\end{rem}

\smallskip
We refer to Lemma \ref{WDecKG4} for some estimates on Fourier norms,
such as those appearing in the above statement, in terms of more standard norms in physical space. %via the following lemma. 

\smallskip
\begin{proof}[Proof of Proposition \ref{WDecKG1}]
Recall that %we have that $r^{1/2}\mathcal{L}_1(\cdot)=\mathcal{H}_1(r^{1/2}\cdot)$, and
\begin{align*}
v(t,r)
%&=\dfrac12e^{it\sqrt{\mathcal{L}_1}}f+\dfrac{1}{2i\sqrt{\mathcal{L}_1}}e^{it\sqrt{\mathcal{L}_1}}g+\dfrac12e^{-it\sqrt{\mathcal{L}_1}}f-\dfrac{1}{2i\sqrt{\mathcal{L}_1}}e^{-it\sqrt{\mathcal{L}_1}}g
%\\ 
&=\dfrac{1}{\sqrt{r}}\int_0^\infty \Phi_1(r,k^2)e^{it\sqrt{k^2+2}}\widetilde{F}_+(k)2k\rho_1'(k^2)dk
\\ &\quad +\dfrac{1}{\sqrt{r}}\int_0^\infty \Phi_1(r,k^2)e^{-it\sqrt{k^2+2}}\widetilde{F}_-(k)2k\rho_1'(k^2)dk
\end{align*}
with $\widetilde{F}_+$ and $\widetilde{F}_-$ defined as in \eqref{ssecKG1}. 
The claimed estimates for the terms with $\widetilde{F}_+$ and $\widetilde{F}_-$ are identical, 
so we will only give the details for $\widetilde{F}_+$, and disregard the other;
furthermore, the proof for the term with $(k^2+2)^{-1} \wtF_1 (\sqrt{r}g)$ 
is essentially the same as that for the term with $\wtF_1 (\sqrt{r} f)$, 
and thus we will restrict ourselves to estimating only
the contribution from $\wtF_1 (\sqrt{r} f)$. 

Introducing a cutoff function $\chi$ as in the previous proofs, with the above reductions,
and with the notation \eqref{WDecKG2}, 
we see that it suffices to estimate the following two terms %decompose $v(t,r) = v^{(1)} + v^{(2)}+v^{(3)} + v^{(4)}$, where
\begin{align}\label{WDecKG7}
v^{(1)}(t,r) & := \frac{1}{\sqrt{r}} \int_0^\infty  \Phi_1(r,k^2) 
  \chi^c(rk) e^{it\sqrt{k^2+2}}\, F(k) \, 2k a_1(k^2)\rho_1^\prime(k^2) \, dk,
\\
\label{WDecKG8}
v^{(2)}(t,r) & :=  \frac{1}{\sqrt{r}} \int_0^\infty \Phi_1(r,k^2) 
  \chi(rk) e^{it\sqrt{k^2+2}} \,  F(k) \, 2k a_1(k^2)\rho_1^\prime(k^2) \, dk.
\end{align}
%with analogous definitions for $v^{(3)}$ and $v^{(4)}$ %whose definitions are the same as those of $v^{(1)}$ and $v^{(2)}$ 
%but with $(k^2+2)^{-1/2}G(k)$ instead of $F(k)$.

\smallskip
\noindent
{\it Estimate of $v^{(1)}$}.
Using the expansion \eqref{prKG1Phi} for $\Phi_1$ when $rk \gtrsim 1$ 
reduces matters to estimating by the right-hand side of \eqref{WDecKG3}, 
the following expression
\begin{align}\label{WDecKG_v1_w}%\label{S2v1'}
w(t,r) & := \frac{1}{\sqrt{r}} \int_0^\infty \frac{1}{\sqrt{k}} e^{it S(t,r,k)} (1+\phi(r,k))
  \chi^c(rk) \, F(k) \, 2k \, dk,
  \ \quad S(t,r,k) := \sqrt{k^2+2} - \frac{rk}{t},
\end{align}
with $\phi(r,k)$ as in \eqref{prKG1phi}, 
and having used $|a_1(k^2)|^2\rho_1^\prime(k^2) = 1/(4\pi)$ 
according to \eqref{SMOp1_4}.
The contribution of the term associated with $e^{irk}$ 
is non-stationary and easier to estimate, so we have disregarded it. 
%More specifically, the resulting phase $S_*(t,r,k):=\sqrt{k^2+2}+\tfrac{rk}{t}$ coming from considering the term $e^{irk}$ in \eqref{prKG1Phi} does not have stationary points.

Note that we may restrict the support of the integral to $k \gg t^{-1/2}$,
otherwise the bound follows by direct integration: 
\[
\left\vert \int_0^{ct^{-1/2}}\dfrac{1}{\sqrt{rk}}e^{itS(t,r,k)}\big(1+\phi(r,k)\big)\chi^c(rk)F(k)2kdk\right\vert 
  \lesssim \Vert F\Vert_{L^\infty_k}\int_0^{t^{-1/2}} k\,dk\lesssim \dfrac{1}{t}\Vert F\Vert_{L^\infty_k},
\]
having also used that $rk\gtrsim1$. We can thus insert a cutoff $\chi^c(ck\sqrt{t})$ 
for some sufficiently small $c>0$ in the integral in the definition of $w$. 
Moreover, for stationary points to exists we require $r/t\in(0,1)$, 
%otherwise \[
%\partial_kS=\dfrac{k}{\sqrt{k^2+2}}-\dfrac{r}{t}<0 \quad \hbox{ for all } \quad k\in(0,\infty).
%\]
and, from now, focus on this case.
%assuming that $r/t\in(0,1)$. %Indeed, if this was not the case, that is, if $r>t$, then it is enough to \begin{align*}
%asdasdasdasd
%\end{align*}

Let $k_0 := \sqrt{2}r/\sqrt{t^2-r^2}$ denote the stationary point of $S(t,r,k)$, 
and let us restrict to the main stationary contribution $k/k_0 \approx 1$ 
inserting a corresponding cutoff $\varphi_{[-2,2]}(k/k_0)$. %%comm2%{is this the same as $\varphi_{\leq2^2}-\varphi_{\leq 2^{-2}}$ right?}. 
We disregard %for the moment 
the non-stationary contribution coming from the complementary region, 
which gives lower order terms or terms that are easier to estimate.

With the notation for Littlewood-Paley cutoffs in Subsection \ref{secnot}, we split $w$ as
\begin{align}
\label{WDecKG9}
\begin{split}
w(t,r) & = \sum_{q \geq q_0} I_q(t,r), %+ \cdots,
\\
I_q(t,r) & := \frac{1}{\sqrt{r}} \int_0^\infty \frac{1}{\sqrt{k}} e^{itS(t,r,k)} 
  m(r,k) \, F(k) \, 2k \, \varphi_q^{(q_0)}(k-k_0)\, dk, 
  \\
\qquad \qquad m(r,k) & := \varphi_{[-2,2]}(k/k_0) (1+\phi(r,k)) \chi^c(rk) \chi^c(ck\sqrt{t}), \qquad  k_0 = \sqrt{2}r/\sqrt{t^2-r^2},
\end{split}
\end{align}
where $q_0$ is the smallest integer such that $2^{q_0} \geq  t^{-1/2}\langle k_0\rangle^{3/2}$ 
(and we omit the dependence of $m$ on $t$).
Note that we must have $k \approx k_0 \gtrsim 2^q$ on the support of $I_q$, as well as, by Taylor expansion,
\begin{align*}
|\partial_k S(t,r,k)| & \approx 2|k-k_0|\langle k_0\rangle^{-3} \approx 2^q\langle k_0\rangle^{-3},
\end{align*}
and, according to Lemma \ref{lemWeyl1}, $|\partial_k^\alpha m(r,k) | \lesssim k^{-\alpha}$. %It shall also be useful to recall that, on the support of $I_q$, \[
%\big\vert \partial_k^2S\big\vert =\left\vert \dfrac{2}{(k^2+2)^{3/2}}\right\vert\approx \langle k_0\rangle^{-3} \qquad \hbox{and}\qquad \big\vert\partial_k^3S\big\vert=\left\vert\dfrac{6k}{(k^2+2)^{5/2}}\right\vert\approx k_0\langle k_0\rangle^{-5}.
%\]

The estimate for the case $q=q_0$ %the idea is that we can simply use the size of the integration domain to obtain the extra required decay. More concretely, we can  estimate 
follows from direct integration,
\begin{align*}
|I_{q_0}(t,r) | & \lesssim \frac{1}{\sqrt{r}} \int_0^\infty \sqrt{k} \, \chi^c(rk) 
  \, \varphi_{\leq q_0}(k-k_0)\, \jk^{-2}dk \, {\| \jk^{2}F \|}_{L^\infty_k}  
  \\ & \lesssim t^{-1/2}\langle k_0\rangle^{-3/2} 2^{q_0}\Vert \jk^2 F\Vert_{L^\infty_k}\lesssim t^{-1} {\| \jk^{2}F \|}_{L^\infty_k},
\end{align*}
having used that $r = k_0 t/\sqrt{k_0^2+2}$ and $2^{q_0}\approx t^{-1/2}\langle k_0\rangle^{3/2}$;
the bound so obtained is consistent with \eqref{WDecKG5}.

In the case $q>q_0$, 
we integrate by parts in $k$ in the integral \eqref{WDecKG9} twice, from where we obtain %\begin{align*}
%I_q&=\dfrac{1}{\sqrt{r}}\int  \dfrac{1}{it\partial_kS}\partial_ke^{itS(t,r,k)}\dfrac{1}{\sqrt{k}}m(r,k)F(k)2k\varphi_q^{(q_0)}(k-k_0)dk
%\\ &=-\dfrac{2}{it\sqrt{r}}\int e^{itS(t,r,k)}\partial_k\left(\dfrac{\sqrt{k}}{\partial_kS}m(r,k)F(k)\varphi_q^{(q_0)}(k-k_0)\right)dk
%\end{align*}
%After a second application of integration by parts, the right-hand side in the last identity can be estimated by
%%comm%{Is just one integration by parts okay for some useful result ? 
%In the case $rk \lesssim 1$ it did not seem to\dots}
\begin{align}\label{WDecKG10}
\begin{split}
|I_q| & \lesssim t^{-2} | A_q | + t^{-2} | B_q | + t^{-2} |C_q| %%%%+t^{-2} |D_q|+t^{-2} |E_q| 
+ t^{-2} R, \qquad q>q_0,
\\
A_q(t,r) & := \frac{1}{\sqrt{r}} \int_0^\infty e^{itS(t,r,k)} 
 \sqrt{k}\, \partial_k^2 \Big( \frac{1}{(\partial_k S)^2} \Big)\, m(r,k) \, F(k) \, k \, 
  \varphi_q(k-k_0) \, dk, 
\\
B_q(t,r) & := \frac{1}{\sqrt{r}} \int_0^\infty  e^{itS(t,r,k)} 
  \frac{\sqrt{k}}{(\partial_kS)^2}\, m(r,k) \, \partial_k^2 F(k)  \, \varphi_q(k-k_0)\, dk, 
\\
C_q(t,r) & := \frac{1}{\sqrt{r}} \int_0^\infty  e^{itS(t,r,k)} 
 \sqrt{k}\, \partial_k \Big( \frac{1}{(\partial_kS)^2}\Big)   \,  m(r,k) \, \partial_k F(k) \, \varphi_q(k-k_0) \, dk,
%\\
%D_q(t,r) & := \frac{1}{\sqrt{r}} \int_0^\infty e^{itS(t,r,k)} 
%  \frac{\sqrt{k}}{(\partial_k S)^2}   \,m(r,k)  \,  F(k) \, \partial_k^2\big(\varphi_q(k-k_0)\big)  \, dk,
%  \\ 
%E_q(t,r) & := \frac{1}{\sqrt{r}} \int_0^\infty e^{itS(t,r,k)} 
%   \sqrt{k}    \, m(r,k)  \,  \partial_k\Big(\dfrac{F(k)}{(\partial_kS)^2}\Big) \, \partial_k \big(\varphi_q(k-k_0)\big)  \, dk,
\end{split}
\end{align}
where $R$ denotes all the terms where the symbol $m(k)$ or $\varphi_q(k-k_0)$ are differentiated at least one time, which are easier to estimate, since $|\partial_k m| \lesssim k^{-1}$ and $k \gtrsim 2^q$,
where $2^{-q}$ is the loss from differentiating $(\partial_k S)^{-1}$ or the cutoff $\varphi_q$.

We begin by estimating the first term in \eqref{WDecKG10}, %. Before doing that, it is worth noting that, on the support of the integral, \[
%\left\vert \partial_k^2\Big(\dfrac{1}{(\partial_kS)^2}\Big)\right\vert \lesssim  2^{-4q}\langle k_0\rangle^6.
%\] 
%That being said, now we can estimate $A_q$ by %{\color{gray}(recall $\varphi_q'$ denotes a cutoff with similar properties to those of $\varphi_q$)}
\begin{align*}
|A_q(t,r)| &\lesssim \int_{0}^\infty 
 \left\vert  \partial_k^2\Big(\dfrac{1}{(\partial_kS)^2}\Big) \, \sqrt{k/r} \, \varphi_q(k-k_0)  \, F(k)\right\vert dk \, 
  \\ &\lesssim 2^{-4q}\langle k_0\rangle^6\int_0^\infty       \, \sqrt{k/r} \, \varphi_q(k-k_0)  \, \vert F(k)\vert \, dk \, 
  \\ &\lesssim t^{-1/2} 2^{-3q} \langle k_0\rangle^{4+1/2}{\| \jk^2 F \|}_{L^\infty_k}
\end{align*}
having used that, on the support of the integral, $k \approx k_0 = r\sqrt{k_0^2+2}/t$. 
Summing this last bound over $2^q \gtrsim t^{-1/2}\langle k_0\rangle^{3/2}$ yields the bound 
\[
|A_q(t,r)| \lesssim t^{-1/2} 
  (t^{1/2}\langle k_0\rangle^{-3/2} )^3 \langle k_0\rangle^{4+1/2}{\| \jk^{2}F \|}_{L^\infty_k}
  \approx t{\| \jk^{2}F \|}_{L^\infty_k},
\]
which, together with the first inequality in \eqref{WDecKG9}, is consistent with the bound \eqref{WDecKG3}.

The second term in \eqref{WDecKG10} is bounded similarly using Cauchy-Schwarz inequality
\begin{align*}
|B_q(t,r)| & \lesssim 2^{-2q} \langle k_0\rangle^6 \int_{0}^\infty 
  \sqrt{k/r} \, \varphi_q(k-k_0) \,\vert \partial_k^2 F(k)\vert  \, dk 
% \\ & \lesssim t^{-1/2}\langle k_0\rangle^{6+1/2} 2^{-2q}\int_0^\infty \varphi_q(k-k_0)\,\vert \partial_k^2F(k)\vert \,dk
 \\ & \lesssim t^{-1/2}\langle k_0\rangle^{9/4}2^{-3q/2}  {\| \jk^{17/4}\partial_k^2 F \|}_{L^2_k},
\end{align*}
summing over $2^q \gtrsim t^{-1/2}\langle k_0\rangle^{3/2}$ gives a bound of $t^{1/4} {\| \jk^{17/4}\partial_k^2 F \|}_{L^2_k}$, which, in view of the first inequality in \eqref{WDecKG10} is consistent with the right-hand side of \eqref{WDecKG3}.

%{\color{gray}
%\[
%C_q(t,r) := \frac{1}{\sqrt{r}} \int_0^\infty  e^{itS(t,r,k)} 
% \sqrt{k}\, \partial_k \Big( \frac{1}{(\partial_kS)^2}\Big)   \,  m(r,k) \, \partial_k G(k) \, \varphi_q(k-k_0) \, dk,
%\]}
%Now, 
Finally, for the third term, proceeding as in the last case we find that
\begin{align*}
|C_q(t,r)| & \lesssim 2^{-3q}\langle k_0\rangle^6\int_0^\infty 
 \sqrt{k/r}  \, \varphi_q(k-k_0)\, |\partial_k F(k)| \,dk 
   \\ & \lesssim t^{-1/2}2^{-5q/2}\langle k_0\rangle^{15/4}  {\| \jk^{11/4}\partial_k F \|}_{L^2_k},
\end{align*}
where the numerology above comes from $6+\tfrac12-\tfrac{11}{4}=\tfrac{15}{4}$. 
Then, summing over $2^q \gtrsim t^{-1/2}\langle k_0\rangle^{3/2}$ 
gives a bound by $t^{3/4}\Vert \jk^{11/4}\partial_kF\Vert_{L^2}$,
which is again consistent with the desired \eqref{WDecKG3}. %which multiplied by the factor $t^{-2}$ gives a contribution 
This concludes %the sketch of the proof for the non-stationary case, and thus 
the proof for $w$.

%we take advantage of $\partial_kS$
%\begin{align*}
%|\partial_k S|&\gtrsim \min\big\{\tfrac{3k_0}{(k_0^2+1)^{3/2}},\, k_0\big\}, \qquad k<\tfrac{1}{2}k_0,
%\\ \vert \partial_k^2S\vert &\approx \jk^{-3}, \qquad \quad k_0 := \sqrt{2}r/\sqrt{t^2-r^2}
%\end{align*}
%which is in turn lower bounded by $t^{-1/2}$ as was the case for $2^q$ (that is, the size of $|\partial_k S|$ in the stationary case, when $|k-k_0| \approx 2^q$). One can then resort to two integration by parts as above  \begin{align*}
%\dfrac{1}{t^2\sqrt{r}}\int_0^\infty \partial_k\left(\dfrac{1}{\partial_kS}\partial_k\Big[\dfrac{\sqrt{k}}{\partial_kS}(1+\phi(r,k)) G(k)\chi^c(rk) \chi^c(ck\sqrt{t})\big(1-\varphi_{[-2,2]}(k/k_0)\big)\Big]\right)dk
%\\ & \lesssim 
%\end{align*}
%using the factor of $1/\sqrt{r}$ to gain a factor of $\sqrt{k}$ in view of the presence of the cutoff $\chi^c(rk)$.
%\begin{align*}
%I_q^c&=-\dfrac{2}{it\sqrt{r}}\int_0^\infty e^{itS(t,r,k)}\partial_k\left(\dfrac{\sqrt{k}}{\partial_kS}(1+\phi(r,k)) G(k)\chi^c(rk) \chi^c(ck\sqrt{t})\big(1-\varphi_{[-2,2]}(k/k_0)\big)\right)dk
%\end{align*}

\medskip

\noindent
{\it Estimate of $v^{(2)}$}.
In this case $rk \lesssim 1$ and the asymptotics in \eqref{prKG2Phi} reduce matters to estimating
by to the right-hand side of \eqref{WDecKG3} the following expression:
\begin{align}\label{WDecKG_v2_h}%\label{S2v1'}
h(t,r) & := %\frac{1}{\sqrt{r}} 
\int_0^\infty k e^{it\sqrt{k^2+2}} \frac{r}{\jr} (1+\phi(r,k)) \jk \chi(rk) \, F(k) \, dk,
\end{align}
where we have replaced $a_1(k^2)\rho_1^\prime(k^2)$ with $\jk$, consistently with Proposition \ref{SMOp1_1},
and $\phi$ satisfies the usual symbol-type estimates.
We may again restrict the support of the integral to $k \gg t^{-1/2}$, 
otherwise the bound follows by direct integration, taking advantage of the length of the integration domain. 
We then insert a cutoff $\chi^c(ck\sqrt{t})$ for some small $c>0$.

For the sake of notation let us denote $S_*(k):=\sqrt{k^2+2}$.
Integrating by parts twice in $k$ using 
\[
%\partial_kS_*(k)=\dfrac{k}{\sqrt{k^2+2}} \qquad \hbox{and} \qquad  
e^{itS_*(k)} = (it\partial_kS_*)^{-1}\partial_k  e^{itS_*(k)},
%%e^{it\sqrt{k^2+2}} = \frac{\sqrt{k^2+2}}{it k}\partial_k  e^{it\sqrt{k^2+2}},
\]
we obtain that %the integral under study is bounded by 
\begin{align}\label{WDecKG11}
|h| & \lesssim t^{-2} | h_1 | + t^{-2} | h_2 | + t^{-2} |h_3| %+ t^{-2} | h_4 | + t^{-2} |h_5|, \nonumber
+ t^{-2} R, \nonumber
\\
h_1(t,r) & := \int_0^\infty \frac{r}{\jr} 
  \left|\partial_k\Big[\dfrac{1}{\partial_kS_*(k)}\partial_k\Big[\dfrac{k}{\partial_kS_*(k)}\Big]\Big]\Big( (1+\phi(r,k)) \jk \chi(rk) \chi^c(ck\sqrt{t}) 
    \Big)  \right|  |F(k)| \, dk,\nonumber
\\
h_2(t,r) & := \int_0^\infty \frac{r}{\jr} 
  \frac{k}{(\partial_kS_*)^2} \Big| (1+\phi(r,k)) \jk \chi(rk) \chi^c(ck\sqrt{t}) \Big| \,    |\partial_k^2 F(k)| \, dk,
%\\ 
%h_3(t,r) & := \int_0^\infty \frac{r}{\jr} 
%  \left| \partial_k\Big[\frac{k}{(\partial_kS_*)^2} \Big] \partial_k \Big[ (1+\phi(r,k)) \jk \chi(rk) \chi^c(ck\sqrt{t})\Big] \right| 
%  \, | F(k)| \, dk,
%\\
%h_4(t,r) & := \int_0^\infty \frac{r}{\jr} 
%   \left| \frac{k}{(\partial_kS_*)^2} \partial_k^2\Big[(1+\phi(r,k)) \jk \chi(rk) \chi^c(ck\sqrt{t})\Big] \right| 
%  \, |F(k)|  \, dk,\nonumber
\\
h_3(t,r) & := \int_0^\infty \frac{r}{\jr} 
   \left| \partial_k\Big[\frac{k}{(\partial_kS_*)^2} \Big]\Big((1+\phi(r,k)) \jk \chi(rk) \chi^c(ck\sqrt{t})\Big) \right| 
  \, |\partial_kF(k)|  \, dk,\nonumber
\end{align}
where $R$ denotes all the terms where the symbol $\phi(r,k)$ 
or the cutoffs are differentiated at least one time, which are easier to estimate.
%, since $|\partial_k m| \lesssim k^{-1}$ and $k \gtrsim 2^q$,
%where $2^{-q}$ is the loss from differentiating $(\partial_k S)^{-1}$ or the cutoff $\varphi_q$.

Now observe that $\partial_k\big(\tfrac{1}{\partial_kS_*(k)}\partial_k\big(\tfrac{k}{\partial_kS_*(k)}\big)\big)=0$,
and hence $h_1=0$. 
For the second term, using %the explicit formula for $\partial_kS_*$ as well as the fact 
that $r\langle r\rangle^{-1}\jk\lesssim 1$ on the support of the integral, 
a direct application of Cauchy-Schwarz yields
\begin{align*}
|h_2(t,r)| & \lesssim \int_{k \gtrsim t^{-1/2}} k^{-1}  |\jk^2\partial_k^2 F(k)| dk  \lesssim t^{1/4} {\| \jk^2\partial_k^2 F \|}_{L^2_k},
\end{align*}
which is acceptable. 
Similarly, using that $\vert \partial_k [\tfrac{k}{(\partial_kS_*)^2}]\vert\lesssim k^{-2}\jk^2$  
%along with %the symbol type estimates for $\phi$, we conclude that
%\begin{align*}
%|h_3(t,r)| & \lesssim \int_{k \gtrsim t^{-1/2}} \left\vert \dfrac{r}{\langle r\rangle}\dfrac{\jk^3}{k^2}\left(r\chi'(rk)+\sqrt{t}\chi'(ck\sqrt{t})+\dfrac{\chi(rk)}{k}\right)\right\vert |F(k)| dk \lesssim t {\| \jk F \|}_{L^\infty_k}.
%\end{align*}
%which, in view of the first inequality in \eqref{WDecKG11} is consistent with \eqref{WDecKG3}, and having used again that $r\langle r\rangle^{-1}\jk\lesssim 1$. Finally, let us bound $h_5$ for the sake of completeness. Since $h_4$ follows very similar lines to those for $h_3$, in order to avoid repetitive calculations we omit it. Now, in the case of $h_5$, 
%an application of 
and Cauchy-Schwarz we obtain that
\begin{align*}
|h_3(t,r)| & \lesssim %%%\int_{k \gtrsim t^{-1/2}} \dfrac{\jk^2}{k} \left(r\chi'(rk)+\sqrt{t}\chi'(ck\sqrt{t})+\dfrac{\chi(rk)}{k}\right) 
\int_{k \gtrsim t^{-1/2}} \dfrac{r}{\langle r \rangle}\dfrac{\jk^3}{k^2}|\partial_k F(k)| dk 
  \lesssim t^{3/4} {\| \jk^2\partial_k F \|}_{L^2_k},
\end{align*}
which along with the extra factor $t^{-2}$ in %the first inequality of 
\eqref{WDecKG11} gives a bound that is compatible with \eqref{WDecKG3}, thus finishing the proof for $v^{(2)}$, and therefore concluding the proof of the proposition.
\end{proof}

\begin{rem}[The case $\vert n\vert\geq 2$]\label{section5_nvortex}
Let us briefly comment some of the adaptations needed for the vortex of degree $n$. 
In the case $rk\gtrsim 1$, the definition of $v^{(1)}$ in \eqref{WDecKG7} is replaced by
\[
v^{(1)}(t,r) := \frac{1}{\sqrt{r}} \int_0^\infty  \Phi_{1,n}(r,k^2) \chi^c(rk) e^{it\sqrt{k^2+2}}\, F(k) \, 2k a_{1,n}(k^2)\rho_{1,n}^\prime(k^2) \, dk
\]
and hence the corresponding definition of $w(t,r)$ in \eqref{WDecKG_v1_w}, according to the asymptotics
of $\Phi_{1,n}$ in \eqref{check_nvor_proof8}, is \[
w_n(t,r) := \frac{1}{\sqrt{r}} \int_0^\infty \frac{1}{\sqrt{k}} e^{it S(t,r,k)} (1+\phi(r,k)) \chi^c(rk) \, F(k) \, 2k \, dk,
\]
having used that $\vert a_{1,n}(k^2)\vert^2\rho_{1,n}'(k^2)\approx 1$,
which is exactly the same definition as that of $w(t,r)$ in \eqref{WDecKG_v1_w}.
Therefore, this case does not change at all for the vortex of degree $n$.% since $\Phi_{1,n}$ brings another $a_{1,n}$, which along with that in $v^{(1)}$ cancel $\rho_{1,n}'(k^2)$, yielding an equivalent resulting integral.

%On the other hand, 
In the case of $v^{(2)}$, %that is, in the case $rk\lesssim1$, 
the definition of $h$ in \eqref{WDecKG_v2_h} is now replaced by 
\begin{align}\label{check_nvor_proof12}
h_n(t,r)=\int_0^\infty ke^{it\sqrt{k^2+2}}\dfrac{r^{n}}{\langle r\rangle^n}(1+\phi(r,k))\langle k\rangle^n \chi(rk)F(k)dk.
\end{align}
Hence, using the fact that $r^n\langle r\rangle^{-n}\langle k\rangle^n\lesssim1$
on the support of the integral, we infer that all the needed
calculations follow identically those for $n=1$, thus completing the proof for $\vert n\vert\geq 2$.
\end{rem}

%%%%%%%%%%%%%%%%%%%%%%%%%%%%%%%%%%%%%%%%
%%%%%%%%%%%%%%%%%%%%%%%%%%%%%%%%%%%%%%%%
%%%%%%%%%%%%%%%%%%%%%%%%%%%%%%%%%%%%%%%%
%%%%%%%%%%%%%%%%%%%%%%%%%%%%%%%%%%%%%%%%
%%%%%%%%%%%%%%%%%%%%%%%%%%%%%%%%%%%%%%%%
%%%%%%%%%%%%%%%%%%%%%%%%%%%%%%%%%%%%%%%%
%%%%%%%%%%%%%%%%%%%%%%%%%%%%%%%%%%%%%%%%
%%%%%%%%%%%%%%%%%%%%%%%%%%%%%%%%%%%%%%%%
%%%%%%%%%%%%%%%%%%%%%%%%%%%%%%%%%%%%%%%%
%%%%%%%%%%%%%%%%%%%%%%%%%%%%%%%%%%%%%%%%

\smallskip
\subsection{Wave-like case} %$\mathcal{L}_2$}
We now look at linear solutions associated with $\mathcal{L}_2$. 
The following proposition is the main result of this subsection.

\begin{prop}\label{WDecWa1}
Let $v$ be a solution to $ \partial_t^2 v + \mathcal{L}_2 v = 0$, with initial conditions 
$v(0) = f$, and $\partial_tv(0)=g$. Let $F(k)$ and $G(k)$ be defined by  
\begin{align}\label{WDecWa2}
F(k) &:= \frac{1}{a_2(k)} \wtF_2 (\sqrt{r} f) (k) \qquad \hbox{ and } \qquad  G(k) := \frac{1}{a_2(k)} \wtF_2 (\sqrt{r} g) (k)
\end{align}
where $ \wtF_2$ is the Fourier transform associated to the operator $\mathcal{H}_2 = r^{1/2} \mathcal{L}_2 r^{-1/2}$. 
%such that ${\| f \big\|}_{L^1} \leq 1$.
Then, the following inequality holds:
\begin{align}\label{WDecWa3}
\begin{split}
|v(t,r)| &\lesssim \dfrac{1}{\sqrt{t}}\Vert k\jk^{1/2+}F\Vert_{L^\infty_k}
 + \dfrac{1}{t}\Vert k\jk^{1/2}\partial_kF\Vert_{L^2_k}
 + \dfrac{1}{\sqrt{t}}\Vert \jk^{1/2+}G\Vert_{L^\infty_k}
 + \dfrac{1}{t}\Vert \jk^{1/2}\partial_kG\Vert_{L^2_k} .
\end{split}
\end{align}
\end{prop}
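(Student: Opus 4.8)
The plan is to mimic closely the proof of Proposition \ref{WDecKG1}, replacing the Klein--Gordon phase $\sqrt{k^2+2}$ with the wave phase $k$ and tracking the simpler spectral weights $\rho_2'(k^2)\approx\jk^2$, $|a_2(k^2)|\approx\jk^{-1}$. Starting from the representation \eqref{ssecKG20}, it suffices (by the usual symmetry between the $e^{\pm itk}$ terms and between the $f$ and $g$ contributions, the latter carrying an extra harmless $1/k$) to estimate a single integral of the form $v(t,r)=r^{-1/2}\int_0^\infty\Phi_2(r,k^2)e^{itk}F(k)\,2k a_2(k^2)\rho_2'(k^2)\,dk$. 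As in Proposition \ref{WDecKG1} I would split the spatial regime via a cutoff $\chi(rk)$: in the region $rk\lesssim1$ use the first two lines of \eqref{prKG2Phi} ($\Phi_2\approx r^{3/2}$ or $r^{1/2}$, times a symbol), in the region $rk\gtrsim1$ use the oscillatory expansion $\Phi_2\sim k^{-1/2}\big(a_2 e^{irk}+\overline{a_2}e^{-irk}\big)(1+\phi)$, keeping only the stationary combination $e^{-irk}e^{itk}$ and using $|a_2(k^2)|^2\rho_2'(k^2)=1/(4\pi)$.

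For the region $rk\gtrsim1$ the relevant phase is $S(t,r,k)=k(t-r)$, which has no interior critical point: $\partial_kS=t-r$. Hence the analysis is genuinely simpler than the Klein--Gordon case and splits by the size of $|t-r|$ exactly as in the proof of Proposition \ref{propKG2dec}. When $|t-r|\lesssim1$ I bound trivially, gaining $t^{-1/2}$ from $r\approx t$ and the $k^{-1/2}$ factor (after using $rk\gtrsim1$ to kill the $r^{-1/2}$), which produces the $\tfrac{1}{\sqrt t}\|k\jk^{1/2+}F\|_{L^\infty}$ term (the $0+$ arising from summing a dyadic decomposition in $k$ of the trivial bound). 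When $|t-r|\gtrsim1$ I integrate by parts once in $k$ using $e^{itk-irk}=\big(i(t-r)\big)^{-1}\partial_k e^{itk-irk}$, gaining $|t-r|^{-1}$; landing the derivative either on $F$ (Cauchy--Schwarz, giving $\tfrac1t\|k\jk^{1/2}\partial_kF\|_{L^2}$ after using $t\gtrsim|t-r|$ in the favorable sub-case, and $\tfrac1{\sqrt t}\tfrac1{\sqrt{|t-r|}}\|\cdots\|$ otherwise) or on the amplitude/symbol (which is a zero-order symbol by Proposition \ref{propSM2} and Lemma \ref{lemWeyl}, so this is lower order). The sub-case split $r\gtrsim t/3$ versus $r\lesssim t/3$, and within the former $s\gtrsim|t-r|/2$ versus $s\lesssim|t-r|/2$, is carried out verbatim as in Proposition \ref{propKG2dec}.

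For the region $rk\lesssim1$ the amplitude is $\tfrac{r}{\jr}(1+\phi(r,k))\jk\chi(rk)F(k)\cdot k$ and the phase is just $e^{itk}$, so $\partial_k e^{itk}=it\,e^{itk}$ and one integrates by parts in $k$ directly, with no stationary point and no $k_0$-localization. Using $r\jr^{-1}\jk\lesssim1$ on the support, two integrations by parts give $t^{-2}$ times integrals where either $F$, $\partial_kF$, or $\partial_k^2F$ appears against $k^{-2}\jk^{(\cdots)}$ weights; the boundary contribution near $k\sim t^{-1/2}$ is handled by a trivial bound on $k\lesssim t^{-1/2}$ as in \eqref{WDecKG11}. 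Cauchy--Schwarz then yields the $\tfrac1t\|k\jk^{1/2}\partial_kF\|_{L^2}$-type terms (and an $L^\infty$ term absorbed into the stated norms); the $F$-only term is handled by $\|k\jk^{1/2+}F\|_{L^\infty}$ with the dyadic-sum loss. The main obstacle — though a mild one — is pure bookkeeping: matching the exact powers of $\jk$ in \eqref{WDecWa3} through the weights $\rho_2'(k^2)\approx\jk^2$, $a_2\approx\jk^{-1}$, the $\sqrt k$ factors from $\Phi_2$ when $rk\gtrsim1$, and the $1/k$ extra factor for the $g$-term, while ensuring every step uses only symbol bounds already established in Lemmas \ref{lemFou1}, \ref{lemWeyl} and Proposition \ref{propSM2}. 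I would present this by reducing to the two model integrals, noting the wave phase is non-stationary, and invoking the proof of Proposition \ref{propKG2dec} (physical-side) and of Proposition \ref{WDecKG1} (Fourier-side) for the repeated estimates.
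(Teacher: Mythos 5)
Your overall strategy — reduce to the two model integrals via \eqref{prKG2Phi}, exploit that the wave phase $k(t-r)$ is non-stationary, integrate by parts in $k$ — is the right one and matches the paper's. However, three concrete points in your plan would cause the proof to fail or produce the wrong right-hand side.

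First, in the region $rk\lesssim 1$ you propose \emph{two} integrations by parts, mimicking the proof of Proposition \ref{WDecKG1}, producing $t^{-2}$ and terms containing $\partial_k^2 F$. But \eqref{WDecWa3} does not contain any $\partial_k^2 F$ norm. The paper integrates by parts only \emph{once}: after cutting out $k\lesssim 1/t$ (handled trivially), a single integration by parts gives the prefactor $t^{-1}$ and produces only $\partial_k F$ (from the derivative hitting $F$, estimated by Cauchy--Schwarz against $\|k\jk^{1/2}\partial_k F\|_{L^2}$) and a symbol derivative (estimated against $\|k\jk^{0+}F\|_{L^\infty}$, with a $\log\langle t\rangle$ loss from the $\chi^c(ckt)$ cutoff). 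You would need to drop the second integration by parts to land in the stated norms.

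Second, in the region $rk\gtrsim 1$ your case split ``$|t-r|\lesssim 1$ vs.\ $|t-r|\gtrsim 1$'' does not separate cleanly into the two types of bounds in \eqref{WDecWa3}. When $|t-r|\gtrsim 1$ but $r\approx t$ (so $1\lesssim|t-r|\ll t$), integrating by parts and putting the derivative on $F$ yields a factor $|t-r|^{-1}$ paired with $\|k\jk^{1/2}\partial_k F\|_{L^2}$, and $|t-r|^{-1}$ can be as large as $O(1)$, which does not produce the required $t^{-1}$ prefactor. Your remark about ``$\tfrac1{\sqrt t}\tfrac1{\sqrt{|t-r|}}\|\cdots\|$ otherwise'' cannot save this, because the statement pairs the $\partial_kF$ norm only with $t^{-1}$, not $t^{-1/2}$. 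The paper's split is by $r\ge t/10$ vs.\ $r<t/10$: in the first regime one does \emph{not} integrate by parts at all (direct integration, using $r\approx t$ to produce $t^{-1/2}\|k\jk^{1/2+}F\|_{L^\infty}$); in the second one has $|t-r|>t/2$, so a single integration by parts gives $t^{-1}$ against the $\partial_kF$ norm. This dichotomy is what makes the norms in \eqref{WDecWa3} come out exactly.

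Third, your invocation of a sub-case split ``$s\gtrsim |t-r|/2$ versus $s\lesssim |t-r|/2\ldots$ verbatim as in Proposition \ref{propKG2dec}'' does not apply here: Proposition \ref{propKG2dec} is an $L^1$--$L^\infty$ kernel estimate with a convolution variable $s$, whereas Proposition \ref{WDecWa1} works directly on the distorted Fourier side and there is no $s$ variable. This part of your plan is a category error, though it is easily removed since the correct argument is simpler (no $s$, no stationary phase, just the dichotomy in $r$ described above).

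A smaller point: the ``$0+$'' in $\jk^{1/2+}$ is not produced by a dyadic decomposition in $k$; it is exactly what is needed to make $\int_0^\infty k^{-1/2}\jk^{-(1/2+)}\,dk$ converge after the direct estimate, both near $k=0$ and $k=\infty$, so no dyadic sum is required.
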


%\smallskip
%It is worth noting that since the definition of $F$ in \eqref{WDecWa2} 
%is analogous to that in the previous subsection (see \eqref{WDecKG2}), 
%an inspection of the proof of Lemma \ref{WDecKG4} (along with a comparison of \eqref{prKG1Phi} and \eqref{prKG2Phi}) 
%shows that all estimates in Lemma \ref{WDecKG4} are also valid in this case.  

\smallskip
 
%\smallskip

%As in the previous subsection, one can relate the Fourier norms in the statement above to more standard norms via the following lemma. 
%
%\begin{lem}\label{WDecWa4}
%With the definition \eqref{WDecWa2} we have the estimates
%\begin{align}\label{WDecWa5}
%{\| F \|}_{L^\infty_k} \lesssim {\| f \|}_{L^1(rdr)}, \quad \qquad {\| G \|}_{L^\infty_k} \lesssim {\| g \|}_{L^1(rdr)}
%\end{align}
%and
%\begin{align}\label{WDecWa6} 
%\begin{split}
%{\| \partial_k F \|}_{L^2(dk)} & \lesssim {\| \jr^{3/2+} f \|}_{L^2(rdr)}, \quad \qquad {\| \partial_k G \|}_{L^2(dk)}  \lesssim {\| \jr^{3/2+} g \|}_{L^2(rdr)}
%\\
%{\| \partial_k^2 F \|}_{L^2(dk)} & \lesssim {\| \jr^{5/2+} f \|}_{L^2(rdr)}, \quad \qquad {\| \partial_k^2 G \|}_{L^2(dk)}  \lesssim {\| \jr^{5/2+} g \|}_{L^2(rdr)}.
%\end{split}
%\end{align} 
%\end{lem}
%Note that exactly the same proof as that of Lemma \ref{WDecKG4} works in this case without any modification, so we omit it.

%\medskip

\begin{proof}%[Proof of Proposition \ref{WDecWa1}]
Recall that %$r^{1/2}\mathcal{L}_2(\cdot)=\mathcal{H}_2(r^{1/2}\cdot)$, and
\begin{align}\label{WDecWa12}
v(t,r)
%&=\dfrac12e^{it\sqrt{\mathcal{L}_2}}f+\dfrac{1}{2i\sqrt{\mathcal{L}_2}}e^{it\sqrt{\mathcal{L}_2}}g+\dfrac12e^{-it\sqrt{\mathcal{L}_2}}f-\dfrac{1}{2i\sqrt{\mathcal{L}_2}}e^{-it\sqrt{\mathcal{L}_2}}g
%\\ 
& =\dfrac{1}{\sqrt{r}}\int_0^\infty \! \Phi_2(r,k^2)e^{itk}\widetilde{F}_+(k)2k\rho_2'(k^2)dk 
  + \dfrac{1}{\sqrt{r}}\int_0^\infty \! \Phi_2(r,k^2)e^{-itk}\widetilde{F}_-(k)2k\rho_2'(k^2)dk,
\end{align}
with $\widetilde{F}_+$ and $\widetilde{F}_-$ as in \eqref{ssecKG20}. 
Since proving the claimed estimates for the terms with $\widetilde{F}_+$ and $\widetilde{F}_-$ 
is identical, we will only give the details for $\widetilde{F}_+$; 
furthermore, in view of the factor $k$ in the estimates for $\sqrt{r}f$ (cf. \eqref{WDecWa3}), 
the proof for the term with $k^{-1}\wtF_2 (\sqrt{r}g)$ 
is essentially the same as that of $\wtF_2 (\sqrt{r}f)$, 
and thus we restrict ourselves to working with $\wtF_2(\sqrt{r}f)$. 

With the above reductions, introducing a cutoff function $\chi$ as in the previous proofs, 
and with the notation \eqref{WDecWa2}, we see that it suffices to estimate the following two terms:
%$v(t,r)$ as $v= v^{(1)} + v^{(2)}+v^{(3)}+v^{(4)}$, with
\begin{align}\label{WDecWa7}
v^{(1)}(t,r) & := \frac{1}{\sqrt{r}} \int_0^\infty  \Phi_2(r,k^2) 
  \chi^c(rk) e^{itk}\, F(k) \, 2k a_2(k^2)\rho_2^\prime(k^2) \, dk,
\\
\label{WDecWa8}
v^{(2)}(t,r) & :=  \frac{1}{\sqrt{r}} \int_0^\infty \Phi_2(r,k^2) 
  \chi(rk) e^{itk} \,  F(k) \, 2k a_2(k^2)\rho_2^\prime(k^2) \, dk,
\end{align}
%and analogously for $v^{(3)}$ and $v^{(4)}$ whose definitions are the same as those of $v^{(1)}$ and $v^{(2)}$ 
%but with $k^{-1}G(k)$ instead of $F(k)$.

\smallskip
\noindent
{\it Estimate of $v^{(1)}$}.
Using the expansion \eqref{prKG2Phi} for $\Phi_2$ when $rk \gtrsim 1$ reduces matters to estimating by the right-hand side of \eqref{WDecWa3}, the following expression
\begin{align*}%\label{S2v1'}
w(t,r) & := \frac{1}{\sqrt{r}} \int_0^\infty \frac{1}{\sqrt{k}} e^{i S(t,r,k)} (1+\phi(r,k))
  \chi^c(rk) \, F(k) \, 2k \, dk,
  \ \quad S(t,r,k) := (t - r)k,
\end{align*}
with $\phi(r,k)$ as in \eqref{prKG2phiest}, and having used $|a_2(k^2)|^2\rho_2^\prime(k^2) = 1/(4\pi)$ 
according to \eqref{propSMrho}. 
The contribution of the term associated with $e^{irk}$ is easier to estimate so we disregard it. %since $t,r,k>0$,  More specifically, the resulting phase coming from considering the term $e^{irk}$ instead of $e^{-irk}$ in \eqref{prKG1Phi} is $S_*(t,r,k):=(t+r)k$. %
%
%\medskip
%
%
%Now note that, as usual, we may restrict the support of the integral under study to $k \gg t^{-1/2}$, otherwise the claimed bound follows by direct integration, namely, %comm2%{I dont see the need for this; I should erase it}\[
%\left\vert \int_0^{ct^{-1/2}}\dfrac{1}{\sqrt{rk}}e^{iS(t,r,k)}\big(1+\phi(r,k)\big)\chi^c(rk)F(k)2kdk\right\vert \lesssim \Vert F\Vert_{L^\infty_k}\int_0^{t^{-1/2}} k\,dk\lesssim \dfrac{1}{t}\Vert F\Vert_{L^\infty_k},
%\]
%having also used that $rk\gtrsim1$. We can thus insert a cutoff $\chi^c(ck\sqrt{t})$ for some sufficiently small $c>0$ in the integral in the definition of $w$. 
%On the other hand, in contrast with the Klein-Gordon case, here $\partial_kS$ is constant in $k$. More precisely, \[
%\partial_kS(t,r,k)=t-r.
%\]

For the sake of simplicity let us denote 
\begin{align*}
L(t,r)&:=\int_0^\infty \dfrac{1}{\sqrt{rk}}e^{itS(t,r,k)}m(r,k) F(k)\, k \, dk,
\qquad m(r,k) := \chi^c(rk)\big(1+\phi(r,k)\big).
\end{align*}

\smallskip
\noindent
{\it Case $r\geq t/10$}. This case follows from direct integration: 
%In fact, if $r\gtrsim t$, then 
\begin{align*}
\vert L\vert \lesssim \dfrac{1}{\sqrt{t}}\Vert k\jk^{1/2+}F\Vert_{L^\infty_k}\int_0^\infty 
  \dfrac{1}{\sqrt{k}}\jk^{-(1/2+)}\vert m(r,k)\vert\,dk  
  \lesssim \dfrac{1}{\sqrt{t}}\Vert k\jk^{1/2+}F\Vert_{L^\infty_k}.
\end{align*}
%Hence, from now on we can assume that $r\ll t$.

\smallskip
\noindent
{\it Case $r<t/10$}.
%As in the non-stationary case for Klein-Gordon, here we 
Integration by parts in $k$ reduces the problem to bounding the following integrals \begin{align*}
L_1 &:= \dfrac{1}{t-r}\int_0^\infty \dfrac{1}{\sqrt{rk}}e^{iS(t,r,k)}m(r,k)\big(\partial_kF(k)\big) \, k \, dk,
\\ 
L_2 &:= \dfrac{1}{t-r}\int_0^\infty e^{iS(t,r,k)}\partial_k\bigg[\dfrac{1}{\sqrt{rk}}m(r,k)k\bigg]F(k)\,dk.
\end{align*}
Regarding $L_1$, by Cauchy-Schwarz inequality, we find that \begin{align*}
\vert L_1\vert \lesssim \dfrac{1}{\vert t-r\vert}\int_0^\infty \dfrac{1}{\sqrt{rk}}
  \big\vert m(r,k) \,k\, \partial_kF(k)\big\vert \,dk
  \lesssim \dfrac{1}{\vert t-r\vert}\Vert k\jk^{1/2}\partial_kF\Vert_{L^2_k}.
\end{align*}
%having used the trivial fact \begin{align}\label{extra_r_1}
%\int_{1/r}^\infty \dfrac{1}{rk\sqrt{1+k^2}}dk\lesssim \dfrac{\log(2+r^2)}{\langle r\rangle }=O(1).
%\end{align}
Similarly, for $L_2$ we have
\begin{align*}
\vert L_2\vert \lesssim \dfrac{1}{\vert t-r\vert }\int_0^\infty \Big\vert\dfrac{m(r,k)}{2k\sqrt{rk}}+
  \dfrac{\partial_km(r,k)}{\sqrt{rk}}\Big\vert \,\vert k\, F(k)\vert\,dk
  \lesssim \dfrac{1}{\vert t-r\vert} \Vert kF\Vert_{L^\infty_k}, 
\end{align*}
having used that $\Vert (k\sqrt{rk})^{-1}\chi^c(rk)\Vert_{L^1_k}\lesssim 1$ 
as well as $\vert \partial_k\phi(r,k)\vert \lesssim k^{-1}$. 
%, which in turn implies that \begin{align*}
%\Vert (rk)^{-1/2}\partial_km(r,k)\Vert_{L^1_k}&\lesssim \Vert (rk)^{-1/2}r\chi'(rk)\Vert_{L^1_k}+\Vert (rk)^{-1/2}\chi(rk)\partial_k\phi(r,k)\Vert_{L^1_k}\lesssim 1.
%\end{align*} 
Recalling that in this case $\vert t-r\vert >\tfrac{1}{2}t$ 
we see that these bounds are consistent with \eqref{WDecWa3}.
Thus, \[
\vert v^{(1)}(t,r)\vert \lesssim \dfrac{1}{\sqrt{t}}\Vert k\jk^{1/2+}F\Vert_{L^\infty_k}+\dfrac{1}{t}\Vert k\jk^{1/2}\partial_kF\Vert_{L^2_k}
\]
concluding the analysis for $v^{(1)}(t)$.

\medskip
\noindent
{\it Estimate of $v^{(2)}$}.
In this case $rk \lesssim 1$ and the asymptotics in \eqref{prKG2Phi} reduces matters to estimating, 
by to the right-hand side of \eqref{WDecWa3}, the expression
\begin{align*}%\label{S2v1'}
h(t,r) & :=
\int_0^\infty 2k e^{itk} \frac{r}{\jr} (1+\phi(r,k)) \jk \chi(rk) \, F(k) \, dk,
\end{align*}
where we have replaced $a_2(k^2)\rho_2^\prime(k^2)$ with $\jk$, for notational convenience
consistently with Proposition \ref{propSM2},
and where $\phi$ satisfies the usual symbol-type estimates.

Note that if $r \geq  t/10$, then % taking advantage of the length of the integration domain, it follows that 
\begin{align*}
\vert h(t,r)\vert \lesssim \int_{k\lesssim1/r} k\, \vert \chi(rk)\,F(k)\vert \,dk \lesssim \dfrac{1}{r }\Vert k\, F\Vert_{L^\infty_k}\lesssim \dfrac{1}{t}\Vert k \, F\Vert_{L^\infty_k}.
\end{align*}
%having used the bound $r\langle r\rangle^{-1}\jk\lesssim1$ valid in the integration domain $rk\lesssim 1$. 
Therefore, in the sequel we can assume that $r<t/10$. 
In this case we seek to integrate by parts. 
Prior to that note that, just as above, in the region $k\lesssim 1/t$, %we can integrate directly, 
\begin{align*}
\int_{k\lesssim 1/t} \dfrac{r}{\langle r\rangle}\jk \,\vert \chi(rk)\,k F(k)\vert\,dk
  \lesssim \dfrac{1}{t}\Vert k F\Vert_{L^\infty}.
\end{align*} 
Hence, we can add a cutoff $\chi^c(ckt)$ with $c$ small. 
%Next, observe that, calling $S$ the phase, that is $S(t,r,k)=tk$, then $\partial_kS=t$. 
%Thus, an application of integration 
Integrating by parts leads to %we obtain that the integral under study is bounded by 
\begin{align*}%\label{WDecWa11}
|h| \lesssim t^{-1} | h_1 | + t^{-1} | h_2 |, %%%+ t^{-1} |h_3| + t^{-t} | h_4 |, \nonumber
\qquad 
h_1(t,r) & :=
\int_0^\infty  \frac{r}{\jr}\partial_k\Big[2k\jk (1+\phi(r,k)) \chi^c(ckt) \chi(rk)\Big] 
  \, F(k) \, dk,
%%\\
%%h_2(t,r) & := \int_0^\infty \frac{r}{\jr} 2k \jk   \partial_k\big(1+\phi(r,k)\big)  \chi(rk) \, F(k) \, dk,\nonumber
%%\\ 
%%h_3(t,r) & := \int_0^\infty \frac{r}{\jr}  2k \jk  (1+\phi(r,k)) \partial_k\big[ \chi(rk)\big] \, F(k) \, dk,
\\
h_2(t,r) & := \int_0^\infty \frac{r}{\jr} 2k \jk (1+\phi(r,k)) \chi(rk) \, \partial_kF(k) \, dk.
\end{align*}

We can estimate the first term by
\begin{align*}
\vert h_1\vert & \lesssim \int_0^\infty \dfrac{r}{\langle r\rangle} \jk \vert \chi^c(ckt) F(k)\vert dk \lesssim \log\langle t\rangle\Vert k\jk^{0+}F\Vert_{L^\infty_k},
\end{align*}
having used the bound $r\langle r\rangle^{-1}\jk\lesssim1$ %valid in the integration domain, 
and $\vert\partial_k\phi(r,k)\vert\lesssim k^{-1}$. %Analogously, recalling that $\vert\partial_k\phi(r,k)\vert\lesssim k^{-1}$, we get that
%\begin{align*}
%\vert h_2\vert & \lesssim \int_0^\infty k \big\vert \partial_k\phi(r,k) \chi^c(ckt)F(k)\big\vert \lesssim \log\langle t\rangle \Vert k\jk^{0+}F\Vert_{L^\infty_k},
%\end{align*}
%having used again that $r\langle r\rangle^{-1}\jk\lesssim1$ again. On the other hand, regarding $h_3$, using that in this case $r\sim1/k$, we obtain that
%\begin{align*}
%\vert h_3\vert & \lesssim \int_0^\infty rk \big\vert \big(1+\phi(r,k)\big)\chi'(rk)F(k)\big\vert dk \lesssim \Vert k  F\Vert_{L^\infty_k}.
%\end{align*}
Finally, using Cauchy-Schwarz we see that
\begin{align*}
\vert h_2 \vert & \lesssim \dfrac{r}{\langle r\rangle }\int_0^\infty k\jk \vert \chi(rk)\partial_kF(k)\vert dk\lesssim \Vert k \jk^{1/2}\partial_kF\Vert_{L^2_k}.
\end{align*}
%where in the last bound we have used that \begin{align}\label{extra_r_2}
%\dfrac{r}{\langle r\rangle }\Big(\int_0^{1/r}\jk \,dk \Big)^{1/2} \lesssim \langle r\rangle^{-1/2}\lesssim 1.
%\end{align}
Gathering all the above estimates, we obtain that 
\[
\vert v^{(2)}(t,r)\vert\lesssim \dfrac{\log\langle t\rangle}{t}\Vert k\jk^{0+}F\Vert_{L^\infty_k}+\dfrac{1}{t}\Vert k\jk^{1/2}\partial_kF\Vert_{L^2_k},
\]
which concludes %the proof for $v^{(2)}$, and hence 
the proof of the lemma.
\end{proof}

\medskip
\section{Spectral Analysis}\label{AppSpec}

\subsection{Absence of unstable spectrum}\label{SecSpec}
In this subsection we give a proof of the absence of negative discrete spectrum 
for the operator $-\mathcal{L}$, see the definition in \eqref{alpha_beta_system}, that is for 
$\mathcal{L}_1$ and $\mathcal{L}_2$. This fact is already known from the earlier works of \cite{GusVort0,WX}.
However, in our setting we can provide a particularly simple proof of independent interest, 
%with more possible applications as it is 
based on a super-symmetric factorization (or Darboux transformation); this type of technique %argument which
has proven to be quite helpful in other contexts,
%, mostly related to problems on the stability of solitons and kinks in $1$d fields theories; 
see, for example, \cite{KMMreview,KowMarMun,LSch,KMS,CGNT07,CM22}.

%An important consequence of the previous supersymmetric factorization 
%is the nonexistence of eigenvalues below the continuous spectrum for $\mathcal{L}_2$.
%This fact was already known since \cite{WX}, however, we believe that the proof below is interesting on its own. 

\begin{prop}\label{Propspec0}
The operators $\mathcal{L}_1$ and $\mathcal{L}_2$ have no non-positive eigenvalues.
Moreover, the operator $\mathcal{L}_2$  has absolutely continuous spectrum $[0,\infty)$
and zero energy is a resonance.
The absolutely continuous spectrum of the operator $\mathcal{L}_1$ is $[2,\infty)$, and the bottom is a resonance. 
Finally, the discrete spectrum of $\mathcal{L}_1$ is contained in $(c_0,2]$ for some $c_0 \in (0,2)$.
\end{prop}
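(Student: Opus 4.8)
The plan is to prove the four assertions in turn, exploiting the supersymmetric (Darboux) factorizations that arise from the zero-energy solutions constructed earlier. First I would record the factorizations. Since $\mathcal{L}_2 U = 0$ with $U>0$ on $(0,\infty)$, the operator $\mathcal{L}_2$ (equivalently $\mathcal{H}_2 = r^{1/2}\mathcal{L}_2 r^{-1/2}$, which has the positive zero-energy solution $\Phi_2^{(0)} = r^{1/2}U$) factorizes as $\mathcal{H}_2 = A_2^\ast A_2$ with $A_2 := -\partial_r + (\log \Phi_2^{(0)})'$ and $A_2^\ast = \partial_r + (\log\Phi_2^{(0)})'$; this is the standard ground-state transform and is legitimate because $\Phi_2^{(0)}$ is nodeless. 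From $\mathcal{H}_2 = A_2^\ast A_2 \geq 0$ one immediately gets that $\mathcal{L}_2$ has no negative spectrum, and that $0$ is not an eigenvalue: an $L^2((0,\infty))$ solution of $\mathcal{H}_2 f = 0$ would have to be a multiple of $\Phi_2^{(0)}$, which is $\approx r^{1/2}$ at infinity and hence not in $L^2$; so $0$ is a resonance but not an eigenvalue. The identification of the a.c.\ spectrum as $[0,\infty)$ follows from the fact that the potential $-(1-U^2) = O(r^{-2})$ decays at infinity, so standard perturbation/limiting absorption theory (or the explicit construction of the spectral measure in Section~\ref{secL2}, where $\rho_2' \approx \langle k^2\rangle$ is finite and nonvanishing on $(0,\infty)$) applies.

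For $\mathcal{L}_1$ the key input is the existence of a positive zero-energy solution. Here one should use the resonance $(0,U)$ together with the translation resonances in \eqref{introres}: the real part of the translation mode gives, after passing to the radial/conjugated picture, a solution of $\mathcal{L}_1 \psi = 0$, and more simply $U_r e^{i\theta}$ is a zero mode of the full linearized operator $\mathfrak{L}$ whose first component feeds into $\mathcal{L}_1$; the upshot is that $\mathcal{H}_1 - 2$ annihilates a solution behaving like $r^{3/2}$ at $0$ and like $\sqrt r\,(\text{oscillatory})$ at infinity — but this solution oscillates and hence changes sign, so it cannot be used directly for a ground-state transform. Instead I would argue as in Weinstein–Xin that since $U' > 0$ and $(r^{1/2}U)' $ is positive for $r$ small, one can still extract a positive solution of $\mathcal{H}_1 \phi = 2\phi$ on $(0,\infty)$ — or, failing a clean positivity statement, observe directly that $\mathcal{L}_1 = \mathcal{L}_2 + 2U^2 \geq 2U^2 > 0$ is not the right bound but $\mathcal{L}_1 \geq \mathcal{L}_2$ in the quadratic-form sense, which gives $\mathcal{L}_1 \geq 0$ with no zero eigenvalue for free, and then one must work harder to get the bottom at $2$. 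The cleanest route: write $\mathcal{H}_1 = \mathcal{H}_2 + 2U^2$ and use $\mathcal{H}_2 \geq 0$ to conclude $\mathcal{H}_1 \geq 0$; for the sharp lower bound $2$, use that $2U^2 = 2 - 2(1-U^2) = 2 + O(r^{-2})$ so that $\mathcal{H}_1 - 2 = \mathcal{H}_2 - 2(1-U^2)$, and observe $\mathcal{H}_2 - 2(1-U^2)$ annihilates $\Phi_1^{(0)}$; since $\Phi_1^{(0)}$ oscillates with constant-amplitude envelope $\sqrt r$ at infinity, a Sturm-type / Weyl-sequence argument shows $\inf\operatorname{spec}(\mathcal{H}_1 - 2) \le 0$, and combined with the a.c.\ spectrum being exactly $[2,\infty)$ (from the spectral measure construction in Section~\ref{secL1}, where $\rho_1'$ is finite and positive for $k > 0$, and the threshold $k=0$, i.e.\ energy $2$, is a resonance in $L^\infty\setminus L^p$ as in \eqref{introres}) one concludes the bottom of the essential spectrum is a resonance, not an eigenvalue.

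It remains to confine the discrete spectrum of $\mathcal{L}_1$ to an interval $(c_0,2]$. The upper bound $2$ is just the statement that $[2,\infty)$ is the essential spectrum, so eigenvalues can only live below $2$; they are real since $\mathcal{L}_1$ is self-adjoint, and nonnegative since $\mathcal{L}_1 \geq \mathcal{H}_2 \geq 0$ — in fact strictly positive, since $\mathcal{L}_1 f = 0$ together with $\mathcal{L}_1 \geq \mathcal{L}_2 \geq 0$ would force $\mathcal{L}_2 f = 0$ and $U^2 f = 0$, hence $f\equiv 0$. This already gives discrete spectrum in $(0,2)$. The existence of a uniform gap $c_0 > 0$ below which there are no eigenvalues is the quantitative statement proved in Theorem~\ref{theoLT} via the Ekholm–Frank Lieb–Thirring inequality, which here gives $c_0 \approx 1.332$; for the present proposition one only needs $c_0 > 0$, which follows a fortiori, or alternatively from a cruder Bargmann-type bound on the number/location of bound states using the explicit decay $3(1-U^2) - \text{(inverse square part)}$ of the attractive piece of the potential.

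\textbf{Main obstacle.} The subtle point is the sharp lower bound for $\mathcal{L}_1$ — showing that the bottom of the essential spectrum is exactly $2$ and, more delicately, that $2$ is a threshold \emph{resonance} rather than an eigenvalue, and that no eigenvalue sits \emph{at} $2$ or accumulates at it from below. The oscillatory behavior $\sqrt r\cos(\sqrt2\log r)$ of the zero-energy solution of $\mathcal{H}_1 - 2$ means the usual nodeless-ground-state argument fails, so one has to substitute a Weyl-sequence / Sturm-oscillation argument or lean on the spectral-measure asymptotics $\rho_1'(k^2) \approx \langle k^2\rangle$ from Proposition~\ref{SMOp1_1} (which encode precisely that $k=0$ is a resonance of the borderline $L^\infty$ type); reconciling the soft functional-analytic argument with the explicit construction is where the real care is needed.
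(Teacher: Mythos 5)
Your overall strategy matches the paper's: a supersymmetric (Darboux) factorization built on the nodeless zero-energy solution of $\mathcal{L}_2$, the variational inequality from $\mathcal{L}_1=\mathcal{L}_2+2U^2$, relative-compactness/perturbation theory for the a.c.\ spectrum, exhibiting the bounded non-$L^2$ solution $r^{-1/2}\Phi_1^{(0)}$ for the resonance, and deferral to Theorem~\ref{theoLT} for the quantitative gap. On these counts the proposal is correct and essentially reproduces the paper's proof.

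Two remarks, however. First, the paper's factorization argument is slightly more elaborate than your ``$\mathcal{H}_2=A_2^\ast A_2\geq 0$'' line: it explicitly forms the conjugated operator $\mathcal{D}\mathcal{D}^\ast=-\Delta_r+V$, checks that $V(r)>0$ pointwise on $(0,\infty)$, and then argues by contradiction that any eigenfunction with non-positive eigenvalue would, after applying $\mathcal{D}$, produce a nontrivial $L^2$ eigenfunction of $\mathcal{D}\mathcal{D}^\ast$ with non-positive eigenvalue — impossible since the potential is strictly positive. This is the cleaner way to kill the eigenvalue $0$ together with the negative ones in one stroke, and it sidesteps the (minor but real) domain questions hidden in writing a self-adjoint operator as a product. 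Your alternative — observing that an $L^2$ zero-mode would have to be a multiple of $\Phi_2^{(0)}\sim r^{1/2}$ — is fine too, but it implicitly uses that $\mathcal{H}_2$ is limit point at both ends, which is worth flagging.

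Second, the middle of your $\mathcal{L}_1$ discussion loses the thread. You write as though a ``sharp lower bound $2$'' for $\mathcal{L}_1$ is needed and try to engineer it from $2U^2=2+O(r^{-2})$; but $\mathcal{L}_1\geq 2$ is \emph{false} — as the paper notes, $\mathcal{L}_1$ has infinitely many eigenvalues in $(0,2)$ accumulating at $2$ because the effective potential decays like $r^{-2}$. The proposition only asserts (i) no non-positive eigenvalues, which is immediate from $\mathcal{L}_1=\mathcal{L}_2+2U^2$ and $\mathcal{L}_2\geq 0$, $U>0$; (ii) $\sigma_{\mathrm{ac}}(\mathcal{L}_1)=[2,\infty)$, which follows from $3U^2-3\in L^2(rdr)$ and relative compactness, not from any Sturm or Weyl-sequence argument; (iii) $2$ is a resonance, by exhibiting $r^{-1/2}\Phi_1^{(0)}$; and (iv) a spectral gap $(0,c_0]$ free of eigenvalues. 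Your ``main obstacle'' paragraph identifies (ii)–(iii) as the delicate part, but these are routine here; the genuine content is (iv), for which the paper hints at the positive-supersolution (Allegretto–Piepenbrink) argument using $\mathcal{L}_1U'>0$ as a soft route and proves the sharp bound $c_0\approx 1.332$ via the Ekholm–Frank Lieb–Thirring inequality. Your proposed ``Bargmann-type bound'' is a workable alternative for the soft statement but is not what the paper does.
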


%To prove the above proposition we first consider the operator $\mathcal{L}_2$;
%it is here that we use the super-symmetric factorization.

%\begin{lem}\label{LemspecL2}
%The spectrum of $\mathcal{L}_2$ is purely absolutely continuous
%and equals $[0,\infty).$ %$\sigma\big(\mathcal{L}_2)=[0,\infty).$
%$0$-energy is a resonance and, in particular, $\mathcal{L}_2$ has no eigenvalues.
%\end{lem}

%An additional interesting result is that the transformed potential $V$ in the above statement
%is strictly decreasing, in view of $U'' < 0$.

\begin{proof}%[Proof of Lemma \ref{LemspecL2}]
That the absolutely continuous spectrum $\sigma_{\text{ac}}(\mathcal{L}_2)=[0,\infty)$ 
follows from writing $\mathcal{L}_2$ as \[
\mathcal{L}_2=-\partial_r^2-\dfrac{1}{r}\partial_r+\dfrac{1}{r^2}+V_0(r)=:\mathcal{L}_2^0+V_0(r),
\]
and noticing that $V_0(r)\in L^2(rdr)$, thus it is $\mathcal{L}_2^0$-compact, 
and that the spectrum of $\mathcal{L}_2^0$ can be explicitly calculated by solving the ODE, 
%explicitly showing that 
$\sigma_{\text{ac}}(\mathcal{L}_2^0)=[0,\infty)$.

%Let us consider the operator $\mathcal{L}_2=-\partial_r^2-\tfrac{1}{r}\partial_r+\tfrac{1}{r^2}-(1-U^2(r))$. 
As we have already mentioned, $U$ is a zero-energy resonance, $\mathcal{L}_2U(r)=0$. 
Then, we define the operator 
\begin{align*}
\mathcal{D}:=\partial_r- U'(r) [U(r)]^{-1}
\end{align*}
which satisfies $\mathcal{D}U=0$.
Its adjoint is $\mathcal{D}^*=-\partial_r-r^{-1} - U'(r) [U(r)]^{-1}$,
and an explicit calculation, using \eqref{eq_vortex}, shows that
\[
\mathcal{D}^*\mathcal{D}=-\partial_r^2-\dfrac{1}{r}\partial_r+\dfrac{1}{U(r)}\big(U''(r)+\tfrac{1}{r}U'(r)\big)
  =-\partial_r^2-\dfrac{1}{r}\partial_r+\dfrac{1}{r^2}-(1-U^2) = \mathcal{L}_2.
\]
Next, we consider the `conjugated operator'
\begin{align}\label{L2star}
\begin{split}
\mathcal{D}\mathcal{D}^*&=-\partial_r^2-\dfrac{1}{r}\partial_r
  +\dfrac{1}{r^2}+\dfrac{U'(r)}{rU(r)}+\dfrac{2U'(r)^2}{U(r)^2}-\dfrac{U''(r)}{U(r)}
\\ 
& = -\partial_r^2-\dfrac{1}{r}\partial_r+\dfrac{2U'(r)}{rU(r)}+\dfrac{2U'(r)^2}{U(r)^2}+(1-U(r)^2)
  =: -\Delta_r+V(r)=:\mathcal{L}_2^\star.
\end{split}
\end{align}
We then notice that the potential term $V(r)$ satisfies, for all $r>0$,
\begin{align}\label{pot_conj}
V(r) = \dfrac{2U'(r)}{rU(r)}+\dfrac{2U'(r)^2}{U(r)^2}+(1-U(r)^2)>0.
\end{align}
Moreover $V \approx O(r^{-2})$ for small $r$.

%Even more, we claim that $V(r)$ is strictly decreasing. In fact, by direct computations we have \begin{align*}
%V'(r)&=-\dfrac{2U'(r)}{r^2U(r)}-\dfrac{2U'(r)^2}{rU(r)^2}+\dfrac{2U''(r)}{rU(r)}-\dfrac{4U'(r)^3}{U(r)^3}+\dfrac{4U'(r)U''(r)}{U(r)^2}-2U(r)U'(r).
%\end{align*}
%Therefore, provided $U''(r)<0$ for all $r>0$, the previous claim holds.

Next, %Therefore, the problem reduces to calculate the point spectrum of $\mathcal{L}_2$. 
assume that there exists an eigenfunction $f\in D(\mathcal{L}_2) 
:= \{f\in L^2(rdr): \mathcal{L}_2f \in L^2(rdr) \}$, 
\[
\mathcal{L}_2f=-\lambda^2 f, \qquad \hbox{ and hence } \qquad \mathcal{D}\mathcal{L}_2f=-\lambda^2 \mathcal{D}f.
\]
It then follows from $f\in D(\mathcal{L}_2)$ that $\mathcal{D}f\in L^2(rdr)$. 
Moreover, we have $f(r)\not\equiv U(r)$ and hence, using that $\mathcal{L}_2=\mathcal{D}^*\mathcal{D}$,
we infer that $g(r):=\mathcal{D}f(r)\not\equiv 0$ solves 
%is an eigenfunction of $\mathcal{L}_2^\star$,
%in other words, 
\begin{align}\label{eigen_Lstar}
\mathcal{L}_2^\star g=\mathcal{D}\mathcal{D}^*g=-\lambda^2 g.
\end{align}
Note that $f\in D(\mathcal{L}_2)$ requires in particular that $\tfrac{1}{r}f\in L^2(rdr)$,
and hence $g\in L^2(rdr)$ which, along with \eqref{eigen_Lstar}, imply $g\in D(\mathcal{L}_2 ^\star)$. 
The equation \eqref{eigen_Lstar} is then a contradiction to the identity \eqref{L2star} 
for $\mathcal{L}_2^\star$ with $V>0$. %\eqref{pot_conj}.
%
\iffalse
\comment{
Recall that (see \eqref{pot_conj}) \[
\mathcal{L}_2^\star=-\partial_r^2-\tfrac{1}{r}\partial_r +V(r)
\]
with $V(r)>0$ for all $r\in(0,\infty)$. It is also worth to notice that $g\in D(\mathcal{L}_2^\star)$ 
implies that $g(0)=0$.

Thus, multiplying \eqref{eigen_Lstar} by $rg(r)$ and then integrating the resulting equation, 
after an application of integration by parts we obtain \[
\int rg'(r)^2+\int g'(r)g(r)-\int g'(r)g(r)+\int rV(r)g^2(r)= -\lambda^2\int rg^2(r)
\]
and hence \[
\int rg'(r)^2+\int rV(r)g^2(r)= -\lambda^2\int rg^2(r)
\]
therefore $g(r)\equiv0$ due to the positivity of the potential $V(r)>0$. 
We point out that above we have used the fact that \[
\int_0^\infty g'(r)g(r)dr=-\dfrac12g(0)^2=0,
\]
having used that $g\in D(\mathcal{L}_2^\star)$ for the last identity. 
Therefore, there are no eigenvalues below zero, and hence $\sigma(\mathcal{L}_2)=[0,+\infty)$. 
%The proof is complete.
}
\fi
%
%\end{proof}

Having shown that 
%\begin{proof}[Proof of Proposition \ref{Propspec0}]
that $\mathcal{L}_2$ does not have any negative eigenvalues, the same property follows easily for $\mathcal{L}_1$
from writing $\mathcal{L}_1 = \mathcal{L}_2 + 2U^2$, and applying the variational principle.
The same identity, and the fact that $U^2-1 \in L^2(rdr)$, give that $\sigma_{\text{ac}}(\mathcal{L}_1)= [2,\infty)$.

To show that $\sigma(\mathcal{L}_1) \subset (c_0,\infty)$ for some constant $c_0>0$,
one can again use the variational principle after verifying that $\mathcal{L}_1 U' > 0$.
We leave the details to the reader, and we are actually going to prove a stronger result in Theorem \ref{theoLT}. 
Finally, that the bottom of the continuous spectrum is a resonance, follows from our construction
of $\Phi_1^{(0)}$ in Lemma \ref{lem_p1_t1}, which gives that $r^{-1/2}\Phi_1^{(0)}$ is a nontrivial bounded solution 
of $\mathcal{L}_1f = 2f$.
\end{proof}

%%%%%%%%%%%%%%%%%%%%%%%%%%%%%%%%%%%%%%%%%%%%%%%%%%
%%%%%%%%%%%%%%%%%%%%%%%%%%%%%%%%%%%%%%%%%%%%%%%%%%
%%%%%%%%%%%%%%%%%%%%%%%%%%%%%%%%%%%%%%%%%%%%%%%%%%
%%%%%%%%%%%%%%%%%%%%%%%%%%%%%%%%%%%%%%%%%%%%%%%%%%
%%%%%%%%%%%%%%%%%%%%%%%%%%%%%%%%%%%%%%%%%%%%%%%%%%
%%%%%%%%%%%%%%%%%%%%%%%%%%%%%%%%%%%%%%%%%%%%%%%%%%
%%%%%%%%%%%%%%%%%%%%%%%%%%%%%%%%%%%%%%%%%%%%%%%%%%

\smallskip
\subsection{Lieb-Thirring inequalities and the spectrum of $\mathcal{L}_1$}\label{secLT}
We will now provide a lower bound for the first eigenvalue of the operator $\mathcal{L}_1$
as a consequence of a Lieb-Thirring inequality for the second moment of the negative spectrum of 
$\mathcal{L}_1-2$, some estimates on the vortex, and some numerical computations.

To give some context, we recall the Lieb-Thirring inequality from the seminal paper \cite{LiebThi}.
Given a Schr\"odinger operator $H = -\Delta + V(x)$, $V:\mathbb{R}^d \rightarrow \R$,
let $E_k(H)$, $k\in\mathbb{N}$, denote the negative eigenvalues of $H$ on $L^2(\R^d)$ 
repeated according to their multiplicities, and arranged in non-decreasing order,
with the convention that $E_k(H)=0$ if $H$ has less than $k$ negative eigenvalues.
Then, for $V\in L^{1+d/2}(\R^d)$, which in particular guarantees that $H$ is a self-adjoint lower bounded operator 
on $L^2(\R^d)$,
there exist a constant $L_{1,d} < \infty$ (the Lieb-Thirring constant) such that
\begin{align}\label{LT1}
\sum_{k} |E_k(H)| \leq L_{1,d} \int_{\R^d} V_-(x)^{1+d/2} \, dx,
\end{align}
where $V_-=\max(-V,0)$ is the negative part of $V$.
A variant of the above inequality is the following:
for any $\gamma \geq 1/2$ when $d=1$, and any $\gamma>0$ when $d=2$, and $\gamma\geq 0$ for $d\geq 3$,
there exists a constant $L_{\gamma,d}$ such that
\begin{align}\label{LTgamma}
\sum_{k} |E_k(H)|^\gamma \leq L_{\gamma,d} \int_{\R^d} V_-(x)^{\gamma+d/2} \, dx. 
\end{align}
We refer to the book \cite{FrankLTBook} and to the survey \cite{FrankLTsurvey} for an account of the illustrious history
and large literature about these inequalities, as well as for the best known results
concerning the constants $L_{\gamma,d}$.
In our proof we will use a specific result of Ekholm-Frank \cite{EkhFra} combined with the 
result on the optimal constant in \eqref{LTgamma}, for $\gamma \geq 3/2$, from Laptev-Weidl \cite{LapWei}. %for certain values of $\gamma$ and for $d=2$.
%Consider $H = \mathcal{L}_1-2$ in the Lieb-Thirring inequalities, we aim to give an estimate for the 
%spectral gap of $\mathcal{L}_1$, and obtain an estimate for the location of the 
%possible embedded eigenvalues in the continuous spectrum $[0,\infty)$ 
%of the operator $-\vec{\mathcal{L}}$, see \eqref{alpha_beta_system}.
This is our result:

\def\Upzero{0.5832}
%value U'(0)

\def\bottomx{\lambda_0}
%estimated bottom of he spectrum for \mathcal{L}_1
\def\bottom{1.3326%3174407
}

%2 - 0.6676825593

\def\turningx{r_0}
%turning point of potential = lower limit of integration for mathematica computation
\def\turning{0.614489}

\def\upperx{r_1}
%upper limit of integration for mathematica computation
\def\upper{7}

\begin{thm}\label{theoLT}
The discrete spectrum of the operator $\mathcal{L}_1$ %has continuous spectrum $[2,\infty)$ and its
is contained in $(\lambda_0,2)$ with $\lambda_0 = \bottom$.
\end{thm}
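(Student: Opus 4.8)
The strategy is to convert the lower bound on the discrete spectrum of $\mathcal{L}_1$ into an upper bound on the number-weighted sum of negative eigenvalues of $\mathcal{L}_1-2 = -\partial_r^2 - \tfrac1r\partial_r + \tfrac1{r^2} - (1-3U^2) - 2 = -\Delta_r + W(r)$, where $W(r) := 1 - 3U^2(r)$ on the relevant radial Hilbert space $L^2(rdr)$, viewed as a $2$d Schrödinger operator acting on radial functions. The point is that the discrete spectrum of $\mathcal{L}_1$ lies in $(0,2)$ (already established in Proposition \ref{Propspec0}, using $\mathcal{L}_1 = \mathcal{L}_2 + 2U^2$ and the variational principle together with $\mathcal{L}_1 U' > 0$), so each eigenvalue $\lambda_j \in (0,2)$ of $\mathcal{L}_1$ corresponds to a negative eigenvalue $\lambda_j - 2 \in (-2,0)$ of $\mathcal{L}_1 - 2$. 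If $\lambda_0$ were the bottom of the discrete spectrum, then $|\lambda_0 - 2| = 2 - \lambda_0$ would be the largest in absolute value among the $|\lambda_j - 2|$, hence
\[
(2-\lambda_0)^\gamma \leq \sum_j |\lambda_j - 2|^\gamma = \sum_k |E_k(\mathcal{L}_1 - 2)|^\gamma,
\]
and a Lieb--Thirring bound on the right-hand side with the negative part of the effective $2$d potential will force $2 - \lambda_0 < 2 - \bottom$, i.e. $\lambda_0 > \bottom$.

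\textbf{Key steps.} First, I would make precise the reduction of $\mathcal{L}_1 - 2$ on radial functions to a genuine $2$d Schrödinger operator $-\Delta + \widetilde{W}$ on $L^2(\mathbb{R}^2)$ whose radial sector reproduces $\mathcal{L}_1 - 2$; the effective $2$d potential, after accounting for the $\tfrac1{r^2}$ term coming from the $e^{i\theta}$ angular dependence (recall the perturbation ansatz $\phi = e^{i\theta}(U_1 + \varepsilon)$), is such that the negative eigenvalues of $\mathcal{L}_1 - 2$ are exactly the negative eigenvalues of this $2$d operator restricted to the first angular mode. Concretely $\mathcal{L}_1$ is unitarily equivalent (via $r^{1/2}$-conjugation, as in $\mathcal{H}_1$) or directly comparable to $-\Delta_{\mathbb{R}^2} + 2 - 3(1-U^2) - (1-U^2)\cdot(\text{lower order})$ on the relevant subspace; I would pin down the exact effective potential $V_{\mathrm{eff}}(r)$ so that $V_{\mathrm{eff},-}(r) = \big(2n^2/r^2 - \text{const} \cdot(1-U^2)\big)_-$ type expression, using the asymptotics \eqref{Uexp0}--\eqref{Uexpinfty}: near $r=0$, $3U^2 - 1 \to -1$ so $W = 1-3U^2 \to 1 > 0$ and there is no negative part there once the repulsive $r^{-2}$ is present; for $r$ large, $W = 1 - 3U^2 = 1 - 3(1 - r^{-2} + \dots) = -2 + 3r^{-2} + O(r^{-4})$, which is where the negative part of the potential lives. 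Second, I would apply the Ekholm--Frank inequality \cite{EkhFra} for the second moment ($\gamma = 3/2$, the natural choice in $d=1$ after reduction, or the appropriate $\gamma$ in $d=2$) with the optimal Lieb--Thirring constant from Laptev--Weidl \cite{LapWei}; Ekholm--Frank is specifically designed to handle the presence of the critical inverse-square term $\tfrac1{r^2}$, which is exactly the obstruction that makes the naive $2$d Lieb--Thirring inequality inapplicable (the $2$d constant for $\gamma > 0$ would otherwise be needed, and $\gamma = 3/2$ is favorable because of sharp constants). This yields
\[
\sum_k |E_k(\mathcal{L}_1 - 2)|^{3/2} \leq L_{3/2,1}^{\mathrm{EF}} \int_0^\infty V_{\mathrm{eff},-}(r)^{2}\, r\, dr
\]
(or the analogous $d=2$ version), with an explicit constant. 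Third, I would bound the right-hand integral: this is a concrete one-dimensional integral of a function built from the vortex profile $U$, which I would estimate using the rigorous asymptotic expansions for $U$ near $0$ and $\infty$, possibly combined with rigorous (interval-arithmetic-style) numerical bounds on $U$ in the intermediate region $r \in [\turning, \upper]$, to get a numerical upper bound $< (2 - \bottom)^{3/2}$; taking cube roots $\times 2/3$ and rearranging gives $\lambda_0 > \bottom$.

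\textbf{Main obstacle.} The delicate part is getting the numerology to actually close: one needs the LT constant and the integral of $V_{\mathrm{eff},-}^2$ to combine into a number strictly below $(2 - \bottom)^{3/2}$, which is a tight inequality (the claimed $\bottom \approx 1.332$ is within about $0.67$ of the effective mass $2$). This requires (i) choosing the effective $2$d representation that makes the potential's negative part as small as possible — in particular fully exploiting the repulsive $\tfrac1{r^2}$ barrier to cancel negativity at small $r$, which is precisely what Ekholm--Frank's version of the inequality permits, since it treats $-\Delta - \tfrac{(2n)^2-1}{4r^2}$-type operators with a virial/Hardy correction; and (ii) controlling $1 - U^2(r)$ both near infinity (where the $r^{-4}$ correction in \eqref{Uexpinfty} matters for the integral's convergence and value) and in the crossover region, where only numerical information on $U$ is available. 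I would handle the crossover by combining the validated bounds on $U$ on a compact interval with the tails handled analytically via \eqref{Uexp0}, \eqref{Uexpinfty}, and the ODE \eqref{eq_vortex} (the latter giving monotonicity $U' > 0$ and hence two-sided enclosures of $U$ between its endpoint asymptotics). The remaining steps — the spectral reduction, the application of \eqref{LTgamma}/Ekholm--Frank, and the final rearrangement — are essentially routine once the potential is correctly identified and the integral is bounded, and the whole argument gives the additional benefit (noted after Theorem \ref{theoLT} in the introduction) that $\lambda_j \in (1,2]$ for all discrete eigenvalues, ruling out quadratic resonant interactions among them.
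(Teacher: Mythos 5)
Your plan coincides with the paper's: bound $\mathrm{Tr}(\mathcal{H}_1-2)_-^\gamma$ via the Ekholm--Frank Lieb--Thirring inequality for Schr\"odinger operators with a virtual level (their Theorem 1.6 with $\alpha=1$, which is precisely the half-line form that absorbs the Hardy-critical inverse-square term), invoke the sharp Laptev--Weidl constant for $\gamma\geq 3/2$, and estimate the resulting integral $\int_0^\infty \big[ r^{-2}-3(1-U^2) \big]_-^{\gamma+1} r\,dr$ using the known asymptotics of $U$ near $0$ and $\infty$ together with rigorous numerical enclosures of the vortex profile (two-sided bracketing solutions of the ODE) on a compact intermediate interval. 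The one point you must settle before committing is the choice of $\gamma$: the paper fixes $\gamma=2$ after observing numerically that this value is close to optimal, and the resulting bound is tight ($\lambda_0 = 2 - \sqrt{0.445\ldots}\approx 1.333$), so you should check whether $\gamma = 3/2$ (with the corresponding $d=2$ integrand exponent $\gamma + 1 = 5/2$, not the $d=1$ exponent $\gamma + 1/2$ you also float) actually closes to the claimed $\lambda_0$ — or better, optimize over $\gamma \geq 3/2$ — since a suboptimal choice yields a strictly weaker lower bound.
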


\begin{proof}
%Any negative eigenvalue of $\mathcal{L}_1-2$, considered as an operator 
%for $r\in[0,\infty)$, is also an eigenvalue for the operator 
%$-\Delta + |x|^{-2} - 3(1-U^2(|x|))$ acting on functions on $\R^2$.
Recall that $\mathcal{L}_1 = -\Delta_r + r^{-2} - (1-3U^2(r))$
and $\mathcal{H}_1 = - \partial_r^2 + (3/4)r^{-2} - (1-3U^2(r))$.
It suffices to prove the statement for $\mathcal{H}_1$. For this we want to apply a Lieb-Thirring inequality
to $\mathcal{H}_1-2$.
We then write the operator as
\begin{align*}
\mathcal{H}_1-2 = h_0 + V, \qquad h_0 := -\partial_r^2 - (1/4)r^{-2}, \qquad V := r^{-2} - (1-3U^2) 
  \in L^1_{\mathrm{loc}}(\R_+),
\end{align*}
associated with the closure of the quadratic form 
\begin{align*}
\int_0^\infty \Big( |f'|^2 + \frac{3}{4r^2}f^2 - (1-3U^2(r)) f^2 \Big) \, dr,
\end{align*}
on $C_c^\infty((0,\infty))$;
the quadratic form is semi-bounded since, for example, $(\mathcal{H}_1 f,f) \geq 0$ 
in view of Proposition \ref{Propspec0}.
We can then apply Theorem 1.6 in Ekholm-Frank \cite{EkhFra} (see also the statement of that theorem,  
Remark 1.9, and the comments after Lemma 3.1 in the cited paper); in particular,
the quadratic form above is closed and lower semi-bounded on the domain of $h_0$
and the corresponding self-adjoint operator $\mathcal{H}_1-2$  satisfies
\begin{align}\label{LTEkhFra}
\mathrm{Tr} \big( \mathcal{H}_1-2 \big)_-^\gamma \leq 2\pi L_{\gamma,2} 
  \int_{0}^\infty 
  \Big[  \frac{1}{r^2} - 3\big( 1-U^2(r)) \Big]_-^{\gamma + 1} \, r dr
\end{align}
with %$[V]_- = \max(-V,0)$ is the negative partof $V$, and 
$\gamma>0$ 
(this is the case $\alpha=1$ in Theorem 1.6 of \cite{EkhFra}).
Here $L_{\gamma,2}$ is the best constant in \eqref{LTgamma}.
It turns out, from numerical computations, 
that the most effective bound in our case will be provided by a value of $\gamma\approx 2$,
and we will use exactly $\gamma =2$ for our estimates.

For $\gamma\geq 3/2$, and any $d\geq 1$, one has the optimal Lieb-Thirring constant (which coincides
with the semi-classical constant)
\begin{align*}
L_{\gamma,d} %= L_{\gamma,d}^{\mathrm{cl}} 
  = \frac{1}{(4\pi)^{d/2}} \frac{\Gamma(\gamma+1)}{\Gamma(\gamma+1+\tfrac{d}{2})}
\end{align*}
%Frank-Hudertmarl-Jex-Nam \cite{FrankHJN} 
%57
%R. L. Frank, D. Hundertmark, M. Jex, P. T. Nam, The Lieb–Thirring inequality revisited. J. Eur. Math. Soc.,
%\begin{align}
%L_{1,2} \leq 1.456 \, L_{1,2}^{\mathrm{cl}}, \qquad  L_{1,d}^{\mathrm{cl}} := 
%  \frac{1}{(4\pi)^{d/2}} \frac{1}{\Gamma(2+\tfrac{d}{2})}; %= \frac{\omega_d}{(2\pi)^d} \frac{2}{d+2} 
%\end{align}
%here $L_{1,d}^{\mathrm{cl}}$ is the semi-classical constant.
from the work of Laptev-Weidl \cite{LapWei}. %, where $L_{\gamma,d}^{\mathrm{cl}}$ is the semi-classical constant. 
In particular,
%\begin{align*}
$L_{\gamma,2} = (4\pi)^{-1} (\gamma+1)^{-1},$
%\end{align*}
%We will then use $\gamma=2$ to prove the theorem. 
%However, we first look at the case $\gamma=1$ %for comparison.
and, using \eqref{LTEkhFra}, we can estimate %the lowest eigenvalue as follows:
\begin{align}\label{LTpr1}
%%|E_1(\mathcal{L}_1-2)| 
\mathrm{Tr} \big( \mathcal{H}_1-2 \big)_-^2 
  \leq \frac{1}{6} %2\pi L_{2,2} 
  \int_0^\infty \Big[ \frac{1}{r^2} - 3\big( 1-U^2(r) \big) \Big]_-^{3} \,r dr. 
\end{align}
We aim to estimate the right-hand side of \eqref{LTpr1}
using some numerical computations for parts of the argument.

%We will leave the value of $\gamma$ unspecified
%in some of the formulas below for ease of comparison with different numerical values, should the reader
To accurately resolve the values of the vortex $U$ numerically,
we solve the ODE \eqref{eq_vortex} with Mathematica to find
solutions $U_\pm$ such that $U_-(r) < U(r) < U_+(r)$ for all $r$.
These solutions correspond to values $c_\pm$ with $c_- < a < c_+$, where 
$a= U'(0) \approx \Upzero$ and $c_+-c_- < 10^{-10}$.
%we first find an approximately correct value 
%of\footnote{We do this with Mathematica up to 10 decimal digits, which we do not display here.}
%The value we obtain is in accordance with numerics by %comm%{good reference}
$U_+$ is guaranteed to be above the vortex since it diverges as $r$ increases, while 
$U_-$ is guaranteed to be below since it starts oscillating for $r$ large \cite{HerHer}.
We can then use these numerical solution to estimate quantities that involve the exact vortex solution,
up to Mathematica's numerical errors.
See Figure \ref{fig1} for a plot of $U_\pm$.

\begin{figure}
\begin{center}
\includegraphics[scale=0.645]{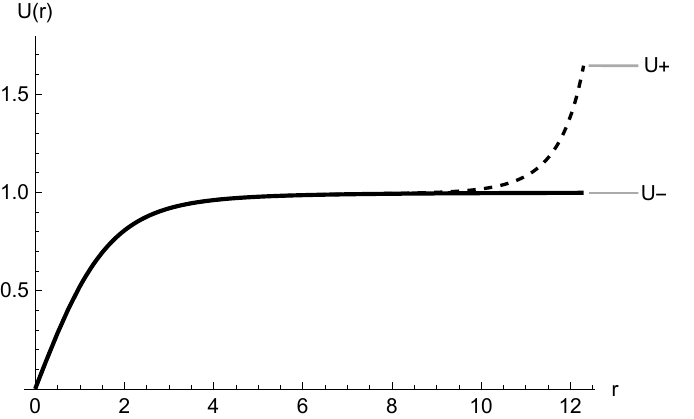}
\quad \includegraphics[scale=0.645]{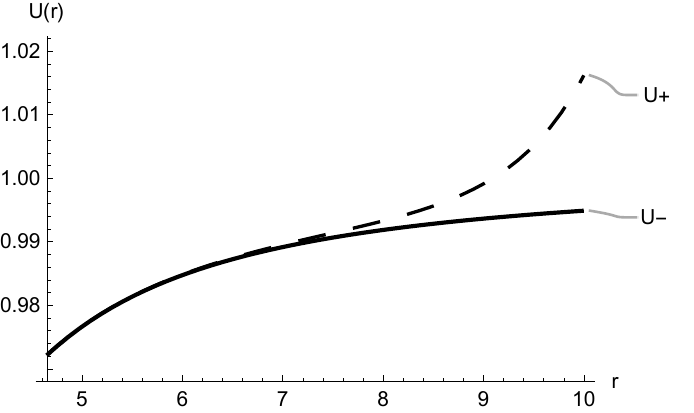}
\caption{Graphs of the numerical solutions $U_+(r)$ and $U_-(r)$ used in the calculation of %$A$ and 
$A_\gamma$ in %\eqref{LTpr5} and 
\eqref{LTpr1g}. 
These are solutions of the differential equation \eqref{eq_vortex}
with initial conditions $U_{\pm}(0)=0$ and $U'{\pm}(0)=c_\pm$. 
The solid line represents the solution, that we denote by $U_-(r)$, with $c_-$ 
slightly smaller than the theoretical value of $a=U'(0)$ for the vortex. 
The dashed line corresponds to the solution, that we denote by $U_+(r)$, for $c_+>a$. 
The difference is $c_+-c_-< 10^{-10}$. 
Due to the uniqueness of solutions, the vortex $U$ must be %, up to Mathematica's numerical errors, 
in between these two graphs. 
%For the reader convenience, on the right we have also added (dotted line) the function corresponding to the asymptotic expansion at infinity of the vortex up to order $4$, that is, the function $1-\tfrac{1}{2r^2}-\tfrac{9}{8r^4}$, labelled as $\sim U$.
%The first solution is always below the vortex whereas the second is always above it. 
%In particular, the solid line never exceeds $1$. 
At $r=7$ the difference $U_+-U_-$ is approximately $10^{-4}$. 
We use the numerics only up to $r=7$ and
after that we integrate using Claim \eqref{1-U2}.
}
\label{fig1}
\end{center}
\end{figure}

We begin by observing numerically that the potential $V = r^{-2} -  3\big( 1-U^2)$ is positive for 
$ r \leq \turningx := \turning$, since this is true for $V_- := r^{-2} -  3\big( 1-U_-^2)$; see Figure \ref{fig2}.
Thus, integrating the absolute value of $V_-^3$ on $(r_0,\infty)$ 
guarantees that the entire support of the negative part of the potential in \eqref{LTEkhFra} is 
taken into consideration.
Using this, and defining $\upperx := \upper$, we estimate
%Then, using \eqref{LTEkhFra} we can estimate
\begin{align}\label{LTpr1g}
\begin{split}
& \mathrm{Tr} \big( \mathcal{H}_1-2 \big)_-^\gamma 
  \leq L_{\gamma,2} \cdot  2\pi \int_{r \geq \turningx} 
  \Big( 3\big( 1-U^2(r)) - \frac{1}{r^2} \big) \Big)^{\gamma + 1} \, r dr
  = \frac{1}{2(\gamma+1)} \big( A_\gamma + R_\gamma),
  \\
& A_\gamma := \int_{\turningx \leq r \leq \upperx} \Big( 3\big( 1-U^2(r)) - \frac{1}{r^2} \big) \Big)^{\gamma + 1} \, r dr,
\\
& R_\gamma := \int_{r \geq \upperx} \Big( 3\big( 1-U^2(r)) - \frac{1}{r^2} \big) \Big)^{\gamma + 1} \, r dr.
\end{split}
\end{align}
We left $\gamma$ unspecified here for 
ease of comparison of the computations between different values of $\gamma$,
should the reader wish to check the almost optimality of our choice $\gamma =2$.
%The integral $A$ will be computed numerically %commjose%{using\footnote{%commjose%{This way we shall overestimate the right-hand side of \eqref{LTpr2}, and hence underestimate the spectral gap $\lambda_0$.} } $U_-(r)$ (c.f. the notation in the caption of Figure \ref{fig1} and \ref{fig2})} up to errors of the order $10^{-4}$, while $R$
%will be estimated directly. 
%

The integral $A_\gamma$ can be rigorously upperbounded, and then estimated, using
%\footnote{%commjose%{This numerical calculation has been carried out with absolute values inside 
%the integral sign in \eqref{LTpr1g}, using $U_-$ to overestimate said integral, 
%thus producing a rigorous upper bound for $A_\gamma$.} } 
the numerical solution $U_-$ instead of $U$;
%(c.f. the notation in the caption of Figure \ref{fig1} and \ref{fig2}). %up to errors of the order $10^{-4}$, 
%The integral $R_\gamma$ can be estimated instead using the inequality \eqref{1-U2}.
%Solving the ODE \eqref{eq_vortex} for $U$ numerically, 
%which one can do by finding the (approximately)
%correct value of\footnote{We do this with Mathematica up to 10 decimal digits, which we do not display here. 
%The value we obtain is in accordance with numerics by %comm%{good reference}}
%$c=U'(0) \approx \Upzero$, and plugging the result in \eqref{LTpr1g}, we obtain %$\gamma=2$,
%in which case%\footnote{%commjose%{Once again, to calculate $A_\gamma$ we use $U_-$ (see Figure \ref{fig1} and \ref{fig2}) to overestimate it and hence underestimate $\lambda_0$.}}
%Computing as described above 
from this we obtain
\begin{align}\label{Agamma}
& \left. \frac{1}{2(\gamma+1)} A_\gamma \right|_{\gamma = 2}  \approx 
0.44515. %0.44526.
\end{align}

Next, we claim that %$U(r) \geq 1 - 0.51r^{-2}$ 
\begin{align}\label{1-U2}
1-U^2(r) \leq 1.1 r^{-2},%1.2 r^{-2},
 \qquad \mbox{for} \quad r \geq r_1. %\upper.
\end{align}
See Figure \ref{fig3}.
We postpone the proof of \eqref{1-U2} for the moment, and proceed to use it to estimate the remainder 
\begin{align}\label{R2}
& R_2 \leq %(2.6)^3
(2.3)^3 \int_{r \geq \upperx}\frac{1}{r^5} \, dr %= 17.576 \cdot \frac{1}{4 \cdot 10^4} 
\approx 0.001267.
 %0.00044. %0.0004394. 
\end{align}
%\begin{align}
%\label{LTpr6'}
%\begin{split}
%R & \leq (2.3)^2 \int_{r \geq \upperx} \frac{1}{r^3} dr \leq 0.05398%0.0338.
%\end{split}
%\end{align}
%Alternatively, estimate by hand
%\begin{align*}%\label{LTpr6'} 
%& \leq \int_{r \geq \upperx} \big( ( 3.06 -1)r^{-2} %- 3 (0.51)^2 r^4 
%  \big)^2 \, r dr = 2 (2.06)^2 \upperx^{-2}
%\end{align*}
%%comm%{Seems quite large...}
%Putting together \eqref{LTpr2} and \eqref{LTpr5'}, \eqref{LTpr6'}, we obtain
%\begin{align*}%\label{LTpr2}
%& \mathrm{Tr} \big( \mathcal{H}_1-2 \big)_- 
%  \leq 1.456 \cdot \big( 0.7748 + 0.0135 \big) \leq 1.1478%1.149. %1.1488568
%\end{align*}
%
%

%
%
%\medskip
%{\it The case $\gamma \geq 3/2$}. 
%
%
%
%
%
%
%
%Computing numerically for various values of $\gamma$ we see that a convenient choice is $\gamma=2$,
%in which case%\footnote{%commjose%{Once again, to calculate $A_\gamma$ we use $U_-$ (see Figure \ref{fig1} and \ref{fig2}) to overestimate it and hence underestimate $\lambda_0$.}}
%\begin{align}
%& \left. \frac{1}{2(\gamma+1)} A_\gamma \right|_{\gamma = 2}  \approx 
%0.44515%0.44526.
%\end{align}
%For the remainder, using again \eqref{1-U2} we have
%\begin{align}
%& R_2 \leq %(2.6)^3
%(2.3)^3 \int_{r \geq \upperx}\frac{1}{r^5} \, dr %= 17.576 \cdot \frac{1}{4 \cdot 10^4} 
%\approx 0.001267
% %0.00044. %0.0004394. 
%\end{align}
Combining \eqref{Agamma} and \eqref{R2} shows that 
$$ \mathrm{Tr} \big( \mathcal{H}_1-2 \big)_-^2  \leq  %0.4458
0.44536.$$
Therefore, we must have that the lowest eigenvalue of $\mathcal{L}_1 -2$ 
is larger than $-\sqrt{0.44536}$, hence 
the lowest eigenvalue of $\mathcal{L}_1$ is larger than $\bottomx =  2 -  \sqrt{0.44536} \approx \bottom$ as claimed.

%0.6671

\begin{figure}
\begin{center}
\includegraphics[scale=0.645]{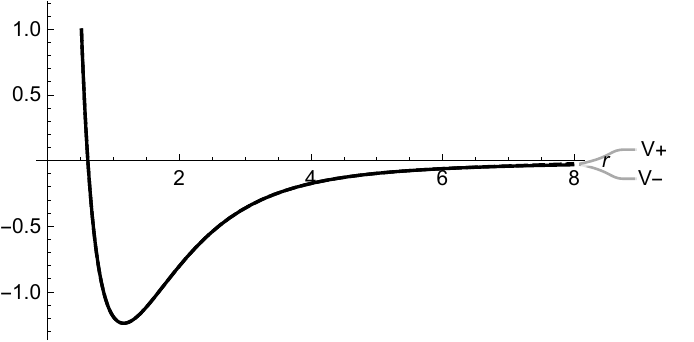}
\quad \includegraphics[scale=0.645]{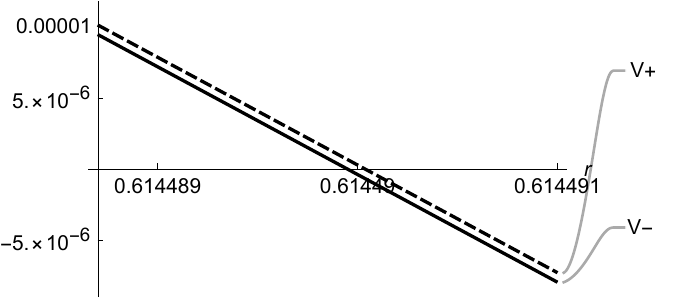}
\caption{On the left: a plot of the potentials $V_\pm = 1/r^2 - 3 (1 - U_\pm^2)$,
which cannot be distinguished at this coarse scale.
\\
On the right: plots of the potentials $V_\pm$ near $r_0=\turning$,
the point at which we claim that $V$ is still positive.}
\label{fig2}
\end{center}
\end{figure}

\smallskip
{\it Proof of \eqref{1-U2}}.
Observe first that it suffices to show that $1-U \leq 0.55r^{-2}$ for all $r\geq r_1 = \upper$. 
%both functions are strictly decreasing and convex.
One can verify numerically that this desired inequality is true at $r=r_1$ (see Figure \ref{fig3}),
while it holds at $r=\infty$ in view of the expansion \eqref{asympt_U_infty}. 
Then we consider the smooth radial function 
$f := U - (1-0.55r^{-2})$ on the interval $I:= (\upper,\infty)$ and claim that it satisfies 
\begin{align}\label{1-U2feq}
\Delta f \leq c(r) f + f^3, \qquad c(r) \geq 0, \qquad r \in I.
\end{align}
To verify \eqref{1-U2feq} we use \eqref{eq_vortex} and calculate
\begin{align*}
%U_{rr} + \dfrac{1}{r} U_r - \dfrac{1}{r^2}U + (1-U^2)U = 0
f_{rr} + \dfrac{1}{r} f_r & = \dfrac{1}{r^2}U - (1-U^2)U + \frac{2.2}{r^4} 
\\
& = \dfrac{1}{r^2} f + \frac{1}{r^2} - U  + f^3 + 3 U (U-f)f + (U-f)^3 + \frac{1.65}{r^4}
\\
& = \dfrac{1}{r^2} f + f^3 + \big[3U(U-f) - 1\big] f + 
  \big[ f-U + \frac{1}{r^2} + \frac{1.65}{r^4} + (U-f)^3 \big]
\\  
  & := \dfrac{1}{r^2} f + f^3 + a(r)f + B(r)
\end{align*}
where $ a(r) := 3U(U-f) - 1,$
and
\begin{align*}
B(r) & :=  f - U + r^{-2} + 1.65 r^{-4} + (U-f)^3
 \\
 & = - (1-0.55r^{-2}) + r^{-2} + 1.65r^{-4} + \big( 1 - 0.55r^{-2} \big)^3
\\ 
& = -0.1 r^{-2} + (1.65 + 3(0.55)^2) r^{-4} - (0.55)^3 r^{-6}.
\end{align*}
Using that $U \geq 1/2$ in $I$, we immediately see that $a = 3U(1-0.55r^{-2}) - 1 > 0$ in $I$,
and one can also directly verify that $B<0$ in $I$.
These show the validity of \eqref{1-U2feq} with $c(r) = a(r) + r^{-2}$.
To conclude, we observe that \eqref{1-U2feq} implies that $f$ cannot have a negative minimum in the interval $I$ and,
since it is non-negative at the boundary, it is non-negative in $I$, hence the desired conclusion.
\begin{figure}
\begin{center}
\includegraphics[scale=0.7]{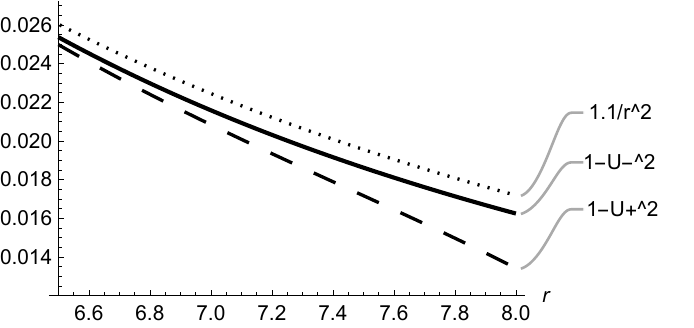}
\caption{A visualization of Claim \eqref{1-U2} near $r=7$.
The solid line represents $1-U_-^2(r)$, the dashed line $1-U_+^2(r)$
and the dotted line is $\tfrac{1.1}{r^2}$. 
The same quantity in the case of the vortex, i.e., $1-U^2(r)$, must be in between the solid and the dashed line,
hence the claimed boundary condition, $f(r_1) > 0$, for the solution of \eqref{1-U2feq} is verified.}
\label{fig3}
\end{center}
\end{figure}
\end{proof}

\appendix

\medskip

\section{Additional and supporting material}

%\newpage
%\section{Other decay estimates}

\smallskip

\subsection{The Heat Flow}\label{SecHeat}
%%comm%{Question: Can get sharper estimates than \cite{WX}, with stronger stability
%and rate of return to equilibrium for the $1$-vortex ?}
For the heat flow \eqref{Heat}, the linearized equations at a perturbation of a vortex are
%\begin{align}
$\partial_t (\alpha,\beta) = \mathfrak{L}(\alpha,\beta)$,
%\end{align}
where $\mathfrak{L}$ is defined in \eqref{full_lin_Re_Im}. %= -\mathrm{diag}(\mathcal{L}_1,\mathcal{L}_2)$.
In particular, for co-rotational perturbations
\begin{align}\label{Heatlin}
\partial_t \left( \begin{matrix}
\alpha \\ \beta 
\end{matrix}\right) 
%\vec{\mathcal{L}}_*(\alpha,\beta):=
=
\left(\begin{matrix}
\Delta+1-\tfrac{1}{r^2}-3U^2(r) & 0 \\ 0 & \Delta+1-\tfrac{1}{r^2}-U^2(r)
\end{matrix}\right)
\left( \begin{matrix}
\alpha \\ \beta 
\end{matrix}\right) .
%+ \dfrac{2}{r^2}
%\left(\begin{matrix}
%0 & -\partial_\theta \\ \partial_\theta  & 0 
%\end{matrix}\right)\left( \begin{matrix}
%\alpha \\ \beta 
%\end{matrix}\right).
\end{align}
Below we show that $\alpha,\beta$ satisfy decay estimate consistent
with a parabolic flow. This result is not new as it is already contained in the work of Weinstein-Xin \cite{WX}.
However, our proof is different and more direct, based on the use of the distorted Fourier transforms built
in Sections \ref{secL2}-\ref{secL1}.
Once the linear decay estimates from Proposition \ref{Heatprop} %for \eqref{Heatlin} 
are obtained, 
a nonlinear stability result can be proved based
on $L^p-L^q$ decay estimates and a bootstrap argument via Duhamel's formula as in \cite[Section 4]{WX}.

\smallskip
\begin{prop}\label{Heatprop}
With the definitions of $\mathcal{L}_i$, $i=1,2$, in \eqref{alpha_beta_system}, we have,
for radial $f=f(r)$, $t>0$,
\begin{align}\label{Heatconc1}
{\big\| e^{-t\mathcal{L}_1} P_c f \big\|}_{L^\infty(\R^2)} \lesssim e^{-2t} t^{-1} {\| f \|}_{L^1(rdr)},
  %\min\big( t^{-1} {\| f \|}_{L^\infty}, {\| f \|}_{L^1(rdr)} \big),
\end{align}
and
\begin{align}\label{Heatconc2}
{\big\| e^{-t\mathcal{L}_2} f \big\|}_{L^\infty(\R^2)} \lesssim 
  %\min\big( t^{-1} {\| f \|}_{L^\infty}, {\| f \|}_{L^1(rdr)} \big).
  t^{-1} {\| f \|}_{L^1(rdr)}.
\end{align}
Moreover, with the definitions of the Littlewood-Paley projections in \eqref{LPF1'L}, 
the following estimates hold for all $\ell \in \Z$:
\begin{align}\label{Heatconc1'}
{\big\| e^{-t\mathcal{L}_1} P^{\mathcal{L}_1}_\ell f \big\|}_{L^p(\R^2)} \lesssim 
  e^{-2t} t^{1/p - 1/q} {\| f \|}_{L^q(rdr)}, \quad 1 \leq q \leq p \leq \infty,
 \end{align}
and
\begin{align}\label{Heatconc2'}
{\big\| e^{-t\mathcal{L}_2} P^{\mathcal{L}_2}_\ell f \big\|}_{L^p(\R^2)} \lesssim 
  t^{1/p - 1/q} {\| f \|}_{L^q(rdr)}, \quad 1 \leq q \leq p \leq \infty.
\end{align}
\end{prop}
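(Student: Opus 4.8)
\textbf{Proof proposal for Proposition \ref{Heatprop}.}

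The plan is to mirror the analysis of the second-order (Klein–Gordon/wave) evolutions from Section \ref{secdecay}, replacing the oscillatory multiplier $e^{\pm it\sqrt{k^2+2}}$ (resp.\ $e^{\pm itk}$) by the heat multiplier $e^{-t(k^2+2)}$ (resp.\ $e^{-tk^2}$), which makes every estimate strictly easier since no stationary phase is needed—pure decay of the exponential suffices. Concretely, using the distorted Fourier representations from Propositions \ref{propFT1} and \ref{propFT2}, one writes
\begin{align}\label{heatrepr}
\begin{split}
& \big(e^{-t\mathcal{L}_1} P_c f\big)(r) = \frac{1}{\sqrt{r}} \int_0^\infty \Phi_1(r,k^2)\, e^{-t(k^2+2)}\, \wtF_1(\sqrt{r}f)(k)\, 2k\rho_1'(k^2)\,dk,
\\
& \big(e^{-t\mathcal{L}_2} f\big)(r) = \frac{1}{\sqrt{r}} \int_0^\infty \Phi_2(r,k^2)\, e^{-tk^2}\, \wtF_2(\sqrt{r}f)(k)\, 2k\rho_2'(k^2)\,dk,
\end{split}
\end{align}
and the corresponding kernels $K^{\mathcal{L}_j}(t,r,s) = \frac{1}{\sqrt{rs}}\int_0^\infty \Phi_j(r,k^2)\Phi_j(s,k^2)\, e^{-t(k^2+2\delta_{j1})}\, k\rho_j'(k^2)\,dk$. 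For the $L^1\to L^\infty$ bounds \eqref{Heatconc1}–\eqref{Heatconc2} it then suffices to show $\sup_{r,s\geq 0}|K^{\mathcal{L}_j}(t,r,s)| \lesssim e^{-2t\delta_{j1}}\, t^{-1}$, which reduces further, upon $k\to\sqrt{t}\,k$ rescaling, to the convergence of $\int_0^\infty e^{-k^2}k\,\langle k/\sqrt t\rangle^2\,dk$ against the eigenfunction bounds.

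First I would split the kernel integral into the three regions $sk \gtrsim 1$, $rk\lesssim 1$, and the transition region $sk\lesssim 1\lesssim rk$, exactly as in the proofs of Propositions \ref{propKG1dec} and \ref{propKG2dec}, and insert the expansions \eqref{prKG1Phi} and \eqref{prKG2Phi} for $\Phi_j$. In the region $rk,sk\gtrsim 1$ one uses $|\Phi_j(r,k^2)\Phi_j(s,k^2)| \lesssim (rs\,k^2)^{-1/2}\langle k\rangle^{-2}$ together with $|a_j(k^2)|^2\rho_j'(k^2)=1/(4\pi)$, so that the kernel is bounded by $\frac{1}{rs}\int_0^\infty \langle k\rangle^{-2}\, e^{-tk^2}\,k\,dk \lesssim t^{-1}$ after using $rk,sk\gtrsim1$; in the region $rk,sk\lesssim 1$ one uses the polynomial bounds $|\Phi_j(r,k^2)|\lesssim r^{3/2}\langle r\rangle^{-1}$ and the spectral density $\rho_j'(k^2)\approx\langle k\rangle^2$ to reduce to $\int_0^{\min(r,s)^{-1}} e^{-tk^2}\langle k\rangle^2 k\,dk \lesssim t^{-1}$, again exploiting $r\langle r\rangle^{-1}\langle k\rangle, s\langle s\rangle^{-1}\langle k\rangle\lesssim 1$; the transition region combines the two. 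The extra factor $e^{-2t}$ in the $\mathcal{L}_1$ case comes simply from pulling the $e^{-2t}$ out of $e^{-t(k^2+2)}$. Crucially, no integration by parts and no handling of a stationary point are needed, because $e^{-tk^2}$ already supplies all the decay and is a symbol of order $0$ (with its derivatives producing further decay), so the entire delicate stationary-phase machinery of Section \ref{secdecay} is bypassed.

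For the $L^q\to L^p$ estimates \eqref{Heatconc1'}–\eqref{Heatconc2'} on Littlewood–Paley pieces, I would combine the $L^1\to L^\infty$ bound above (localized to frequency $k\approx 2^\ell$, which only improves it, giving $\|K^{\mathcal{L}_j}_\ell(t,\cdot,\cdot)\|_{L^\infty_{r,s}} \lesssim e^{-2t\delta_{j1}} t^{-1}$) with the trivial $L^2\to L^2$ bound $\|e^{-t\mathcal{L}_j}P_\ell\|_{L^2\to L^2}\lesssim e^{-2t\delta_{j1}}$ coming from the spectral theorem and the fact that $\sigma(\mathcal{L}_1)\subset[2,\infty)$ on the continuous-spectrum subspace (Proposition \ref{Propspec0}), via the Schur test and Riesz–Thorin interpolation; one obtains $L^q\to L^p$ for the dual pair $(q,p)=(1,\infty)$ and $(2,2)$ and then for all $1\leq q\leq p\leq\infty$ by interpolation and the semigroup property $e^{-t\mathcal{L}_j}=e^{-\frac t2\mathcal{L}_j}e^{-\frac t2\mathcal{L}_j}$ together with duality ($K^{\mathcal{L}_j}$ is symmetric in $r,s$). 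The bound $\|e^{-t\mathcal{L}_j}P_\ell\|_{L^q\to L^q}\lesssim e^{-2t\delta_{j1}}$ for all $q$, needed to interpolate, follows from Young's inequality once one checks $\sup_r\int_0^\infty |K^{\mathcal{L}_j}_\ell(t,r,s)|\,s\,ds\lesssim e^{-2t\delta_{j1}}$, which is itself a routine consequence of the eigenfunction bounds and $\rho_j'(k^2)\approx\langle k\rangle^2$. The main obstacle, such as it is, will be the bookkeeping in the transition region $sk\lesssim 1\lesssim rk$ and making sure the powers of $r,s,\langle k\rangle$ balance uniformly—exactly the same point that required care in Propositions \ref{propFT2} and \ref{propKG1dec}—together with correctly invoking the Ekholm–Frank/GZ spectral framework to justify \eqref{heatrepr} rigorously for $f\in L^1(rdr)\cap L^2(rdr)$; all of this is, however, strictly easier than the dispersive case and I would only sketch it, referring to Section \ref{secdecay} for the analogous details.
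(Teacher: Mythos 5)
Your proposal is essentially correct and follows the same structural outline as the paper: write the heat kernel via the distorted Fourier representation, split into the three regions $sk\gtrsim1$, $rk\lesssim1$, $sk\lesssim1\lesssim rk$, use the eigenfunction asymptotics and $|a_j|^2\rho_j'=1/(4\pi)$, and then interpolate $L^1\to L^\infty$ with $L^q\to L^q$ to get the Littlewood–Paley $L^q\to L^p$ bounds. The one substantive point of departure is that you assert (correctly) that no integration by parts is needed—the Gaussian $e^{-tk^2}$ already supplies all the decay—whereas the paper does perform a double IBP in each region before estimating. Your observation is sound: for instance, on the support of $K_1$ one has $1/\sqrt{rs}\leq k$, so the integrand is $\lesssim k\,e^{-tk^2}$ and direct integration gives $t^{-1}$, and similarly for $K_2,K_3$ after using $\tfrac{r}{\jr}\jk\lesssim1$, $\tfrac{s}{\langle s\rangle}\jk\lesssim1$. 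This is cleaner than the paper's IBP route.

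Two imprecisions worth flagging. First, your bound $|\Phi_j(r,k^2)\Phi_j(s,k^2)|\lesssim(rsk^2)^{-1/2}\jk^{-2}$ in the region $rk,sk\gtrsim1$ is not what the eigenfunction expansions give: from $\Phi_j=2\Re(a_j\Psi_j)$ with $|\Psi_j|\approx k^{-1/2}$ and $|a_j|\approx\jk^{-1}$ one only gets $|\Phi_j(r,k^2)|\lesssim k^{-1/2}\jk^{-1}$, with no $r^{-1/2}$ gain. The correct way to absorb the $1/\sqrt{rs}$ prefactor is to use $1/\sqrt{rs}\leq k$ pointwise on the support (or, equivalently, the Gaussian tail starting at $k\gtrsim1/s$), rather than pulling $1/(rs)$ outside the integral as you do; as written, the step ``$\tfrac{1}{rs}\int_0^\infty\jk^{-2}e^{-tk^2}k\,dk\lesssim t^{-1}$ after using $rk,sk\gtrsim1$'' does not close. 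Second, the $L^q\to L^q$ bound via Schur's test, $\sup_r\int_0^\infty|K_\ell(t,r,s)|\,s\,ds\lesssim1$, is not simply a consequence of the pointwise eigenfunction bounds and $\rho_j'\approx\jk^2$: one needs pointwise decay of the frequency-localized kernel in $|r\pm s|$ at scale $2^{-\ell}$, which does require integration by parts in $k$ (this is precisely what Lemma \ref{LPlem} establishes). So while IBP is avoidable in the $L^1\to L^\infty$ kernel estimate, it re-enters through the Littlewood–Paley bound; your sketch should acknowledge this dependence rather than calling it routine.
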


\begin{rem}
Combining \eqref{Heatconc1} with our Theorem \ref{theoLT} which quantifies the gap between $0$ and the discrete spectrum,
one obtains, for $\lambda_0 = 1.332$,
\begin{align*}%\label{Heatconc1gap}
{\big\| e^{-t\mathcal{L}_1} f \big\|}_{L^\infty(\R^2)} \lesssim e^{-\lambda_0t} t^{-1} {\| f \|}_{L^1(rdr)}.
  %\min\big( t^{-1} {\| f \|}_{L^\infty}, {\| f \|}_{L^1(rdr)} \big),
\end{align*}
\end{rem}

%\begin{cor}\label{Heatcor}
%With the definitions of Littlewood-Paley projections in \eqref{LPF1'L}, the following estimates hold for all $\ell \in \Z$:
%\begin{align}\label{Heatconc1'}
%{\big\| e^{-t\mathcal{L}_1} P^{\mathcal{L}_1}_\ell f \big\|}_{L^p(\R^2)} \lesssim 
%  e^{-2t} t^{1/p - 1/q} {\| f \|}_{L^q(rdr)}, \quad 1 \leq q \leq p \leq \infty,
% \end{align}
%and
%\begin{align}\label{Heatconc2'}
%{\big\| e^{-t\mathcal{L}_2} P^{\mathcal{L}_2}_\ell f \big\|}_{L^p(\R^2)} \lesssim 
%  t^{1/p - 1/q} {\| f \|}_{L^q(rdr)}, \quad 1 \leq q \leq p \leq \infty.
%\end{align}
%\end{cor}

%In \eqref{Heatconc1'} and \eqref{Heatconc2'} we left frequency projections on the left-hand sides
%since our proof does not give the statement without projections (for the $L^\infty$-$L^\infty$ estimate say)

%\begin{proof}[Proof of Corollary \ref{Heatcor}]
%\end{proof}

%\begin{rem}
%%comm%{These linear estimates also hold for non-corotational perturbations.} 
%Weighted-type estimate can also be established following arguments parallel to those in 
%Section \ref{WDecKG_Sec}, but we will not pursue these.
%\end{rem}

%\begin{rem}
%Note that we have the explicit $e^{-\lambda_0t}$ exponential decay factor in \eqref{Heatconc1} 
%(hence in \eqref{Heatconc1'}  as well). This factor is left as an unspecified 
%$e^{-C_2 t}$ in \cite{WX}, where $C_2$ is said to depend on the vortex $U$ (although those authors
%treat all co-rotational perturbations simultaneously, while we only focus on the case of the $1$-vortex).
%\end{rem}

\begin{proof}[Proof of Proposition \ref{Heatprop}]
We only look at the case of $\mathcal{L}_2$ since the case of of $\mathcal{L}_1$ is almost identical.
%and will be discussed at the end of the proof.
We set up the proof similarly to that of the relativistic cases in Section \ref{secdecay}
and Propositions \ref{propKG1dec} and  \ref{propKG2dec}.
Using \eqref{defH_2} and the representation \eqref{FT2'}, %Proposition \ref{propFT2}, 
we can write
\iffalse
\begin{align}\label{Heatpr0}
\begin{split}
e^{t\mathcal{L}_2} f (r) = r^{-1/2} e^{t\mathcal{H}_2} ( s^{1/2} f)(r)
 = r^{-1/2} \int_{0}^\infty \Phi_2(r,k^2) e^{tk^2} \wtF_2( s^{1/2} f)(k^2)\rho_2^\prime(k^2)
 \, %\frac{d\rho_2}{dk}(k^2) 
 2k \, dk
\end{split}
\end{align}
that is, 
\fi
\begin{align}\label{Heatpr1}
\begin{split}
\big(e^{-t\mathcal{L}_2} f \big)(t,r) & = \int_0^\infty K(t,r,s) f(s) s \, ds, 
\\
K(t,r,s) & :=  \frac{2}{\sqrt{rs}} 
  \int_0^\infty \Phi_2(r,k^2) \Phi_2(s,k^2) e^{-tk^2} \, k \rho_2^\prime(k^2) \, dk.
\end{split}
\end{align}
Note that $K$ is symmetric with respect to exchanging $r$ and $s$.
To show \eqref{Heatconc2} it will then suffice to prove
\begin{align}\label{Heatprmain1}
\sup_{r>s\geq 0} \big| K(t,r,s) \big| \lesssim t^{-1}.
\end{align}
%for the second inequality in \eqref{Heatconc1} it will suffice to show that
%\begin{align}\label{Heatprmain2}
%\sup_{s\geq 0} {\| K(t,\cdot,s) \|}_{L^1(r dr)} \lesssim 1.
%\end{align}
%%comm%{check when get there if this is the proper bound (for all $p$)}.

%\smallskip
%{\it Proof of \eqref{Heatprmain1}}.
Let us recall here for convenience \eqref{prKG2Phi}, that is
\begin{align}\label{HeatprPhi}
\Phi_2(r,k^2) = \left\{ 
\begin{array}{ll}
r^{3/2} \big( 1 + \phi_{<}(r,k)), &  rk \lesssim 1, \quad r \lesssim 1,
\\
r^{1/2} \big( 1 + \phi_{>}(r,k)), &  rk \lesssim 1, \quad r \gtrsim 1,
\\
\dfrac{1}{\sqrt{k}} \big[ a_2(k^2) e^{irk} \big( 1 + \phi_+(r,k)) 
  + \overline{a_2}(k^2) e^{-irk} \big(1 + \phi_-(r,k)) \big], &  rk \gtrsim 1,
\end{array} 
\right.
\end{align}
where $\phi \in \{ \phi_{<}, \phi_{>}\phi_-,\phi_+ \}$ is a generic function satisfying %the symbol-type estimates
\begin{align}\label{Heatprphiest}
|\partial_k^\alpha \phi(y,k)| \lesssim k^{-\alpha}, \quad \alpha = 0,1,2.
\end{align}
As before, %We may also assume that $t\geq 1$.
we let $\chi = \chi(x)$, $x\geq 0$ be a smooth function equal to $1$ in $[0,1]$, %a neighbourhood of the origin, 
decreasing, and vanishing for $x \geq 2$, define $\chi^c := 1-\chi$ and split
\begin{align}
\nonumber
\tfrac{1}{2} K(t,r,s) & = K_{1}(t,r,s) + K_{2}(t,r,s) + K_{3}(t,r,s),
\\
\label{HeatprKer1}
K_1(t,r,s) & :=  \frac{1}{\sqrt{rs}} 
  \int_0^\infty \Phi_2(r,k^2) \Phi_2(s,k^2) e^{-tk^2} \,  k \rho_2^\prime(k^2) \, 
  \chi^c(sk) \, dk,
\\
\label{HeatprKer2}
K_2(t,r,s) & :=  \frac{1}{\sqrt{rs}} 
  \int_0^\infty \Phi_2(r,k^2) \Phi_2(s,k^2) e^{-tk^2} \, k \rho_2^\prime(k^2) \, 
  \chi(rk)\chi(sk) \, dk,
\\
\label{HEatprKer3}
K_3(t,r,s) & :=  \frac{1}{\sqrt{rs}} 
  \int_0^\infty \Phi_2(r,k^2) \Phi_2(s,k^2) e^{-tk^2} \, k \rho_2^\prime(k^2) \, 
  \chi^c(rk)\chi(sk) \, dk.
\end{align}

\medskip
\noindent
{\it Estimate of $K_{1}$.}
On the support of $K_{1}$ we have $rk \geq sk \gtrsim 1$, and
using \eqref{HeatprPhi} we reduce matters to estimating the kernels 
\begin{align*}
\begin{split}
K_{1,\eps_1\eps_2}(t,r,s) := 
  \int_0^\infty \frac{1}{\sqrt{rk}} \frac{1}{\sqrt{sk}} a_2^{\eps_1}(k^2) e^{\eps_1 irk} \big( 1 + \phi_{\eps_1}(r,k))
  a_2^{\eps_2}(k^2) e^{\eps_2 isk} \big( 1 + \phi_{\eps_2}(s,k))  
  \\
  \times e^{itk^2} \, k \rho_2^\prime(k^2) \, \chi^c(sk) \, dk, \qquad \eps_1,\eps_2 \in \{+,-\}.
\end{split}
\end{align*}
%where, as before, we are denoting $f^+=f$ and $f^-=\bar{f}$.
It suffices to look at the case $(\eps_1,\eps_2) = (-,+)$; the other cases are similar or easier.
We write %$K_{1,-+} = L$ with
\begin{align}\label{HeatprK1}
\begin{split}
K_{1,-+}(t,r,s) & := \int_0^\infty \frac{1}{\sqrt{rk}} \frac{1}{\sqrt{sk}} e^{ik(s-r)} e^{-tk^2} m(k) A(r,s,k) \, k \, dk,
%\\ S(r,s,k,t) & := k( t-r + s),
\\
m(k) & := |a_2(k^2)|^2 \rho_2^\prime(k^2),
\\
A(r,s,k) & := \big( 1 + \phi_-(r,k)) \big( 1 + \phi_{+}(s,k)) \chi^c(sk).
\end{split}
\end{align}
%and aim to show
%\begin{align}\label{prKG2main1}
%\sup_{r,s\geq 0} \big| L(t,r,s) \big| \lesssim 2^\ell (1+2^{\ell}) \, t^{-1/2} (1+|r-t|)^{-1/2}.
%\end{align}
In view of \eqref{propSMrho} and \eqref{prKG2phiest}, we have the symbol type estimates
\begin{align}\label{prHeat15'}
|\partial^\alpha_k m| \lesssim \jk^{-\alpha},
\qquad |\partial_k^\alpha A(r,s,k)| \lesssim k^{-\alpha}, \qquad \alpha = 0,1,2.
\end{align}

Notice that the contribution to the kernel when $k \lesssim t^{-1/2}$ is pointwise bounded by
\begin{align*}
\begin{split}
C \int_0^{Ct^{-1/2}} e^{-tk^2} k \, dk \lesssim t^{-1}.
\end{split}
\end{align*} 
%while its $L^1(sds)$ norm is upperbounded by
%\begin{align}\label{}
%\begin{split}
%C \int_0^\infty \int_0^{Ct^{-1/2}} e^{-tk^2} \sqrt{s/r} \, \chi^c(sk) \, dk ds \lesssim %comm%{ ? ? }.
%\end{split}
%\end{align} 
%%comm%{maybe avoid restriction here, put $1/k$ factors later and work with them}
Therefore, we can restrict our attention to the contribution 
from $k \gtrsim t^{-1/2}$ inserting a cutoff $\chi^c(\sqrt{t}k)$.
%with $k\approx 2^\ell$ we 
In the formula for the kernel we integrate by parts twice using 
%$$\big(tk - i\partial_k \big) e^{ik(s-r)} = \big(tk + \frac{s-r}{kt}\big) e^{ik(s-r)}$$
$$\partial_k \big[ e^{ik(s-r) - tk^2} \big] = [ i(s-r) - 2tk] e^{ik(s-r) -tk^2},$$
and obtain the term
\begin{align}\label{Heatprbound1}
\begin{split}
I(t,r,s) & := \int_0^{\infty} e^{-tk^2} \partial_k \Big[ 
  \frac{1}{tk + i(r-s)}  
  \\
  & \times \partial_k \Big( \frac{1}{tk + i (r-s)} \, k \frac{\chi^c(sk)}{\sqrt{rk} \sqrt{sk}} 
  A(r,s,k) m(k) \chi^c(\sqrt{t}k) \Big) \Big] \, dk.
\end{split}
\end{align} 
Since $kt \gtrsim 1/k$, using \eqref{prHeat15'}, we can bound %%comm%{check/clean a bit}
\begin{align*}%\label{Heatprbound2}
\begin{split}
| I(t,r,s) | \lesssim \int_{k \gtrsim \max(t^{-1/2}, s^{-1})} e^{-tk^2} 
  \frac{(kt)^2}{(tk)^2 + (r-s)^2} \, %\frac{\chi^c(Csk)1}{\sqrt{rk} \sqrt{sk}} 
  \, k dk
%\end{split}
%\end{align} 
%It follows that
%\begin{align*}
%\begin{split}
%I(t,r,s) 
\lesssim \int_0^\infty e^{-tk^2}  \, k \, dk \lesssim t^{-1},
\end{split}
\end{align*} 
which is the desired bound \eqref{Heatprmain1}. 

%To obtain the second bound \eqref{Heatprmain2} we look back at \eqref{Heatprbound2} and
%%, since $s\leq r$, %comm%{clarify this and how \eqref{Heatprbound2} is sufficient \dots} we can 
%estimate 
%\begin{align*}
%\begin{split}
%{\| I(t,\cdot,r) \|}_{L^1(rdr)} & \lesssim \int_{k \gtrsim t^{-1/2}} e^{-tk^2} 
%  {\Big\| \frac{(kt)^2}{(tk)^2 + (r-s)^2} \frac{\sqrt{r}}{\sqrt{s}} \Big\|}_{L^1(dr) }\, dk
%  \\ 
%  & \lesssim \int_{k \gtrsim t^{-1/2}} e^{-tk^2} tk \, dk \lesssim 1.
%\end{split}
%\end{align*} 

\medskip
\noindent
{\it Estimate of $K_{2}$.}
In view of \eqref{HeatprPhi}, we can reduce matters to estimating the expression
\begin{align*}%\label{HeatprKer2'}
L(t,r,s) & :=  
  \int_0^\infty \frac{r}{\jr} \frac{s}{\langle s \rangle}  e^{-tk^2} \, 
  \big( 1 + \phi(r,k)) \big( 1 + \phi(s,k)) \chi(rk)\chi(sk) \jk^2 k \, dk,
\end{align*}
for $\phi$ satisfying \eqref{Heatprphiest}.
The case $k \lesssim t^{-1/2}$ is easy to deal with, so we restrict our attention to the complementary region;
there we can integrate by parts the exponential and bound
\begin{align}\label{Heatprbound5}
| L(t,r,s) | \lesssim
  \int_{k \gtrsim t^{-1/2}} \frac{r}{\jr} \frac{s}{\langle s \rangle} e^{-tk^2} \frac{1}{(tk^2)^2} \, 
  \chi(rk)\chi(sk) \jk^2 k \, dk.
\end{align}
This immediately implies the desired pointwise bound as in \eqref{Heatprmain1}:
\begin{align*}
| L(t,r,s) | \lesssim
  \int_{k \gtrsim t^{-1/2}} e^{-tk^2} k \, dk \lesssim t^{-1}.
\end{align*}
%For \eqref{Heatprmain2} we go back to \eqref{Heatprbound5} and estimate, using also that 
%$\frac{s}{\langle s \rangle} \jk \lesssim$ when $sk \lesssim 1$,
%\begin{align*}
%{\| L(t,r,\cdot) \|}_{L^1(sds)} & \lesssim
%  \int_0^\infty \int_{s \lesssim k^{-1}} \frac{s^2}{\langle s \rangle} \, \chi(sk) \jk^2 k \, dk ds
%  \\ & \lesssim ...
%\end{align*}

\medskip
\noindent
{\it Estimate of $K_{3}$.}
This estimate is a combination of the previous two;
by similar reductions, it suffice to estimate a term of the form 
\begin{align}\label{HeatprKer3'}
%\frac{1}{\sqrt{s}} 
  \int_0^\infty e^{irk} \frac{1}{\sqrt{rk}} \big( 1 + \phi(r,k))
  \frac{s}{\langle s \rangle} \big( 1 + \phi(s,k)) e^{-tk^2} \, k \jk \, 
  \chi^c(rk)\chi(sk) \chi^c(\sqrt{t}k) \, dk,
\end{align}
with the same notation above for $\phi$ and $\chi$.
Integrating by parts similarly to \eqref{Heatprbound5} (by letting $s=0$ in the phase) 
produces a main term which is bounded by
\begin{align*}
\begin{split}
M(t,r,s) & := \int_0^{\infty} e^{-tk^2} \partial_k \Big[ 
  \frac{1}{tk + ir}  
   \partial_k \Big( \frac{1}{tk + ir} \, k \frac{\chi(sk) \chi^c(rk)}{\sqrt{rk}} 
  \chi^c(\sqrt{t}k) \Big) \Big] \, dk.
\end{split}
\end{align*} 
Since $kt \gtrsim 1/k$, %and $s \jk / \langle s \rangle \lesssim 1$ on the support of the integral, 
%using the symbol bounds \eqref{prHeat15'}, 
we obtain
\begin{align*}
\begin{split}
M(t,r,s) \lesssim \int_{k \gtrsim t^{-1/2}} e^{-tk^2} 
  \frac{(kt)^2}{(tk)^2 + r^2} \, %\frac{\chi^c(Csk)1}{\sqrt{rk} \sqrt{sk}} 
  \, k dk
%\end{split}
%\end{align} 
%It follows that
%\begin{align*}
%\begin{split}
%I(t,r,s) 
\lesssim \int_0^\infty e^{-tk^2}  \, k \, dk \lesssim t^{-1}.
\end{split}
\end{align*}

\smallskip
\noindent
{\it Proof of \eqref{Heatconc1'}-\eqref{Heatconc2'}}.
First, we observe that integration by parts arguments similar to those above, and 
to those in the proof of Lemma \ref{LPlem}, where the additional localization in $k\approx 2^\ell$ is included,
show that
\begin{align}\label{Hcorpr0}
{\big\| e^{-t\mathcal{L}_2} P^{\mathcal{L}_2}_\ell f \big\|}_{L^\infty(\R^2)} \lesssim {\| f \|}_{L^\infty}
\end{align}
for all $\ell \in \Z$.
Then, interpolating \eqref{Hcorpr0} with the $L^1 - L^\infty$ estimate \eqref{Heatconc2}
%with an $L^\infty - L^\infty$ estimate %%comm%{maximum principle or other argument ?} 
we get
%$L^p \rightarrow - L^\infty$ with decay rate $C t^{1/p-1}$.
\begin{align}\label{Hcorpr1}
{\big\| e^{-t\mathcal{L}_2} P^{\mathcal{L}_2}_\ell f \big\|}_{L^\infty(\R^2)} \lesssim 
  t^{-1/q} {\| f \|}_{L^q(rdr)}.
\end{align}
Interpolating the basic $L^2 - L^2$ bound with \eqref{Hcorpr0}, %the $L^\infty - L^\infty$ bound, 
and using duality, we also get %an $L^q - L^q$ bound 
\begin{align}\label{Hcorprq}
{\big\| e^{-t\mathcal{L}_2} P^{\mathcal{L}_2}_\ell f \big\|}_{L^q(\R^2)} \lesssim {\| f \|}_{L^q(rdr)}
\end{align}
for $q\in[1,\infty]$.
Interpolating this latter and \eqref{Hcorpr1} we get the claimed $L^q - L^p$ estimate \eqref{Heatconc1'}.
The proof of \eqref{Heatconc2'} is identical.
\end{proof}

%%%%%%%%%%%%%%%%%%%%%%%%%%%%%%%%%%%%%%%%%%%%%%%%%%%%%%%%%%%%%
%%%%%%%%%%%%%%%%%%%%%%%%%%%%%%%%%%%%%%%%%%%%%%%%%%%%%%%%%%%%%
%%%%%%%%%%%%%%%%%%%%%%%%%%%%%%%%%%%%%%%%%%%%%%%%%%%%%%%%%%%%%
%%%%%%%%%%%%%%%%%%%%%%%%%%%%%%%%%%%%%%%%%%%%%%%%%%%%%%%%%%%%%
%%%%%%%%%%%%%%%%%%%%%%%%%%%%%%%%%%%%%%%%%%%%%%%%%%%%%%%%%%%%%
%%%%%%%%%%%%%%%%%%%%%%%%%%%%%%%%%%%%%%%%%%%%%%%%%%%%%%%%%%%%%

\smallskip
\subsection{Estimates for Fourier norms}\label{SsecFouPhys}

%In this section we show how to bound $L^\infty$ and $L^2$-type Fourier norms by (weigthed) norms in physical space.
Here is a result on bounds for $L^\infty$ and $L^2$-type Fourier norms by (weighted) norms in physical space.

\begin{lem}\label{WDecKG4}
With the definition \eqref{WDecKG2} for $F$, we have the estimates
\begin{align}\label{WDecKG5}
{\| F \|}_{L^\infty_k} \lesssim {\| f \|}_{L^1(rdr)},% \quad \qquad {\| G \|}_{L^\infty_k} \lesssim {\| g \|}_{L^1(rdr)}
\end{align}
and
\begin{align}\label{WDecKG6} 
\begin{split}
{\| \partial_k F \|}_{L^2(dk)} & \lesssim {\| \jr^{3/2+} f \|}_{L^2(rdr)}, %\quad \qquad {\| \partial_k G \|}_{L^2(dk)}   \lesssim {\| \jr^{3/2+} g \|}_{L^2(rdr)}
\qquad {\| \partial_k^2 F \|}_{L^2(dk)}  \lesssim {\| \jr^{5/2+} f \|}_{L^2(rdr)}. %, \quad \qquad {\| \partial_k^2 G \|}_{L^2(dk)}  \lesssim {\| \jr^{5/2+} g \|}_{L^2(rdr)}.
\end{split}
\end{align}
Moreover, we have the bounds 
\begin{align}\label{WDecKG24}
\begin{split}
\Vert F\Vert_{L^2(dk)}&\lesssim  \Vert \langle r\rangle^{1/2+}f\Vert_{L^2(rdr)},
\\ 
\Vert \langle k\rangle F\Vert_{L^2(dk)}&\lesssim  \Vert r^{-1}\langle r\rangle^{3/2+}f\Vert_{L^2(rdr)} 
  + \Vert r^{1/2-} \langle r\rangle^{0+}\partial_rf\Vert_{L^2 (rdr)},
\\  
\Vert \langle k\rangle^2F\Vert_{L^2(dk)}&\lesssim \Vert r^{-2}\langle r\rangle^{5/2+}f\Vert_{L^2(rdr)} 
  + \Vert r^{-1/2-}\langle r\rangle^{0+}\partial_rf\Vert_{L^2 (rdr)}
  + \Vert r^{1/2-}\langle r\rangle^{0+}\partial_r^2f\Vert_{ L^2(rdr)}.
\end{split}
\end{align}
Finally, the following inequalities hold:
\begin{align}\label{WDecKG20}
\Vert (k\partial_k)F\Vert_{L^2(dk)}&\lesssim \Vert \langle r\rangle^{1/2+}f\Vert_{L^2(rdr)} 
  + \Vert \langle r\rangle^{3/2+}f'\Vert_{L^2(rdr)},
\\ 
\label{WDecKG23}
\Vert (k\partial_k)^2F\Vert_{L^2(dk)}&\lesssim \Vert \langle r\rangle^{1/2+}f\Vert_{L^2(rdr)} 
  + \Vert \langle r\rangle^{3/2+}f'\Vert_{L^2(rdr)}+\Vert \langle r\rangle^{5/2+}f''\Vert_{L^2(rdr)}.
\end{align}
%Additionally, for $\alpha\in(0,1)$, the following inequalities hold \begin{align}\label{WDecKG20} 
%\begin{split}
%{\| \jk^{\alpha} F \|}_{L^\infty(dk)} & \lesssim {\| ... \|}_{?(rdr)}, %%%%% \quad \qquad {\| \jk G \|}_{L^\infty(dk)}  \lesssim {\| .... g \|}_{L^?(rdr)}
%\\
%{\| \jk^{1+\alpha}\partial_k F \|}_{L^2(dk)} & \lesssim {\| ... \|}_{???(rdr)}, %%%%\quad \qquad {\| \jk^{7/4}\partial_k G \|}_{L^2(dk)}  \lesssim {\| .... g \|}_{L^2(rdr)},
%\\
%{\| \jk^{2+\alpha}\partial_k^2 F \|}_{L^2(dk)} & \lesssim {\| ... \|}_{???(rdr)}. %%%%%\quad \qquad {\| \jk^{13/4}\partial_k G \|}_{L^2(dk)}  \lesssim {\| .... g \|}_{L^2(rdr)}.
%\end{split}
%\end{align}
%Furthermore, \begin{align}\label{WDecKG23} 
%\begin{split}
%{\| \jk^{2} F \|}_{L^\infty(dk)} & \lesssim {\| ... \|}_{?(rdr)}, %%%%% \quad \qquad {\| \jk G \|}_{L^\infty(dk)}  \lesssim {\| .... g \|}_{L^?(rdr)}
%\\
%{\| \jk^{11/4}\partial_k F \|}_{L^2(dk)} & \lesssim {\| ... \|}_{???(rdr)}, %%%%\quad \qquad {\| \jk^{7/4}\partial_k G \|}_{L^2(dk)}  \lesssim {\| .... g \|}_{L^2(rdr)},
%\\
%{\| \jk^{17/4}\partial_k^2 F \|}_{L^2(dk)} & \lesssim {\| ... \|}_{???(rdr)}. %%%%%\quad \qquad {\| \jk^{13/4}\partial_k G \|}_{L^2(dk)}  \lesssim {\| .... g \|}_{L^2(rdr)}.
%\end{split}
%\end{align}
\end{lem}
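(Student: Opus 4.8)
\textbf{Proof strategy for Lemma \ref{WDecKG4}.}
The plan is to express everything in terms of the distorted Fourier transform $\wtF_1$ of $\sqrt{r}f$ and exploit the two regimes $rk \lesssim 1$ and $rk\gtrsim 1$ for the generalized eigenfunctions $\Phi_1(r,k^2)$ from Lemma \ref{Phi1Op1_1} and Lemma \ref{lemWeyl1}, together with the bounds $a_1(k^2)\approx\jk^{-1}$, $\rho_1'(k^2)\approx\jk^2$ and their symbol-type derivative estimates from Proposition \ref{SMOp1_1}. Recall $F(k) = a_1(k)^{-1}\wtF_1(\sqrt{r}f)(k) = a_1(k)^{-1}\int_0^\infty \Phi_1(r,k^2)\sqrt{r}f(r)\,dr$. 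The starting point for \eqref{WDecKG5} is the pointwise bound $|\Phi_1(r,k^2)| \lesssim \jk^{-1} \cdot (\text{something integrable against } \sqrt{r}|f|)$: more precisely from \eqref{prKG1Phi} one has $|\Phi_1(r,k^2)| \lesssim r^{3/2}\jr^{-1}$ in $rk\lesssim 1$ and $|\Phi_1(r,k^2)|\lesssim k^{-1/2}\jk^{-1/2}$ in $rk\gtrsim 1$; since $a_1(k)^{-1}\approx\jk$, in either region $|a_1(k)^{-1}\Phi_1(r,k^2)|\cdot\sqrt r \lesssim r\jr^{-1} \cdot\jk \wedge \jk^{1/2}k^{-1/2}\cdot r^{1/2}$, and using $r\jr^{-1}\jk\lesssim 1$ when $rk\lesssim1$ and $r^{1/2}k^{1/2}\lesssim 1$... wait — one simply uses $|a_1(k)^{-1}\Phi_1(r,k^2)| \lesssim r^{1/2}$ uniformly (checking the three cases of \eqref{prKG1Phi} against $a_1^{-1}\approx\jk$), so that $|F(k)|\lesssim \int_0^\infty r^{1/2}\cdot\sqrt r|f(r)|\,dr = \int_0^\infty r|f(r)|\,dr = \|f\|_{L^1(rdr)}$, giving \eqref{WDecKG5}.

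For the $L^2$-type bounds I would use the Plancherel-type identity coming from the Fourier representation \eqref{FTj}: $\|\wtF_1(h)\|_{L^2(\rho_1(d\xi))}^2 = \|h\|_{L^2(dr)}^2$, equivalently, with $\xi = k^2$, $\int_0^\infty |\wtF_1(h)(k)|^2\,\rho_1'(k^2)\,2k\,dk = \|h\|_{L^2(dr)}^2$. Hence $\|\jk^{j}F\|_{L^2(dk)}^2 = \int |\jk^j a_1^{-1}\wtF_1(\sqrt r f)|^2\,dk \approx \int |\jk^{j+1}\wtF_1(\sqrt r f)|^2\,dk \approx \int |\wtF_1(\sqrt r f)|^2 \jk^{2j}\,dk$. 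Using the diagonalization property $\wtF_1(n(\mathcal{H}_1)h) = n(k^2)\wtF_1(h)$, one rewrites $\jk^{2j}$ as (roughly) $\langle\mathcal{H}_1\rangle^{j}$ acting on $\sqrt r f$ and then commutes powers of $\sqrt{\mathcal{H}_1}$ with the $\sqrt r$ weight; since $\mathcal{H}_1 = -\partial_r^2 + \tfrac{3}{4}r^{-2} - (1-3U^2)$, each power of $\mathcal{H}_1$ produces two $\partial_r$'s or an $r^{-2}$, and on $\sqrt r f$ this yields the combinations $r^{-2}\jr^{\ldots}f$, $r^{-1/2-}\partial_r f$, $r^{1/2-}\partial_r^2 f$ appearing in \eqref{WDecKG24}. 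The $0+$ and $1/2-$ losses come from the fact that near $r=0$ the weight $r^{-1/2}$ is only borderline in $L^2(rdr)$ so one trades an $\varepsilon$ of decay; near $r=\infty$ the slow decay of $1-U^2\sim r^{-2}$ and of $\Phi_1$ forces the $\jr^{1/2+}$ factors — this is exactly why the losses are $\varepsilon$ and not zero. The $\partial_k F$ and $\partial_k^2 F$ estimates \eqref{WDecKG6} are proved by the same scheme after noting $\partial_k\wtF_1(\sqrt r f)(k) = \int (\partial_k\Phi_1(r,k^2))\sqrt r f\,dr$ and using the symbol bounds $|\partial_k^m \Phi_1(r,k^2)| \lesssim \langle r\rangle^{m}|\Phi_1(r,k^2)|$ (a consequence of Lemma \ref{Phi1Op1_1} and Lemma \ref{lemWeyl1}, where differentiating the oscillatory factor $e^{irk}$ costs a factor $r$), which converts each $\partial_k$ into a weight $\jr$, and also using $|\partial_k^m a_1^{-1}|\lesssim \jk$. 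The Mellin/scaling-type bounds \eqref{WDecKG20}-\eqref{WDecKG23} for $(k\partial_k)^mF$ are similar but slightly better behaved: $k\partial_k$ hitting the oscillation $e^{irk}$ gives $rk$, which on the support $rk\gtrsim 1$ is comparable to a full power of $\jk$ absorbed elsewhere, so one only loses weights $\jr^{1/2+},\jr^{3/2+},\jr^{5/2+}$ with no extra negative powers of $r$ at the origin; one carries this out by splitting into $rk\lesssim 1$ and $rk\gtrsim 1$ and integrating by parts in $r$ in the oscillatory region to distribute derivatives onto $f$.

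The main obstacle will be the region $rk \gtrsim 1$ where $\Phi_1(r,k^2) = a_1 e^{irk}(1+\phi_+) + \overline{a_1}e^{-irk}(1+\phi_-)$ oscillates: differentiating in $k$ brings down factors of $r$, so a naive estimate loses too much weight, and one must instead integrate by parts in $r$ (using $\partial_r e^{irk} = ik\, e^{irk}$) to move the derivative onto $\sqrt{r}f(r)$, which is what produces the $\partial_r f, \partial_r^2 f$ terms on the right-hand side of \eqref{WDecKG24} and \eqref{WDecKG23}; keeping track of the boundary contributions at $rk\approx 1$ (the junction between the two regimes) and verifying they are dominated by the stated norms, together with the bookkeeping of the precise $\varepsilon$-powers near $r=0$ and $r=\infty$, is the delicate part. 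A clean way to organize this is to prove first the commutator estimate $\|r^{-1/2}\langle\sqrt{\mathcal H_1}\rangle^{m}(\sqrt r f)\|_{L^2} \lesssim$ (right-hand side of the corresponding line in \eqref{WDecKG24}) via the explicit form of $\mathcal H_1$ and Hardy's inequality $\|r^{-1}g\|_{L^2(rdr)}\lesssim\|\partial_r g\|_{L^2(rdr)}$ on $\R^2$-radial functions (valid since $g=\sqrt r f$ vanishes suitably at $0$), and then transfer to the Fourier side by Plancherel; this isolates all the $r$-weighted Sobolev analysis into one elementary lemma and leaves only symbol estimates for $a_1,\rho_1',\Phi_1$ on the Fourier side.
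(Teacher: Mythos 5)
Your bound for \eqref{WDecKG5} is correct and is essentially the paper's argument, rephrased as the pointwise estimate $|a_1(k^2)^{-1}\Phi_1(r,k^2)|\lesssim r^{1/2}$ in all three regimes of \eqref{prKG1Phi}.

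However, your strategy for the $L^2$ estimates has a genuine gap. You propose to use the Plancherel identity together with the diagonalization $\wtF_1(n(\mathcal{H}_1)h)=n(k^2)\wtF_1(h)$ and then commute powers of $\mathcal{H}_1$ through the weight $\sqrt r$. But the $L^2(dk)$ norms in \eqref{WDecKG24} are \emph{not} Plancherel norms for $\wtF_1$: the Plancherel identity reads $\|h\|_{L^2(dr)}^2=\int|\wtF_1 h(k)|^2\,2k\,\rho_1'(k^2)\,dk$, with a measure $\approx k\jk^2\,dk$, whereas (after writing $a_1^{-1}\approx\jk$) $\|\jk^m F\|_{L^2(dk)}^2\approx\int|\wtF_1(\sqrt r f)|^2\jk^{2m+2}\,dk$ carries no factor of $k$. (There is also a small arithmetic slip in your reduction: $|\jk^{j+1}|^2=\jk^{2j+2}$, not $\jk^{2j}$.) Matching the two measures costs a factor $k^{-1}$, i.e.\ an operator $(\mathcal{H}_1-2)^{-1/2}$ on the physical side, which is singular precisely at the threshold resonance of $\mathcal{H}_1-2$; controlling it requires a weighted estimate, and this is exactly where the $\jr^{1/2+}$, $\jr^{3/2+}$, $\jr^{5/2+}$ weights originate — not primarily, as you suggest, from the slow decay of $1-U^2$ or from the boundary of $L^2(rdr)$ near $r=0$. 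In addition, in your ``clean way to organize this,'' the proposed intermediate commutator estimate involves $r^{-1/2}\langle\sqrt{\mathcal{H}_1}\rangle^m(\sqrt r f)$, but $r^{-1/2}$ does not commute with $\mathcal{H}_1$, so the claimed transfer back to Fourier side by Plancherel does not go through as stated.

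The paper avoids spectral/commutator gymnastics entirely: it splits $F=F^{(1)}+F^{(2)}$ with cutoffs $\chi(rk)$ and $\chi^c(rk)$, reduces to explicit integrals $A(k)$ and $B(k)$ via \eqref{prKG1Phi}, differentiates directly in $k$, and then uses (i) for the non-oscillatory piece $A$, the explicit computation $\|\jk^m\chi(rk)\|_{L^2_k}\lesssim r^{-m-1/2}\jr^m$ followed by weighted Cauchy--Schwarz in $r$ (this is where the $r^{-1/2}$ factor from the low-frequency end is absorbed into the $\jr^{\cdot+}$ weights), and (ii) for the oscillatory piece $B$, a Calder\'on--Vaillancourt-type $L^2$-boundedness criterion for pseudodifferential operators with symbols localized to $r\approx 2^\ell$, combined with integration by parts in $r$ to trade powers of $k$ for derivatives of $f$ (this produces the $\partial_r f$, $\partial_r^2 f$ terms in \eqref{WDecKG24}). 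Your general intuition about integrating by parts in $r$ in the oscillatory region and the $(k\partial_k)\leftrightarrow(r\partial_r)$ correspondence is on the right track, but as proposed the argument does not close; you would need either to carry out the direct PDO argument as in the paper, or to prove a low-frequency weighted resolvent estimate for $(\mathcal{H}_1-2)^{-1/4}$ before the Plancherel transfer can be made rigorous.
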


\smallskip
Estimates of the type \eqref{WDecKG20}-\eqref{WDecKG23} convert the
operator $k\partial_k$ on the Fourier side to $r\partial_r$ on the
physical side; due to our definition of $F$ an extra power of
$\jr^{1/2}$ appears on the right-hand side. These estimates are related to the so-called ``transference identities'' in \cite{KST,KMS}.
%The extra additional $\jr^{0+}$ is only due to technical reasons.
Since the proof of Lemma \ref{WDecKG4} is somewhat standard, we will only give the main details
of its proof.

\smallskip
\begin{proof}[Proof of Lemma \ref{WDecKG4}]
%In fact, first of all, in order to take advantage of the asymptotics of $\Phi_1(r,k^2)$ derived in the previous sections we decompose the integral under study into two regions, namely, $\{rk\lesssim1\}$ and $\{rk\gtrsim1\}$. More concretely 
As before, we decompose $F$ as
\begin{align*}
F = F^{(1)} + F^{(2)},  \qquad & F^{(1)}(k) := \frac{1}{a_1(k^2)} \int_0^\infty \chi(rk) \Phi_1(r,k^2) \sqrt{r} f(r) \, dr,
%\\
%& F^{(2)}(k) := \frac{1}{a_1(k^2)} \int_0^\infty \chi^c(rk) \Phi_1(r,k^2) \sqrt{r} f(r) \, dr,
\end{align*}
and, according to the asymptotics \eqref{prKG1Phi}, we can reduce the analysis of 
$F^{(1)}$ and $F^{(2)}$ to estimating the integrals (respectively)
\begin{align}
A(k) & := \frac{1}{a_1(k^2)} \int_0^\infty \chi(rk) \frac{r^{3/2}}{\jr} (1+\phi(r,k)) \sqrt{r} f(r) \, dr, \label{WDecKG21}
\\
B(k) & := %\frac{1}{a(k)} 
  \int_0^\infty \chi^c(rk) \dfrac{1}{\sqrt{k}} %a(k^2) 
  e^{irk} \big( 1 + \phi(r,k)) \sqrt{r} f(r) \, dr,\label{WDecKG22}
\end{align}
where $\phi$ is a generic function satisfying the symbol-type estimates
\begin{align}\label{WDecKG12}
| (k\partial_k)^\alpha (r\partial_r)^\beta \phi(r,k) | \lesssim 1, \qquad \alpha,\beta = 0,1,2.
\end{align}
For \eqref{WDecKG5}-\eqref{WDecKG6} we then want to show that 
\begin{align}\label{WDecKG13}
{\| A \|}_{L^\infty_k} + {\| B \|}_{L^\infty_k}  \lesssim {\| f \|}_{L^1(rdr)},
\end{align}
and
\begin{align}\label{WDecKG14}
{\| \partial_k A \|}_{L^2(dk)} +  {\| \partial_k B \|}_{L^2(dk)} \lesssim {\| \jr^{3/2+} f \|}_{L^2(rdr)},
\\
\label{WDecKG15}
{\| \partial_k^2 A \|}_{L^2(dk)} +  {\| \partial_k^2 B \|}_{L^2(dk)} \lesssim {\| \jr^{5/2+} f \|}_{L^2(rdr)}.
\end{align}

\smallskip
\noindent
{\it Proof of \eqref{WDecKG13}.}
Recall that $a(k^2) \approx \jk^{-1}$, so that, 
\begin{align*}
| A(k) | + | B(k) |  
  & \lesssim \int_{r \lesssim 1/k} \jk \frac{r^{3/2}}{\jr} \sqrt{r} |f(r)| \, dr
  +  \int_{r \gtrsim 1/k} \dfrac{1}{\sqrt{k}} \sqrt{r} |f(r)| \, dr \lesssim \int_0^\infty r\vert f(r)\vert \, dr,
\end{align*}
having used that $r\langle r\rangle^{-1}\jk\lesssim 1$ on the support of the integral. 
%For the other term we simply estimate
%\begin{align*}
%| B(k) | \lesssim  \int_{r \gtrsim 1/k} \dfrac{1}{\sqrt{k}} \sqrt{r} |f(r)| \, dr \lesssim \int_0^\infty r |f(r)| \, dr.
%\end{align*}

\smallskip
\noindent
{\it Proof of \eqref{WDecKG14}-\eqref{WDecKG15}.}
For the first term we have $|\partial_k A| \lesssim | A_1 | + | A_2 | + | A_3 |$ with
\begin{align}\label{WDecKG16}
\begin{split}
\\
A_1(k) &:= \frac{\partial_k a(k^2)}{a^2(k^2)} \int_0^\infty \chi(rk) \frac{r^{3/2}}{\jr} \sqrt{r} f(r) \, dr,
\\
A_2(k) &:= \frac{1}{a(k^2)} \int_0^\infty \chi'(rk) r \, \frac{r^{3/2}}{\jr} \sqrt{r} f(r) \, dr,
\\
A_3(k) &:= \frac{1}{a(k^2)} \int_0^\infty \chi(rk) \, \frac{r^{3/2}}{\jr} \partial_k \phi(r,k) \, \sqrt{r} f(r) \, dr,
\end{split}
\end{align}
Using \eqref{propSMa} which gives $|a|^{-2} | \partial_k a | \lesssim %\jk^2 \jk^{-2} \approx 
1$, we integrate in $k$ whence it follows that
\begin{align*}
{\| A_1 \|}_{L^2_k} & \lesssim %\int_0^\infty
\int_0^\infty  {\| \chi(rk) \|}_{L^2_k} \frac{r^2}{\jr} |f(r)| \, dr
	%\\
	%& \lesssim \int_0^1 r^{3/2} |f(r)| \, dr  +  \int_1^\infty \sqrt{r} |f(r)| \, dr 
	\lesssim \Big( \int_0^\infty  \jr^{1+} |f(r)|^2 \, r dr \Big)^{1/2}.
\end{align*}
%having used $\Vert \chi(rk)\Vert_{L^2_k}\lesssim r^{-1/2}$. 
%Note that the above right-hand side is bounded by  \eqref{WDecKG14}.
We can similarly bound
\begin{align*}
{\| A_2 \|}_{L^2_k}  & \lesssim \int_0^\infty  {\| \jk \chi'(rk) \|}_{L^2_k} \frac{r^3}{\jr} |f(r)|  dr
	%\\
	%& \lesssim \int_0^1 r^{3/2} |f(r)| \, dr  +  \int_1^\infty r^{3/2} |f(r)| \, dr
	\lesssim \Big( \int_0^\infty  \jr^{3+} |f(r)|^2 \,r dr \Big)^{1/2},
\end{align*}
having used $\Vert \jk \chi'(rk)\Vert_{L^2_k}\lesssim \langle r\rangle r^{-3/2}$. 
%The last bound above is consistent with %the right-hand side of 
%\eqref{WDecKG14}.
Finally, for the third term, we first bound
% %comm%{using this instead of $1/k$. Check it's okay and state it in Lemma \ref{lemFou1} for example}
\begin{align*}
{\| A_3 \|}_{L^2_k}  & \lesssim  \left\Vert \int_0^\infty  \jk \chi(rk)  \frac{r^2}{\jr} \vert \partial_k\phi(r,k)\vert \, |f(r)| \,  dr\right\Vert_{L^2_k} .
\end{align*}
Then, using $rk\lesssim1$, an inspection of Lemma \ref{lemFou1} which shows that $\vert \partial_k\phi(r,k)\vert \lesssim r$,
and applying Cauchy-Schwarz it follows that 
\[
 \Vert A_3\Vert_{L^2_k}\lesssim \int_0^\infty r^{3/2}\vert f(r)\vert dr
  \lesssim \Big( \int_0^\infty  \jr^{3+} |f(r)|^2 \,r dr \Big)^{1/2}.
\]

\smallskip
To prove  \eqref{WDecKG14} for the integral $B$, we first bound %decompose said quantity as follows
$|\partial_k B| \lesssim | B_1 | + | B_2 | + | B_3 |,$ 
with
\begin{align}\label{WDecKG17}
\begin{split}
& B_1(k) := \int_0^\infty \partial_k \big[ \chi^c(rk) \dfrac{1}{\sqrt{k}} \big] e^{irk} \big( 1 + \phi(r,k)) \sqrt{r} f(r) \, dr,
\\
& B_2(k) := \int_0^\infty \chi^c(rk) \dfrac{1}{\sqrt{k}} \, ir e^{irk} \big( 1 + \phi(r,k)) \sqrt{r} f(r) \, dr,
\\
& B_3(k) :=\int_0^\infty \chi^c(rk) \dfrac{1}{\sqrt{k}} e^{irk} \, \partial_k \phi(r,k) \sqrt{r} f(r) \, dr.
\end{split}
\end{align}
To bound these terms in $L^2$ we use the following standard boundedness property of pseudo-differential operators:
%%comm%{figure out proper criterion, if there is an exact one that works, and cite it.
%Can also do it by hand, not hard}
%not quite Calderon-Villancourt \cite{CVPDO}
let $m(r,k)$ be an operator supported on $r \approx 2^\ell$ and such that 
\begin{align}\label{PDOcrit}
%\sup_{r,k} \sup_{j,l=0,1,2}|(r \partial_r)^l(k \partial_k)^j m(r,k)| \leq M,
\sup_{r} \sup_{l=0,1,2}|(r \partial_r)^l m(r,k)| \leq M,
\end{align}
then the PDO with symbol $m$ is bounded by $C M$ as an operator from $L^2$ to $L^2$.
We can then write 
\begin{align*}
B_1(k) & = \int_0^\infty m_1(r,k) e^{irk} r^2 f(r) \, dr, 
\qquad m_1(r,k) := \partial_k \big[ \chi^c(rk) (rk)^{-1/2} r^{-1} \big] \big( 1 + \phi(r,k))
\end{align*}
so that, using the symbol properties from Lemma \ref{lemWeyl}, 
and applying the above criterion we immediately get
\begin{align*}
{\| B_1(k) \|}_{L^2} \lesssim \sum_{\ell \in \Z} {\| r^2 \varphi_\ell \, f \|}_{L^2(dr)} = {\| \jr^{3/2+} f \|}_{L^2(rdr)}.
\end{align*}
It is easy to see that a similar argument applies almost verbatim to $B_2$ and $B_3$ by writing
\begin{align*}
B_2(k) & = \int_0^\infty m_2(r,k) e^{irk} r^2 f(r) \, dr, 
\qquad m_2(r,k) := \chi^c(rk) (rk)^{-1/2} \big( 1 + \phi(r,k)),
\\
B_3(k) & = \int_0^\infty m_3(r,k) e^{irk} r^2 f(r) \, dr, 
\qquad m_3(r,k) := \chi^c(rk) (rk)^{-1/2} r^{-1} \partial_k \phi(r,k).
\end{align*}
%\newpage 
%
%To bound $B_1$ in $L^2$ we use the following trivial calculation \[
%\big\Vert \partial_k\big(\chi^c(rk) \, k^{-1/2}\big)\big\Vert_{L^2}\lesssim r  .
%\]
%That being said, a direct application of Cauchy-Schwarz along with the last estimate yields \[
%\Vert B_1(k)\Vert_{L^2_k}\lesssim \int r^{3/2}\, \vert f(r)\vert \, dr\lesssim \Big(\int_0^\infty \langle r\rangle^{1+}r^2\, \vert f(r)\vert^2\,r\,dr\Big)^{1/2}.
%\]

%\smallskip
%\noindent
%{\it Proof of \eqref{WDecKG15}.}
It is not difficult to see that we can use similar arguments to bound the second derivatives.
For the term $\partial_k^2 A$, we notice that, with the notation in \eqref{WDecKG16},
we can bound $|\partial_k^2 A| \lesssim | \partial_kA_1 | + | \partial_kA_2 | + | \partial_kA_3 |$. 
Then, we observe that an application of $\partial_k$ 
to the three terms in \eqref{WDecKG16} will cost exactly a factor of $r$, 
using also that $|\partial_k^\alpha \phi | \lesssim r^\alpha$, $\alpha =0,1,2$.
%%comm%{Check and state in Lemmas \ref{lemFou1} and \ref{lem_p1_t1}
%or just after \eqref{prKG1Phi} and refer to it.}
Similarly, with the same notation in \eqref{WDecKG17},
we have $|\partial_k^2 B| \lesssim | \partial_k B_1 | + | \partial_k B_2 | + | \partial_k B_3 |$,
and each application of $\partial_k$ to the terms in \eqref{WDecKG17} costs either a factor of $r$,
which is consistent with the desired bound,
or a factor of $1/k$, but this is essentially equivalent to a factor of $r$ in view of the cutoff $\chi^c(rk)$.

\medskip
\noindent
{\it Proof of \eqref{WDecKG24}.} 
Let $m\in\{0,1,2\}$. We use again the decomposition in \eqref{WDecKG21} and \eqref{WDecKG22}. 
In the case of $A(k)$, using that $\Vert \langle k\rangle^m\chi(rk)\Vert_{L^2_k}\lesssim r^{-m-1/2}\langle r\rangle^{m}$,
we have 
\begin{align*}
\Vert \jk^m A(k)\Vert_{L^2_k} \lesssim \int_0^\infty {\| \langle k\rangle^m \chi(rk) \|}_{L^2_k} 
  |f(r)| rdr \lesssim \Vert r^{-m}\langle r\rangle^{m+1/2+}f \Vert_{L^2(rdr)}.
\end{align*}
%having used that \[
%\Vert \langle k\rangle^m\chi(rk)\Vert_{L^2_k}\lesssim r^{-m-1/2}\langle r\rangle^{m}.
%\]

To bound the term $B(k)$ it suffices to look at $k \gg 1$, 
so that we can replace $\langle k\rangle$ for $k$. 
We also suppose that $m=2$, the case $m=1$ being similar.
Integrating by parts we obtain
\begin{align*}
& \Big| \int_0^\infty \chi^c(rk) \dfrac{k^2}{\sqrt{k}} e^{irk} \big( 1 + \phi(r,k)) \sqrt{r} f(r) \, dr \Big|
\lesssim |\widetilde{B}_1|+ |\widetilde{B}_2| + |\widetilde{B}_3|,
%\\ & \qquad = -\int_0^\infty  \dfrac{1}{\sqrt{k}} e^{irk} \partial_r^2\Big(\chi^c(rk) \chi^c(ck)\big( 1 + \phi(r,k)) \sqrt{r} f(r)\Big) \, dr
%\end{align*}
%Hence, the problem reduces to bounding the following integrals
%\begin{align*}
\\
& \widetilde{B}_1 := \int_0^\infty  \dfrac{1}{\sqrt{k}} e^{irk} \partial_r^2\Big(\chi^c(rk) \chi^c(ck)\big( 1 + \phi(r,k)) \sqrt{r} \Big)\,f(r) \, dr,
\\ 
& \widetilde{B}_2 :=\int_0^\infty  \dfrac{1}{\sqrt{k}} e^{irk} \partial_r\Big(\chi^c(rk) \chi^c(ck)\big( 1 + \phi(r,k)) \sqrt{r} \Big)\,\partial_rf(r) \, dr,
\\ 
& \widetilde{B}_3 := \int_0^\infty  \dfrac{1}{\sqrt{k}} e^{irk} \chi^c(rk) \chi^c(ck)\big( 1 + \phi(r,k)) \sqrt{r} \partial_r^2f(r) \, dr.
\end{align*}
%Thus, as in the previous case, 
%For convenience, we write these as
The above integrals can conveniently be written as
\[
\widetilde{B}_j := \int_0^\infty \widetilde{m}_i(r,k)e^{irk} r^{j-2} \partial_r^{j-1} f(r) \, dr
\]
with the natural definition of the symbols $\widetilde{m}_j$.
%\begin{align*}
%\widetilde{m}_1(r,k)&:=\dfrac{r}{\sqrt{k}}\partial_r^2\Big(\chi^c(rk)\chi^c(ck) (1+\phi(r,k))\sqrt{r}\Big),
%\\ \widetilde{m}_2(r,k)&:=\dfrac{1}{\sqrt{k}}\partial_r\Big(\chi^c(rk)\chi^c(ck) (1+\phi(r,k))\sqrt{r}\Big),
%\\ \widetilde{m}_3(r,k)&:=\dfrac{1}{\sqrt{rk}}\chi^c(rk)\chi^c(ck) (1+\phi(r,k)).
%\end{align*}
%
%In the case $k\lesssim1$ we do not integrate by parts, and simply write 
%\[
%\widetilde{B}_0=\int_0^\infty \widetilde{m}_0(r,k)e^{irk} f(r)\,rdr, \qquad \hbox{with} 
%  \qquad \widetilde{m}_0(r,k):=\dfrac{1}{\sqrt{rk}}\chi^c(rk)\chi(ck)(1+\phi(r,k)).
%\]
%
To conclude, it suffices to notice that the symbols $\wt{m}_j$, $j=1,\dots,3$, 
%are of order zero in the sense
%that they 
satisfy \eqref{PDOcrit}, in view of
%can be easily verified using that 
$|(r \partial_r)^\alpha \phi | \lesssim 1$, see \eqref{WDecKG12}; 
%and Lemma \ref{lemWeyl};}
then applying the same $L^2$-boundedness criterion for PDOs used before, we obtain
\begin{align*}
\Vert \widetilde{B}_j \Vert_{L^2_k} & \lesssim \Vert r^{j-5/2-}\langle r \rangle^{0+} \partial_r^{j-1} f \Vert_{L^2(rdr)}.
%
%\Vert \widetilde{B}_0\Vert_{L^2_k}&\lesssim \Vert r^{1/2-}\langle r\rangle^{0+}f\Vert_{L^2(rdr)},
%\\ 
%\Vert \widetilde{B}_1\Vert_{L^2_k}&\lesssim \Vert r^{-3/2}\langle r\rangle^{0+}f\Vert_{L^2(rdr)}, 
%\\ 
%\Vert \widetilde{B}_2\Vert_{L^2_k}&\lesssim \Vert r^{-1/2}\langle r\rangle^{0+}\partial_rf\Vert_{L^2(rdr)}, 
%\\ 
%\Vert \widetilde{B}_3\Vert_{L^2_k}&\lesssim \Vert r^{1/2}\langle r\rangle^{0+}\partial_r^2f(r)\Vert_{L^2(rdr)}.
\end{align*}
Combining these with the bound for the term $A$ we obtain the claimed \eqref{WDecKG24}.

\medskip
\noindent
{\it Proof of \eqref{WDecKG20}-\eqref{WDecKG23}.} 
The proof is almost identical to that of the previous case. Using that $rk\lesssim 1$ 
in the support of $\partial_kA$ in \eqref{WDecKG16}, 
bounding as we did for $\Vert \partial_kA\Vert_{L^2_k}$ we see that 
\[
\Vert k\partial_kA\Vert_{L^2_k} \lesssim \Big(\int_0^\infty \langle r\rangle^{0+} \vert f(r)\vert^2 \,rdr \Big)^{1/2}.
\] 
For the other term we first bound $|k\partial_kB| \lesssim |kB_1| + |kB_2| + |kB_3|$, see \eqref{WDecKG17}.
We then write
\begin{align*}
kB_j(k) & = \int_0^\infty q_j(r,k) e^{irk} r f(r) \, dr, \qquad j\in\{1,3\},
%kB_1(k) & = \int_0^\infty q_1(r,k) e^{irk} r f(r) \, dr, 
%\qquad q_1(r,k) :=k \partial_k \big[ \chi^c(rk) (rk)^{-1/2}  \big] \big( 1 + \phi(r,k)),
%\\
%kB_3(k) & = \int_0^\infty q_3(r,k) e^{irk} r f(r) \, dr, 
%\qquad q_3(r,k) := k\chi^c(rk) (rk)^{-1/2}  \partial_k \phi(r,k),
\end{align*}
with the natural definition of the symbols $q_1$ and $q_3$;
it is easy to verify that these satisfy the symbol-type bounds \eqref{PDOcrit},
%as $m_1$, and $m_3$, 
and hence, 
%\[
$\Vert kB_1\Vert_{L^2_k}+\Vert kB_2\Vert_{L^2_k}\lesssim \Vert \langle r\rangle^{1/2+}f\Vert_{L^2(rdr)}.$
%\]
Regarding $kB_2$, integration by parts in $r$ leads to $|kB_2(k)|\lesssim |\widetilde{B}_{2,1}| + |\widetilde{B}_{2,2}|$,
where
%\begin{align*}
%kB_2(k)& = -\int_0^\infty  \dfrac{1}{\sqrt{k}} e^{irk}\,\partial_r\Big[\chi^c(rk) ir  \big( 1 + \phi(r,k)) \sqrt{r}\Big] f(r) \, dr
%\\ & \quad -\int_0^\infty \chi^c(rk) \dfrac{1}{\sqrt{k}} \, ir e^{irk} \big( 1 + \phi(r,k)) \sqrt{r} f'(r) \, dr 
%  =: - \widetilde{B}_{2,1} - \widetilde{B}_{2,2}.
%\end{align*}
%Then, we write 
\begin{align*}
\widetilde{B}_{2,1} & := \int_0^\infty \widetilde{m}_{2,1}(r,k) e^{irk} r f(r) \, dr, 
 & & \widetilde{m}_{2,1}(r,k) :=\dfrac{1}{r\sqrt{k}} \partial_r\Big[\chi^c(rk) ir  \big( 1 + \phi(r,k)) \sqrt{r}\Big],
\\
\widetilde{B}_{2,2} & := \int_0^\infty \widetilde{m}_{2,2}(r,k) e^{irk} r^2 f'(r) \, dr, 
 & & \widetilde{m}_{2,2}(r,k) := \chi^c(rk) (rk)^{-1/2} \big( 1 + \phi(r,k)).
\end{align*}
Since the symbols $\widetilde{m}_{2,1}$ and $\widetilde{m}_{2,2}$ also satisfy the bound \eqref{PDOcrit}
%, the same proof as that for $B$ yields 
we get
%\[
$\Vert kB_2\Vert_{L^2_k}\lesssim \Vert \langle r\rangle^{1/2+}f\Vert_{L^2(rdr)}+\Vert \langle r\rangle^{3/2+}f'\Vert_{L^2(rdr)},$
%\]
hence concluding the proof of \eqref{WDecKG20}.

\smallskip
Finally, it is not difficult to see that the proof of \eqref{WDecKG23} follows the same lines up to obvious modifications.
If both derivatives $\partial_k$ hit the exponential $e^{irk}$, then we integrate by parts in $r$ twice. 
If only one derivative $\partial_k$ hits the exponential, then we integrate by part once.
Note that whenever the derivative acts on any term other than the exponential, 
we lose exactly a factor $k^{-1}$, and hence the operator $(k\partial_k)$ yields the same estimates as before in those cases. 
The proof of the lemma is complete. 
\end{proof}

%%%%%%%%%%%%%%%%%%%%%%%%%%%%%%%%%%%%%%%%%%%%%%%%%%%%%%%%%%%
%%%%%%%%%%%%%%%%%%%%%%%%%%%%%%%%%%%%%%%%%%%%%%%%%%%%%%%%%%%
%%%%%%%%%%%%%%%%%%%%%%%%%%%%%%%%%%%%%%%%%%%%%%%%%%%%%%%%%%%
%%%%%%%%%%%%%%%%%%%%%%%%%%%%%%%%%%%%%%%%%%%%%%%%%%%%%%%%%%%
%%%%%%%%%%%%%%%%%%%%%%%%%%%%%%%%%%%%%%%%%%%%%%%%%%%%%%%%%%%

%%%%%%%%%%%%%%%%%%%%%%%%%%%%%%%%%%%%%%%%%%%%%%%%%%%%%%%%%%%%%
%%%%%%%%%%%%%%%%%%%%%%%%%%%%%%%%%%%%%%%%%%%%%%%%%%%%%%%%%%%%%
%%%%%%%%%%%%%%%%%%%%%%%%%%%%%%%%%%%%%%%%%%%%%%%%%%%%%%%%%%%%%
%%%%%%%%%%%%%%%%%%%%%%%%%%%%%%%%%%%%%%%%%%%%%%%%%%%%%%%%%%%%%
%%%%%%%%%%%%%%%%%%%%%%%%%%%%%%%%%%%%%%%%%%%%%%%%%%%%%%%%%%%%%
%%%%%%%%%%%%%%%%%%%%%%%%%%%%%%%%%%%%%%%%%%%%%%%%%%%%%%%%%%%%%
%%%%%%%%%%%%%%%%%%%%%%%%%%%%%%%%%%%%%%%%%%%%%%%%%%%%%%%%%%%%%

\smallskip
\subsection{Littlewood-Paley projections}\label{ssecLP}
In this subsection we prove that the Littlewood-Paley projections associated to
the operators $\mathcal{L}_j$, $j=1,2$, as defined in \eqref{LPF1L} and \eqref{LPF1'}
(see also the notation from Subsection \ref{secnot}) are bounded on Lebesgue spaces.
Since this is somewhat standard, we will only provide a sketch of the proof below.
%Notice that we do not pursue the question of boundedness of wave operators, which would automatically imply 
%the claimed property for the Littlewood-Paley projections.

\begin{lem}\label{LPlem}
Consider, for $j=1,2$, the operators
\begin{align}\label{LPF1Lap}
\begin{split}
& P_\ell^{\mathcal{L}_j} f (r) %= r^{-1/2} P_\ell^{\mathcal{H}_j} (r^{1/2} f ) = 
:= \int_0^\infty \frac{1}{\sqrt{rs}} K_{j,\ell}(r,s) f(s) \, s  ds,
\\
& K_{j,\ell}(r,s) := \int_0^\infty \Phi_j(r,k^2) \Phi_j(s,k^2)  \, \varphi_\ell(k) \, 2k \rho_j'(k^2) \, dk,
\end{split}
\end{align}
where $\Phi_j$ are the generalized eigenfunctions associated to 
the conjugated operator $\mathcal{H}_j = r^{1/2} \mathcal{L}_j r^{-1/2}$.
Then
\begin{align}\label{LPlemconc}
{\big\| P_\ell^{\mathcal{L}_j} f (r) \big\|}_{L^p(rdr)} \lesssim {\| f \|}_{L^p(rdr)}, \qquad 1\leq p \leq \infty.
\end{align}

\end{lem}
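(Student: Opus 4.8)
The strategy is the standard one for establishing $L^p$-boundedness of spectrally-defined Littlewood-Paley projections: reduce matters to a pointwise kernel estimate of Calderón-Zygmund type, namely that
\begin{align}\label{LPlempr_ker}
\Big| \frac{1}{\sqrt{rs}} K_{j,\ell}(r,s) \Big| \lesssim \frac{2^{2\ell}}{(1 + 2^\ell|r-s|)^N}, \qquad N \text{ large},
\end{align}
uniformly in $\ell\in\Z$, so that the operator with kernel $\frac{1}{\sqrt{rs}}K_{j,\ell}(r,s)$ has $L^1(rdr)\to L^1(rdr)$ and $L^\infty(rdr)\to L^\infty(rdr)$ operator norms bounded by an absolute constant (via Schur's test against the measure $s\,ds$), and then interpolate to obtain \eqref{LPlemconc} for all $p\in[1,\infty]$. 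I would carry this out for $j=2$ in detail and remark that $j=1$ is identical, since the only structural difference (the $\cos(\sqrt 2\ln r),\sin(\sqrt2\ln r)$ oscillations of $\Phi_1$ for $r\gtrsim 1$, $rk\lesssim1$) is harmless: it contributes bounded, slowly-varying factors that do not affect any of the integrations by parts below.

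\textbf{Key steps.} First I would insert the decomposition $1 = \chi(rk)\chi(sk) + \chi^c(rk)\chi(sk) + \chi^c(sk)$ (recall $r\ge s$ WLOG by symmetry of $K_{j,\ell}$) exactly as in the proofs of Propositions \ref{propKG1dec} and \ref{propKG2dec}, and use the corresponding expansions \eqref{prKG2Phi} (resp. \eqref{prKG1Phi}) for $\Phi_j$ together with $\rho_j'(k^2)\approx\jk^2$ and $|a_j(k^2)|^2\rho_j'(k^2)=1/(4\pi)$. In the region $rk,sk\lesssim 1$ the integrand is supported on $k\lesssim 2^{-\ell'}$ with $\varphi_\ell(k)$ forcing $k\approx 2^\ell$, and the kernel is $\int \frac{rs}{\jr\langle s\rangle}(1+\phi(r,k))(1+\phi(s,k))\,\varphi_\ell(k)\,2k\rho_j'(k^2)\,dk$ times $\frac{1}{\sqrt{rs}}$; since here $r,s\lesssim 2^{-\ell}$, repeated integration by parts in $k$ (there is no oscillatory phase, but one still gains $2^{-\ell}$ per derivative from the cutoffs and symbols, against the trivial bound $\lesssim 2^{2\ell}$ from the measure) gives \eqref{LPlempr_ker}. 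In the regions where at least one of $rk,sk\gtrsim 1$, the factors $e^{\pm irk}e^{\pm isk}$ combine into a phase $e^{i(r\pm s)k}$; on the diagonal block the phase $e^{i(r-s)k}$ is stationary-free and integration by parts using $\partial_k[e^{i(r-s)k}] = i(r-s)e^{i(r-s)k}$ together with the symbol bounds $|\partial_k^\alpha\phi|\lesssim k^{-\alpha}$ from \eqref{prKG2phiest}/\eqref{prKG1phi} yields the decay $(1+2^\ell|r-s|)^{-N}$ (each integration by parts costs $|r-s|^{-1}$ and gains, from differentiating $\varphi_\ell(k)k\jk^2$ and the $\sqrt{k}$ factors, a factor $\lesssim 2^{-\ell}$, net $\lesssim (2^\ell|r-s|)^{-1}$, with the non-decaying range $2^\ell|r-s|\lesssim 1$ handled by the trivial bound $\lesssim 2^{2\ell}$); the $e^{i(r+s)k}$ term is even better since $r+s\gtrsim |r-s|$ and it is never stationary. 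The off-diagonal block $\chi^c(rk)\chi(sk)$ (so $s\lesssim 2^{-\ell}\lesssim r$) is treated the same way, using $r-s\approx r$ and $|\partial_k S|=|r-s|\approx r\gtrsim 2^{-\ell}$. Finally I would record the partition-of-unity identity $\sum_\ell P_\ell^{\mathcal{L}_j}f = f$ a.e. and the almost-orthogonality $P_\ell^{\mathcal{L}_j}P_{\ell'}^{\mathcal{L}_j}=0$ for $|\ell-\ell'|>1$, both of which follow from the corresponding properties of the spectral calculus $\varphi_\ell(\sqrt{\mathcal{H}_j})$ via $P_\ell^{\mathcal{L}_j} = r^{-1/2}\varphi_\ell(\sqrt{\mathcal{H}_j})r^{1/2}$.

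\textbf{Main obstacle.} The only point requiring genuine care — as opposed to bookkeeping — is verifying the kernel bound \eqref{LPlempr_ker} uniformly across the three spatial regimes and, within the regimes where $rk\gtrsim 1$, making sure the conjugation factors $\frac{1}{\sqrt{rs}}$, the $\frac{1}{\sqrt{k}}$'s from the Weyl asymptotics, the weight $\jk^2$ from $\rho_j'$, and the extra $k$ from $2k\,dk$ balance so that the prefactor is exactly $2^{2\ell}$ (not a worse power) on the diagonal; once that accounting is done the integration-by-parts argument is routine and mirrors line-by-line the estimates of $K_{1,\ell},K_{2,\ell},K_{3,\ell}$ in Section \ref{secdecay} with the oscillatory factor $e^{\pm it\sqrt{k^2+2}}$ (or $e^{\pm itk}$) simply removed. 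Since all of this is a direct transcription of arguments already carried out in detail in Propositions \ref{propKG1dec}–\ref{propKG2dec}, I would present only the diagonal $rk,sk\gtrsim1$ case in full and indicate that the remaining cases and the index $j=1$ follow identically.
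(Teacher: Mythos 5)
Your high-level strategy — reduce to a pointwise kernel estimate and run Schur's test against the measure $s\,ds$, then interpolate — is the same one the paper uses, so the framework is right. However, the claimed kernel bound
$$
\Big| \tfrac{1}{\sqrt{rs}} K_{j,\ell}(r,s) \Big| \lesssim \frac{2^{2\ell}}{(1+2^\ell|r-s|)^{N}}
$$
is too lossy to close the Schur test you invoke, and this is a genuine gap. The problem sits in the diagonal block $rk,sk\gtrsim 1$: there you absorbed the conjugation factor $(rs)^{-1/2}$ into the prefactor using $(rs)^{-1/2}\lesssim k\approx 2^\ell$, turning $\frac{2^\ell}{\sqrt{rs}}$ into $2^{2\ell}$. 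That step discards exactly the decay you need against the growing measure $s\,ds$: for $r\approx s\approx R\gg 2^{-\ell}$ the near-diagonal contribution to the Schur integral is
$$
\int_{|r-s|\lesssim 2^{-\ell}} \frac{2^{2\ell}}{(1+2^\ell|r-s|)^N}\, s\,ds \approx 2^{2\ell}\cdot 2^{-\ell}\cdot R = 2^\ell R,
$$
which is unbounded in $R$, so $\sup_{r}\int_0^\infty |\cdot|\,s\,ds$ is not $O(1)$ and neither endpoint of your interpolation scheme is actually established. (In the other two regions $rk\lesssim 1$ forces $r\lesssim 2^{-\ell}$, so the measure $s\,ds$ cannot accumulate and your bound does suffice there; the issue is confined to Case 1.)

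The fix is to retain the $(rs)^{-1/2}$ factor rather than absorb it: in the diagonal regime the sharp estimate is
$$
\Big| \tfrac{1}{\sqrt{rs}} K_{j,\ell}(r,s) \Big| \lesssim \frac{1}{\sqrt{rs}}\cdot\frac{2^\ell}{(1+2^\ell|r-s|)^{N}},
$$
and this does close the Schur test, because the kernel is concentrated on $|r-s|\lesssim 2^{-\ell}\lesssim \min(r,s)$, so $\sqrt{s}\approx\sqrt{r}$ on its support and
$$
\frac{2^\ell}{\sqrt{r}}\int_0^\infty \frac{\sqrt{s}}{(1+2^\ell|r-s|)^N}\,ds \lesssim \frac{2^\ell}{\sqrt{r}}\cdot\sqrt{r}\cdot 2^{-\ell}\approx 1.
$$
Equivalently, one can phrase this as the paper does: at the level of the conjugated operator $P_\ell^{\mathcal{H}_j}$ on $L^p(dr)$, the kernel $K_{j,\ell}(r,s)$ is a genuine convolution-type kernel of unit $L^1$-mass at scale $2^{-\ell}$, and since it is localized near the diagonal where $r\approx s\gtrsim 2^{-\ell}$, the weight $s^{1/p-1/2}$ arising from the change $L^p(dr)\leftrightarrow L^p(rdr)$ is slowly varying on its support; the paper's Cases 2 and 3 are then done by direct Schur estimates as you propose. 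So your plan works once you track the $(rs)^{-1/2}$ factor through Case 1 rather than crushing it to $2^\ell$.
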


\begin{proof}
We restrict our attention to the case $j=2$ and will drop the index $j$ from the notation.
The case $j=1$ is identical up to obvious modifications; in particular, the extra
logarithmic terms in $\Phi_1$ (cfr. Lemmas \ref{lemFou1} and \ref{Phi1Op1_1}) do not play any role.
As usual, we will employ the expansions for $\Phi_2$ from Lemmas \ref{lemFou1} and \ref{lemWeyl}
to estimate the kernel by distinguishing three different cases, depending on the sizes of $rk$ and $sk$.

\smallskip
\noindent
{\it Case 1: $rk,sk \gtrsim 1$}.
According to \eqref{LPF1Lap}, plugging in the asymptotics in Lemma \ref{lemWeyl} for the 
Weyl solutions at infinity, %and those in Proposition \ref{propSM2} for the spectral measure,
we can reduce matters to considering a kernel of the form
\begin{align*}
L_\ell(r,s) := \frac{1}{\sqrt{rs}} \int_0^\infty e^{i(r\pm s) k} 
  \chi^c(rk) \chi^c(sk) (1+\phi(r,k))(1+\phi(s,k)) \, \varphi_\ell(k) %\, \rho_j'(k^2) 
  \, dk
\end{align*}
where $\phi$ denotes a generic function with the symbol property
\begin{align}\label{LPlem2}
|\partial_k^\alpha \phi(r,k)| \lesssim k^{-\alpha}, \quad \alpha = 0,1,2,
\end{align}
(see \eqref{prKG1phi}), and obtaining the bound %consistent with \eqref{LPlemconc}, 
\begin{align}\label{LPlem1}
|L_\ell(r,s) | \lesssim \frac{2^{2\ell}}{(1+ 2^{2\ell}|r\pm s|^2)}.
\end{align}
The desired estimate by the right-hand side of \eqref{LPlemconc}
would then follow from Young's inequality.
It is not hard to see that \eqref{LPlem1} holds true integrating by parts in $k$
using $(1+2^{2\ell}|r\pm s|^2) e^{i(r\pm s) k} = (1-2^{2\ell}\partial_k^2) e^{i(r\pm s) k}$,
and the fact that $(rs)^{-1/2} \lesssim k \approx 2^\ell$.

\smallskip
\noindent
{\it Case 2: $rk \lesssim 1, sk \lesssim 1$}.
In this case we use Lemma \ref{lemFou1} to expand $\Phi$,
and Proposition \ref{propSM0} for the spectral measure.
Then, similarly to the proofs in Section \ref{secdecay}, we can reduce matters to bounding in $L^p(rdr)$
the quantity
\begin{align*}
\int_0^\infty M_\ell(r,s) \, f(s) s ds
\end{align*}
where the kernel is of the form
\begin{align*}
M_\ell(r,s) :=  \int_0^\infty
  \chi(rk) \chi(sk) \frac{r}{\jr} \frac{s}{\langle s \rangle} (1+\phi(r,k)) (1+\phi(s,k)) \, \varphi_\ell(k) %\, \rho_j'(k^2) 
  k \jk^2 \, dk,
\end{align*}
with $\phi$ a generic function with the same symbol property \eqref{LPlem2} above.
To obtain an estimate consistent with \eqref{LPlemconc} via Schur's test it suffices to show
\begin{align*}%\label{LPlem2}
\sup_{s\geq 0} \int_0^\infty |M_\ell(r,s)| \, r dr + \sup_{r \geq 0} \int_0^\infty |M_\ell(r,s)| \, s ds \lesssim 1.
\end{align*}
By symmetry, it suffices to bound the first of the two quantities above.
Since $k\approx 2^\ell$ and $rk \lesssim 1$ on the support of the integral, 
%For the contribution with $r \geq 1$ (hence $2^\ell \approx k \lesssim 1)$ we have 
using also that $x \jk/\langle x \rangle \lesssim 1$ for $xk \lesssim 1$, we have
\begin{align*}
\int_0^\infty |M_\ell(r,s)| \, r dr \lesssim \int_{r \lesssim 2^{-\ell}}  
  \int_{k \approx 2^\ell} k dk \, r dr \lesssim 1.
\end{align*}
This gives a sufficient bound in this case.

\smallskip
\noindent
{\it Case 3: $sk \lesssim 1 \lesssim rk$}.
This case is a combination of the previous two.
We use Lemma \ref{lemWeyl} for $\Phi(r,k)$ and Lemma \ref{lemFou1} for $\Phi(s,k)$,
as well as Proposition \ref{propSM2} for the spectral measure and the coefficient $a(k)$.
Similarly to the proofs in Section \ref{secdecay} and the one above, 
we can then reduce matters to bounding 
\begin{align*}
\int_0^\infty N_\ell(r,s) \, f(s) s ds
\end{align*}
where the kernel has of the form
\begin{align*}
N_\ell(r,s) := \frac{1}{\sqrt{rk}} \int_0^\infty e^{i r k} 
  \chi^c(rk) \chi(sk) (1+\phi(r,k)) \frac{s}{\langle s \rangle} (1+\phi(s,k)) \, \varphi_\ell(k) %\, \rho_j'(k^2) 
  \, k \jk \, dk,
  \end{align*}
and $\phi$ denotes a generic function with the property \eqref{LPlem2}.
To obtain an estimate consistent with \eqref{LPlemconc}, it then suffices to 
apply Schur's test with the two inequalities
\begin{align}\label{LPlem3}
\sup_{s\geq 0} |N_\ell(r,s)| \lesssim \frac{2^{2\ell}}{(1+ 2^{2\ell}r^2)},
\qquad \sup_{r \geq 0} \int_0^\infty |N_\ell(r,s)| \, s ds \lesssim 1.
\end{align}
The first inequality can be verified using integration by parts in $k$
via the identity $(1+2^{2\ell}r^2) e^{ir k} = (1-2^{2\ell}\partial_k^2) e^{ir k}$, similarly to Case 1 above,
and that $rk\gtrsim 1$ and $s \jk/\langle s \rangle \lesssim 1$ on the support of the integral;
we skip the details.
For the second inequality in \eqref{LPlem3} we can bound, similarly to Case 2 above,
\begin{align*}
\int_0^\infty |N_\ell(r,s)| \, s ds \lesssim \int_{s \lesssim 2^{-\ell}} \int_{k\approx 2^\ell}   \,k dk\, s ds
\lesssim 1.
\end{align*}
This concludes the proof of the lemma.
\end{proof}

%%%%%%%%%%%%%%%%%%%%%%%%%%%%%%%%%%%%%%%%%%%%%%%%%%%%%%%%%%%%%%%%%%%%%%%%%%%
%%%%%%%%%%%%%%%%%%%%%%%%%%%%%%%%%%%%%%%%%%%%%%%%%%%%%%%%%%%%%%%%%%%%%%%%%%%
%%%%%%%%%%%%%%%%%%%%%%%%%%%%%%%%%%%%%%%%%%%%%%%%%%%%%%%%%%%%%%%%%%%%%%%%%%%
%%%%%%%%%%%%%%%%%%%%%%%%%%%%%%%%%%%%%%%%%%%%%%%%%%%%%%%%%%%%%%%%%%%%%%%%%%%
%%%%%%%%%%%%%%%%%%%%%%%%%%%%%%%%%%%%%%%%%%%%%%%%%%%%%%%%%%%%%%%%%%%%%%%%%%%
%%%%%%%%%%%%%%%%%%%%%%%%%%%%%%%%%%%%%%%%%%%%%%%%%%%%%%%%%%%%%%%%%%%%%%%%%%%
%%%%%%%%%%%%%%%%%%%%%%%%%%%%%%%%%%%%%%%%%%%%%%%%%%%%%%%%%%%%%%%%%%%%%%%%%%%
%%%%%%%%%%%%%%%%%%%%%%%%%%%%%%%%%%%%%%%%%%%%%%%%%%%%%%%%%%%%%%%%%%%%%%%%%%%

%%%%%%%%%%%%%%%%%%%%%%%%%%%%%%%%%%%%%%%%%%%%%%%%%%%%%%%%%%%%%%%%%%%%%%%%%%%%
%%%%%%%%%%%%%%%%%%%%%%%%%%%%%%%%%%%%%%%%%%%%%%%%%%%%%%%%%%%%%%%%%%%%%%%%%%%%
%%%%%%%%%%%%%%%%%%%%%%%%%%%%%%%%%%%%%%%%%%%%%%%%%%%%%%%%%%%%%%%%%%%%%%%%%%%%
%%%%%%%%%%%%%%%%%%%%%%%%%%%%%%%%%%%%%%%%%%%%%%%%%%%%%%%%%%%%%%%%%%%%%%%%%%%%
%%%%%%%%%%%%%%%%%%%%%%%%%%%%%%%%%%%%%%%%%%%%%%%%%%%%%%%%%%%%%%%%%%%%%%%%%%%%
%%%%%%%%%%%%%%%%%%%%%%%%%%%%%%%%%%%%%%%%%%%%%%%%%%%%%%%%%%%%%%%%%%%%%%%%%%%%
%%%%%%%%%%%%%%%%%%%%%%%%%%%%%%%%%%%%%%%%%%%%%%%%%%%%%%%%%%%%%%%%%%%%%%%%%%%%

\medskip

\end{document}